\newtheorem{thm}{Theorem}
\newtheorem{prop}{Proposition}
\newtheorem{lem}{Lemma}
\newtheorem{cor}{Corollary}
\newtheorem{rem}{Remark}
\theoremstyle{definition}
\DeclareMathOperator\Jac{Jac }
\DeclareMathOperator\Hom{Hom }
\DeclareMathOperator\End{End }
\DeclareMathOperator\tr{tr }
\DeclareMathOperator\Pic{Pic}
\DeclareMathOperator\Sp{Sp}
\DeclareMathOperator\Kum{Kum }
\def\Z{\mathbb Z}
\def\Q{\mathbb Q}
\def\R{\mathbb R}
\def\C{\mathbb C}
\def\A{\mathbb A}
\def\H{\mathbb H}
\def\B{\mathcal B}
\def\J{\mathcal J}
\def\K{\mathcal K}
\def\L{\mathcal L}
\def\M{\mathcal M}
\def\O{\mathcal O}
\def\X{\mathcal X}
\def\Y{\mathcal Y}
\def\AA{\mathcal A}
\def\CC{\mathcal X}
\def\DD{\mathcal D}
\newcommand\bP{\mathbb P}
\def\char{\mbox{char }}
\def\Im{\text{\rm Im}}
\def\<{\langle}
\def\>{\rangle}
\title{Kummer Surfaces, Isogenies and Theta Functions}
\author{Adrian Clingher}
\address{Department of Mathematics and Statistics, University of Missouri - St. Louis, St. Louis, MO 63121}
\email{clinghera@umsl.edu}
\author{Andreas Malmendier}
\address{Department of Mathematics \& Statistics, Utah State University, Logan, UT 84322}
\email{andreas.malmendier@usu.edu}
\thanks{A.M. acknowledges support from the Simons Foundation through grant no.~202367.}
\author{Tony Shaska}
\address{Department of Mathematics and Statistics, Oakland University, Rochester, MI 48309}
\email{shaska@oakland.edu}
\thanks{This work was presented in \emph{Advanced Research Workshop: Isogeny based post-quantum cryptography} at Hebrew University of Jerusalem and supported by NATO grant G6218}
\begin{document}
\begin{abstract}
The paper discusses geometric and computational aspects associated with $(n,n)$-isogenies for principally polarized Abelian surfaces and related Kummer surfaces. We start by reviewing the comprehensive Theta function framework for classifying genus-two curves, their principally polarized Jacobians, as well as for establishing explicit quartic normal forms for associated Kummer surfaces.  This framework is then used for practical isogeny computations. A particular focus of the discussion is the $(n,n)$-Split isogeny case. We also explore possible extensions of Richelot's $(2,2)$-isogenies to higher order cases, with a view towards developing efficient isogeny computation algorithms.
\end{abstract}
\keywords{Abelian surfaces, Kummer surfaces, isogenies, Theta function}
\subjclass[2020]{14J28, 14K02, 14K25}
\maketitle
\section{Introduction}
Abelian varieties, as projective algebraic groups, generalize elliptic curves to higher dimensions. They are important objects in both algebraic geometry and number theory. Among these, Abelian surfaces---two-dimensional Abelian varieties--- are of particular interest, due to their intricate geometric and arithmetic structures, which often lead to important practical applications in applied computational fields such as cryptography.  This paper explores computational aspects associated with isogenies between Abelian surfaces, with particular focus on $(n, n)$-isogenies with $n \in \mathbb{N}_{\ge 2}$. The present work may be seen as a continuation of \cite{2019-2}, concentrating primarily to Jacobians of genus 2 curves and their associated Kummer surfaces, with a view towards applications to isogeny-based cryptography.

Kummer surface, defined as minimal resolutions for the quotient of an Abelian surface by its inversion involution, provides a powerful geometric platform for studying isogenies. In the case of Jacobians of genus-two curves, the associated Kummer surface may be explicitly described via various birational models given by projective quartic surfaces. The quartic underlying polynomials are usually referred to as {\it normal forms} and carry various names – Hudson, G\"opel, Rosenhain, Cassels-Flynn, Baker. A common theme for all these normal forms is that they can be derived from the theory of Theta functions and their coefficients can be described in terms of special Theta constants called Theta-nulls.

In the realm of cryptography, isogeny-based protocols like the Supersingular Isogeny Diffie-Hellman (SIDH) leverage the computational hardness of finding isogenies---surjective homomorphisms with finite kernels---as a foundation for secure systems. Extending this paradigm from elliptic curves to Abelian surfaces introduces significant advantages: the \(n^4\)-torsion groups of Abelian surfaces are substantially larger than the $n^2$-torsion groups of elliptic curves, potentially enhancing security, while their richer endomorphism algebras may enable more efficient protocols. However, computing $(n, n)$-isogenies for $n >2$ presents formidable challenges due to the increased complexity of the kernel, requiring methods beyond classical Richelot $(2,2)$-isogenies. Moreover, cryptographic security hinges on addressing vulnerabilities, particularly from split Jacobians---Jacobians isogenous to products of elliptic curves---which can reduce the complexity of isogeny problems.

Building on prior work, this paper provides a methodology for computing $(n,n)$-isogenies, generalizing Richelot’s $(2,2)$-isogenies to arbitrary odd $n$ with the Lubicz-Robert formula, achieving \(O(n^2)\) complexity. Our approach leverages the geometry of Kummer surfaces (for various realizations as projective hypersurfaces) and the associated Shioda-Inose surface---a K3 surface derived as a two-isogeny of the Kummer surface---which provides a Hodge-theoretic connection between the Jacobian and its K3 counterpart, enriching the geometric framework for isogeny computations. We relate our methods to classical constructions, such as those of Dolgachev-Lehavi, enhancing their applicability through projective embeddings and \(n\)-tuple techniques.

We also investigate the loci \(\mathcal{L}_n\) of \((n, n)\)-Split Jacobians, characterizing their geometric properties and linking them to Humbert surfaces, which are critical for understanding cryptographic vulnerabilities. Recent advances in artificial intelligence and machine learning  \cite{2024-03} and \cite{2025-07} enhance this exploration. The former employs supervised models trained on datasets of genus 2 curves to detect \(\mathcal{L}_n\) for arbitrary \(n\), while the latter leverages arithmetic insights in positive characteristic to refine cryptanalysis over finite fields. 
 
The paper is structured as follows. Sections 2--4 lay the theoretical groundwork, detailing Abelian surfaces, their endomorphism rings, Theta functions, the geometry of genus 2 curves and their Jacobians, and the structure of Kummer surfaces, including their embeddings and (16,6)-configurations. Section 5 examines the loci \(\mathcal{L}_n\), connecting split Jacobians to Humbert surfaces and highlighting their geometric and cryptographic significance. Section 6 presents our computational methods for \((n, n)\)-isogenies, emphasizing a Kummer surface-based approach with the Lubicz-Robert formula, and comparisons to Dolgachev-Lehavi’s techniques. Section 7 applies these results to cryptography, analyzing attacks on the superspecial isogeny problem that exploit \(\mathcal{L}_n\) and their implications for protocols like the Castryck-Decru-Smith hash. 

\section{Abelian varieties} 
Let $\AA$, $\B$ be Abelian varieties over a field $k$. We denote the $\Z$-module of homomorphisms $\AA \mapsto \B$ by $\Hom(\AA, \B)$ and the ring of endomorphisms $\AA \mapsto \AA$ by $\End \AA$. In the context of linear algebra, it is often more convenient to work with the $\Q$-vector spaces $\Hom^0 (\AA, \B) := \Hom(\AA, \B) \otimes_\Z \Q$ and $\End^0 \AA := \End \AA \otimes_\Z \Q$. 
For an Abelian variety $\AA$ defined over a number field $K$, computing $\End_K (\AA)$ is a harder problem than computing $\End_{\bar K} (\AA)$; see \cite{lombardo}*{Lemma~5.1} for details.

\begin{lem}
If there exists an algorithm to compute $\End_K (\AA)$ for any Abelian variety of dimension $g \geq 1$ defined over a number field $K$, then there is an algorithm to compute $\End_{\bar K} (\AA)$.
\end{lem}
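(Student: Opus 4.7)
The plan is to reduce the computation over $\bar K$ to a computation over a suitable finite extension $L/K$ that rationalizes every geometric endomorphism. The main input is a classical theorem of Silverberg: for any integer $n \geq 3$, the extension $L := K(\AA[n])$ generated by the coordinates of the $n$-torsion points satisfies $\End_L(\AA_L) = \End_{\bar K}(\AA)$, because the absolute Galois group of $L$ acts trivially on $\End_{\bar K}(\AA)$. Thus, as soon as we can present $L$ explicitly as a number field, we can feed the pair $(\AA_L, L)$ into the hypothesized algorithm and read off $\End_{\bar K}(\AA)$.

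Concretely, I would fix $n = 3$ and compute the zero-dimensional \'etale $K$-scheme $\AA[3]$ from defining equations of $\AA$ together with its group law (for a Jacobian of a genus-$g$ curve this is given by an explicit division polynomial; in general it comes from the multiplication-by-$3$ map on any chosen projective model). Standard Gr\"obner basis and factorization routines over $K$ decompose $\AA[3]$ into Galois orbits and, after adjoining a primitive element, produce a presentation $L = K[x]/(f(x))$. The degree of $L/K$ is bounded a priori because $\mathrm{Gal}(L/K)$ embeds into $\mathrm{GL}_{2g}(\Z/3\Z)$, so this step terminates uniformly in the input.

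Next, I would invoke the given algorithm on $(\AA_L, L)$ to obtain the finite-rank $\Z$-algebra $\End_L(\AA_L)$; by Silverberg's theorem this coincides with $\End_{\bar K}(\AA)$, which is the desired output. Note that $\End_{\bar K}(\AA)$ inherits an action of $\mathrm{Gal}(L/K)$ which the algorithm can also record, recovering the full descent data needed to write $\End_K(\AA) = \End_{\bar K}(\AA)^{\mathrm{Gal}(L/K)}$ if desired.

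The main obstacle is conceptual rather than computational: one needs a uniform field-of-definition theorem which bounds the extension required to rationalize all endomorphisms, and Silverberg's result supplies exactly this. The practical bottleneck is the degree $[L : K]$, which can grow rapidly with $g$; in an implementation one would first compute the image of the mod-$3$ Galois representation in order to take the smallest $L$ possible before invoking the endomorphism algorithm.
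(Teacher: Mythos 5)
Your proposal is correct, and it is in fact the standard argument — it is essentially Lemma~5.1 of the reference (Lombardo) that the statement itself points to: pass to $L = K(\AA[n])$ for some $n \ge 3$, use Silverberg's theorem that every geometric endomorphism is already defined over $L$, present $L$ effectively as a number field (with $[L:K]$ bounded via the embedding of $\mathrm{Gal}(L/K)$ into $\mathrm{GL}_{2g}(\Z/n\Z)$), and then invoke the hypothesized algorithm on $(\AA_L, L)$. This is a genuinely different route from the proof printed in the paper, which instead asserts that one can recover the geometric endomorphism ring by ``extending scalars,'' culminating in the identity $\End_{\bar K}(\AA) = \End_K(\AA) \otimes_\Z \bar K$. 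That identity is not correct as stated: the geometric endomorphism ring is a finitely generated $\Z$-module that can have strictly larger rank than $\End_K(\AA)$ (e.g.\ an elliptic curve over $\Q$ with CM has $\End_\Q = \Z$ but $\End_{\bar\Q}$ of rank $2$), so no scalar extension of the $K$-rational ring can produce the missing endomorphisms. Your argument supplies exactly the ingredient the paper's version lacks, namely a uniform, effectively computable field of definition for all of $\End_{\bar K}(\AA)$; the only mild caveat is that Silverberg's theorem should be cited for homomorphisms of abelian varieties over fields of characteristic zero (which covers the number-field setting here), and that one should check the chosen projective model makes the multiplication-by-$3$ map explicit enough for the Gr\"obner-basis step — both routine points. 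In short: your proof is correct and is the one the cited literature actually uses, whereas the paper's inline justification should be read only as an (inaccurate) sketch deferring to that reference.
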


\begin{proof}
Since $K \subseteq \bar{K}$, any endomorphism of $\AA$ over $\bar{K}$ restricts to an endomorphism over $K$ after base change, but the converse requires additional structure. An algorithm for $\End_K (\AA)$ determines the $\Z$-module of $K$-rational endomorphisms. Extending scalars to $\bar{K}$, we compute $\End_{\bar K} (\AA) = \End_K (\AA) \otimes_\Z \bar{K}$, leveraging the finite generation of $\End_K (\AA)$ and the algebraic closure of $\bar{K}$. The lemma follows from the existence of such an extension procedure.
\end{proof}
\subsection{Isogenies of Abelian varieties}
A homomorphism $f \colon \AA \longrightarrow \B$ is called an \textbf{isogeny} if $\Im{f} = \B$ and $\ker{f}$ is a finite group scheme. If an isogeny $\AA \longrightarrow \B$ exists, we say that $\AA$ and $\B$ are isogenous. This relation is symmetric, as shown in Lemma~\ref{dual}. The degree of an isogeny $f\colon \AA \longrightarrow \B$ is defined as the degree of the function field extension
\begin{equation}
\deg f := [K(\AA) : f^\star K(\B)].
\end{equation}
This equals the order of the group scheme $\ker (f)$. The group of $\bar{k}$-rational points has order $\#(\ker f)(\bar{k}) = [K(\AA) : f^\star K(\B)]^{sep}$, where $[K(\AA) : f^\star K(\B)]^{sep}$ is the degree of the maximal separable subextension. An isogeny $f$ is \textbf{separable} if and only if
\begin{equation}
\# \ker f(\bar{k}) = \deg f,
\end{equation}
equivalently, if $\ker f$ is \'etale.

\begin{lem}\label{noether}
For any Abelian variety $\AA/k$, there is a one-to-one correspondence between finite subgroup schemes $\K \leq \AA$ and isogenies $f \colon \AA \longrightarrow \B$, where $\B$ is determined up to isomorphism. Moreover, $\K = \ker f$, $\B = \AA/\K$, $f$ is separable if and only if $\K$ is \'etale, and then $\deg f = \#\K(\bar{k})$.
\end{lem}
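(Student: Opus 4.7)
The plan is to establish the one-to-one correspondence by exhibiting two mutually inverse assignments between finite subgroup schemes of $\AA$ and isogenies out of $\AA$, then read off the separability and degree claims from the material in the preceding paragraphs. In the forward direction, an isogeny $f \colon \AA \to \B$ gets assigned its kernel $\ker f$, which by the definition of isogeny is a finite group scheme contained in $\AA$; this is immediate.

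For the inverse direction, given a finite subgroup scheme $\K \leq \AA$, I would form the fppf quotient sheaf $\AA / \K$ and invoke the representability theorem of Grothendieck and Mumford: for any finite flat subgroup scheme of an abelian variety, this quotient is represented by an abelian variety $\B$ of the same dimension as $\AA$, and the canonical projection $f \colon \AA \to \B$ is a faithfully flat homomorphism whose kernel is exactly $\K$. Since $\K$ is finite, $f$ has finite kernel and is surjective, hence is an isogeny in the sense already defined.

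Next I would verify that the two constructions are mutually inverse. Starting from $\K$, forming $\AA/\K$ and taking its kernel tautologically recovers $\K$. Starting from an isogeny $f\colon \AA \to \B$, the first isomorphism theorem for group schemes produces a canonical identification $\AA/\ker f \xrightarrow{\sim} \B$, so $\B$ is determined up to unique isomorphism by $\K = \ker f$. This gives the bijection and simultaneously the formula $\B = \AA / \K$.

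Finally, for the separability and degree assertions, I would appeal to the formulas already recorded in the excerpt: $\deg f = [K(\AA) : f^\star K(\B)]$ equals the order (rank) of the finite group scheme $\ker f$, while $\#(\ker f)(\bar{k})$ equals the separable part $[K(\AA) : f^\star K(\B)]^{sep}$. A finite group scheme over $k$ is \'etale precisely when its order coincides with its number of geometric points, i.e.\ when it has no infinitesimal part, which by the two equalities above is equivalent to $\deg f = \#\ker f(\bar{k})$, i.e.\ to separability of $f$; and in this case the equation $\deg f = \#\K(\bar{k})$ is exactly this coincidence. The main obstacle, and the only genuinely non-formal ingredient, is the existence of $\AA/\K$ as an abelian variety; the rest is bookkeeping with the definitions already in place.
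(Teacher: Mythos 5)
Your proposal is correct and follows essentially the same route as the paper's proof: construct the quotient $\AA/\K$ with its projection, check the two assignments are mutually inverse, and read off the separability claim from the comparison of $\deg f$ with $\#\ker f(\bar k)$. If anything, your version is more complete, since you explicitly cite the representability of the quotient and argue both directions of the ``separable iff \'etale'' equivalence, whereas the paper only asserts one direction.
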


\begin{proof}
Given a finite subgroup scheme $\K \leq \AA$, the quotient $\B = \AA/\K$ is an Abelian variety over $k$, and the natural projection $f \colon \AA \longrightarrow \B$ is an isogeny with $\ker f = \K$. Conversely, for an isogeny $f \colon \AA \longrightarrow \B$, the kernel $\K = \ker f$ is finite, and $\B \cong \AA/\K$ by the quotient structure. The map $\K \mapsto \AA/\K$ is injective (distinct kernels yield distinct quotients) and surjective (every isogeny arises this way), establishing the bijection. Separability of $f$ implies $\ker f$ is étale, and $\deg f = \#\K(\bar{k})$ holds in this case due to the étale order matching the field extension degree.
\end{proof}
\par Isogenous Abelian varieties have commensurable endomorphism rings: if $\AA$ and $\B$ are isogenous, then $\End^0 (\AA) \cong \End^0 (\B)$. This follows from the existence of isogenies $f\colon \AA \to \B$ and $g \colon \B \to \AA$ such that $g \circ f = [n]$, inducing an isomorphism on rational endomorphism algebras.

\begin{thm}[Poincaré-Weil]
Let $\AA$ be an Abelian variety. Then $\AA$ is isogenous to
\begin{equation}
\AA_1^{n_1} \times \AA_2^{n_2} \times \dots \times \AA_r^{n_r},
\end{equation}
where (up to permutation) $\AA_i$, for $i = 1, \dots, r$, are simple, non-isogenous Abelian varieties, and the factors $\AA_i^{n_i}$ are uniquely determined up to isogenies.
\end{thm}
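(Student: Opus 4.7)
The plan is to prove existence by induction on $\dim \AA$, using Poincar\'e's complete reducibility theorem as the main input, and then to extract uniqueness from the structure of the rational endomorphism algebra.

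First, I would establish complete reducibility: for any Abelian subvariety $\B \subseteq \AA$, there exists a complementary Abelian subvariety $\B' \subseteq \AA$ such that the addition map $\B \times \B' \to \AA$ is an isogeny. Since $\AA$ is projective, fix an ample line bundle $L$ on $\AA$ and form the polarization $\phi_L \colon \AA \to \hat{\AA}$. Letting $\iota \colon \B \hookrightarrow \AA$ denote the inclusion, define $\B'$ to be the identity component of the kernel of the composition $\AA \xrightarrow{\phi_L} \hat{\AA} \xrightarrow{\hat{\iota}} \hat{\B}$. Ampleness of $L|_\B$ forces $\phi_L \circ \iota$ to be an isogeny $\B \to \hat{\B}$, which in turn forces $\B \cap \B'$ to be finite. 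A dimension count then shows that $\B \times \B' \to \AA$ is an isogeny, as desired.

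Second, I would iterate this construction. If $\AA$ is simple there is nothing to prove; otherwise pick a minimal nonzero Abelian subvariety $\AA_1 \subseteq \AA$, which is automatically simple, take a complement of $\AA_1$ in $\AA$, and apply the induction hypothesis to it. Grouping together simple factors lying in a common isogeny class and relabelling yields the desired decomposition $\AA \sim \AA_1^{n_1} \times \cdots \times \AA_r^{n_r}$ with the $\AA_i$ pairwise non-isogenous.

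Third, for uniqueness, I would invoke the analogue of Schur's lemma for simple Abelian varieties: any nonzero homomorphism between simple Abelian varieties is an isogeny, so $\Hom^0(\AA_i, \AA_j) = 0$ whenever $i \neq j$, while each $D_i := \End^0(\AA_i)$ is a finite-dimensional division algebra over $\Q$. Using that isogenous Abelian varieties have isomorphic rational endomorphism algebras, as noted in the excerpt, we obtain $\End^0(\AA) \cong \prod_{i=1}^r M_{n_i}(D_i)$. Since this Wedderburn decomposition is an isogeny invariant of $\AA$, the integer $r$, the multiplicities $n_i$, and the isogeny classes of the $\AA_i$ are all recovered intrinsically.

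The step I expect to be the main obstacle is complete reducibility itself: it genuinely requires the polarization, and without projectivity one can exhibit analytic complex tori with Abelian subvarieties admitting no isogeny complement. Once complete reducibility is in hand, the induction on dimension and the Wedderburn-theoretic uniqueness argument are essentially formal.
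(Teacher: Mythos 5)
Your plan is sound and is considerably more substantive than the paper's own argument, which simply asserts that ``the isogeny class contains a product of powers of simple Abelian varieties'' and defers uniqueness to a Jordan--H\"older principle. You actually supply the missing content: Poincar\'e complete reducibility via the polarization $\phi_L$, with the complement $\B'$ taken to be the identity component of $\ker(\hat{\iota}\circ\phi_L)$ and finiteness of $\B\cap\B'$ forced by ampleness of $L|_\B$; this is the standard and correct argument (cf.\ \cite{Mum}, Ch.~IV), and the induction on dimension through minimal (hence simple) Abelian subvarieties is fine. Your observation that projectivity is the essential input is also on point.

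The one step that does not quite close is the last clause of your uniqueness argument. From $\End^0(\AA)\cong\prod_{i=1}^r M_{n_i}(D_i)$ and the uniqueness of the Wedderburn decomposition you do recover $r$, the multiplicities $n_i$, and the division algebras $D_i$ --- but not the isogeny classes of the simple factors $\AA_i$, because non-isogenous simple Abelian varieties can have isomorphic endomorphism algebras (two non-isogenous elliptic curves without CM both have $\End^0=\Q$, yet $E_1\times E_2$ and $E_1'\times E_2'$ need not be isogenous). To finish, compare two decompositions directly: an isogeny $\prod_i\AA_i^{n_i}\to\prod_j\B_j^{m_j}$ restricted to a simple factor $\AA_i$ and projected to $\B_j$ is either zero or an isogeny by your Schur-type lemma, and surjectivity of the total map forces each $\AA_i$ to be isogenous to some $\B_j$; the multiplicities then match by computing $\dim_\Q\Hom^0(\AA_i,\AA)$ (or by counting dimensions within each isotypic block). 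With that supplement the proof is complete.
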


\begin{proof}
Since $\AA$ is an Abelian variety, its isogeny class contains a product of powers of simple Abelian varieties. Let $\AA \cong \prod_{i=1}^r \AA_i^{n_i}$, where $\AA_i$ are simple and pairwise non-isogenous. The uniqueness follows from the Jordan-H\"older theorem for Abelian varieties: any two such decompositions have isomorphic factors with equal multiplicities, up to permutation, due to the indecomposability of simple varieties and the structure of $\End^0 (\AA)$.
\end{proof}

\begin{cor}
If $\AA$ is an absolutely simple Abelian variety, then every endomorphism not equal to $0$ is an isogeny.
\end{cor}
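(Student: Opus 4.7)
The plan is to unpack what it means to be an isogeny and to use the defining property of a simple Abelian variety, namely that its only Abelian subvarieties are $0$ and $\AA$ itself. Recall from the text preceding the statement that a homomorphism $f\colon \AA \to \AA$ is an isogeny precisely when $\Im f = \AA$ and $\ker f$ is a finite group scheme. So I would verify these two conditions separately for a nonzero endomorphism $f \in \End \AA$.

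First I would handle surjectivity. The image $\Im f$ of a homomorphism of Abelian varieties is an Abelian subvariety of the target. Since $\AA$ is absolutely simple, $\Im f$ must be either $0$ or $\AA$. The assumption $f \neq 0$ rules out $\Im f = 0$, so $\Im f = \AA$.

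Next I would handle the kernel. The kernel $\ker f$ is a closed subgroup scheme of $\AA$, and its identity component $(\ker f)^0$ (taken with the reduced structure over $\bar k$) is an Abelian subvariety of $\AA$. By absolute simplicity, $(\ker f)^0$ is either $0$ or $\AA$. If $(\ker f)^0 = \AA$, then $\ker f = \AA$, forcing $f = 0$, contradicting the hypothesis. Hence $(\ker f)^0 = 0$, which means $\ker f$ is zero-dimensional, hence finite. Combined with the previous step, $f$ is an isogeny, which is exactly what the corollary asserts.

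The only subtle point is the use of absolute simplicity rather than simplicity over $k$: since $f$ is defined over $k$ but the subvarieties above may only be visible after passing to $\bar k$, one needs simple-ness to hold over $\bar k$ to apply the dichotomy to $\Im f$ and $(\ker f)^0$. This is the substantive role played by the word \emph{absolutely} in the hypothesis, and it is essentially the only thing to watch; no serious obstacle arises.
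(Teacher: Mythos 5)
Your proof is correct and follows essentially the same route as the paper: apply the dichotomy for Abelian subvarieties of a simple variety to the subvarieties attached to $f$. In fact your write-up is more careful than the paper's --- the paper asserts that a non-zero endomorphism has \emph{trivial} kernel, whereas only the identity component $(\ker f)^0$ need vanish (the kernel itself is merely a finite group scheme, exactly as you argue), and your separate treatment of the image supplies the surjectivity that the paper leaves implicit.
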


\begin{proof}
For $\AA$ absolutely simple, $\End^0 (\AA)$ is a division algebra. A non-zero endomorphism $\phi$ has trivial kernel (since $\ker \phi \neq \AA$ and $\AA$ has no proper subvarieties as a simple variety), hence is an isogeny.
\end{proof}

Fix a field $k$ and an Abelian variety $\AA$ over $k$. Let $H$ be a finite subgroup of $\AA$. From a computational perspective, we consider: (i) finding all Abelian varieties $\B$ over $k$ such that there exists an isogeny $\AA \longrightarrow \B$ with kernel isomorphic to $H$; (ii) given $\AA$ and $H$, determining $\B := \AA/H$ and the isogeny $\AA \longrightarrow \B$; (iii) given $\AA$ and $\B$, determining if they are isogenous and computing a rational expression for an isogeny $\AA \longrightarrow \B$. For a survey and conjectures, see \cite{Frey}.

The scalar multiplication by $n$ map $[n]\colon \AA \longrightarrow \AA$ is an isogeny with kernel a group scheme of order $n^{2\dim \AA}$; see \cite{Mum}. We denote $\AA[n] = \ker [n] (\bar{k})$, whose elements are the $n$-\textbf{torsion points} of $\AA$.

\begin{lem}\label{dual}
Let $f \colon  \AA \longrightarrow \B$ be an isogeny of degree $n$. Then there exists an isogeny $\hat f \colon  \B \longrightarrow \AA$ such that
\begin{equation}
f \circ \hat f = \hat f \circ f = [n].
\end{equation}
The isogeny $\hat f$ is called the \textbf{dual} of $f$.
\end{lem}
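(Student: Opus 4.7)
The plan is to exhibit $\hat f$ as the map induced by the factorization of $[n]_\AA$ through the quotient $\AA\to\AA/\ker f\cong \B$.

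The first step is to show that $\ker f\subseteq \AA[n]$. Here $\ker f$ is a finite group subscheme of $\AA$ of order $n$ (by the degree--order identity stated just after the definition of an isogeny), and any commutative finite group scheme of order $n$ is annihilated by multiplication by $n$ (a theorem of Deligne in full generality, but standard and direct for subgroup schemes of Abelian varieties). Thus $[n]_\AA$ vanishes on $\ker f$.

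Next, I invoke Lemma~\ref{noether}: the isogeny $f\colon\AA\to\B$ realizes $\B$ as the quotient $\AA/\ker f$, so by the universal property of quotients of Abelian varieties the map $[n]_\AA\colon\AA\to\AA$, which kills $\ker f$, descends to a unique morphism $\hat f\colon\B\to\AA$ satisfying
\begin{equation}
\hat f\circ f = [n]_{\AA}.
\end{equation}
The identity on the other side follows by a one-line cancellation: composing on the right with $f$ gives $f\circ\hat f\circ f = f\circ [n]_\AA = [n]_\B\circ f$, and since $f$ is surjective (an isogeny), we may cancel it to obtain $f\circ\hat f = [n]_\B$.

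It remains to verify that $\hat f$ is itself an isogeny. Surjectivity is immediate: $\hat f(\B)\supseteq \hat f(f(\AA)) = [n]_\AA(\AA) = \AA$ because $[n]_\AA$ is surjective on an Abelian variety. For finiteness of the kernel, the relation $\hat f\circ f = [n]_\AA$ shows that $f^{-1}(\ker\hat f)\subseteq \ker [n]_\AA = \AA[n]$, a finite group scheme; since $f$ is surjective with finite kernel, $\ker\hat f = f\!\left(f^{-1}(\ker\hat f)\right)$ is also finite. The main subtlety is step one—the fact that a finite group scheme of order $n$ is killed by $n$—which in characteristic dividing $n$ is not entirely formal, but once granted, the remainder of the argument is purely a quotient/universal-property computation.
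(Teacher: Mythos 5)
Your proof is correct, and it takes a genuinely different route from the paper. The paper's proof defines $\hat f$ via the dual Abelian variety $\hat\B$ and the polarization induced by $f$ (essentially $\hat f = \lambda_\AA^{-1}\circ f^\vee\circ\lambda_\B$), and then asserts the two composition identities; that sketch implicitly requires polarizations to be available and leaves the verification that the composite is $[n]$ unexplained. You instead run the standard quotient argument: $\ker f$ has order $n$, hence is killed by $[n]_\AA$ (Deligne's theorem for commutative finite flat group schemes, or Lagrange in the \'etale case), so $[n]_\AA$ descends through $\AA/\ker f\cong\B$ via Lemma~\ref{noether}, and the identity $f\circ\hat f=[n]_\B$ follows by right-cancellation of the faithfully flat (hence epimorphic) map $f$. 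Your route is self-contained, needs no polarization, works uniformly in all characteristics once the order-kills-the-scheme fact is granted, and you correctly flag that fact as the one non-formal input; you also correctly disambiguate $[n]_\AA$ from $[n]_\B$, which the lemma's statement conflates. The only point worth making explicit is that the cancellation step uses that $f$ is an epimorphism of group schemes (faithful flatness of isogenies), not merely surjectivity on points, but this is standard and does not affect the validity of the argument.
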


\begin{proof}
Define $\hat f$ via the dual Abelian variety and the polarization induced by $f$. Since $f$ is an isogeny, there exists a unique $\hat f \colon  \B \longrightarrow \AA$ such that $f \circ \hat f = [n]$ on $\B$ and $\hat f \circ f = [n]$ on $\AA$, satisfying the duality condition due to the finite kernel’s order matching $\deg f = n$.
\end{proof}

\begin{thm}\label{thm-1}
Let $\AA/k$ be an Abelian variety, $p = \char k$, and $\dim \AA = g$.
\begin{enumerate}
\item[i)] If $p \nmid n$, then $[n]$ is separable, $\# \AA[n] = n^{2g}$, and $\AA[n] \cong (\Z/n\Z)^{2g}$.
\item[ii)] If $p \mid n$, then $[n]$ is inseparable, and there exists an integer $0 \leq i \leq g$ such that
\begin{equation}
\AA [p^m] \cong (\Z/p^m\Z)^i, \; \text{for all } m \geq 1.
\end{equation}
\end{enumerate}
\end{thm}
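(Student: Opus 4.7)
The plan is to leverage two key facts cited above: $[n]$ is an isogeny of degree $n^{2g}$ (from \cite{Mum}), and its differential at the origin on $T_0 \AA$ is scalar multiplication by $n$. Separability of $[n]$ is then diagnosed by the invertibility of $n$ in $k$, while the abstract group structure is pinned down by combining the order count from Lemma~\ref{noether} with short exact sequences on $\ell$-power torsion.

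For part (i), when $p \nmid n$, the map $d[n]_0 = n \cdot \mathrm{id}$ is invertible on $T_0 \AA$, so $[n]$ is \'etale at the origin. Because $[n]$ is a group homomorphism on the smooth variety $\AA$, translation transports \'etaleness to every closed point, so $[n]$ is \'etale globally and therefore separable. Lemma~\ref{noether} then gives $\#\AA[n] = \deg[n] = n^{2g}$. For the group structure I would reduce by the Chinese Remainder Theorem to a prime power $\ell^k$ with $\ell \neq p$, and use the short exact sequence
\begin{equation*}
0 \longrightarrow \AA[\ell] \longrightarrow \AA[\ell^{k+1}] \xrightarrow{[\ell]} \AA[\ell^k] \longrightarrow 0,
\end{equation*}
where surjectivity follows from surjectivity of $[\ell]$ on $\AA(\bar k)$. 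The base case $\AA[\ell] \cong (\Z/\ell\Z)^{2g}$ follows from the order count combined with the observation that $\AA[\ell]$ is an $\mathbb{F}_\ell$-vector space; induction on $k$ yields $\AA[\ell^k] \cong (\Z/\ell^k\Z)^{2g}$, and reassembly gives $\AA[n] \cong (\Z/n\Z)^{2g}$.

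For part (ii), when $p \mid n$, writing $n = p^a m$ with $\gcd(p,m)=1$, the differential $d[p]_0 = p = 0$ shows $[p]$ is not \'etale and is therefore inseparable, which propagates to $[n]$. Define the $p$-rank $i$ by $\#\AA[p](\bar k) = p^i$; since this group is annihilated by $p$, it is an $\mathbb{F}_p$-vector space of dimension $i$. The bound $i \leq g$ I would extract from the connected-\'etale sequence of finite group schemes $0 \to \AA[p]^0 \to \AA[p] \to \AA[p]^{\mathrm{et}} \to 0$: the connected component $\AA[p]^0$ embeds into the formal completion $\hat\AA$, which has dimension $g$, and the height of the $p$-torsion of this formal group is at least $g$, forcing $|\AA[p]^0| \geq p^g$ and hence $i \leq g$. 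To pass from $\AA[p]$ to $\AA[p^m]$, I would invoke the structure theorem to write $\AA[p^m](\bar k) \cong \bigoplus_{j=1}^r \Z/p^{a_j}\Z$ with $a_j \leq m$, observe that its $p$-torsion has $r$ summands and must equal $(\Z/p\Z)^i$, so $r = i$, and then use multiplicativity of separable degrees, $\#\AA[p^m](\bar k) = p^{im}$, to force $\sum a_j = im$ with $i$ terms each at most $m$, whence $a_j = m$ for all $j$.

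The principal obstacle is the bound $i \leq g$: all other ingredients reduce routinely to the degree formula, the differential calculation, and the structure theorem for finite abelian groups. Controlling $\AA[p]^0$ genuinely requires the theory of commutative formal groups (or Dieudonn\'e theory) and is where the characteristic-$p$ pathology is concentrated. A clean alternative, avoiding explicit formal-group height calculations, is to apply Cartier duality to the dual abelian variety $\hat\AA$, which exchanges connected and \'etale parts and yields the bound symmetrically from $\dim T_0 \hat\AA = g$.
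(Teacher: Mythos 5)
Your proposal is correct and follows essentially the same route as the paper's (very terse) proof: separability of $[n]$ via invertibility of the differential $n\cdot\mathrm{id}$ on $T_0\AA$, inseparability of $[p]$ via the vanishing of that differential (equivalently the Frobenius factor), and the $p$-rank bound $i \leq g$ via the connected--\'etale decomposition of $\AA[p]$ and the formal group $\hat\AA$. Your write-up supplies the counting and structure-theorem arguments for $\AA[n] \cong (\Z/n\Z)^{2g}$ and $\AA[p^m] \cong (\Z/p^m\Z)^i$ that the paper only gestures at, and these details are sound.
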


\begin{proof}
If $p \nmid n$, $[n]$ is separable as its derivative is non-zero, and $\AA[n] \cong (\Z/n\Z)^{2g}$ follows from the étale nature of the kernel over $\bar{k}$. If $p \mid n$, $[n]$ includes a power of the Frobenius, making it inseparable, and the $p$-torsion structure depends on the $p$-rank $i$, determined by the dimension of the $p$-divisible group.
\end{proof}

If $i = g$, $\AA$ is \textbf{ordinary}; if $\AA[p^s](\bar K) = \Z/p^{ts}\Z$, it has \textbf{$p$-rank} $t$. For $\dim \AA = 1$, it is \textbf{supersingular} if $p$-rank is 0; $\AA$ is \textbf{supersingular} if isogenous to a product of supersingular elliptic curves.
\subsection{Theta functions}\label{sec:theta_functions}
Let $\AA$ be an Abelian variety of dimension $g$ over an algebraically closed field $k$, which we take as $\C$ for simplicity; the arguments here extend to fields of characteristic $p$ coprime to relevant integers via algebraic Theta functions (see \cite{Mum}). We consider an algebraic variety $\AA$ that can be represented as $\C^g / \Lambda$, where $\Lambda = \Z^g + \tau \Z^g$ is a lattice, and $\tau$ is a symmetric $g \times g$ matrix in the Siegel upper half-space $\mathbb{H}_g$, satisfying ${}^t \tau = \tau$ and $\Im(\tau) > 0$.

A \emph{Theta function} with characteristic $[a, b]$, where $a, b \in \Q^g$, is defined as
\begin{equation}
\theta \left[ \begin{smallmatrix} a \\ b \end{smallmatrix} \right] (z, \tau) = \sum_{n \in \Z^g} \exp \left( \pi i (n + a)^t \tau (n + a) + 2 \pi i (n + a)^t (z + b) \right),
\end{equation}
for $z \in \C^g$. The series converges absolutely due to the positive definiteness of $\Im(\tau)$, ensuring that $\theta \left[ \begin{smallmatrix} a \\ b \end{smallmatrix} \right] (z, \tau)$ is holomorphic on $\C^g$. These functions are quasi-periodic with respect to the lattice $\Lambda$: for $m \in \Z^g$,
\[
\theta \left[ \begin{smallmatrix} a \\ b \end{smallmatrix} \right] (z + m, \tau) = \theta \left[ \begin{smallmatrix} a \\ b \end{smallmatrix} \right] (z, \tau),
\]
and for $n \in \Z^g$,
\[
\theta \left[ \begin{smallmatrix} a \\ b \end{smallmatrix} \right] (z + \tau n, \tau) = \exp \left( -\pi i n^t \tau n - 2 \pi i n^t (z + b) \right) \theta \left[ \begin{smallmatrix} a \\ b \end{smallmatrix} \right] (z, \tau).
\]
When $a, b \in \{0, 1/2\}^g$, the characteristic is half-integer, and the parity is even if $4 a^t b$ is even, odd if $4 a^t b$ is odd, reflecting the function’s symmetry properties.

\begin{proof}[Proof of Quasi-Periodicity]
For the first property, substitute $z + m$ into the definition:
\[
\theta \left[ \begin{smallmatrix} a \\ b \end{smallmatrix} \right] (z + m, \tau) = \sum_{n \in \Z^g} \exp \left( \pi i (n + a)^t \tau (n + a) + 2 \pi i (n + a)^t (z + m + b) \right).
\]
Since $(n + a)^t m = n^t m + a^t m$ and $n, m \in \Z^g$, the exponential term $\exp(2 \pi i n^t m) = 1$, and $a^t m$ is rational, so the sum reindexes to itself, yielding equality. For the second, let $z' = z + \tau n$:
\[
\theta \left[ \begin{smallmatrix} a \\ b \end{smallmatrix} \right] (z + \tau n, \tau) = \sum_{k \in \Z^g} \exp \left( \pi i (k + a)^t \tau (k + a) + 2 \pi i (k + a)^t (z + \tau n + b) \right).
\]
Set $k = n - m$, adjust the sum, and compute the difference in exponents, then factoring out the term $\exp \left( -\pi i n^t \tau n - 2 \pi i n^t (z + b) \right)$ due to the symmetry ${}^t \tau = \tau$, confirm the multiplier.
\end{proof}

 Abelian varieties of the form $\AA=\C^g / \Lambda$ are in fact \emph{principally polarized}. In fact, the \emph{canonical principal polarization} of $\AA$ is given by the positive definite Hermitian form $\mathbf{h}$ on $\C^g$ such that $\alpha = \operatorname{Im} \mathbf{h}(\Lambda,\Lambda)$ is the canonical Riemann form on  $\Z^g \oplus \tau \, \Z^g$. In turn, this Hermitian form determines a line bundle $\mathscr{L} \to \AA$ in the N\'eron-Severi lattice $\mathrm{NS}(\AA)$. We call $\mathscr{L}$ the line bundle associated with the principal polarization, and $\mathscr{L}^n=\mathscr{L}^{\otimes n}$ is its $n$-th tensor powers.  Theta functions are sections of these line bundles over $\AA$. 

\par A \emph{level $n$ Theta structure} is given by the functions $\theta \left[ \begin{smallmatrix} 0 \\ b \end{smallmatrix} \right] (z, \tau/n)$, where $b \in (\Z/n\Z)^g$. It turns out that these form a basis for the space of sections $\Gamma(\AA, \mathscr{L}^n)$. The dimension of this space is $n^g$, reflecting the number of distinct characteristics $b \in (\Z/n\Z)^g$. This basis of sections induces a morphism
\begin{equation}
\varphi_n \colon \AA \longrightarrow \bP^{n^g - 1}, \quad z \mapsto \left( \theta \left[ \begin{smallmatrix} 0 \\ b \end{smallmatrix} \right] (z, \tau/n) \right)_{b \in (\Z/n\Z)^g}.
\end{equation}
For $n \geq 3$, $\varphi_n$ is an embedding, realizing $\AA$ as a projective variety (see \cite{Shimura}, Chapter II).

\begin{thm}[Embedding Theorem]
\label{thm:embedding}
For an Abelian variety $\AA$ of dimension $g$ over $\C$ with a principal polarization, the morphism $\varphi_n \colon  \AA \longrightarrow \bP^{n^g - 1}$ is an embedding for $n \geq 3$.
\end{thm}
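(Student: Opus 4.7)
The plan is to show that the line bundle $\mathscr{L}^n$ is very ample for $n \geq 3$. Since the Theta functions $\theta \left[ \begin{smallmatrix} 0 \\ b \end{smallmatrix} \right](z, \tau/n)$ with $b \in (\Z/n\Z)^g$ form a basis of $\Gamma(\AA, \mathscr{L}^n)$, the map $\varphi_n$ is the morphism defined by the complete linear system $|\mathscr{L}^n|$, so very ampleness is equivalent to the embedding statement. I would carry out the three classical steps of Lefschetz's argument: base-point freeness, separation of pairs of distinct points, and separation of tangent vectors at every point.

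The key tool is the theorem of the square, which for the principal polarization $\mathscr{L}$ with theta divisor $\Theta = \{ \theta \left[ \begin{smallmatrix} 0 \\ 0 \end{smallmatrix} \right](\,\cdot\,, \tau) = 0 \}$ yields, for any $a, b \in \AA$, the linear equivalence
\[
\Theta_a + \Theta_b + \Theta_{-a-b} \;\equiv\; 3\, \Theta
\]
on $\AA$, where $\Theta_c$ denotes the translate of $\Theta$ by $c$. Taking $b = -a$ and letting $a$ vary, the divisors $\Theta_a + \Theta_{-a}$ collectively avoid every point of $\AA$, so $|\mathscr{L}^2|$ is base-point free, and hence so is $|\mathscr{L}^n|$ for all $n \geq 2$. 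For point separation, given distinct $x, y \in \AA$, I would first choose $a$ so that $x \in \Theta_a$ but $y \notin \Theta_a$; such an $a$ exists because $x - \Theta \neq y - \Theta$, with the decomposable case handled separately via the factorization of $\Theta$. Then I would pick $b$ generically so that $y$ avoids both $\Theta_b$ and $\Theta_{-a-b}$. The resulting member of $|3\Theta|$ vanishes at $x$ but not at $y$, and tensoring with a section of $\mathscr{L}^{n-3}$ not vanishing at $y$ produces a section of $\mathscr{L}^n$ separating $x$ and $y$.

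Separation of tangent vectors proceeds in the same spirit: for $x \in \AA$ and a nonzero direction $v \in T_x \AA$, one selects translates whose intersection divisor vanishes at $x$ to order exactly one, with the first-order term along $v$ being nonzero. I expect this to be the main technical obstacle, since controlling the exact order of vanishing requires a careful genericity argument exploiting the fact that the tangent cone of $\Theta_a$ at $x$ depends linearly on $a$, so for generic $a$ the prescribed vanishing locus avoids the subvariety where the tangent contribution along $v$ vanishes. Finally, the bound $n \geq 3$ is sharp: the map $\varphi_2$ factors through the Kummer quotient $\AA / \{\pm 1\}$ because the space of even Theta functions is preserved by $z \mapsto -z$, so $\varphi_2$ is never injective, and this marks the exact threshold at which Lefschetz's argument succeeds.
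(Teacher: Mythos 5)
Your proposal is correct in outline, and it is in fact a genuine argument where the paper's own proof is essentially an assertion: the paper states that $\mathscr{L}^n$ is very ample for $n\ge 3$ and that the theta functions separate points and tangent vectors ``due to the completeness of the basis,'' without supplying a mechanism. You supply that mechanism by the classical Lefschetz route: the theorem of the square, the presentation of members of $|3\Theta|$ as $\Theta_a+\Theta_b+\Theta_{-a-b}$, and a genericity argument choosing the translates to pass through $x$ and miss $y$ (using that for a principal polarization no nonzero translation preserves $\Theta$, so $x-\Theta\neq y-\Theta$). This is the standard proof and it buys an actual argument where the paper has only a citation-level sketch; your closing observation that $\varphi_2$ factors through the Kummer quotient, so the bound $n\ge 3$ is sharp, is also correct and consistent with Section~4 of the paper.

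Two small points should be tightened. First, the reduction from $|3\Theta|$ to $|n\Theta|$ by ``tensoring with a section of $\mathscr{L}^{n-3}$ not vanishing at $y$'' breaks down for $n=4$ when $y\in\Theta$, because $h^0(\mathscr{L})=1$ and the unique section vanishes along $\Theta$; the fix is to apply the iterated theorem of the square directly, writing $n\Theta\equiv\Theta_{a_1}+\cdots+\Theta_{a_n}$ with $a_1+\cdots+a_n=0$ and running the same genericity argument on all $n$ translates at once. Second, the tangent hyperplane of $\Theta_a$ at $x$ is the translate of the tangent hyperplane of $\Theta$ at $x-a$, so it varies with $a$ through the Gauss map of $\Theta$ rather than linearly; what the tangent-separation step actually requires is that the Gauss map of an irreducible theta divisor is dominant (with decomposable $\Theta$ handled factor by factor), which is the genuine theorem hiding behind your genericity claim.
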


\begin{proof}
The line bundle $\mathscr{L}^n$ is ample for $n \geq 1$, and for $n \geq 3$, it is very ample, meaning it separates points and tangent vectors. For distinct points $x, y \in \AA$, there exists $b \in (\Z/n\Z)^g$ such that $\theta \left[ \begin{smallmatrix} 0 \\ b \end{smallmatrix} \right] (x, \tau/n) \neq \theta \left[ \begin{smallmatrix} 0 \\ b \end{smallmatrix} \right] (y, \tau/n)$, as the functions span a space large enough to distinguish cosets modulo $\Lambda$. For a point $x$ and tangent vector $v \in T_x \AA$, the directional derivative $D_v \theta \left[ \begin{smallmatrix} 0 \\ b \end{smallmatrix} \right] (x, \tau/n) \neq 0$ for some $b$, due to the completeness of the basis. Thus, $\varphi_n$ is injective with injective differential, embedding $\AA$ into $\bP^{n^g - 1}$.
\end{proof}

A key result is the transformation law under the symplectic group $\Sp(2g, \Z)$, which acts on $\mathbb{H}_g$. For $\gamma = 
\left( \begin{smallmatrix} A & B \\ C & D \end{smallmatrix} \right) \in \Sp(2g, \Z)$, define the transformed period matrix $\tau' = (A \tau + B)(C \tau + D)^{-1}$ and $z' = z (C \tau + D)^{-1}$. Shimura (\cite{Shimura}, Chapter III) provides the functional equation:
\begin{equation}
\theta \left[ \begin{smallmatrix} a \\ b \end{smallmatrix} \right] (z', \tau') = \kappa(\gamma) \det(C \tau + D)^{1/2} \exp \left( \pi i z (C \tau + D)^{-1} C z^t \right) \theta \left[ \begin{smallmatrix} a' \\ b' \end{smallmatrix} \right] (z, \tau),
\end{equation}
where $\kappa(\gamma)$ is a constant, and $a', b'$ are transformed characteristics. This law governs the behavior of Theta functions under automorphisms of $\AA$.

\begin{thm}[Riemann Theta Relation]
For $z_1, z_2 \in \C^g$, the Theta functions satisfy
\begin{equation}
\theta(z_1 + z_2) \theta(z_1 - z_2) = \sum_{m \in (\Z/2\Z)^g} \theta \left[ \begin{smallmatrix} m/2 \\ 0 \end{smallmatrix} \right] (2z_1) \theta \left[ \begin{smallmatrix} m/2 \\ 0 \end{smallmatrix} \right] (2z_2),
\end{equation}
where $\theta(z) = \theta \left[ \begin{smallmatrix} 0 \\ 0 \end{smallmatrix} \right] (z, \tau)$.
\end{thm}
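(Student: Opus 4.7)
The plan is to prove the identity by the classical \emph{doubling trick}: expand both theta functions on the left as explicit series, perform a change of summation variables that turns the double sum into a sum over pairs of half-integer vectors, and regroup according to parity classes modulo $\Z^g$ to recognize each factor of the right-hand side.

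First I would write
\[
\theta(z_1+z_2)\,\theta(z_1-z_2) = \sum_{n_1,n_2 \in \Z^g} \exp\bigl(\pi i\, n_1^t \tau n_1 + \pi i\, n_2^t \tau n_2 + 2\pi i\, n_1^t(z_1+z_2) + 2\pi i\, n_2^t(z_1-z_2)\bigr),
\]
and then substitute $p = (n_1+n_2)/2$, $q = (n_1-n_2)/2$. The key algebraic identities $n_1^t\tau n_1 + n_2^t\tau n_2 = 2p^t\tau p + 2q^t\tau q$ and $n_1^t(z_1+z_2)+n_2^t(z_1-z_2) = 2p^t z_1 + 2q^t z_2$ (using the symmetry $\tau = {}^t\tau$) decouple the $z_1$ and $z_2$ dependence, which is the crux of the argument. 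The pair $(p,q)$ now ranges over $\tfrac{1}{2}\Z^g \times \tfrac{1}{2}\Z^g$ subject to the parity constraint $p-q \in \Z^g$, i.e.\ $p$ and $q$ have the same class in $\tfrac{1}{2}\Z^g/\Z^g \cong (\Z/2\Z)^g$.

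Next I would parametrize this constraint by writing $p = r + m/2$ and $q = s + m/2$ with $r,s \in \Z^g$ and $m \in \{0,1\}^g$, and sum first over $m$ and then independently over $r$ and $s$. After this reindexing the double sum factors as
\[
\sum_{m \in (\Z/2\Z)^g} \Bigl(\sum_{r\in\Z^g} e^{\pi i (r+m/2)^t (2\tau)(r+m/2) + 2\pi i (r+m/2)^t (2z_1)}\Bigr) \Bigl(\sum_{s\in\Z^g} e^{\pi i (s+m/2)^t(2\tau)(s+m/2) + 2\pi i (s+m/2)^t(2z_2)}\Bigr),
\]
and each inner sum is, by inspection of the definition of $\theta[\, \cdot\, ](z,\tau)$, precisely $\theta\!\left[\begin{smallmatrix} m/2 \\ 0 \end{smallmatrix}\right]\!(2z_i)$ under the convention the paper adopts for the doubled period.

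The main obstacles I anticipate are bookkeeping rather than conceptual. One has to justify the change of summation order (absolute convergence of the series from $\Im(\tau) > 0$, already noted earlier in the excerpt), track the factor of $2$ that enters both the quadratic and the linear terms to match the standard characteristic theta notation, and verify that the coset representatives $m \in \{0,1\}^g$ index the parity classes bijectively. A minor but essential point is checking the convention: the factor $\theta\!\left[\begin{smallmatrix} m/2 \\ 0 \end{smallmatrix}\right]\!(2z_i)$ in the statement must be read as the theta series at argument $2z_i$ with doubled period matrix $2\tau$, which is what the derivation naturally produces. Once the conventions are aligned, the identity is a direct consequence of the splitting computed above.
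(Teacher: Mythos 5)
Your proof is correct and supplies, in complete detail, exactly the computation the paper only sketches: your substitution $n_1 = p+q$, $n_2 = p-q$ is the paper's ``pair terms with $n+m$ and $n-m$,'' and your regrouping by parity classes in $\tfrac{1}{2}\Z^g/\Z^g$ is what produces the sum over $m \in (\Z/2\Z)^g$. Your remark that the right-hand factors must be read as theta series at the doubled period matrix $2\tau$ --- a convention the paper's statement leaves implicit --- is accurate and is the one nontrivial point of bookkeeping; everything else (absolute convergence, the bijectivity of the coset parametrization) is handled correctly.
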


\begin{proof}
Consider the product $\theta(z_1 + z_2) \theta(z_1 - z_2)$ as a sum over $n, m \in \Z^g$. Pair terms with $n + m$ and $n - m$, adjust indices, and use the periodicity properties. The right-hand side arises from a Fourier expansion of the product, with the half-integer characteristics $m/2$ accounting for the doubling $2z_1, 2z_2$. The identity holds by the analytic continuation and symmetry of Theta functions (see \cite{Shimura}, Chapter II).
\end{proof}

The values $\theta \left[ \begin{smallmatrix} a \\ b \end{smallmatrix} \right] (0, \tau)$, called \emph{Theta constants} or Theta-nulls, are significant. For a principally polarized $\AA$, they determine $\tau$ up to the action of $\Sp(2g, \Z)$, parameterizing the moduli space $\AA_g$. When $a, b \in \{0, 1/2\}^g$, there are $2^{2g}$ such constants, half even and half odd, reflecting the 2-torsion structure $\AA[2]$.

\begin{lem}
The Theta constants $\theta \left[ \begin{smallmatrix} a \\ b \end{smallmatrix} \right] (0, \tau)$ for $a, b \in \{0, 1/2\}^g$ are zero if and only if the characteristic $[a, b]$ is odd.
\end{lem}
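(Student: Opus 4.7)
The plan is to derive a parity identity for $\theta \left[\begin{smallmatrix} a \\ b \end{smallmatrix}\right](z,\tau)$ under the involution $z \mapsto -z$, which forces identical vanishing at $z=0$ for odd characteristics, and then to exploit $\Sp(2g,\Z)$-transitivity on the set of even characteristics to rule out identical vanishing in the even case.

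First, I would substitute $z \mapsto -z$ in the defining series and reindex via $n \mapsto -n - 2a$, which is a bijection of $\Z^g$ because $a \in \{0,1/2\}^g$ implies $2a \in \Z^g$. The quadratic exponent $(n+a)^t \tau (n+a)$ is invariant, while $2\pi i(n+a)^t(-z+b)$ picks up an extra factor $\exp(-4\pi i(m+a)^t b)$. Since $b \in \{0,1/2\}^g$ gives $\exp(-4\pi i m^t b) = 1$ for every $m \in \Z^g$, the computation collapses to
\[
\theta \left[ \begin{smallmatrix} a \\ b \end{smallmatrix} \right] (-z, \tau) = (-1)^{4 a^t b}\, \theta \left[ \begin{smallmatrix} a \\ b \end{smallmatrix} \right] (z, \tau).
\]
Evaluating at $z=0$: if $[a,b]$ is odd then $\theta[a,b](0,\tau) = -\theta[a,b](0,\tau)$, forcing $\theta[a,b](0,\tau) \equiv 0$ on $\mathbb{H}_g$ and giving the ``if'' direction.

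For the converse, I would invoke the $\Sp(2g,\Z)$ transformation law stated earlier in the excerpt: up to a nowhere-vanishing automorphy factor, $\theta[a,b](0,\tau)$ is carried to $\theta[a',b'](0,\tau')$, where the induced action on characteristics preserves parity and is transitive on the $2^{g-1}(2^g+1)$-element set of even characteristics. Consequently, if one even Theta constant were identically zero, then all of them would be. It then suffices to exhibit a single non-vanishing example: for $[a,b] = [0,0]$ the series reduces to $\sum_{n \in \Z^g} \exp(\pi i n^t \tau n)$, which tends to $1$ as $\tau \to i\infty \cdot I_g$ since only the $n=0$ term survives in the limit. This completes the ``only if'' direction.

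The main obstacle I anticipate is establishing the transitivity of $\Sp(2g,\Z)$ on even characteristics. The explicit action on $(a,b)$ involves the diagonals of $C D^t$ and $A B^t$ modulo $2$, and verifying transitivity requires producing enough explicit symplectic generators to move $[0,0]$ to any prescribed even characteristic (a classical computation of Igusa, but bureaucratic). An alternative route that bypasses this step is to analyze the $q$-expansion of $\theta[a,b](0,\tau)$ directly at the totally degenerate cusp $\tau = iT\cdot I_g$ as $T \to \infty$, where the series factors into $g$ one-dimensional Jacobi Theta constants whose non-vanishing for even characteristics is elementary --- the caveat being that when $4 a^t b \ge 2$ the leading Gaussian coefficient can vanish accidentally, so one must pass to higher-order terms in the product expansion to extract a non-zero leading contribution.
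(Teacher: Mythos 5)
Your forward direction is the same argument the paper uses: the paper's proof pairs the summands $n$ and $-n-2a$ at $z=0$ and observes they cancel when $4a^tb$ is odd, which is exactly your parity identity $\theta\left[\begin{smallmatrix} a\\ b\end{smallmatrix}\right](-z,\tau)=(-1)^{4a^tb}\,\theta\left[\begin{smallmatrix} a\\ b\end{smallmatrix}\right](z,\tau)$ specialized to $z=0$; your computation of the reindexing and of the factor $\exp(-4\pi i(m+a)^tb)$ is correct. Where you genuinely diverge is the converse: the paper simply asserts that ``no such cancellation occurs, and the constant is non-zero generically,'' whereas you supply an actual argument via $\Sp(2g,\Z)$-transitivity on even characteristics together with the non-vanishing of $\theta\left[\begin{smallmatrix} 0\\ 0\end{smallmatrix}\right](0,\tau)$ at the cusp. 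That route is sound (the transitivity is the classical Igusa fact you flag, and the automorphy factor $\kappa(\gamma)\det(C\tau+D)^{1/2}$ is nowhere zero), and it correctly proves the statement in the only form in which it is true, namely that even Theta constants are not \emph{identically} zero in $\tau$ --- they do vanish on proper subvarieties (e.g.\ the even constant with characteristic $a=b=(1/2,1/2)$ vanishes on the diagonal locus $\tau_{12}=0$ in genus two, which is why $\chi_{10}$ cuts out $H_1$). This last point is also why your ``alternative route'' is weaker than you suggest: at a diagonal period matrix the genus-$g$ constant factors into genus-one constants indexed by the components $[a_j,b_j]$, and an even characteristic can have an odd component in each factor (as in the example just given), in which case the restriction to the diagonal vanishes identically rather than merely having a degenerate leading Gaussian term; one would have to perturb off the diagonal, not just go to higher order in $q$ along it. So keep the transitivity argument as the primary proof of the converse; with that, your write-up is correct and in fact more complete than the paper's.
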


\begin{proof}
The parity depends on $4 a^t b \mod 2$. If odd, the summand $\exp(\pi i (n + a)^t \tau (n + a))$ at $z = 0$ pairs terms $n$ and $-n - 2a$, with opposite signs due to the linear term $2 \pi i (n + a)^t b$, canceling to zero. If even, no such cancellation occurs, and the constant is non-zero generically.
\end{proof}

These results underpin the geometric and analytic properties of Abelian varieties, with Theta functions serving as both coordinates and invariants (see \cite{Shimura} for a comprehensive treatment).
\subsection{Jacobian Varieties}
Jacobian varieties are special algebraic varieties that arise as the connected component of the identity in the Picard group of non-singular algebraic curves.
\par Let $\X$ be a curve of positive genus over $k$, and assume there exists a $k$-rational point $P_0 \in \X(k)$ with attached prime divisor $\mathfrak{p}_0=(P_0)$. There exists an Abelian variety $\Jac_k (\X)$ over $k$ and a uniquely determined embedding
\begin{equation}
\phi_{P_0} \colon \X \longrightarrow \Jac_k (\X) \; \text{ with } \; \phi_{P_0}(P_0) = 0_{\Jac_k (\X)},
\end{equation}
satisfying:
\begin{enumerate}
\setlength{\itemindent}{-1em}
\item For all extension fields $L$ of $k$, $\Jac_L \X = \Pic^0_{\X_L}(L)$, with this equality given functorially.
\item For any Abelian variety $\A$ and morphism $\eta\colon \X \longrightarrow \A$ sending $P_0$ to $0_\A$, there exists a unique homomorphism $\psi\colon  \Jac (\X) \longrightarrow \A$ such that $\psi \circ \phi_{P_0} = \eta$.
\end{enumerate}
This universal object $\Jac (\X)$, called the \textit{Jacobian variety} of $\X$, maps a prime divisor $\mathfrak{p}$ of degree 1 on $\X_L$ to $[\mathfrak{p} - \mathfrak{p}_0]$ in $\Pic^0_{\X_L}(L)$.

\begin{lem}
\label{lem:polarization}
The Jacobian $\Jac (\X)$ is unique up to isomorphism among Abelian varieties satisfying the above properties.
\end{lem}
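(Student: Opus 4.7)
The plan is to deduce uniqueness purely formally from the universal property (2) in the definition, in the standard manner for universal objects in a category. No geometry beyond the statement is needed.

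Suppose $(J_1, \phi_1)$ and $(J_2, \phi_2)$ are two pairs satisfying properties (1) and (2) above, where $\phi_i \colon \X \to J_i$ sends $P_0$ to $0_{J_i}$. First I would apply the universal property of $(J_1,\phi_1)$ to the morphism $\phi_2 \colon \X \to J_2$, which sends $P_0$ to the identity of the abelian variety $J_2$; this produces a unique homomorphism $\psi_{12} \colon J_1 \to J_2$ with $\psi_{12} \circ \phi_1 = \phi_2$. Swapping the roles of the two pairs, I get a unique $\psi_{21} \colon J_2 \to J_1$ with $\psi_{21} \circ \phi_2 = \phi_1$.

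Next I would show $\psi_{21} \circ \psi_{12} = \mathrm{id}_{J_1}$ by the standard uniqueness argument: both $\psi_{21} \circ \psi_{12}$ and $\mathrm{id}_{J_1}$ are homomorphisms $J_1 \to J_1$ whose composition with $\phi_1$ equals $\phi_1$ (for the first, by chaining the two factorizations; for the second, trivially). Since the universal property of $(J_1,\phi_1)$ guarantees that such a homomorphism is \emph{unique}, the two must agree. The identical argument with the indices swapped yields $\psi_{12} \circ \psi_{21} = \mathrm{id}_{J_2}$. Hence $\psi_{12}$ is an isomorphism of abelian varieties identifying $\phi_1$ with $\phi_2$.

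Finally, I would note that this isomorphism is not merely one of abelian varieties but is compatible with the embeddings of $\X$, so it is the canonical one; and that property (1) was not needed for the argument, only property (2), the universal factorization. The only real subtlety is making sure that $\phi_2$, a priori only a morphism of varieties, is legitimate input for the universal property of $J_1$, which requires a pointed morphism into an abelian variety; this is immediate since $J_2$ is an abelian variety by hypothesis and $\phi_2(P_0) = 0_{J_2}$. There is no genuine obstacle here — the entire content of the lemma is the yoga of universal objects, and the main thing to be careful about is clean bookkeeping of the factorization diagrams.
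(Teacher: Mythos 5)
Your proof is correct, and its overall shape matches the paper's: produce $\psi_{12}$ and $\psi_{21}$ from the universal property and show both composites are identities. The one genuine difference is in how that last step is justified. You invoke the \emph{uniqueness} clause of property (2): both $\psi_{21}\circ\psi_{12}$ and $\mathrm{id}_{\J_1}$ are homomorphisms $\J_1\to\J_1$ whose composition with $\phi_1$ is $\phi_1$, so they coincide. The paper instead does not use the uniqueness clause here; it argues that $\phi_1(\X)$ generates $\J_1$ as a group (citing Abel--Jacobi), so a homomorphism agreeing with the identity on the image of $\phi_1$ agrees with it everywhere. Your route is the cleaner and more economical one: it is the standard ``universal objects are unique up to unique isomorphism'' argument, it needs no geometric input beyond the stated universal property, and it additionally gives that the isomorphism is \emph{unique} among those compatible with the embeddings --- something the paper's version does not directly deliver without again appealing to generation. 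The paper's route buys a more concrete, geometric picture (and would still work if one only knew existence, not uniqueness, in property (2)), but at the cost of importing the fact that the Abel--Jacobi image generates the Jacobian. Your closing remarks --- that property (1) is not needed and that the only point to check is that $\phi_2$ is legitimate input for the universal property --- are both accurate.
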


\begin{proof}
Suppose $\J_1$ and $\J_2$ satisfy the conditions with embeddings $\phi_1 \colon \X \longrightarrow \J_1$ and $\phi_2 \colon  \X \longrightarrow\J_2$. By property (2), there exist homomorphisms $\psi_{12} \colon \J_1 \longrightarrow \J_2$ and $\psi_{21} \colon \J_2 \longrightarrow \J_1$ such that $\psi_{12} \circ \phi_1 = \phi_2$ and $\psi_{21} \circ \phi_2 = \phi_1$. Composing, $\psi_{21} \circ \psi_{12} \circ \phi_1 = \phi_1$. Since $\phi_1(\X)$ generates $\J_1$ as a group (by the Abel-Jacobi theorem), and $\J_1$ is Abelian, $\psi_{21} \circ \psi_{12}$ acts as the identity on a dense subset, hence $\psi_{21} \circ \psi_{12} = \mathrm{id}_{\J_1}$. Similarly, $\psi_{12} \circ \psi_{21} = \mathrm{id}_{\J_2}$, establishing an isomorphism $\J_1 \cong \J_2$.
\end{proof}

Let $L/k$ be a finite algebraic extension. Then $\Jac_L \X$ is the scalar extension of $\Jac \X$ with $L$, forming a fiber product with projection $p\colon \Jac_L \X \longrightarrow \Jac \X$. The norm map is $p_*$, and the conorm map is $p^*$. If $f \colon \X \longrightarrow \DD$ is a surjective morphism of curves sending $P_0$ to $Q_0$, there exists a unique surjective homomorphism $f_* \colon  \Jac \X \longrightarrow \Jac \DD$ such that $f_* \circ \phi_{P_0} = \phi_{Q_0}$. If $\Jac \X$ is simple and $\eta \colon \X \longrightarrow \DD$ is a separable cover of degree $> 1$, then $\DD \cong \bP^1$, as any non-trivial quotient of a simple Abelian variety is trivial.   \subsection{Endomorphism rings of Abelian varieties}
The endomorphism ring of an Abelian variety is a fundamental invariant that encodes its algebraic symmetries and arithmetic structure. For an Abelian variety $\AA$ of dimension $g$ over a field $k$, we define $\End(\AA)$ as the ring of regular morphisms $\phi : \AA \to \AA$ that preserve the group operation, satisfying $\phi(x + y) = \phi(x) + \phi(y)$ for all $x, y \in \AA(k')$ over any extension field $k'$. Equipped with addition and composition, and with the identity map as the unit, $\End(\AA)$ forms a $\Z$-module, but its full complexity emerges in the rational endomorphism algebra $\End^0(\AA) = \End(\AA) \otimes_\Z \Q$, a finite-dimensional $\Q$-algebra. Over an algebraically closed field such as $\C$ or $\overline{\Q}$, this algebra provides insight into the variety’s isogeny class and geometric properties.

An endomorphism $\phi \in \End(\AA)$ is an isogeny if it is surjective with a finite kernel, its degree $\deg \phi$ being the order of $\ker \phi$ as a group scheme. Such a $\phi$ has a dual isogeny $\hat{\phi} \colon \AA \longrightarrow \AA$ such that $\phi \circ \hat{\phi} = [\deg \phi]$, where $[n]\colon \AA \longrightarrow \AA$ denotes multiplication by $n$. A key property across all dimensions is that $\End^0(\AA)$ is invariant under isogeny: if $\AA$ and $\B$ are isogenous Abelian varieties, meaning there exists an isogeny $f\colon \AA \longrightarrow \B$, then their rational endomorphism algebras are isomorphic. To see this, consider $f$ with dual $\hat{f}$ satisfying $f \circ \hat{f} = [n]$. The map $\Phi \colon  \End(\B) \longrightarrow \End(\AA)$ given by $\Phi(\phi) = f \circ \phi \circ \hat{f}$ is a ring homomorphism, and extending it to $\End^0(\B)$ via $\Phi'(\phi) = \frac{1}{n} f \circ \phi \circ \hat{f}$ yields an isomorphism, as $\Phi'$ is injective (if $f \circ \phi \circ \hat{f} = 0$, then $\phi = 0$ since $f$ and $\hat{f}$ are isogenies) and surjective via the inverse map adjusted by scalars (see \cite{Mum}, Chapter III, §19). This invariance underpins the study of endomorphism algebras across isogeny classes.

For an Abelian variety $\AA$, the structure of $\End^0(\AA)$ depends on its decomposition. The Poincaré-Weil theorem asserts that $\AA$ is isogenous to $\AA_1^{n_1} \times \cdots \times \AA_r^{n_r}$, where the $\AA_i$ are simple (having no non-trivial Abelian subvarieties) and pairwise non-isogenous, with the factors uniquely determined up to permutation and isogeny (\cite{Mum}, Chapter III, §15). If $\AA$ is simple, $\End^0(\AA)$ is a division algebra over its center $Z$, and $[Z : \Q] \cdot [\End^0(\AA) : Z] = (\dim \AA)^2 = g^2$. If $\AA$ is not simple, $\End^0(\AA)$ is a product of the endomorphism algebras of its factors, adjusted for isogenies.

When we narrow our focus to Abelian surfaces, where $\dim \AA = 2$, the classification of $\End^0(\AA)$ becomes more precise, especially over $\overline{\Q}$. Here, an endomorphism $\phi$ acts on a 2-dimensional variety, and the possible structures of $\End^0(\AA)$ are richly detailed by Albert’s classification, as refined in \cite{Oort}. For a principally polarized Abelian surface $\AA$—where a polarization $\lambda \colon \AA \longrightarrow \hat{\AA}$ to the dual has degree 1—the algebra $\End_{\overline{\Q}}^0(\AA)$ can be one of several types: $\Q$, a real quadratic field (e.g., $\Q(\sqrt{d})$, $d > 0$ square-free), a CM field of degree 4 over $\Q$ (a totally imaginary quadratic extension of a real quadratic field), a non-split quaternion algebra over $\Q$ (e.g., $\left( \frac{a, b}{\Q} \right)$, $a, b \in \Q^\times$), a direct sum $F_1 \oplus F_2$ where each $F_i$ is $\Q$ or an imaginary quadratic field (e.g., $\Q(\sqrt{-d})$), or a matrix algebra from the Mumford-Tate group with center $F$ being $\Q$ or an imaginary quadratic field.

\begin{thm}[Albert’s Classification for Abelian Surfaces]
Let \(\AA\) be a principally polarized Abelian surface over \(\overline{\Q}\). Then \(\End^0(\AA)\) is isomorphic to one of: 
\begin{enumerate} \item \(\Q\), 
\item a real quadratic field, 
\item a CM field of degree 4 over \(\Q\), 
\item  a non-split quaternion algebra over \(\Q\), 
\item \(F_1 \oplus F_2\) where \(F_i = \Q\) or an imaginary quadratic field, 
\item  a matrix algebra over a center   \(F = \Q\) or an imaginary quadratic field.
\end{enumerate}
\end{thm}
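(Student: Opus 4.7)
The plan is to combine the Poincar\'e--Weil theorem, stated earlier in the excerpt, with Albert's classification of division algebras admitting a positive involution, then to impose the dimension constraint $\dim \AA = 2$. The principal polarization $\lambda \colon \AA \to \hat{\AA}$ endows $\End^0(\AA)$ with the Rosati involution $\phi \mapsto \phi^{\dagger} := \lambda^{-1}\,\hat{\phi}\,\lambda$, and a standard argument shows that the trace pairing $\phi \mapsto \tr(\phi \phi^{\dagger})$ is positive definite on $\End^0(\AA) \otimes_{\Q} \R$. This positivity is precisely what restricts the shape of $\End^0(\AA)$.

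First I would apply Poincar\'e--Weil to split the discussion into three mutually exclusive cases: (a) $\AA$ is simple; (b) $\AA$ is isogenous to $E_1 \times E_2$ for two non-isogenous elliptic curves $E_1, E_2$; (c) $\AA$ is isogenous to $E^{2}$ for a single elliptic curve $E$. Using the isogeny invariance of $\End^0$ recalled before Poincar\'e--Weil, case (b) immediately gives $\End^0(\AA) \cong \End^0(E_1) \oplus \End^0(E_2)$, where each summand is either $\Q$ (no complex multiplication) or an imaginary quadratic field (with CM), producing case (5). In case (c) one obtains $\End^0(\AA) \cong M_2(\End^0(E))$ with center $\Q$ or imaginary quadratic, yielding case (6).

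The heart of the argument is case (a). Here $D := \End^0(\AA)$ is a finite-dimensional division algebra over its center $Z$, carrying the positive involution $\dagger$. Albert's classification yields four possibilities for $(D, \dagger)$: Type I with $D = Z$ totally real; Type II with $D$ an indefinite quaternion algebra over a totally real field $Z$; Type III with $D$ a definite quaternion algebra over a totally real field $Z$; and Type IV with $D$ a division algebra over a CM field $Z$, where $\dagger$ restricts to complex conjugation on $Z$. I would then impose the standard numerical bound relating $[Z:\Q]$, $[D:Z]$ and $2g = 4$. Type I forces $[Z:\Q] \mid 2$, producing cases (1) and (2). Types II and III both force $Z = \Q$ and $D$ to be a non-split quaternion algebra over $\Q$, producing case (4). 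Type IV forces $D = Z$ to be a CM field of degree exactly $4$ over $\Q$, since the subcase $[Z:\Q] = 2$ would make $\AA$ isogenous to $E \times E$ for an elliptic curve $E$ with CM by $Z$, contradicting simplicity; this produces case (3).

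The main obstacle is Albert's classification itself, whose proof draws nontrivially on the structure theory of central simple algebras, the Brauer group, and the analysis of positive involutions of the first and second kind, together with the precise numerical constraints linking $\dim_\Q D$ to $\dim \AA$. Rather than reproduce this machinery I would cite it from \cite{Mum} and \cite{Oort}; the remaining work is then the straightforward bookkeeping above, matching the four Albert types under the specialization $g = 2$ with the simple outcomes (1)--(4), combined with the non-simple analysis giving (5) and (6).
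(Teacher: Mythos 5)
Your proposal is correct and follows essentially the same route as the paper: Poincar\'e--Weil reduces to the simple versus split cases, Rosati positivity constrains $\End^0(\AA)$, the simple case is settled by enumerating division algebras with positive involution against the bound coming from $g=2$, and the split case yields $F_1\oplus F_2$ or $M_2(F)$. The only difference is organizational --- you index the simple case by Albert type (I--IV) and explicitly dispose of the imaginary-quadratic Type IV subcase, whereas the paper enumerates by the degree of the center --- but the substance is the same.
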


\begin{proof}
Since \(\AA\) is an Abelian surface over \(\overline{\Q}\), its dimension is 2, and by the Poincaré-Weil theorem (Theorem 1), \(\AA\) is isogenous to a product \(\AA_1^{n_1} \times \cdots \times \AA_r^{n_r}\), where the \(\AA_i\) are simple Abelian varieties, pairwise non-isogenous, and the decomposition is unique up to permutation and isogeny. Given \(\dim \AA = 2\), only two cases arise: (i) \(\AA\) is simple (\(r = 1\), \(n_1 = 1\)), or (ii) \(\AA \cong E_1 \times E_2\), where \(\dim E_1 = \dim E_2 = 1\). The rational endomorphism algebra \(\End^0(\AA) = \End(\AA) \otimes_{\Z} \Q\) is semi-simple due to the principal polarization’s Rosati involution, and its structure depends on this decomposition.

\textbf{Case 1: \(\AA\) is simple.} If \(\AA\) is simple, \(\End^0(\AA)\) is a division algebra over its center \(Z\), a number field over \(\Q\), and the dimension formula holds: \([Z : \Q] \cdot [\End^0(\AA) : Z] = (\dim \AA)^2 = 4\). Since \(\AA\) is principally polarized, the Rosati involution \(\phi \mapsto \phi^\dagger\) (where \(\phi^\dagger = \hat{\lambda}^{-1} \circ \hat{\phi} \circ \lambda\), \(\lambda \colon \AA \longrightarrow \hat{\AA}\) the polarization) is positive definite, fixing \(Z\) and constraining \(\End^0(\AA)\) to be a division algebra with involution. We consider possible dimensions of \(Z\):
\begin{enumerate}
\setlength{\itemindent}{-1.5em}

    \item If \(Z = \Q\), then \([Z : \Q] = 1\), and \([\End^0(\AA) : \Q] = 4\). Possible division algebras over \(\Q\) of dimension 4 include:
        \begin{enumerate} 
        \setlength{\itemindent}{-1.5em}
            \item \(\Q\) itself, dimension 1, where \(\End^0(\AA) = \Q\), and every non-zero endomorphism is an isogeny (multiplication by a rational).
            \item A real quadratic field, e.g., \(\Q(\sqrt{d})\), \(d > 0\) square-free, dimension 2, where \(\End^0(\AA)\) embeds into \(M_2(\R)\) via real multiplication, satisfying the dimension constraint with \([\End^0(\AA) : Z] = 2\).
            \item A CM field of degree 4, e.g., \(\Q(\sqrt{d}, i)\), a totally imaginary quadratic extension of a real quadratic field, dimension 4, with \([\End^0(\AA) : Z] = 1\), compatible with complex multiplication and the involution (conjugation).
            \item A non-split quaternion algebra over \(\Q\), e.g., \(\left( \frac{a, b}{\Q} \right)\), \(a, b \in \Q^\times\), dimension 4, with \([\End^0(\AA) : Z] = 1\), where the standard involution (conjugation) is positive definite under the polarization.
        \end{enumerate}

    \item If \(Z\) is a quadratic field (real or imaginary), \([Z : \Q] = 2\), then \([\End^0(\AA) : Z] = 2\). Here, \(\End^0(\AA)\) is a division algebra over a quadratic field:
        \begin{enumerate}
        \setlength{\itemindent}{-1.5em}
            \item For \(Z = \Q(\sqrt{d})\), \(d > 0\), \(\End^0(\AA)\) could be a quaternion algebra over \(Z\) (dimension 4 over \(Z\), but 8 over \(\Q\)), exceeding 4, so this is not possible unless \(\End^0(\AA) = Z\), already covered.
            \item For \(Z = \Q(\sqrt{-d})\), similar constraints apply; a CM field over an imaginary quadratic field has dimension 8 over \(\Q\), ruling it out.
        \end{enumerate}

    \item If \(Z\) is quartic, \([Z : \Q] = 4\), then \([\End^0(\AA) : Z] = 1\), so \(\End^0(\AA) = Z\), a CM field, as above.
\end{enumerate}
Thus, for simple \(\AA\), \(\End^0(\AA)\) is \(\Q\), a real quadratic field, a CM field of degree 4, or a non-split quaternion algebra over \(\Q\).

\textbf{Case 2: \(\AA \cong E_1 \times E_2\).} If \(\AA\) is not simple, it is isogenous to \(E_1 \times E_2\), where \(E_1\) and \(E_2\) are elliptic curves over \(\overline{\Q}\). Then \(\End^0(\AA) = \End^0(E_1) \oplus \End^0(E_2)\), as endomorphisms respect the product structure (no cross-terms exist unless \(E_1 \cong E_2\)). For an elliptic curve \(E_i\) over \(\overline{\Q}\):
\begin{enumerate}
    \item \(\End^0(E_i) = \Q\) if \(E_i\) has no complex multiplication (CM), dim= 1.
    \item \(\End^0(E_i) = \Q(\sqrt{-d})\), an imaginary quadratic field, if \(E_i\) has CM, dim= 2.
\end{enumerate}
Thus, \(\End^0(\AA) = F_1 \oplus F_2\), where \(F_i = \End^0(E_i)\), yielding dimensions 1+1 = 2 or 2+2 = 4 over \(\Q\). If \(E_1 \cong E_2\), \(\End^0(\AA) \cong M_2(F)\), where \(F = \End^0(E_1)\), a matrix algebra over \(\Q\) (dimension 4) or an imaginary quadratic field (dimension 8), but the principal polarization constrains this to a division algebra unless adjusted by the Hodge structure.

When \(\AA\)’s Hodge structure (over \(\C\)) defines a Mumford-Tate group, \(\End^0(\AA)\) may be a matrix algebra over a center \(F\). For \(\dim \AA = 2\), \(F = \Q\) gives \(M_2(\Q)\) (dimension 4), or \(F = \Q(\sqrt{-d})\) gives a division algebra (dimension 4 over \(F\), 8 over \(\Q\)), but the polarization restricts to \(M_2(\Q)\) or reduces to prior cases (\cite{Oort}, §4; \cite{Shimura}, Chapter IV).

The dimension bound 4 and semi-simplicity (via Rosati) limit \(\End^0(\AA)\) to these types. Higher-degree fields or algebras (e.g., dimension 8) exceed \(\dim \AA^2 = 4\), and the polarization excludes non-division or non-positive cases, completing the classification.
\end{proof}

A principal polarization $\lambda\colon \AA \longrightarrow \hat{\AA}$ induces the Rosati involution $\phi \mapsto \phi^\dagger = \hat{\lambda}^{-1} \circ \hat{\phi} \circ \lambda$, fixing the center and ensuring $\End^0(\AA)$ is semi-simple. The trace form $\langle \phi, \psi \rangle = \tr(\phi^\dagger \circ \psi)$ is positive definite: for $\phi \neq 0$, $\tr(\phi^\dagger \circ \phi) > 0$, as $\phi^\dagger \circ \phi$ is a non-zero symmetric endomorphism, and the polarization’s positivity on $\hat{\AA}$ guarantees this (see \cite{Mum}, Chapter IV, §21). Thus, $\End^0(\AA)$ decomposes into simple algebras matching Albert’s types.

The dimension of $\End^0(\AA)$ for an Abelian surface is at most 4, reflecting $g = 2$. If $\AA$ is simple, it is 1, 2, or 4; if a product, it sums to 2 or 4. For a Jacobian $\Jac(\CC)$, $\End_{\overline{\Q}}^0(\Jac \CC)$ is typically $\Q$ unless $\CC$ has special symmetries, yielding richer structures as detailed in \cite{Oort}.

\section{Genus 2 Curves and their Jacobians}\label{gen-2}
\par Let $\CC$ be a genus 2 curve defined over a field $k$. A curve of genus 2 is a smooth, projective, geometrically irreducible algebraic curve with genus $g = 2$, meaning its geometric genus—computed as the dimension of the space of holomorphic differentials over an algebraically closed field—is 2. The gonality of $\CC$, denoted $\gamma_\CC$, is the minimal degree of a non-constant morphism from $\CC$ to $\bP^1$, and for genus 2 curves, $\gamma_\CC = 2$. This implies that $\CC$ is hyperelliptic, admitting a degree 2 covering $\pi \colon \CC \to \bP^1$, which we call the hyperelliptic projection. By Hurwitz’s formula, applied to this double cover, the number of branch points   $r = 6$. These 6 branch points in $\bP^1(\bar{k})$ are the images of the Weierstrass points of $\CC$, points where the hyperelliptic involution $\imath$ (an automorphism of order 2) fixes the curve. The moduli space of genus 2 curves, denoted $\M_2$, classifies such curves up to isomorphism and has dimension $r - 3 =   3$, reflecting the three degrees of freedom in their configuration after projective transformations.

The arithmetic structure of $\M_2$ was profoundly studied by Igusa   \cite{Ig}, building on earlier work by Clebsch, Bolza, and others.  The isomorphism classes of genus two curves correspond to the equivalence classes of binary sextics and therefore determined by their invariants. 
 These invariants are homogeneous polynomials in the coefficients of a defining equation, with degrees 2, 4, 6, 8, and 10, respectively, and we refer to \cite{2016-5} for a comprehensive treatment of their properties and relations. Two genus 2 curves $\CC$ and $\CC'$ are isomorphic over $\bar{k}$ if and only if there exists $l \in \bar{k}^\star$ such that $J_{2i}(\CC) = l^{2i} J_{2i}(\CC')$ for $i = 1, \dots, 5$, a condition reflecting the projective weighting of the invariants. When $\char k \neq 2$, the invariant $J_8$ is redundant, expressible in terms of $J_2, J_4, J_6$, and $J_{10}$, simplifying the classification.

Henceforth, we assume $\char k \neq 2$, ensuring a standard form for the curve’s equation. Then, $\CC$ can be represented by an affine Weierstrass equation
\begin{equation}\label{eq-g-2}
y^2 = f(x) = a_6 x^6 + \dots + a_1 x + a_0,
\end{equation}
over $\bar{k}$, where $f(x)$ is a monic polynomial of degree 6 with distinct roots, and the discriminant $\Delta_f = J_{10} \neq 0$ guarantees smoothness. Thus, we will be considering
the weighted projective space $\mathbb{P}_{(2,4,6,10)}(k)$ using the invariants $J_2, J_4, J_6, J_{10}$ and the space
$
\mathbb{P}_{(2,4,6,10)}(k) \setminus \{ J_{10} = 0 \},
$
where $J_{10} \neq 0$ excludes singular curves. From now on, by invariants of a genus 2 curve, we mean $J_2, J_4, J_6, J_{10}$, and by a genus 2 curve, we refer to its isomorphism class, represented as a moduli point $[J_2 : J_4 : J_6 : J_{10}] \in \mathbb{P}_{(2,4,6,10)}(k)$.
\subsection{Principal polarizations and Torelli theorem}\label{ssec:ppas}
The Jacobian $\Jac(\CC)$ of a genus 2 curve $\CC$ is an Abelian surface, a 2-dimensional principally polarized Abelian variety, constructed as the Picard group $\Pic^0(\CC)$ of degree 0 divisor classes on $\CC$. Given a $k$-rational point $P_0 \in \CC(k)$, the embedding $\phi_{P_0}\colon  \CC \longrightarrow \Jac(\CC)$, defined by $P \mapsto [P - P_0]$, maps $\CC$ into $\Jac(\CC)$ with $\phi_{P_0}(P_0) = 0$. This embedding is canonical up to translation, and $\Jac(\CC)$ is functorial; see Lemma~\ref{lem:polarization}. The hyperelliptic involution $\imath\colon \CC \longrightarrow \CC$, swapping sheets of the cover $\pi$, induces an involution $\imath_* \colon \Jac(\CC) \longrightarrow \Jac(\CC)$ with $\iota_*(D) = -D$, which, on the level of the Abelian surface,  we denote by $-\mathbb{I}$ and which is central to the construction of Kummer surfaces.

 A map $f\colon \CC \longrightarrow \DD$ between curves induces homomorphisms $f^*\colon \Jac(\DD) \longrightarrow \Jac(\CC)$ (pullback) and $f_* \colon \Jac(\CC) \longrightarrow \Jac(\DD)$ (pushforward), reflecting the functorial nature of the Jacobian. When $f$ is a maximal covering, i.e., not factoring through an isogeny, it reveals the Jacobian’s structure. For an Abelian surface $\AA$, a polarization is an isogeny $\lambda \colon \AA \to \hat{\AA}$ to the dual, and $\AA$ is principally polarized if $\deg \lambda = 1$. The Theta divisor $\Theta \subset \Jac(\CC)$, image of $\phi_{P_0}$, provides such a polarization, making $\Jac(\CC)$ a principal case. 

 Let us explain the construction of a principal polarization on $\Jac(\CC)$ in more and explicit detail: The Siegel three-fold is the quasi-projective variety of dimension three, obtained from the Siegel upper half-plane $\mathbb{H}_2$ of degree two\footnote{By definition $\mathbb{H}_2$ is the set of two-by-two symmetric matrices over $\C$ whose imaginary part is positive definite} divided by the action of the modular transformations $\Gamma_2:= \operatorname{Sp}_4(\Z)$, i.e., 
\begin{equation}
 \mathbb{A}_2 =  \mathbb{H}_2 / \Gamma_2 .
\end{equation}
Each $\tau= \bigl(\begin{smallmatrix} \tau_{11}& \tau_{12}\\ \tau_{12} & \tau_{22} \end{smallmatrix} \bigr) \in \mathbb{H}_2$ determines a complex Abelian surface $\AA = \C^2 / \Lambda$ obtained from the lattice $\Lambda =\langle \Z^2 \oplus \tau \, \Z^2\rangle$ with the period matrix $(\mathbb{I}_2,\tau) \in \mathrm{Mat}(2, 4;\C)$.  

We consider two Abelian surfaces $\AA$  and $\widetilde{\AA}$ isomorphic if and only if there is an  $M \in \Gamma_2$ such that $\widetilde{\tau} = M (\tau)$. The \emph{canonical principal polarization} of $\AA$ is given by the positive definite Hermitian forms $\mathbf{h}$ on $\C^2$ such that $\alpha = \operatorname{Im} \mathbf{h}(\Lambda,\Lambda) \subset \Z$ where $\alpha$ is the Riemann form  $\alpha( x_1 + x_2 \tau, y_1  + y_2 \tau)=x_1^t\cdot y_2 - y_1^t\cdot x_2$ on  $\Z^2 \oplus \tau \, \Z^2$. 
Similarly, we define the subgroup $\Gamma_2(2n) = \lbrace M \in \Gamma_2 | \, M \equiv \mathbb{I} \mod{2n}\rbrace$ and Igusa's congruence subgroups $\Gamma_2(2n, 4n) = \lbrace M \in \Gamma_2(2n) | \, \operatorname{diag}(B) =  \operatorname{diag}(C) \equiv \mathbb{I} \mod{4n}\rbrace$ with corresponding Siegel modular threefolds $\mathbb{A}_2(2)$, $\mathbb{A}_2(2,4)$, and $\mathbb{A}_2(4,8)$ such that
\begin{equation}
\label{eqn:level_groups}
 \Gamma_2/\Gamma_2(2)\cong S_6, \quad  \Gamma_2(2)/\Gamma_2(2,4)\cong (\mathbb{Z}/2\mathbb{Z})^4, \quad \Gamma_2(2,4)/\Gamma_2(4,8)\cong (\mathbb{Z}/2\mathbb{Z})^9,
\end{equation}
where $S_6$ is the permutation group of six elements. The geometric meaning of $\mathbb{A}_2(2), \mathbb{A}_2(2,4)$, and $\mathbb{A}_2(4,8)$ will be discussed below.

 The Hermitian form determines a line bundle $\mathscr{L} \to \AA$ in the N\'eron-Severi lattice $\mathrm{NS}(\AA)$.  A general fact from linear algebra asserts that one can always  choose a basis of $\Lambda$ such that $\alpha$ is given  by the matrix $\bigl(\begin{smallmatrix} 0&D\\ -D&0 \end{smallmatrix} \bigr)$ with $D=\bigl(\begin{smallmatrix}d_1&0\\ 0&d_2 \end{smallmatrix} \bigr)$ where $d_1, d_2 \in \mathbb{N}$, $d_1, d_2 \ge 0 $, and $d_1$ divides $d_2$. For a principal polarization one has $(d_1, d_2)=(1, 1)$.  Since transformations in $\Gamma_2$  preserve the Riemann form $\alpha$, it follows that the Siegel three-fold is also the set of isomorphism classes of principally polarized Abelian surfaces, i.e., Abelian surfaces with a polarization of type $(1,1)$. 
\par For a principally polarized Abelian variety $\AA$ the line bundle $\mathscr{L}$ defining its  principal polarization is ample and satisfies $h^0(\mathscr{L}) = 1$. There exists an effective divisor $\Theta$ such that $\mathscr{L}= \mathcal{O}_{\AA}(\Theta)$, uniquely defined only up to translations. The divisor $\Theta \in \mathrm{NS}(\AA)$ is precisely the Theta divisor associated with the polarization.  It is known that the Abelian surface $\AA$ is not the product of two elliptic curves if and only if $\Theta$ is an irreducible divisor.  Torelli's theorem states that the map sending a curve $\CC$ to its Jacobian  $\mathrm{Jac}(\CC)$, that is $\CC \mapsto (\Jac(\CC), \Theta)$, is injective and defines a birational map $\mathcal{M}_2 \dasharrow \mathbb{A}_2$. 

Conversely, if the period matrix $\tau$ of a principally polarized Abelian variety $\AA$ is not equivalent to a point with $\tau_{12}=0$ relative to $\Gamma_2$, then a $\Theta$ divisor on $\AA$ is given by a non-singular curve $\CC$ of genus-two, i.e., $\Theta=[\CC]$ such that $\AA = \mathrm{Jac}(\CC)$.  Thus, a principally polarized Abelian surface is either the Jacobian of a smooth curve of genus two with a Theta divisor or the product of two complex elliptic curves, with the product polarization. 
\par Igusa proved \cites{MR0229643, MR527830} that the ring of modular forms is generated by the Siegel modular forms $\psi_4$, $\psi_6$, $\chi_{10}$, $\chi_{12}$ and by one more cusp form $\chi_{35}$ of odd weight $35$\footnote{For a detailed introduction to Siegel modular forms relative to $\Gamma_2$, Humbert surfaces, and the Satake compactification of the Siegel modular threefold we refer to Freitag's book \cite{MR871067}.}  whose square is the following polynomial \cite{MR0229643}*{p.~\!849} in the even generators 
\begin{equation}
\label{chi_35sqr}
\begin{split}
\chi_{35}^2 & = \frac{1}{2^{12} \, 3^9} \; \chi_{10} \,  \Big(  
2^{24} \, 3^{15} \; \chi_{12}^5 - 2^{13} \, 3^9 \; \psi_4^3 \, \chi_{12}^4 - 2^{13} \, 3^9\; \psi_6^2 \, \chi_{12}^4 + 3^3 \; \psi_4^6 \, \chi_{12}^3 \\
& - 2\cdot 3^3 \; \psi_4^3 \, \psi_6^2 \, \chi_{12}^3 - 2^{14}\, 3^8 \; \psi_4^2 \, \psi_6 \, \chi_{10} \, \chi_{12}^3 -2^{23}\, 3^{12} \, 5^2\, \psi_4 \, \chi_{10}^2 \, \chi_{12}^3  + 3^3 \, \psi_6^4 \, \chi_{12}^3\\
& + 2^{11}\,3^6\,37\,\psi_4^4\,\chi_{10}^2\,\chi_{12}^2+2^{11}\,3^6\,5\cdot 7 \, \psi_4 \, \psi_6^2\, \chi_{10}^2 \, \chi_{12}^2 -2^{23}\, 3^9 \, 5^3 \, \psi_6\, \chi_{10}^3 \, \chi_{12}^2 \\
& - 3^2 \, \psi_4^7 \, \chi_{10}^2 \, \chi_{12} + 2 \cdot 3^2 \, \psi_4^4 \, \psi_6^2 \, \chi_{10}^2 \, \chi_{12} + 2^{11} \, 3^5 \, 5 \cdot 19 \, \psi_4^3 \, \psi_6 \, \chi_{10}^3 \, \chi_{12} \\
&  + 2^{20} \, 3^8 \, 5^3 \, 11 \, \psi_4^2 \, \chi_{10}^4 \, \chi_{12} - 3^2 \, \psi_4 \, \psi_6^4 \, \chi_{10}^2 \, \chi_{12} + 2^{11} \, 3^5 \, 5^2 \, \psi_6^3 \, \chi_{10}^3 \, \chi_{12}  \\
&	- 2 \, \psi_4^6 \, \psi_6 \, \chi_{10}^3  - 2^{12} \, 3^4 \, \psi_4^5 \, \chi_{10}^4 + 2^2 \, \psi_4^3 \, \psi_6^3 \, \chi_{10}^3 + 2^{12} \, 3^4 \, 5^2 \, \psi_4^2 \, \psi_6^2 \, \chi_{10}^4 \\
&+ 2^{21} \, 3^7 \, 5^4 \, \psi_4 \, \psi_6 \, \chi_{10}^5  - 2 \, \psi_6^5 \, \chi_{10}^3 + 2^{32} \, 3^9 \, 5^5 \, \chi_{10}^6 \Big) \;.
\end{split}
\end{equation}
Hence, the expression $Q:= 2^{12} \, 3^9 \, \chi_{35}^2 /\chi_{10}$ is a polynomial of degree $60$ in the even generators.   Igusa also proved that each Siegel modular form (with trivial character) of odd weight is divisible by the form $\chi_{35}$. 
\par Igusa \cite{MR0229643}*{p.~\!848} proved that the relations between the Igusa invariants of a binary sextic $y^2=f(x)$ defining a smooth genus-two curve $\CC$ and the even Siegel modular forms for the associated principally polarized Abelian surface $\AA = \operatorname{Jac}{(\CC)}$ with period matrix $\tau$ are as follows:
\begin{equation}
\label{invariants}
\begin{split}
 J_2(f) & = -2^3 \cdot 3 \, \dfrac{\chi_{12}(\tau)}{\chi_{10}(\tau)} \;, \\
 J_4(f) & = \phantom{-} 2^2 \, \psi_4(\tau) \;,\\
 J_6(f) & = -\frac{2^3}3 \, \psi_6(\tau) - 2^5 \,  \dfrac{\psi_4(\tau) \, \chi_{12}(\tau)}{\chi_{10}(\tau)} \;,\\
 J_{10}(f) & = -2^{14} \, \chi_{10}(\tau) \not = 0 \;.
\end{split}
\end{equation}
Here, the Igusa invariant $J_{10}$ is the discriminant of the sextic $f(x)$ and we are using the same normalization as in \cites{MR3712162,MR3731039}. It follows that the moduli space $\M_2$ has a compactification $\mathbb{A}_2^\star$ as the weighted projective space $\mathbb{P}_{(2,4,6,10)}(k)$ using the invariants $J_2, J_4, J_6, J_{10}$ and
\begin{equation}
\mathbb{A}_2 \cong \mathbb{P}_{(2,4,6,10)}(k) \setminus \{ J_{10} = 0 \}.
\end{equation}
\subsection{G\"opel groups and Theta divisors}
For an Abelian surface $\AA$, such as the Jacobian $\AA = \Jac(\CC)$ of a genus 2 curve $\CC$ over a field $k$, the group of 2-torsion points $\AA[2]$ consists of elements $x \in \AA(\bar{k})$ satisfying $2x = 0$, forming a group isomorphic to $(\Z/2\Z)^{2g} = (\Z/2\Z)^4$ over $\bar{k}$ when $\char k \neq 2$. Translation by a 2-torsion point is an isomorphism of $\AA$, mapping $\AA[2]$ to itself, preserving its structure under the Weil pairing, which defines an alternating bilinear form on $\AA[2]$. A subspace $G \leq \AA[2]$ is isotropic if the Weil pairing vanishes on $G \times G$, and a maximal isotropic subspace, or Göpel group, has dimension 2 (containing 4 points, as $2^2 = 4$).
\par Let us explain the construction of G\"opel groups in more and explicit detail: consider the Jacobian $\AA=\operatorname{Jac}{(\CC)}$ of a genus-two curve $\CC$ in Rosenhain form
\begin{equation}\label{Eq:Rosenhain}
\CC\colon \quad  y^2 = x z (x - z) (x - \lambda_1 z) (x - \lambda_2 z) (x - \lambda_3 z),
\end{equation}
with roots at $\lambda_1, \lambda_2, \lambda_3, \lambda_4=0, \lambda_5=1, \lambda_6=\infty$. Every nontrivial order-two point on $\AA$ can be considered the difference of Weierstrass points $P_i \in \CC$ for $1 \le i \le 6$. In fact, the sixteen order-two points of $\AA$ are obtained using the embedding of the curve into the connected component of the identity in the Picard group, i.e., $\CC \hookrightarrow \operatorname{Jac}{(\CC)} \cong \operatorname{Pic}^0(\CC)$. We obtain the 15 elements $P_{i j}=[ P_i + P_j - 2 \, P_6]  \in \AA[2]$ with $1 \le i < j \le 6$  and set $P_0=P_{66}= [0]$. For $\{i, j, k, l, m, n\}=\{1, \dots, 6\}$,  the group law on $\AA[2]$ is given by the relations
\begin{equation}
 \label{group_law}
    P_0 +  P_{ij} =  P_{ij}\,, \quad  P_{ij} +  P_{ij} =  P_{0}\,, \quad 
    P_{ij} + P_{kl} =  P_{mn}, \quad P_{ij} +
    P_{jk} =  P_{ik}\,.
\end{equation}
The subgroup $\Gamma_2(2) = \lbrace M \in \Gamma_2 | \, M \equiv \mathbb{I} \mod{2}\rbrace$ with $\Gamma_2/\Gamma_2(2)\cong S_6$ where $S_6$ is the permutation group of six elements is now realized as the permutations of the roots of the right hand side of Equation~\eqref{Eq:Rosenhain} when we have identification $\AA=\Jac(\CC)$. Accordingly, $\mathbb{A}_2(2)$ is the three-dimensional moduli space of principally polarized Abelian surfaces with level-two structure. 
\par The  space $\AA[2]$ of order-two points admits a symplectic bilinear form, the aforementioned Weil pairing such that the two-dimensional, maximal isotropic subspace of $\AA[2]$ with respect to the Weil pairing are the G\"opel groups. It is then easy to check that there are exactly 15 inequivalent G\"opel groups, and they are of the form
\begin{equation}
\label{eqn:G_groups}
 \Big \lbrace P_0, P_{ij}, P_{kl}, P_{mn} \Big \rbrace
\end{equation} 
such that  $\{i, j, k, l, m, n\}=\{1, \dots, 6\}$. The standard Theta divisor $\Theta = \Theta_6 = \{ [P - P_6] \, \mid P \in \CC \}$ contains the six order-two points $P_0, P_{i6}$ for $1 \le i \le 5$.  Likewise for $1 \le i \le 5$, the five translates $\Theta_i = P_{i6} + \Theta$ contain $P_0, P_{i6}, P_{ij}$ with $j \not = i, 6$, and the ten translates $\Theta_{ij6} = P_{ij} + \Theta$ for $1 \le i < j \le 5$ contain $P_{ij}, P_{i6}, P_{j6}, P_{kl}$ with $k,l \not = i,j,6$ and $k<l$. Conversely, each order-two point lies on exactly six of the divisors, namely
 \begin{align}
   P_{0} & \in \Theta_i \phantom{ ,  \mathsf{\Theta}_6,  \mathsf{\Theta}_{ij6} } \quad  \text{for $i=1, \dots, 6$,}\\
   P_{i6} & \in \Theta_i,  \Theta_6,  \Theta_{ij6} \quad  \text{for $i=1, \dots, 5$ with $j \not = i,6$,}\\
   P_{ij} & \in  \Theta_i,  \Theta_j,  \Theta_{kl6} \quad  \text{for $1 \le i < j \le 5$ with $k,l \not = i,j,6$ and $k<l$.}
 \end{align}
 Hence, we have a configuration of sixteen elements $P_0, P_{ij} \in \AA[2]$ and sixteen Theta divisors $\Theta, \Theta_i, \Theta_{ijk}$, where each Theta divisor contains six order-two points, and where each order-two point is contained in six Theta divisors. This configuration is called the \emph{$16_{6}$ configuration on the Abelian surface} $\AA$.
\par Moreover, we call the divisors $\{ \Theta_i \}$ and $\{ \Theta_{jk6} \}$ with $1\le i \le6$ and $1\le j < k<6$, the six \emph{odd} and the ten \emph{even} Theta divisors, respectively. In this way, the odd Theta divisors are the six translates of the curve $\CC$ by $P_{i6}$ with $1 \le i \le 6$, and thus with the six Weierstrass points $P_i$ on the curve $\CC$. Furthermore, there is a one-to-one correspondence between pairs of odd Theta divisors and two-torsion points on $\AA[2]$ since $\Theta_i \cap \Theta_j =\{ P_0, P_{ij} \}$, and, in turn, unordered pairs $\{P_i, P_j\}$ of Weierstrass points since $P_{ij} = P_{i6}  +  P_{j6}$.
\subsection{Theta constants of genus two}
\label{sssec:relats_thetas}
For $\left( \begin{smallmatrix} a_1&a_2\\ b_1&b_2 \end{smallmatrix} \right) \in \mathbb{F}_2^4$ -- where $\mathbb{F}_2$ denotes the finite field with two elements -- there are sixteen corresponding Theta functions. We always identify $\left( \begin{smallmatrix} a_1&a_2\\ b_1&b_2 \end{smallmatrix} \right) \in \mathbb{F}_2^4$ with the corresponding characteristic $\left[ \begin{smallmatrix} a \\ b \end{smallmatrix} \right]$, \(a, b \in (\Z/2\Z)^2\). The Theta functions with characteristics \([a, b]\), \(a, b \in (\Z/2\Z)^2\), are defined as
\begin{equation}
\theta \left[ \begin{smallmatrix} a \\ b \end{smallmatrix} \right] (z, \tau) = \sum_{m \in \Z^2} \exp \left( \pi i (m + a)^t \tau (m + a) + 2 \pi i (m + a)^t (z + b) \right).
\end{equation}
We note that this definition is obtained by specializing the situation of Section~\ref{sec:theta_functions} to $g=2$. Among the Theta functions, 10 are even and 6 are odd functions according to
\begin{equation}
 \theta\!\begin{bmatrix} a_1 & a_2 \\ b_1 & b_2 \end{bmatrix}\!\!(-z, \tau)
  = (-1)^{a^t\cdot b} \;   \theta\!\begin{bmatrix} a_1 & a_2 \\ b_1 & b_2 \end{bmatrix}\!\!(z, \tau).
\end{equation}
 A \emph{level-$n$ Theta structure} is given by the functions $\theta \left[ \begin{smallmatrix} 0 \\ b \end{smallmatrix} \right] (z, \tau/n)$, where $b \in (\Z/n\Z)^2$. For a period matrix $\tau \in \mathbb{H}_2$, the values of the Theta functions $\theta \left[ \begin{smallmatrix} a \\ b \end{smallmatrix} \right] (z, \tau/2)$, with $a, b \in (\Z/2\Z)^2$ at $z = 0$, are called \emph{theta constants} or \emph{theta null points}. In particular, we set $\theta_i = \theta \left[ \begin{smallmatrix} a_i \\ b_i \end{smallmatrix} \right] (0, \tau)$, where corresponding to either
\begin{equation}
\left( \begin{smallmatrix} 0 & 0 \\ 0 & 0 \end{smallmatrix} \right), \left( \begin{smallmatrix} 0 & 0 \\ 1 & 1 \end{smallmatrix} \right), \left( \begin{smallmatrix} 0 & 0 \\ 1 & 0 \end{smallmatrix} \right), \left( \begin{smallmatrix} 0 & 0 \\ 0 & 1 \end{smallmatrix} \right) \in \mathbb{F}_2,
\end{equation}
or, equivalently, the characteristics $a=0$, $b=[0,0],$ \([1/2,1/2]\) \([1/2,0]\), \([0,1/2]\),  we have labeled the functions $\theta_1, \theta_2, \theta_3, \theta_4$.\par We will also introduce Theta functions that are evaluated at $2 \tau$. Under duplication of the modular variable, the Theta functions $\theta_1, \theta_5, \theta_7, \theta_8$ play a role dual to $\theta_1, \theta_2, \theta_3, \theta_4$. We renumber the former and use the symbol $\Theta$ to mark the fact that they are evaluated at the isogenous Abelian variety.  That is, Theta constants $\lbrace \Theta_1^2 , \Theta_2^2, \Theta_3^2, \Theta_4^2\rbrace$ and $\lbrace \theta_1^2 , \theta_2^2, \theta_3^2, \theta_4^2\rbrace$ correspond to the following Theta characteristics 
\begin{equation} 
\label{eqn:dual_Goepel_groups}
\left( \begin{smallmatrix} 0&0\\ 0&0 \end{smallmatrix} \right), \left( \begin{smallmatrix} 1&0\\ 0&0 \end{smallmatrix} \right), 
\left( \begin{smallmatrix} 1&1\\ 0&0 \end{smallmatrix} \right), \left( \begin{smallmatrix} 0&1\\ 0&0 \end{smallmatrix} \right), \; \text{and} \;
\left( \begin{smallmatrix} 0&0\\ 0&0 \end{smallmatrix} \right), \left( \begin{smallmatrix} 0&0\\ 1&1 \end{smallmatrix} \right), 
\left( \begin{smallmatrix} 0&0\\ 1&0 \end{smallmatrix} \right), \left( \begin{smallmatrix} 0&0\\ 0&1 \end{smallmatrix} \right).
\end{equation}
\par \emph{Thomae's formula} is a formula introduced by Thomae relating Theta constants to the branch points of a genus 2 curve. The three $\lambda$ parameters appearing in the Rosenhain normal form in Equation~(\ref{Eq:Rosenhain})  are ratios of even Theta constants. There are 720 choices for such expressions since the forgetful map to $\mathcal{M}_2$ is a Galois covering of degree $720 = |S_6|$ where $S_6$ acts on the  roots of $\CC$ by permutations. Any of the 720 choices may be used, we choose the one also used in \cites{MR2367218,MR2372155}:

\begin{lem} \label{ThomaeLemma}
For any period point $\tau \in\mathbb{A}_2(2)$, there is a genus-two curve $\CC \in \mathcal{M}_2$ with level-two structure and Rosenhain roots $\lambda_1, \lambda_2, \lambda_3$  such that
\begin{equation}\label{Picard}
\lambda_1 = \frac{\theta_1^2\theta_3^2}{\theta_2^2\theta_4^2} \,, \quad \lambda_2 = \frac{\theta_3^2\theta_8^2}{\theta_4^2\theta_{10}^2}\,, \quad \lambda_3 =
\frac{\theta_1^2\theta_8^2}{\theta_2^2\theta_{10}^2}\,.
\end{equation}
Similarly, the following expressions are perfect squares of Theta constants:
\begin{equation}
\label{Picard2}
\begin{split}
\lambda_1 -1 = \frac{\theta_7^2 \theta_9^2}{\theta_2^2 \theta_4^2} , \qquad \lambda_2 -1 = \frac{\theta_5^2 \theta_9^2}{\theta_4^2 \theta_{10}^2} , \qquad 
\lambda_3 -1 = \frac{\theta_5^2 \theta_7^2}{\theta_2^2 \theta_{10}^2} ,\qquad \qquad\\
\lambda_2 -\lambda_1 = \frac{\theta_3^2 \theta_6^2 \theta_9^2}{\theta_2^2 \theta_4^2\theta_{10}^2} , \qquad \lambda_3 -\lambda_1 = \frac{\theta_1^2 \theta_6^2 \theta_7^2}{\theta_2^2 \theta_4^2\theta_{10}^2} , \qquad
 \lambda_3 -\lambda_2 = \frac{\theta_5^2 \theta_6^2 \theta_8^2}{\theta_2^2 \theta_4^2\theta_{10}^2} .
\end{split}
\end{equation} 
Conversely, given a smooth genus-two curve $\CC \in \mathcal{M}_2$ with three distinct complex numbers $(\lambda_1, \lambda_2, \lambda_3)$ different from $0, 1, \infty$, there is complex Abelian surface $\AA$ with period matrix $(\mathbb{I}_2,\tau)$ and $\tau \in\mathbb{A}_2(2)$ such that $\AA=\operatorname{Jac}{\CC}$  and the fourth powers of the even Theta constants are given by
\begin{equation}\label{Thomaeg=2}
\begin{array}{ll}
\theta_1^4 = R \, \lambda_3 \lambda_1 (\lambda_2 -1) (\lambda_3 - \lambda_1) \,,  &
\theta_2^4  = R \, \lambda_2 (\lambda_2 -1) ( \lambda_3 - \lambda_1) \,,  \\[0.2em]
\theta_3^4  = R\, \lambda_2  \lambda_1 (\lambda_2 - \lambda_1) (\lambda_3 - \lambda_1) \,,  &
\theta_4^4  = R\, \lambda_3 (\lambda_3 - 1) (\lambda_2 - \lambda_1) \,,  \\[0.2em]
\theta_5^4  = R\, \lambda_1 (\lambda_2 -1) (\lambda_3 - 1) ( \lambda_3 - \lambda_2) \,, &
\theta_6^4  = R\, (\lambda_3 - \lambda_2) (\lambda_3 -\lambda_1) ( \lambda_2 -\lambda_1) \,, \\[0.2em]
\theta_7^4  =  R \, \lambda_2 (\lambda_3 -1) ( \lambda_1 -1) (\lambda_3 - \lambda_1)  \,, &
\theta_8^4  = R \, \lambda_2 \lambda_3 (\lambda_3 - \lambda_2) (\lambda_1 -1)  \,, \\[0.2em]
\theta_9^4  = R\, \lambda_3 ( \lambda_2 -1) (\lambda_1 - 1) (\lambda_2 - \lambda_1)  \,, &
\theta_{10}^4  = R \, \lambda_1 ( \lambda_1 - 1) (\lambda_3 - \lambda_2) \,, 
\end{array}
\end{equation}
where $R\in \mathbb{C}^{*}$ is a non-zero constant.
\end{lem}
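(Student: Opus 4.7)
My plan is to prove Lemma~\ref{ThomaeLemma} in two steps: first establish the classical Thomae formula expressing each even theta fourth power as a universal constant $R(\tau)$ times a product of Vandermonde differences of the six Weierstrass values $\lambda_1, \lambda_2, \lambda_3, 0, 1, \infty$, and then deduce the specific ratios in \eqref{Picard}--\eqref{Picard2} by cancellation. The geometric input is already assembled in the excerpt: the Abel-Jacobi embedding $\phi_{P_6} \colon \CC \hookrightarrow \Jac(\CC)$ identifies the $16$ half-integer theta characteristics $[a,b]$ with the $16$ two-torsion points $P_{ij} \in \AA[2]$, and the $16_{6}$ configuration tells us which pair of Weierstrass points (for the odd characteristics) or which triple--triple partition of $\{1,\dots,6\}$ (for the even characteristics) each characteristic represents.

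The analytic core is to prove
\[
\theta[a,b]^4(0,\tau) \;=\; R(\tau) \!\!\!\prod_{r<s,\, r,s \in T}\!\!\!(\lambda_r-\lambda_s)\!\!\prod_{r<s,\, r,s \in T^c}\!\!\!(\lambda_r-\lambda_s),
\]
where $T = T(a,b) \subset \{1,\dots,6\}$ is the triple attached to the even characteristic $[a,b]$ and $R(\tau)$ is independent of $[a,b]$. I would derive this by combining the heat equation $4\pi i\,\partial_{\tau_{kl}}\theta[a,b] = (1+\delta_{kl})\,\partial_{z_k}\partial_{z_l}\theta[a,b]$ with Rauch's variational formula, which expresses $\partial \tau_{kl}/\partial \lambda_i$ as a residue of a product of normalized holomorphic differentials at the branch point $P_i$. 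Evaluating at $z=0$ and dividing produces a logarithmic-derivative equation whose right-hand side matches that of the claimed product of Vandermonde factors, so integration yields the identity up to the universal prefactor $R(\tau)$. Transcribing into the particular labeling $\theta_1,\dots,\theta_{10}$ from Section~\ref{sssec:relats_thetas}, with the standard limiting convention for $\lambda_6=\infty$ (absorb factors formally involving $\infty$ into $R$), recovers the ten identities in \eqref{Thomaeg=2}.

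The six ratios in \eqref{Picard}--\eqref{Picard2} then follow from \eqref{Thomaeg=2} purely algebraically: $R(\tau)$ cancels and the remaining Vandermonde factors collapse to $\lambda_i$, $\lambda_i - 1$, or $\lambda_j - \lambda_i$ as claimed. Since the stated identities are between \emph{squares} of theta constants, sign choices must be pinned down; this is done by continuity of the theta constants on the simply connected level cover $\mathbb{A}_2(2)$ and by matching at a convenient base point (e.g., a curve with extra automorphisms where the theta values are explicitly computable).

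The main obstacle is not the analytic derivation, which is classical, but the combinatorial matching underlying Step~2: the labeling $\theta_1,\dots,\theta_{10}$ is abstractly given by a list of characteristics in $\mathbb{F}_2^4$, and each must be matched to the correct triple $T \subset \{1,\dots,6\}$ so that the specific formulas \eqref{Thomaeg=2} and \eqref{Picard}--\eqref{Picard2} come out exactly as written rather than in some permuted form. A different matching yields an equally valid Thomae formula, but with the Rosenhain roots permuted; correctness therefore reduces to adhering to the convention of \cite{MR2367218,MR2372155} and verifying consistency against an independent check point.
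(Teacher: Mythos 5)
The paper offers no proof of Lemma~\ref{ThomaeLemma} to compare against: the statement is imported as the classical Thomae formula, with the particular labeling of characteristics fixed by the conventions of \cite{MR2367218} and \cite{MR2372155}. Judged on its own terms, your outline is the standard variational proof --- the heat equation combined with Rauch's formula shows that $\partial_{\lambda_i}\log\bigl(\theta[\eta_T]^4/\prod(\lambda_r-\lambda_s)\bigr)$ is independent of the triple $T$, so the ratio is a $T$-independent constant $R(\tau)$ --- and organizing the ten even characteristics by triple--triple partitions of the six branch points is the right combinatorial frame. Two genuine gaps remain. First, \eqref{Picard} and \eqref{Picard2} are identities between \emph{squares} of theta constants while Thomae delivers fourth powers, so your cancellation only proves each identity up to sign; the sign determination is the actual content of that half of the lemma, and your proposed fix is miscalibrated, since $\mathbb{A}_2(2)=\mathbb{H}_2/\Gamma_2(2)$ is not simply connected. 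The workable version of your idea is to argue on $\mathbb{H}_2$ itself (which is contractible) minus the even theta-null divisors, whose complement is connected because they have complex codimension one, and then pin the sign at one explicit period point; alternatively, and more cleanly, the difference formulas in \eqref{Picard2} follow from \eqref{Picard} together with the Frobenius identities \eqref{Eq:FrobeniusIdentities}, which are already statements about squares (e.g.\ $\lambda_1-1=(\theta_1^2\theta_3^2-\theta_2^2\theta_4^2)/(\theta_2^2\theta_4^2)=\theta_7^2\theta_9^2/(\theta_2^2\theta_4^2)$).

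Second, the ``purely algebraic'' cancellation does not reproduce \eqref{Picard} from \eqref{Thomaeg=2} as printed: one finds
$\theta_1^4\theta_3^4/(\theta_2^4\theta_4^4)=\lambda_1^2\,(\lambda_3-\lambda_1)/(\lambda_3-1)$ and the analogous discrepancy for $\lambda_2$, with only the $\lambda_3$ identity checking out; the two tables become consistent only after replacing the factor $(\lambda_3-\lambda_1)$ in the entry for $\theta_3^4$ by $(\lambda_3-1)$ (or, equivalently, after re-permuting the marking of the roots). This is precisely the combinatorial-matching danger you flag in your last paragraph, but it shows that the ``independent check point'' is not an optional sanity check: it is the step on which the specific formulas stand or fall, and as written your argument would certify a version of \eqref{Picard} contradicting \eqref{Thomaeg=2}. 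A complete proof must verify the characteristic-to-partition dictionary entry by entry against Section~\ref{sssec:relats_thetas}, or else derive \eqref{Picard}--\eqref{Picard2} independently via \eqref{Eq:FrobeniusIdentities} and then check \eqref{Thomaeg=2} against them. A last, minor point: the forward direction as stated fails for $\tau$ on the Humbert surface $H_1$, where $\AA$ is a product of elliptic curves and no smooth genus-two curve exists; neither the paper nor your proposal excludes this locus.
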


\par According to \cite{MR2367218}*{Sec.~3} we have the following \emph{Frobenius identities} relating Theta constants:
\begin{equation}
\label{Eq:FrobeniusIdentities}
\begin{array}{lllclll}
\theta_5^2 \theta_6^2 & = & \theta_1^2 \theta_4^2 - \theta_2^2 \theta_3^2 \,, &\qquad
\theta_5^4 + \theta_6^4 & =& \theta_1^4 - \theta_2^4 - \theta_3^4 + \theta_4^4 \,, \\[0.2em]
\theta_7^2 \theta_9^2 & = & \theta_1^2 \theta_3^2 - \theta_2^2 \theta_4^2 \,, &\qquad
\theta_7^4 + \theta_9^4 &= & \theta_1^4 - \theta_2^4 + \theta_3^4 - \theta_4^4 \, , \\[0.2em]
\theta_8^2 \theta_{10}^2 & = & \theta_1^2 \theta_2^2 - \theta_3^2 \theta_4^2 \, , &\qquad
\theta_8^4 + \theta_{10}^4 & = & \theta_1^4 + \theta_2^4 - \theta_3^4 - \theta_4^4 \,.
\end{array}
\end{equation}
The following identities are called the \emph{second principal transformations of degree two}~\cites{MR0141643, MR0168805} for Theta constants:
\begin{equation}
\label{Eq:degree2doubling}
\begin{array}{lllclll}
\theta_1^2 & = & \Theta_1^2 + \Theta_2^2 + \Theta_3^2 + \Theta_4^2 \,, &\qquad
\theta_2^2 & =&  \Theta_1^2 + \Theta_2^2 - \Theta_3^2 - \Theta_4^2 \,, \\[0.2em]
\theta_3^2 & = &  \Theta_1^2 - \Theta_2^2 - \Theta_3^2 + \Theta_4^2 \,, &\qquad
\theta_4^2 &= &  \Theta_1^2 - \Theta_2^2 + \Theta_3^2 - \Theta_4^2 \,.
\end{array}
\end{equation}
We also have the following identities:
\begin{equation}
\label{Eq:degree2doublingR}
\begin{array}{lllclll}
\theta_5^2 & = & 2 \, \big( \Theta_1 \Theta_3 + \Theta_2  \Theta_4 \big) \,, &\qquad
\theta_6^2 & =&  2 \, \big( \Theta_1 \Theta_3 - \Theta_2  \Theta_4 \big)\,, \\[0.4em]
\theta_7^2 & = & 2 \, \big( \Theta_1 \Theta_4 + \Theta_2  \Theta_3 \big) \,, &\qquad
\theta_8^2 &= &  2 \, \big( \Theta_1 \Theta_2 + \Theta_3  \Theta_4 \big)\,, \\[0.4em]
\theta_9^2 & = & 2 \, \big( \Theta_1 \Theta_4 - \Theta_2  \Theta_3 \big)\,, &\qquad
\theta_{10}^2 &= & 2 \, \big( \Theta_1 \Theta_2 - \Theta_3  \Theta_4 \big) \,.
\end{array}
\end{equation}
\par The characterization of the Siegel modular threefolds $\mathbb{A}_2(2)$, $\mathbb{A}_2(2,4)$, and $\mathbb{A}_2(4,8)$ as projective varieties and their Satake compactifications was given in \cites{MR3238326}*{Prop.~2.2}:
\begin{prop}
\label{compactifications}
\begin{enumerate}
\item[]
\item The holomorphic map $\Xi_{2,4}\colon \mathbb{H}_2 \longrightarrow \mathbb{P}^3$ given by $\tau \mapsto [\Theta_1 : \Theta_2: \Theta_3 : \Theta_4]$ is an isomorphism
between the Satake compactification $\overline{\mathbb{A}_2(2,4)}$ and $\mathbb{P}^3$.
\item The holomorphic map $\Xi_{4,8}\colon  \mathbb{H}_2 \longrightarrow \mathbb{P}^9$ given by $\tau \mapsto [\theta_1 : \dots : \theta_{10}]$ is an isomorphism
between the Satake compactification $\overline{\mathbb{A}_2(4,8)}$ and the closure of $\Xi_{4,8}$ in $\mathbb{P}^9$.
\item We have the following commutative diagram:

\centerline{
\xymatrix{
\overline{\mathbb{A}_2(4,8)} \ar[rr]^{\Xi_{4,8}} \ar[d]_{\pi} 							&  									& \mathbb{P}^9 \ar[d]^{\operatorname{Sq}} \\
\overline{\mathbb{A}_2(2,4)} \ar[r]^{\phantom{aa} \Xi_{2,4}}  \ar[d]_{\operatorname{Ros}} 	&  \mathbb{P}^3 \ar[r]^{\operatorname{Ver}} 	& \mathbb{P}^9\\
\overline{\mathbb{A}_2(2)}
}}Here, $\pi$ is the covering map with deck transformations $\Gamma_2(2,4)/\Gamma_2(4,8)\cong (\mathbb{Z}/2\mathbb{Z})^9$, the map $\operatorname{Sq}$ is the  square map $[\theta_1: \cdots : \theta_{10}] \mapsto [\theta_1^2: \cdots : \theta_{10}^2]$, the map $\operatorname{Ver}$ is the Veronese type map defined by the quadratic relations~(\ref{Eq:degree2doubling}) and~(\ref{Eq:degree2doublingR}), and the map $\operatorname{Ros}$ is the covering map with the deck transformations $\Gamma_2(2)/\Gamma_2(2,4)\cong (\mathbb{Z}/2\mathbb{Z})^3$ given by plugging the quadratic relations~(\ref{Eq:degree2doubling}) and~(\ref{Eq:degree2doublingR}) into Equations~(\ref{Picard}).
\end{enumerate}
\end{prop}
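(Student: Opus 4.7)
The plan is to handle the three parts sequentially, invoking the functional equation of Theta functions under $\Sp(4,\Z)$ stated earlier as the main engine, together with the algebraic identities \eqref{Eq:degree2doubling}, \eqref{Eq:degree2doublingR}, \eqref{Eq:FrobeniusIdentities} and Thomae's formula from Lemma \ref{ThomaeLemma}.

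For part (1), I would first verify that $[\Theta_1:\Theta_2:\Theta_3:\Theta_4]$ is well-defined on $\mathbb{A}_2(2,4)$. Applying the general Theta transformation law to the four characteristics listed in \eqref{eqn:dual_Goepel_groups}, evaluated at $2\tau$, one checks that for $M\in\Gamma_2(2,4)$ all four constants acquire a common automorphy factor $\kappa(M)\det(C\tau+D)^{1/2}$; the congruence conditions defining $\Gamma_2(2,4)$ are tailored precisely so that the residual phase factors coincide on the four coordinates. Second, I would recall Igusa's identification of $\Theta_1,\Theta_2,\Theta_3,\Theta_4$ as algebraically independent generators of a four-dimensional space of modular forms of half-integral weight for $\Gamma_2(4,8)$ whose graded ring of invariants under $\Gamma_2(2,4)/\Gamma_2(4,8)$ is a polynomial ring in four variables. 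Since the Satake compactification $\overline{\mathbb{A}_2(2,4)}$ is the $\mathrm{Proj}$ of this ring, the map to $\mathbb{P}^3$ is an isomorphism, with the boundary behaviour handled by the standard $q$-expansion principle.

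Part (2) proceeds in parallel. Invariance of $[\theta_1:\cdots:\theta_{10}]$ under $\Gamma_2(4,8)$ follows from the same transformation law applied to the ten even characteristics, and the image automatically lies on the six quadrics \eqref{Eq:FrobeniusIdentities}. Injectivity on the open part is where Thomae's formula \eqref{Thomaeg=2} carries the burden: ratios of the $\theta_i^4$ recover the Rosenhain triple $(\lambda_1,\lambda_2,\lambda_3)$ and hence the underlying point of $\mathbb{A}_2(2)$, while lifting from that point back to a level-$(4,8)$ structure amounts to choosing ten signs for the $\theta_i$ modulo an overall projective sign; the resulting $2^9$ choices match the order of the deck group $\Gamma_2(2,4)/\Gamma_2(4,8)\cong(\mathbb{Z}/2\mathbb{Z})^9$ from \eqref{eqn:level_groups}, confirming injectivity on the full cover.

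For part (3) the three commutativities reduce to algebraic substitutions. Commutativity of the right square is the content of \eqref{Eq:degree2doubling}--\eqref{Eq:degree2doublingR}: squaring each $\theta_i$ and expressing the result in the $\Theta_j$'s via those identities is exactly the Veronese-type map $\operatorname{Ver}$. Commutativity of the lower triangle is Thomae's formula \eqref{Picard}: substituting \eqref{Eq:degree2doubling}--\eqref{Eq:degree2doublingR} into the three ratios defining $\lambda_1,\lambda_2,\lambda_3$ yields the covering map $\operatorname{Ros}$, whose deck group $\Gamma_2(2)/\Gamma_2(2,4)\cong(\mathbb{Z}/2\mathbb{Z})^3$ acts by sign flips $\Theta_i\mapsto\pm\Theta_i$ (modulo an overall sign), under which the $\Theta_i^2$ and hence the Rosenhain ratios are invariant. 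The main obstacle is not a single deep step but the careful bookkeeping of the automorphy factor $\kappa(M)\det(C\tau+D)^{1/2}$ in the transformation law, to see that it is genuinely common to all projective coordinates on each relevant congruence cover, together with a boundary analysis on the Satake compactification to ensure no coordinate vanishes identically on a boundary component; both are classical and follow from Igusa's description of $\Gamma_2(2,4)$ and $\Gamma_2(4,8)$ combined with the $q$-expansion principle.
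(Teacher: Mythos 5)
The paper does not actually prove this proposition: it is imported verbatim from Koike \cite{MR3238326}*{Prop.~2.2}, which in turn rests on Igusa's structure theorems for the graded rings of modular forms of $\Gamma_2(2,4)$ and $\Gamma_2(4,8)$. There is therefore no in-paper argument to compare yours against; what can be said is that your sketch assembles the standard ingredients (the theta transformation law, Thomae's formula, and the quadratic identities \eqref{Eq:degree2doubling}--\eqref{Eq:degree2doublingR}) in the standard order, and part (3) in particular is, as you say, pure substitution. One should be clear, though, that the entire content of parts (1) and (2) sits in the single step you defer to ``Igusa's identification'': that the four second-order theta constants are algebraically independent and generate the full ring of modular forms for $\Gamma_2(2,4)$, so that the Satake compactification is $\operatorname{Proj}$ of a polynomial ring in four variables. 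Everything else is routine verification, so your proposal is essentially a citation with scaffolding --- which is also exactly what the paper does. A small correction: the $\Theta_i$ are modular for $\Gamma_2(2,4)$ itself (that is the point of the congruence conditions), not merely $\Gamma_2(4,8)$-forms whose invariant ring one then computes.

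The one place where your bookkeeping slips is the injectivity argument for $\Xi_{4,8}$. You recover a point of $\mathbb{A}_2(2)$ from the $\theta_i^4$ via Thomae and then say that lifting ``from that point'' to a level-$(4,8)$ structure amounts to the $2^{10}/2=2^9$ sign choices of the $\theta_i$, matching $\Gamma_2(2,4)/\Gamma_2(4,8)\cong(\mathbb{Z}/2\mathbb{Z})^9$. But $2^9$ is the size of the fiber of $\pi\colon\overline{\mathbb{A}_2(4,8)}\to\overline{\mathbb{A}_2(2,4)}$, i.e.\ the sign choices of the $\theta_i$ \emph{given their squares}, which are determined by the $\Theta_j$ through \eqref{Eq:degree2doubling} and \eqref{Eq:degree2doublingR}; the fiber over a point of $\mathbb{A}_2(2)$ is larger by the additional factor $(\mathbb{Z}/2\mathbb{Z})^3$ of sign flips of the $\Theta_j$ that you correctly attach to $\operatorname{Ros}$ in part (3). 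The injectivity of $\Xi_{4,8}$ should therefore be run relative to $\overline{\mathbb{A}_2(2,4)}$ (using part (1) and the commutativity of the right-hand square), not directly relative to $\overline{\mathbb{A}_2(2)}$. All the pieces are present in your write-up; they are just assembled against the wrong base in that one sentence.
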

\subsection{Richelot isogenies}\label{ssec:2isog}
Richelot isogenies provide a classical framework for studying (2,2)-isogenies between principally polarized Abelian surfaces, particularly Jacobians of genus 2 curves. Note that the Abelian surfaces are here in general \emph{not} split. 
It is well-known that the quotient $\hat{\AA} = \AA/G$ by a Göpel group $G$ is again a principally polarized Abelian surface, as detailed in \cites{MR2514037}*{Sec.~23}. The natural projection $\Psi \colon \AA \longrightarrow \hat{\AA}$ is an isogeny with kernel $G$, called a \emph{(2,2)-isogeny} because $\deg \Psi = |G| = 4 = 2^2$, reflecting a kernel of rank 2 over $\Z/2\Z$.
\par Analytically, over $\C$, if $\AA = \C^2 / \Lambda$ with $\Lambda = \Z^2 \oplus \tau \Z^2$ and $\tau \in \mathbb{H}_2$, a Göpel group $G \leq \AA[2]$ corresponds to a 2-dimensional subspace of $\frac{1}{2} \Lambda / \Lambda$. The quotient $\hat{\AA} = \AA/G$ can be represented as $\C^2 / \hat{\Lambda}$, where $\hat{\Lambda} = \Z^2 \oplus 2\tau \Z^2$, adjusting the lattice to double the period in one direction. The isogeny is then
\begin{equation}
\begin{aligned}
\Psi \colon \quad  \AA = \C^2 / \langle \Z^2 \oplus \tau \Z^2 \rangle & \longrightarrow  \hat{\AA} = \C^2 / \langle \Z^2 \oplus 2\tau \Z^2 \rangle \\
(z, \tau) & \ \mapsto \  (z, 2\tau),
\end{aligned}
\end{equation}
mapping points modulo the coarser lattice, with kernel $G$ generated by representatives of $\AA[2]$ spanning a rank-2 subgroup.
\par For $\AA = \Jac(\CC)$, where $\CC$ is a smooth genus 2 curve, we explore whether $\hat{\AA} = \Jac(\hat{\CC})$ for another genus 2 curve $\hat{\CC}$, and how their moduli relate. Richelot addressed this geometrically in \cite{MR1578135}, with modern treatments in \cite{MR970659}. Over $\C$, $\AA[2]$ has $2^{2g} = 16$ points, and there are 15 Göpel groups (computed as the number of 2-dimensional isotropic subspaces in $(\Z/2\Z)^4$ under the symplectic form induced by the Weil pairing). Each corresponds to a (2,2)-isogeny, and Richelot’s construction identifies $\hat{\CC}$ explicitly. Given $\CC$ with equation $Y^2 = f_6(X, Z)$, a sextic in homogeneous coordinates, any factorization $f_6 = A \cdot B \cdot C$ into three quadratic polynomials $A, B, C$ defines a curve $\hat{\CC}$ via
\begin{equation}\label{Richelot}
\Delta_{ABC} \cdot Y^2 = [A, B] [A, C] [B, C],
\end{equation}
where $[A, B] = A' B - A B'$ with $A' = \frac{dA}{dx}$ (assuming a dehomogenized coordinate $x = X/Z$), and $\Delta_{ABC}$ is the determinant of the coefficients of $A, B, C$ in the basis $\{x^2, xz, z^2\}$. It was shown in \cite{MR970659} that $\Jac(\CC)$ and $\Jac(\hat{\CC})$ are (2,2)-isogenous, and there are exactly 15 such factorizations (corresponding to partitions of 6 roots into 3 pairs), yielding all distinct (2,2)-isogenous Abelian surfaces to $\Jac(\CC)$.
\par The geometric insight stems from the isomorphism $S_6 \cong \Sp(4, \mathbb{F}_2)$, where $S_6$ permutes the 6 Weierstrass points of $\CC$ (or Theta divisors containing a fixed order-two point), and $\Sp(4, \mathbb{F}_2)$ acts on $\AA[2]$. 
\subsubsection{An explicit model using Theta constants}
\label{computation}
To compute Richelot isogenies explicitly, we adopt a Theta function approach, following \cite{MR2367218}. For $\AA = \Jac(\CC)$, consider G\"opel groups $G = \{ P_0, P_{15}, P_{23}, P_{46} \}$ and $G' = \{ P_0, P_{12}, P_{34}, P_{56} \}$, where $\AA[2] = \{ P_{ij} \}_{i<j}$ labels the 2-torsion points, with $P_0=P_{66}$ the identity. These satisfy $G + G' = \AA[2]$ and $G \cap G' = \{ P_0 \}$, as $|G + G'| = 4 \cdot 4 / 1 = 16$. Set $\hat{\AA} = \AA/G$, and let $\hat{G}$ be the image of $G'$ in $\hat{\AA}$. We aim to relate the Rosenhain roots of $\CC$:
\begin{equation}\label{Eq:Rosenhain_b}
\CC\colon \quad  y^2 = x z (x - z) (x - \lambda_1 z) (x - \lambda_2 z) (x - \lambda_3 z),
\end{equation}
where $\lambda_4 = 0$, $\lambda_5 = 1$, $\lambda_6 = \infty$, to those of $\hat{\CC}$ with
\begin{equation}\label{Eq:Rosenhain2}
\hat{\CC}\colon \quad y^2 = x (x - 1) (x - \Lambda_1) (x - \Lambda_2) (x - \Lambda_3).
\end{equation}
\par Theta constants give explicit equations for the moduli of $\Jac(\CC)$ and $\Jac(\hat{\CC})$: for a period matrix $\tau \in \mathbb{H}_2$, we use $\theta_i $ for $\CC$ and $\Theta_i$ for $\hat{\CC}$, with characteristics forming Göpel groups. That is, we let $G = \{ P_0, P_{15}, P_{23}, P_{46} \}$ correspond to the G\"opel group in $\mathbb{F}_2^4$ formed by the characteristics
\begin{equation}
\left( \begin{smallmatrix} 0 & 0 \\ 0 & 0 \end{smallmatrix} \right), \left( \begin{smallmatrix} 0 & 0 \\ 1 & 1 \end{smallmatrix} \right), \left( \begin{smallmatrix} 0 & 0 \\ 1 & 0 \end{smallmatrix} \right), \left( \begin{smallmatrix} 0 & 0 \\ 0 & 1 \end{smallmatrix} \right),
\end{equation}
yielding the Theta constants $\theta_1, \theta_2, \theta_3, \theta_4$.  They determine the moduli $\lambda_1, \lambda_2, \lambda_3$ in Equation~(\ref{Picard}) by plugging in the quadratic relations~(\ref{Eq:degree2doubling}) and~(\ref{Eq:degree2doublingR}).

\par The isogeny $\Psi \colon \Jac(\CC) \longrightarrow \Jac(\hat{\CC})$ modifies the lattice, and the dual Theta constants $\Theta_i$ for $\hat{\CC}$ relate to $\theta_i$ via the isogeny’s action in~\eqref{eqn:dual_Goepel_groups}. Results in \cite{MR2367218} and \cite{MR4421430} give
\begin{equation}\label{Picard_sq}
\Lambda_1 = \frac{\Theta_1^2 \Theta_3^2}{\Theta_2^2 \Theta_4^2}, \quad \Lambda_2 = \frac{\Theta_3^2 \Theta_8^2}{\Theta_4^2 \Theta_{10}^2}, \quad \Lambda_3 = \frac{\Theta_1^2 \Theta_8^2}{\Theta_2^2 \Theta_{10}^2}.
\end{equation}

\par For $G$ the corresponding Richelot isogeny is obtained by factoring $f_6 = x z (x - z) (x - \lambda_1 z) (x - \lambda_2 z) (x - \lambda_3 z)$ into
\begin{equation}
A = (x - \lambda_1 z)(x - z), \quad B = (x - \lambda_2 z)(x - \lambda_3 z), \quad C = x z,
\end{equation}
adjusting for homogeneity (e.g., $C = x z$ at infinity). Applying \eqref{Richelot}, we compute $\hat{\CC}$ and its Igusa invariants, which are rational functions of $\{\theta_1, \theta_2, \theta_3, \theta_4\}$. A quadratic twist $\hat{\CC}^{(\mu)}$ with
\[
\mu = \frac{(\theta_1 \theta_2 - \theta_3 \theta_4)^2 (\theta_1^2 + \theta_2^2 - \theta_3^2 - \theta_4^2)(\theta_1^2 - \theta_2^2 + \theta_3^2 - \theta_4^2)}{4 \theta_1 \theta_2 \theta_3 \theta_4 (\theta_1^2 + \theta_2^2 + \theta_3^2 + \theta_4^2)(\theta_1^2 - \theta_2^2 - \theta_3^2 + \theta_4^2)}
\]
adjusts the roots to
\begin{equation}\label{Picard_sq_b}
\begin{aligned}
\Lambda_1 & = \frac{(\theta_1^2 + \theta_2^2 + \theta_3^2 + \theta_4^2)(\theta_1^2 - \theta_2^2 - \theta_3^2 + \theta_4^2)}{(\theta_1^2 + \theta_2^2 - \theta_3^2 - \theta_4^2)(\theta_1^2 - \theta_2^2 + \theta_3^2 - \theta_4^2)}, \\
\Lambda_2 & = \frac{(\theta_1^2 - \theta_2^2 - \theta_3^2 + \theta_4^2)(\theta_1^2 \theta_2^2 + \theta_3^2 \theta_4^2 + 2 \theta_1 \theta_2 \theta_3 \theta_4)}{(\theta_1^2 - \theta_2^2 + \theta_3^2 - \theta_4^2)(\theta_1^2 \theta_2^2 - \theta_3^2 \theta_4^2)}, \\
\Lambda_3 & = \frac{(\theta_1^2 + \theta_2^2 + \theta_3^2 + \theta_4^2)(\theta_1^2 \theta_2^2 + \theta_3^2 \theta_4^2 + 2 \theta_1 \theta_2 \theta_3 \theta_4)}{(\theta_1^2 + \theta_2^2 - \theta_3^2 - \theta_4^2)(\theta_1^2 \theta_2^2 - \theta_3^2 \theta_4^2)},
\end{aligned}
\end{equation}
matching the Richelot construction’s invariants, verifying isomorphism. Similarly, for the G\"opel group $G' = \{ P_0, P_{12}, P_{34}, P_{56} \}$, the dual isogeny $\hat{\Psi}\colon \hat{\AA} \longrightarrow \AA = \hat{\AA}/\hat{G}$ uses $\hat{A} = [B, C]$, $\hat{B} = [A, C]$, $\hat{C} = [A, B]$ to recover $\CC$. These Richelot isogenies realize the pair of dual $(2,2)$-isogenies, given by
\begin{equation}
\AA = \Jac(\CC) \ \overset{\Psi}{\longrightarrow} \ \hat{\AA} = \Jac(\hat{\CC}) \cong \AA/G \ \overset{\hat{\Psi}}{\longrightarrow} \ \AA \cong \hat{\AA}/\hat{G},
\end{equation}
for the complementary maximal isotropic subgroups $G, G'$. If we introduce new rescaled moduli
\begin{equation*}
\lambda_1' = \frac{\lambda_1 + \lambda_2 \lambda_3}{l}, \quad \lambda_2' = \frac{\lambda_2 + \lambda_1 \lambda_3}{l}, \quad \lambda_3' = \frac{\lambda_3 + \lambda_1 \lambda_2}{l},
\end{equation*}
with $l^2 = \lambda_1 \lambda_2 \lambda_3$, and similarly for $\Lambda_i'$, then these relate symmetrically, as shown in \cite{MR4421430}:

\begin{prop}\label{lem:2isog_curve}
The moduli of $\CC$ in \eqref{Eq:Rosenhain_b} and $\hat{\CC}$ in \eqref{Eq:Rosenhain2} satisfy
\begin{equation}\label{relations_RosRoots}
\begin{aligned}
\Lambda_1' & = 2 \frac{2 \lambda_1' - \lambda_2' - \lambda_3'}{\lambda_2' - \lambda_3'}, &
\lambda_1' & = 2 \frac{2 \Lambda_1' - \Lambda_2' - \Lambda_3'}{\Lambda_2' - \Lambda_3'}, \\
\Lambda_2' - \Lambda_1' & = - \frac{4 (\lambda_1' - \lambda_2')(\lambda_1' - \lambda_3')}{(\lambda_1' + 2)(\lambda_2' - \lambda_3')}, &
\lambda_2' - \lambda_1' & = - \frac{4 (\Lambda_1' - \Lambda_2')(\Lambda_1' - \Lambda_3')}{(\Lambda_1' + 2)(\Lambda_2' - \Lambda_3')}, \\
\Lambda_3' - \Lambda_1' & = - \frac{4 (\lambda_1' - \lambda_2')(\lambda_1' - \lambda_3')}{(\lambda_1' - 2)(\lambda_2' - \lambda_3')}, &
\lambda_3' - \lambda_1' & = - \frac{4 (\Lambda_1' - \Lambda_2')(\Lambda_1' - \Lambda_3')}{(\Lambda_1' - 2)(\Lambda_2' - \Lambda_3')}.
\end{aligned}
\end{equation}
\end{prop}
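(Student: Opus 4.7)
The plan is to reduce every quantity appearing in (\ref{relations_RosRoots}) to a rational function in the four Theta constants $\Theta_1, \dots, \Theta_4$ of the isogenous surface $\hat{\AA}$ and then verify the six identities by direct rational manipulation. The key simplification is that $\lambda_i'\pm 2$ and $\Lambda_i'\pm 2$ all factor as clean linear combinations of the $\Theta_i^2$, reducing what appears to be a tangled identity among ratios to a short calculation in four variables.

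First I apply Thomae's formula (Lemma~\ref{ThomaeLemma}) and the definition of $l^2 = \lambda_1\lambda_2\lambda_3$ to collapse $\lambda_i' = (\lambda_i + \lambda_j\lambda_k)/l$ to
\begin{equation*}
\lambda_1' = \frac{\theta_8^4 + \theta_{10}^4}{\theta_8^2\theta_{10}^2}, \qquad \lambda_2' = \frac{\theta_1^4 + \theta_2^4}{\theta_1^2\theta_2^2}, \qquad \lambda_3' = \frac{\theta_3^4 + \theta_4^4}{\theta_3^2\theta_4^2}.
\end{equation*}
I then substitute (\ref{Eq:degree2doubling}) and (\ref{Eq:degree2doublingR}) to eliminate $\theta_j$ in favor of $x_i := \Theta_i^2$. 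Setting $\alpha = x_1^2 + x_2^2$, $\beta = x_3^2 + x_4^2$, $\gamma = x_1 x_2$, $\delta = x_3 x_4$ and the derived $u = \alpha-\beta$, $v = \gamma-\delta$, $w = \alpha+\beta$, $z = \gamma+\delta$, the doubling formulas yield $\theta_8^2\theta_{10}^2 = 4v$, $\theta_8^4+\theta_{10}^4 = 8z$, $\theta_1^2\theta_2^2 = (x_1+x_2)^2 - (x_3+x_4)^2$, and so on, producing
\begin{equation*}
\lambda_1' = \frac{2z}{v},\qquad \lambda_2' = \frac{2(w+2z)}{u+2v},\qquad \lambda_3' = \frac{2(w-2z)}{u-2v}.
\end{equation*}
The same procedure applied to $\hat{\CC}$ (now using that $\hat{\AA}$ is principally polarized with period matrix $2\tau$ and has Theta constants $\Theta_i$, so the analogue of Lemma~\ref{ThomaeLemma} gives the same relations with $\theta$ replaced by $\Theta$) yields $\Lambda_2' = \alpha/\gamma$, $\Lambda_3' = \beta/\delta$, and $\Lambda_1' = u/v$.

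With all six quantities living in $\Q(\alpha,\beta,\gamma,\delta)$, the first identity reduces to a direct rational calculation:
\begin{equation*}
\lambda_2' - \lambda_3' = \frac{8(zu - wv)}{u^2 - 4v^2}, \qquad 2\lambda_1' - \lambda_2' - \lambda_3' = \frac{4u(zu-wv)}{v(u^2-4v^2)},
\end{equation*}
whose ratio is precisely $u/(2v)$, giving $2(2\lambda_1'-\lambda_2'-\lambda_3')/(\lambda_2'-\lambda_3') = u/v = \Lambda_1'$. For the second row, one computes $\Lambda_2' - \Lambda_1' = (\gamma\beta - \alpha\delta)/(\gamma v)$, while the right-hand side of the claimed relation reduces via $\lambda_1'\pm 2 = 4\gamma/v$, $4\delta/v$ (since $z\pm v = 2\gamma, 2\delta$) and the identity $\lambda_1'-\lambda_j' = 2(zu - wv)/[v(u\pm 2v)]$, giving a telescoping that matches after invoking the algebraic fact $vw - zu = 2(\gamma\beta - \alpha\delta)$. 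The third row is identical with $\gamma \leftrightarrow \delta$. The right-hand column of (\ref{relations_RosRoots}) then follows by symmetry: the dual Richelot isogeny $\hat{\Psi}\colon \hat{\AA}\longrightarrow \AA$ is again a $(2,2)$-isogeny with the complementary Göpel group $\hat{G}$, and running the same argument for it swaps the roles of $\lambda$ and $\Lambda$.

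The main obstacle is bookkeeping — matching Theta indices correctly across Thomae's formula, the second principal transformation, and the Frobenius identities — rather than any conceptual difficulty. Once the observation that $\lambda_i' = (\theta_{a}^4+\theta_b^4)/(\theta_a^2\theta_b^2)$ for the appropriate pair $(a,b)$ has been isolated, the doubling identities express each of $\lambda_1', \lambda_2', \lambda_3'$ and $\Lambda_1', \Lambda_2', \Lambda_3'$ as one-line rational functions of $(\alpha,\beta,\gamma,\delta)$, and the six identities of Proposition~\ref{lem:2isog_curve} become elementary algebraic identities in these four variables.
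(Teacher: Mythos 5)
Your proof is correct and follows the route the paper implicitly intends: the paper itself defers the verification to \cite{MR4421430} but sets up exactly the Theta-constant machinery you use, and I checked your reductions $\lambda_1'=2z/v$, $\lambda_2'=2(w+2z)/(u+2v)$, $\lambda_3'=2(w-2z)/(u-2v)$, $\Lambda_1'=u/v$, $\Lambda_2'=\alpha/\gamma$, $\Lambda_3'=\beta/\delta$, together with the resulting rational identities (including $vw-zu=2(\gamma\beta-\alpha\delta)$ and $z\pm v=2\gamma,2\delta$), and they all hold. The only loose step is obtaining the right-hand column ``by symmetry'' from the dual isogeny; this is defensible, but since all six quantities are already explicit elements of $\Q(\alpha,\beta,\gamma,\delta)$ it is just as quick to verify that column by the same direct substitution, e.g.\ $2\,(2\Lambda_1'-\Lambda_2'-\Lambda_3')/(\Lambda_2'-\Lambda_3') = 2z/v = \lambda_1'$, which avoids having to argue that the dual $(2,2)$-isogeny reproduces the same normalizations of Theta characteristics.
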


\section{Kummer Surfaces}
\label{kummer_srfc}
Given an Abelian surface $\AA$ over a field $k$, the \emph{singular Kummer surface} is the quotient $\mathcal{K}_\AA = \AA / \langle - \mathbb{I} \rangle$, where the involution $-\mathbb{I}\colon \AA \longrightarrow \AA$ acts by $P \mapsto -P$. $\mathcal{K}_\AA $ is a singular algebraic variety of dimension two with 16 ordinary double points, each corresponding to an order-two point in $\AA[2] = \{ P \in \AA(\bar{k}) \mid 2P = 0 \}$, the set of which has cardinality $2^{2 \cdot 2} = 16$ over an algebraically closed field. 
\par The quotient construction of $\mathcal{K}_\AA$ leverages the group structure of $\AA$: the involution $-\mathbb{I}$ is an automorphism of order two, and its fixed points, the 2-torsion points, map to singularities on $\mathcal{K}_\AA$. Each singularity is locally isomorphic to the quotient of $\C^2$ by the action $z \mapsto -z$, an $A_1$ singularity, characterized by a quadratic cone. The minimal resolution is denoted the \textit{Kummer surface} $\Kum(\AA)$ and is a smooth K3 surface, obtained by blowing up each double point by gluing in an exceptional $(-2)$-curve; see \cite{shioda-inose} and \cite{2019-4} for details. Conversely, there is a map $\Kum(\AA) \longrightarrow \mathcal{K}_\AA$ contracting the $(-2)$ curves.
\subsection{Kummer surfaces with principal polarizations}
For an Abelian surface $\AA$ equipped with a principal polarization $\mathscr{L}=\mathcal{O}_\AA(\Theta)$, one can always choose the Theta divisor to satisfy $(-\mathbb{I})^* \Theta=\Theta$, that is, to be a \emph{symmetric Theta divisor}. The Abelian surface $\AA$ then maps to the complete linear system $|2\Theta|$. The rational map $\varphi_2\colon \AA \longrightarrow \bP^3$ associated with the line bundle $\mathscr{L}^2$ induces a map to projective space $\mathbb{P}^3$, its image $\varphi_2(\AA)$ is a quartic hypersurface, reflecting the degree of the line bundle $\mathscr{L}^2$ associated with the doubled principal polarization, and $\varphi_2$ factors via an embedding through the projection $\AA \longrightarrow \AA/\langle -\mathbb{I} \rangle$; see \cite{MR2062673}. We identify the singular Kummer variety $\AA/\langle -\mathbb{I} \rangle$ with its image $\mathcal{K}_\AA$ in $\mathbb{P}^3$.   We will give explicit algebraic equations for the image in Section~\ref{sssec:Kummer_even}. 
\par  For the principally polarized Abelian surface $\AA=\operatorname{Jac}{(\CC)}$ associated with a smooth genus 2 curve $\CC$ with the standard Theta divisor $\Theta \cong [\CC]$, we will now construct the morphism to $\bP^3$ explicitly, using the Theta functions from Section~\ref{sec:theta_functions}.  In fact, the level-two Theta functions directly give the map $\varphi_2$ into $\mathbb{P}^3$ when we identify $\mathscr{L}$ with the line bundle that was introduced in Section~\ref{sec:theta_functions}. Specifically, assume that $\AA$ is not a product of elliptic curves, for $\AA = \C^2 / \Lambda$ with a period matrix $\tau \in \mathbb{H}_2$, the functions $\varphi_2(z) = \left( \theta \left[ \begin{smallmatrix} 0 \\ b \end{smallmatrix} \right] (z, \tau/2) \right)_{b \in (\Z/2\Z)^2}$ satisfy $\varphi_2(z) = \varphi_2(-z)$ and induce the morphism $\mathcal{K}_\AA \longrightarrow \bP^3$; see Section~\ref{sssec:relats_thetas} and \cite{MR2062673}*{Thm.~4.8.1}).  This morphism $\varphi_2$ is in fact an embedding and its image is a quartic surface in $\bP^3$ with Theta constants determining its equation. We have the following:

 \begin{lem} 
Let $\AA$ be a principally polarized Abelian surface over $\C$, not isomorphic to a product of elliptic curves. The map $\varphi_2\colon \mathcal{K}_\AA \longrightarrow \bP^3$ defined by $\varphi_2(z) = \left( \theta \left[ \begin{smallmatrix} 0 \\ b \end{smallmatrix} \right] (z, \tau/2) \right)_{b \in (\Z/2\Z)^2}$ is an embedding.
\end{lem}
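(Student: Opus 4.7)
\medskip

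\textbf{Proof proposal.} The plan is to interpret the map $\varphi_2$ as the morphism associated with the complete linear system $|2\Theta|$ of the symmetric principal polarization, factor it through the involution, and then check separately that it separates points and tangent vectors on the singular quotient $\mathcal{K}_\AA$. The hypothesis that $\AA$ is not a product of elliptic curves enters precisely to guarantee that $\Theta$ is irreducible, which is what rules out degenerations that would break injectivity.

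First I would identify the target space. By Riemann-Roch on $\AA$, $h^0(\AA,\mathscr{L}^2) = 2^g = 4$, and the four level-two theta functions $\theta\!\left[\begin{smallmatrix}0\\ b\end{smallmatrix}\right]\!(z,\tau/2)$, $b\in (\Z/2\Z)^2$, introduced in Section~\ref{sec:theta_functions}, form a basis of this space. Because each of these sections has characteristic with $4\cdot 0^t b = 0$, they are even in $z$, so $\varphi_2(z)=\varphi_2(-z)$, and the map factors through the quotient $\AA\to\AA/\langle -\mathbb{I}\rangle=\mathcal{K}_\AA$. Hence $\varphi_2$ descends to a morphism $\mathcal{K}_\AA\to\bP^3$; since $\mathscr{L}^2$ is ample, the basis has no common zero, so $\varphi_2$ is a well-defined morphism.

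Next I would show that $\varphi_2$ is injective on $\mathcal{K}_\AA$. Given $x,y\in\AA$ with $\{x,-x\}\ne\{y,-y\}$, I want a section of $\mathscr{L}^2$ vanishing at $\{x,-x\}$ but not at $y$. By the theorem of the square, for each $a\in\AA$ the divisor $T_a^*\Theta+T_{-a}^*\Theta$ lies in $|2\Theta|$, so taking $a$ such that $a$ is a Weierstrass translate of $x$ yields a section of $\mathscr{L}^2$ vanishing exactly on the symmetric pair through $x$. The existence of such a section not vanishing at $y$ rests on the irreducibility of the theta divisor $\Theta$: if $\Theta$ were reducible (which, by the analysis in Section~\ref{ssec:ppas}, happens exactly when $\AA$ is isomorphic to a product of elliptic curves with the product polarization), one could find $y$ hit by every translate-pair $T_a^*\Theta+T_{-a}^*\Theta$ that contains $x$. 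Since we have excluded this case, the translates of $\Theta$ through $x$ cut out precisely the orbit $\{x,-x\}$, giving injectivity of $\varphi_2$ on $\mathcal{K}_\AA$.

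Finally I would address separation of tangent vectors; this will be the main obstacle because $\mathcal{K}_\AA$ is singular at the $16$ images of $\AA[2]$. At a smooth point of $\mathcal{K}_\AA$, the differential of $\varphi_2$ is injective iff the sections of $\mathscr{L}^2$ separate tangent vectors on $\AA$ modulo the $(-\mathbb{I})$-action, which I would verify by the same translation-of-$\Theta$ argument combined with the Riemann theta relation stated earlier: one produces, for any tangent direction, a symmetric divisor in $|2\Theta|$ passing through $x$ with prescribed tangent cone, again invoking irreducibility of $\Theta$ to exhaust the local linear system. At an image of a $2$-torsion point $P\in\AA[2]$, the local ring of $\mathcal{K}_\AA$ is a quotient singularity $\C[[u,v]]^{\{\pm 1\}}=\C[[u^2,uv,v^2]]$, and one must show that the four even theta functions, expanded in local coordinates around $P$, span the cotangent space of this $A_1$-singularity, i.e.\ generate the maximal ideal modulo its square inside the invariant ring. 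This is the delicate step: it follows by computing the first even Taylor coefficients of $\theta\!\left[\begin{smallmatrix}0\\ b\end{smallmatrix}\right]\!(z,\tau/2)$ at a $2$-torsion point, using the transformation law under translation by half-periods to reduce to a non-degeneracy statement about the Hessian of a single theta function at the origin, which again holds precisely when $\Theta$ is smooth and irreducible. Putting these together, $\varphi_2$ is a closed immersion of $\mathcal{K}_\AA$, identifying it with a quartic surface in $\bP^3$ with $16$ nodes, as asserted.
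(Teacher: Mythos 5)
Your proposal is correct and follows essentially the same route as the paper: factor $\varphi_2$ through the quotient by $-\mathbb{I}$ using the evenness of the level-two Theta functions, then invoke very ampleness of $|2\Theta|$ on $\mathcal{K}_\AA$, with irreducibility of $\Theta$ (equivalently, $\AA$ not a product) as the essential hypothesis. You supply more detail than the paper's own proof --- notably the theorem-of-the-square argument with the symmetric divisors $T_a^*\Theta+T_{-a}^*\Theta$ for separating orbits, and the local analysis at the sixteen $A_1$-points --- where the paper simply asserts that $\mathscr{L}^2$ separates points and tangent vectors.
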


\begin{proof}
The Theta functions $\theta_i(z) = \theta \left[ \begin{smallmatrix} 0 \\ b \end{smallmatrix} \right] (z, \tau/2)$ for $b \in (\Z/2\Z)^2$ are even, satisfying $\theta_i(-z) = \theta_i(z)$ due to the characteristic’s symmetry under the involution $\iota$. Thus, $\varphi_2 \colon \AA \longrightarrow \bP^3$ factors through the quotient $\pi \colon \AA \longrightarrow \mathcal{K}_\AA$, inducing a well-defined map $\overline{\varphi_2} \colon \mathcal{K}_\AA \longrightarrow \bP^3$. For $\AA$ principally polarized, the line bundle $\mathscr{L}^2$ corresponds to $2\Theta$, where $\Theta$ is the Theta divisor. The space of sections $\Gamma(\AA, \mathscr{L}^2)$ is four-dimensional, spanned by the $\theta_i$. If $\AA$ is not a product $E_1 \times E_2$, the polarization ensures $\mathscr{L}^2$ is very ample on $\Kum(\AA)$, separating points and tangent vectors. Specifically, for distinct points $x, y \in \mathcal{K}_\AA$ (not both in $\AA[2]$), there exists $i$ such that $\theta_i(x) \neq \theta_i(y)$, and for a point $x$ with tangent direction $v$, some $\theta_i$ has non-zero derivative along $v$. Thus, $\overline{\varphi_2}$ embeds $\mathcal{K}_\AA$ as a quartic surface with 16 nodes at the images of $\AA[2]$.
\end{proof}
\par The image of each order-two point of $\AA$ is a singular point on the Kummer surface $\mathcal{K}_\AA$ and called a \emph{node}. That is, nodes are the images of the order-two points $P_{ij} \in \AA[2]$.  Similarly, any Theta divisor of $\AA$ is mapped to the intersection of the singular Kummer surface with a plane.  We call such a singular plane a \emph{trope}. In the complete linear system $|2\Theta|$ on $\operatorname{Jac}(\CC)$, the odd symmetric Theta divisors $\Theta_i$ on $\operatorname{Jac}(\CC)$ are mapped to six tropes $\mathsf{T}_i$, and the even symmetric Theta divisors $\Theta_{ijk}$ on $\operatorname{Jac}(\CC)$ are mapped to the tropes $\mathsf{T}_{ijk}$. The ten tropes $\mathsf{T}_{ijk}$ with $1\le i < j < k \le 6$  correspond to partitions of $\{1,\dots,6\}$ into two unordered sets of three, say $\lbrace i, j, k\rbrace$ $\lbrace l,m, n\rbrace$. We use the formulas for  $\mathsf{T}_{ijk}$ from  \cite{MR1406090}*{Sec.~3.7} paying careful attention to the fact that  we have moved the root $\lambda_6$ to infinity.  Hence, a configuration of sixteen nodes and sixteen tropes is induced on $\mathcal{K}_\AA$, where each trope contains six nodes and each node is contained in six tropes. This configuration is called the \emph{$16_{6}$ configuration on the singular Kummer surface} $\mathcal{K}_\AA$.
\subsection{Kummer surfaces from odd characteristics}
\label{ssec:Kummer_odd}
The Jacobian $\operatorname{Jac}{(\CC)}$ of a smooth genus two curve $\CC$ is birational to the symmetric product $\operatorname{Sym}^2(\CC)$. Let us consider the function field  of $\operatorname{Sym}^2(\CC) = (\CC\times\CC)/\langle \sigma \rangle$ where $\sigma$ interchanges the copies of $\CC$. For a smooth genus-two curve $\CC$ in Rosenhain form~(\ref{Eq:Rosenhain}), the function field of the variety $\operatorname{Sym}^2(\CC)/\langle \imath \times  \imath \rangle$ -- where $\imath \times  \imath$ is the involution on $\operatorname{Sym}^2(\CC)$ induced by the hyperelliptic involution $\imath$ on $\CC$ -- is generated by $z_1=z^{(1)}z^{(2)}$, $z_2=x^{(1)}z^{(2)}+x^{(2)}z^{(1)}$, $z_3=x^{(1)}x^{(2)}$, and $\tilde{z}_4=y^{(1)}y^{(2)}$, subject to the relation
\begin{equation}
\label{kummer_middle}
 \tilde{z}_4^2 = z_1 z_3 \big(  z_1  - z_2 +  z_3 \big)  \prod_{i=1}^3 \big( \lambda_i^2 \, z_1  -  \lambda_i \, z_2 +  z_3 \big) \;.
\end{equation}
Equation~(\ref{kummer_middle}) is a special case of a \emph{double sextic surface}, i.e., the double cover of $\mathbb{P}^2$ branched on the union of six lines (and hence over a sextic curve). The minimal resolution of a double sextic surface is always a K3 surface; see \cite{MR1922094}. Equation~(\ref{kummer_middle}) is known as \emph{Shioda sextic} associated with $\AA=\operatorname{Jac}{(\CC)}$; see \cite{MR2296439}.  It follows that the minimal resolution of $\mathcal{W}$ is isomorphic to the Kummer surface $\operatorname{Kum}(\AA)$. 
\par In Equation~(\ref{kummer_middle}) the double cover of $\mathbb{P}^2= \mathbb{P}(z_1, z_2, z_3)$ is branched along the following six lines $\mathsf{T}_i$ with $1 \le i \le 6$, given by
\begin{equation}
\label{eqn:6lines}
 \lambda_i^2 \, z_1  -  \lambda_i \, z_2 +  z_3 =0 \; \text{with $1\le i \le 3$}, \quad  z_1=0,\quad  z_1  - z_2 +  z_3=0,  \quad  z_3=0 \,.
\end{equation}
These six lines are precisely the aforementioned odd tropes and tangent to the common conic $K_2= z_2^2 - 4 \, z_1 z_3=0$. Conversely, it is easy to see that any six lines tangent to a common conic can always be brought into the form of Equations~(\ref{eqn:6lines}) using a projective transformation.  Thus, we can rewrite Equation~\eqref{kummer_middle} as
\begin{equation}
 \tilde{z}_4^2 = \mathsf{T}_1 \mathsf{T}_2 \mathsf{T}_3 \mathsf{T}_4 \mathsf{T}_5 \mathsf{T}_6 \;.
\end{equation}
Moreover, one easily checks that Moreover, there are fifteen linear relations between the odd tropes, and a generating set is given by
\begin{equation}
\label{eqn:tropes_QR}
\begin{split}
 \mathsf{T}_1 & = (1-\lambda_1) \mathsf{T}_4 +\lambda_1 \mathsf{T}_5 + \lambda_1 (\lambda_1-1) \mathsf{T}_6,\\
 \mathsf{T}_2 & = (1-\lambda_2) \mathsf{T}_4 +\lambda_2 \mathsf{T}_5 +  \lambda_2 (\lambda_2-1) \mathsf{T}_6,\\
 \mathsf{T}_3 & = (1-\lambda_3) \mathsf{T}_4 +\lambda_3 \mathsf{T}_5 +  \lambda_3 (\lambda_3-1) \mathsf{T}_6.
\end{split}
\end{equation}
Humbert proved that the Kummer plane $(\mathbb{P}^2; \mathsf{T}_1, \dots, \mathsf{T}_6)$ inherits essential information of the principally polarized Abelian surface $\AA=\operatorname{Jac}{(\CC)}$ itself; see \cite{Humbert1901}.
\par We consider the blow up $p\colon \widehat{\AA} \longrightarrow \AA$, replacing the sixteen order-two points with the exceptional curves $E_1, \dots, E_{16}$. The linear system $|4 p^* \Theta - \sum E_i|$ determines a morphism of degree two from $\widehat{\AA}$ to a complete intersection of three quadrics in $\mathbb{P}^5$.  The net spanned by these quadrics is isomorphic to $\mathbb{P}^2$ with a discriminant locus corresponding to the union of six lines~\eqref{eqn:6lines}.   In \cite{MR4421430} we showed the following:
\begin{thm}
\label{thm_quadrics}
The Kummer surface $\operatorname{Kum}(\AA)$ associated with the Jacobian $\AA=\operatorname{Jac}(\CC)$ of a genus-two curve~$\CC$ in Rosenhain normal form~(\ref{Eq:Rosenhain}) is the complete intersection of three quadrics in $\mathbb{P}^5=\mathbb{P}(\mathsf{t}_1:\cdots:\mathsf{t}_6)$ given by
\begin{equation}
\label{Kum:quadrics}
\begin{split}
 \mathsf{t}^2_1 & = (1-\lambda_1) \mathsf{t}^2_4 +\lambda_1 \mathsf{t}^2_5 +  \lambda_1 (\lambda_1-1) \mathsf{t}^2_6,\\
 \mathsf{t}^2_2 & = (1-\lambda_2) \mathsf{t}^2_4 +\lambda_2 \mathsf{t}^2_5 + \lambda_2 (\lambda_2-1) \mathsf{t}^2_6,\\
 \mathsf{t}^2_3 & = (1-\lambda_3) \mathsf{t}^2_4 +\lambda_3 \mathsf{t}^2_5 +  \lambda_3 (\lambda_3-1) \mathsf{t}^2_6.
\end{split}
\end{equation}
\end{thm}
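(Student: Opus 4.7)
The plan is to upgrade the three linear identities~\eqref{eqn:tropes_QR} among the six odd tropes $\mathsf{T}_1,\ldots,\mathsf{T}_6$, regarded as linear forms on the Kummer plane $\mathbb{P}^2 = \mathbb{P}(z_1:z_2:z_3)$, into three quadratic identities by introducing global ``square roots'' $\mathsf{t}_i$ with $\mathsf{t}_i^2=\mathsf{T}_i$. These square roots will be realized as the six $(-\mathbb{I})$-antiinvariant (``odd'') sections of $\mathscr{L}^{\otimes 4}$ on the blow-up $\widehat{\AA}$, and the morphism they define should embed $\Kum(\AA)$ into $\mathbb{P}^5$ as the complete intersection of the three quadrics.

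\textbf{Step 1: Constructing the sections and relations.} On $\AA$, the space $H^0(\AA,\mathscr{L}^{\otimes 4})$ has dimension $16$ and decomposes under $-\mathbb{I}$ into a $10$-dimensional even subspace plus a $6$-dimensional odd subspace. Every odd section vanishes at all sixteen $2$-torsion points, because $s(-P) = -s(P)$ forces $s(P)=0$ at a fixed point of $-\mathbb{I}$. On the blow-up $p\colon\widehat{\AA}\to\AA$ the odd sections therefore furnish a basis of $H^0(\widehat{\AA},4p^\ast\Theta-\sum_{j=1}^{16}E_j)$, which I label $\mathsf{t}_1,\ldots,\mathsf{t}_6$ in correspondence with the six odd Theta divisors. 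The associated morphism $\widehat{\AA}\to\mathbb{P}^5$ is projectively $(-\mathbb{I})$-invariant, since the sign flips cancel in projective coordinates, so it factors through $\Kum(\AA)=\widehat{\AA}/\langle-\mathbb{I}\rangle$. The squares $\mathsf{t}_i^2$ are $(-\mathbb{I})$-invariant sections of $\mathscr{L}^{\otimes 8}$ whose zero divisors are exactly the pullbacks of the tropes $\mathsf{T}_i$ from the Shioda plane; this yields the tautological identifications $\mathsf{t}_i^2=\mathsf{T}_i$, and the three linear dependencies~\eqref{eqn:tropes_QR} translate directly into the three quadratic relations~\eqref{Kum:quadrics}.

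\textbf{Step 2: Degree match and equality.} For the induced morphism $\varphi\colon\Kum(\AA)\hookrightarrow\mathbb{P}^5$ to be a closed embedding, one checks that the odd sections separate points and tangent directions on $\Kum(\AA)$---this is standard in the theory of Kummer embeddings via odd Theta functions, using very ampleness of $\mathscr{L}^{\otimes 4}$ on $\AA$ together with the ability of odd sections to distinguish antipodal orbits $\{z,-z\}$ away from the $2$-torsion. An intersection-number computation,
\begin{equation*}
\deg \varphi\bigl(\Kum(\AA)\bigr) \;=\; \tfrac{1}{2}\bigl(4p^\ast\Theta-\textstyle\sum_j E_j\bigr)^{\!2} \;=\; \tfrac{1}{2}\bigl(16\cdot 2 - 16\bigr) \;=\; 8,
\end{equation*}
then matches the B\'ezout degree $2\cdot 2\cdot 2 = 8$ of the complete intersection $X$ of the three quadrics. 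For generic Rosenhain parameters $X$ is irreducible and smooth (so a K3 surface of degree $8$, with trivial canonical bundle by adjunction, verified by a Jacobian computation); hence the containment $\varphi(\Kum(\AA))\subseteq X$ forces equality, and the non-generic cases follow by specialization.

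\textbf{Main obstacle.} The delicate step is Step 1: verifying that the six odd sections really span $H^0(\widehat{\AA},4p^\ast\Theta-\sum_j E_j)$ and that no further relations beyond the three stated quadrics are needed, and moreover that the descended morphism is a closed embedding rather than merely a birational one. The bookkeeping is subtle because each odd Theta divisor contains exactly six of the sixteen $2$-torsion points (the ``$16_6$ configuration''), so the pullback $p^\ast\Theta$ decomposes non-trivially into proper transform plus exceptional components; these contributions must be tracked to compute intersection numbers correctly and to rule out higher-degree syzygies. Once that analysis is in place, the final identification $\varphi(\Kum(\AA))=X$ follows cleanly from the degree match.
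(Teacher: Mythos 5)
Your proposal is correct and follows essentially the same route as the paper: the paper likewise realizes the $\mathsf{t}_i$ as the six odd sections of $|4p^*\Theta-\sum E_j|$ (the odd level-$(2,2)$ Theta functions, whose squares are the tropes of the Shioda sextic) and obtains the three quadrics directly from the rank-three linear relations~\eqref{eqn:tropes_QR}. The only difference is that you make explicit the final identification of the image with the full complete intersection via the degree count $\tfrac12(4p^*\Theta-\sum E_j)^2=8=2^3$, a verification the paper leaves implicit.
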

\begin{proof}
The Shioda sextic is given by the double cover of $\mathbb{P}^2$ branched along the reducible sextic that is the union of the six lines given by the tropes $\mathsf{T}_1, \dots, \mathsf{T}_6$. The tropes $\mathsf{T}_1, \dots, \mathsf{T}_6$ satisfy fifteen linear relations of rank three equivalent to Equations~(\ref{eqn:tropes_QR}). Introducing $\mathsf{T}_i=\mathsf{t}_i^2$ such that $\tilde{z}_4=\mathsf{t}_1 \mathsf{t}_2 \mathsf{t}_3 \mathsf{t}_4 \mathsf{t}_5 \mathsf{t}_6$, the (smooth) Kummer surface $\operatorname{Kum}(\AA)=\widehat{\AA}/\langle -\mathbb{I} \rangle$ is the complete intersection of three quadrics in $\mathbb{P}^5 \ni [\mathsf{t}_1:\mathsf{t}_2:\mathsf{t}_3:\mathsf{t}_4:\mathsf{t}_5:\mathsf{t}_6]$ given by Equations~(\ref{Kum:quadrics}).
\end{proof}
The following fact is well known: 
\begin{rem}
\label{fact:sections2_2}
For the principally polarized Abelian surface $(\AA,\mathscr{L})$, a basis of sections for $\mathscr{L}^4$, called the Theta functions of level $(2,2)$,  is given by the odd genus 2 Theta functions that we denote by $\theta_i(z)$ for $11 \le i \le 16$.
\end{rem}
In \cite{MR4421430}*{Lemma~4.11} it was shown that Thomae's formula provides an explicit relation between tropes and the Theta functions used in Theorem~\ref{thm_quadrics}. In particular, it was shown that the following holds:
\begin{rem}
\label{lem:bijection_tropes_thetas}
There is a bijection between the six tropes $\mathsf{T}_1, \dots, \mathsf{T}_6$ and the squares of the odd Theta functions $\theta^2_{11}(z), \dots,\theta^2_{16}(z)$ such that Equations~\eqref{eqn:6lines} are induced by the Mumford relations for Theta functions.
\end{rem}
\par The Shioda sextic is closely related to two quartic equations, known as the \emph{Baker quartic} and \emph{Cassels-Flynn quartic}. The Baker quartic is obtained directly from the Shioda sextic in Equation~\eqref{eqn:6lines}, using parameters
\begin{equation}
\label{eqn:Ls}
\begin{array}{lclclcl}
 L_4 & = & 1 + \lambda_1 + \lambda_2 + \lambda_3 \,, && L_3 & = &  \lambda_1 + \lambda_2 + \lambda_3 +  \lambda_1 \lambda_2 + \lambda_2 \lambda_3 + \lambda_1 \lambda_3 \,,\\
 L_1 & = & \lambda_1 \lambda_2 \lambda _3 \,, && L_2 & = &   \lambda_1 \lambda_2 + \lambda_2 \lambda_3 + \lambda_1 \lambda_3 +  \lambda_1 \lambda_2 \lambda _3 \,,
\end{array}
\end{equation}
and the variable transformation given by
\begin{equation}
\label{eqn:Baker_transfo}
\begin{array}{lcl}
 \multicolumn{3}{c}{\mathbf{W} =  K_2 \, z_1 \,, \qquad \mathbf{X} =  K_2 \, z_2 \,, \qquad \mathbf{Y} =  K_2 \, z_3 \,,} \\
 \mathbf{Z} = 2 \tilde{z}_4 - \Big( L_1 z_1^2 z_2 - 2 L_2 z_1^2 z_3 + L_3 z_1 z_2 z_3 - 2 L_4 z_1 z_3^2 + z_2 z_3^2 \Big)  \,.
\end{array}
\end{equation}
Equation~(\ref{kummer_middle}) is then equivalent to the quartic surface in $\mathbb{P}^3 = \mathbb{P}( \mathbf{W}, \mathbf{X}, \mathbf{Y}, \mathbf{Z})$ given by the vanishing locus of the following \emph{Baker determinant}, i.e.,
\begin{equation}
\label{eqn:Baker_det}
  \left| \begin{array}{cccc} 
  0 				& L_1 \mathbf{W}				& -\mathbf{Z}					& \mathbf{Y} \\
  L_1 \mathbf{W}	& 2 L_2 \mathbf{W}	+ 2 \mathbf{Z} 	& L_3 \mathbf{W} - \mathbf{Y}		& \mathbf{X} \\
  -\mathbf{Z}		&  L_3 \mathbf{W} - \mathbf{Y}		& 2 L_4 \mathbf{W}	- 2 \mathbf{X}	& \mathbf{W}\\
  \mathbf{Y}		& \mathbf{X}					& \mathbf{W}					& 0
 \end{array} \right| = 0 \,.
\end{equation}  
The Baker determinant was first derived in \cite{MR1554977}. 
\par In addition to $K_2$ introduced above, let us also define homogeneous polynomials $K_l=K_l(z_1,z_2,z_3)$ of degree $4-l$ for $l=0,1$ with
\begin{equation}
\label{eqn:Ks}
\begin{split}
K_2 =& \; z_2^2 -4 z_1z_3 \,, \qquad K_1 = -2 z_2 z_3^2 - 2 L_1 z_1^2 z_2 + 4 L_2 z_1^2z_3 - 2 L_3 z_1 z_2 z_3 + 4 L_4 z_1 z_3^2 \,,\\
K_0 =& \;L_1^2 z_1^4 - 2 L_1 L_3 z_1^3 z_3 + (2L_1 - 4 L_2 L_4 + L_3^2) z_1^2 z_3^2 + 4 L_1 L_4 z_1^2 z_2 z_3 \\
&+ 3 (-2 L_1 z_2^2 z_3 + 2 L_2 z_2 z_3^2 - L_3 z_3^3) z_1+ z_3^4 \,.
\end{split}
\end{equation}
A variable transformation in Equation~(\ref{kummer_middle}), given by
\begin{equation}
\label{eqn:transfo_1}
 \tilde{z}_4 = \frac{1}{4} \left( K_2 z_4  + 2 K_1 \right) \,,
\end{equation}
or, equivalently, the variable transformation  in Equation~(\ref{eqn:Baker_det}), given by
\begin{equation}
\label{eqn:transfo_2}
 \mathbf{W} =  K_2 \, z_1 \,, \qquad \mathbf{X} =  K_2 \, z_2 \,, \qquad \mathbf{Y} =  K_2 \, z_3 \,, \qquad \mathbf{Z} = K_2 z_4 + K_1 \,,
\end{equation} 
yields the quartic  projective surface $\mathcal{K}$ in $\mathbb{P}^3=\mathbb{P}(z_1,z_2,z_3,z_4)$ given by 
\begin{equation}
\label{kummer}
  K_2(z_1,z_2,z_3) \; z_4^2 \; + \; K_1(z_1,z_2,z_3)\; z_4 \; + \; K_0(z_1,z_2,z_3) = 0 \;.
\end{equation}
The quartic appeared in Cassels and Flynn \cite{MR1406090}*{Sec.~3} and is called the \emph{Cassels-Flynn quartic}.  Recall from Proposition~\ref{compactifications} that $\Q(\lambda_1, \lambda_2, \lambda_3)$ is the rational function field  of the moduli space $\mathbb{A}_2(2)$. In summary, for the principally polarized Abelian surface $\AA$ with the standard Theta divisor $\Theta \cong [\CC]$ and polarizing line bundle $\mathscr{L} = \mathcal{O}_\AA(\Theta)$ we have the following; see \cite{MR4323344}:
\begin{prop}
\label{prop:Kummers}
Assume that $\lambda_1, \lambda_2, \lambda_3$ are the Rosenhain roots of a smooth genus-two curve $\CC$ in Equation~(\ref{Eq:Rosenhain}).  The surfaces in Equations~(\ref{kummer_middle}) and~(\ref{eqn:Baker_det}) are birational equivalent over $\mathbb{Q}(\lambda_1, \lambda_2, \lambda_3)$.  In particular, the quartic hypersurfaces are isomorphic to the singular Kummer variety $\mathcal{K}_\AA$ associated with the principally polarized Abelian surface $(\AA, \mathscr{L})$.
\end{prop}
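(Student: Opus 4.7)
The plan is to verify the explicit birational maps already written down in Equations~(\ref{eqn:transfo_1}) and~(\ref{eqn:transfo_2}) as polynomial identities, and then upgrade the resulting birational equivalences to an isomorphism with the singular Kummer variety $\mathcal{K}_\AA$ by matching projective models. The three surfaces to connect are the Shioda sextic~(\ref{kummer_middle}), the Cassels--Flynn quartic~(\ref{kummer}), and the Baker determinantal quartic~(\ref{eqn:Baker_det}); the substitutions~(\ref{eqn:transfo_1}) and~(\ref{eqn:transfo_2}) will serve as the bridges.

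First I would verify that the substitution $\tilde z_4 = (K_2 z_4 + 2 K_1)/4$ of~(\ref{eqn:transfo_1}) converts~(\ref{kummer_middle}) into~(\ref{kummer}). Squaring and clearing denominators reduces the claim to the polynomial identity
\[
4 K_0 K_2 + K_1^2 \;=\; 4\, z_1 z_3 (z_1 - z_2 + z_3) \prod_{i=1}^{3} (\lambda_i^2 z_1 - \lambda_i z_2 + z_3),
\]
which is symmetric in the Rosenhain roots and may be expanded in terms of the elementary functions $L_1,\dots,L_4$ of~(\ref{eqn:Ls}). The substitution is invertible on the Zariski open set $\{K_2 \neq 0\}$, the complement of the common tangent conic of the six tropes, and is therefore birational over $\mathbb{Q}(\lambda_1,\lambda_2,\lambda_3)$.

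Second, I would verify that the substitution in~(\ref{eqn:transfo_2}) transforms~(\ref{kummer}) into~(\ref{eqn:Baker_det}). Using $\mathbf{W} = K_2 z_1$, $\mathbf{X} = K_2 z_2$, $\mathbf{Y} = K_2 z_3$, $\mathbf{Z} = K_2 z_4 + K_1$, and expanding the $4\times 4$ Baker determinant along, say, the last column (which isolates the $\mathbf{Z}$-dependence into a quadratic in $z_4$), one obtains an expression of the shape $K_2^{\,m} \cdot (K_2 z_4^2 + K_1 z_4 + K_0)$ for a suitable exponent $m$, which vanishes exactly on the Cassels--Flynn quartic. Again the substitution is invertible on $\{K_2 \neq 0\}$, so composing with the map of Step~1 yields the birational equivalence between~(\ref{kummer_middle}) and~(\ref{eqn:Baker_det}).

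Finally, to upgrade to the claim that the two quartic hypersurfaces are isomorphic to $\mathcal{K}_\AA$, I would invoke the embedding discussion preceding Theorem~\ref{thm_quadrics}: the linear system $|2\Theta|$ on $\AA$ factors through $\AA \to \mathcal{K}_\AA$ and embeds $\mathcal{K}_\AA$ as a quartic in $\mathbb{P}^3$ with sixteen nodes forming a $16_6$ configuration with the sixteen tropes. Both quartics under consideration are normal projective quartic surfaces in $\mathbb{P}^3$, and under the birational maps above the six odd tropes~(\ref{eqn:6lines}) are sent to the singular plane sections of~(\ref{eqn:Baker_det}) and~(\ref{kummer}) whose incidence with the sixteen nodes reproduces the $16_6$ configuration. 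Since the image of $\mathcal{K}_\AA$ under the complete linear system $|2\Theta|$ is determined up to projective equivalence in $\mathbb{P}^3$, any birationally equivalent nodal quartic with the correct incidence pattern must agree with it as a projective variety, giving the asserted isomorphism.

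The principal obstacle is the polynomial identity asserted in Step~1 (and the analogous expansion of the $4\times 4$ determinant in Step~2): while elementary in principle, these are computations of total degree eight in $(z_1,z_2,z_3)$ with coefficients polynomial in $L_1,\dots,L_4$, most cleanly discharged by symbolic computation. Once these identities are in hand, the remainder of the argument is bookkeeping: ensuring that the indeterminacy loci $\{K_2 = 0\}$ are proper closed subsets so that the maps are genuinely birational over $\mathbb{Q}(\lambda_1,\lambda_2,\lambda_3)$, and that the $16_6$ incidence carries through each substitution to pin down the isomorphism class as that of $\mathcal{K}_\AA$.
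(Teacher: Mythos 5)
Your proposal follows essentially the same route as the paper, which establishes the proposition by exhibiting the explicit coordinate changes~(\ref{eqn:Baker_transfo}), (\ref{eqn:transfo_1}), (\ref{eqn:transfo_2}) linking the Shioda sextic, the Cassels--Flynn quartic and the Baker determinant, and then identifying the resulting $16$-nodal quartics with the image of $|2\Theta|$ (the detailed verification being deferred to \cite{MR4323344}). One small correction: the polynomial identity your Step~1 reduces to is the discriminant identity $K_1^2 - 4K_0K_2 = 16\, z_1 z_3(z_1-z_2+z_3)\prod_{i=1}^{3}(\lambda_i^2 z_1 - \lambda_i z_2 + z_3)$ rather than the sum $4K_0K_2+K_1^2$ you display --- completing the square in $z_4$ turns the Cassels--Flynn quartic into $(2K_2 z_4 + K_1)^2 = K_1^2 - 4K_0K_2$, which is what must match $16\tilde z_4^2$ --- but this sign slip does not affect the method.
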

\subsection{Kummer surfaces from even characteristics}
\label{sssec:Kummer_even}
In this section we give explicit algebraic equations for the singular Kummer variety $\mathcal{K}_\AA = \AA / \langle -\mathbb{I} \rangle$,  associated with a principally polarized Abelian variety $(\AA, \mathscr{L})$: they are called the \emph{Hudson quartic}, \emph{G\"opel quartic}, and \emph{Rosenhain quartic}; see results in \cite{MR2062673}. 
\par Let $\mathscr{L}$ again be the symmetric ample line bundle of an Abelian surface $\AA$ defining its  principal polarization and consider the rational map $\varphi_2\colon \AA \longrightarrow \bP^3$ associated with the line bundle $\mathscr{L}^2$. Its image $\varphi_2(\AA)$  is a quartic surface in $\bP^3$ which in projective coordinates $[w:x:y:z]$ can be written as
\begin{gather}
\label{Eq:QuarticSurfaces12}
    0 = \xi_0 \, (w^4+x^4+y^4+z^4)  + \xi_4 \, w x y z \qquad \\
\nonumber
    +\xi_1 \, \big(w^2 z^2+x^2 y^2\big)  +\xi_2 \, \big(w^2 x^2+y^2 z^2\big)  +\xi_3 \, \big(w^2 y^2+x^2 z^2\big) ,
\end{gather}
with $[\xi_0:\xi_1:\xi_2:\xi_3:\xi_4] \in \bP^4$. Any general member of the family~(\ref{Eq:QuarticSurfaces12}) is smooth. As soon as the surface is singular at a general point, it must have sixteen singular nodal points because of its symmetry. The discriminant turns out to be a homogeneous polynomial of degree eighteen in the parameters $[\xi_0:\xi_1:\xi_2:\xi_3:\xi_4] \in \bP^4$ and was determined in \cite{MR2062673}*{Sec.~7.7 (3)}. Thus, the Kummer surfaces form an open set among the hypersurfaces in Equation~(\ref{Eq:QuarticSurfaces12}) with parameters $[\xi_0:\xi_1:\xi_2:\xi_3:\xi_4] \in \bP^4$, namely the ones that make the only irreducible factor of degree three in the discriminant vanish, i.e.,
\begin{equation}
 \xi_0 \, \big( 16 \xi_0^2 - 4 \xi_1^2-4 \xi_2^2 - 4 \xi_3^3+ \xi_4^2\big) + 4 \, \xi_1 \xi_2 \xi_3 =0.
\end{equation}
Setting $\xi_0=1$ and using the affine moduli $\xi_1=-A$, $\xi_2=-B$, $\xi_3=-C$, $\xi_4=2 D$, we obtain the normal form of a nodal quartic surface. Thus, we
say that a \emph{Hudson quartic} is the surface in $\bP^3 = \bP(w,x,y,z)$ given by
\begin{gather}
\label{Goepel-Quartic}
  0 = w^4+x^4+y^4+z^4  + 2  D  w x y z \\
\nonumber
    - A  \big(w^2 z^2+x^2 y^2\big)  - B \big(w^2 x^2+y^2 z^2\big)  - C  \big(w^2 y^2+x^2 z^2\big)  \;, 
\end{gather}
where $A, B, C, D \in \C$ such that
\begin{equation}
\label{paramGH}
 D^2 = A^2 + B^2 + C^2 + ABC - 4\;.
\end{equation}
The Hudson quartic in Equation~(\ref{Goepel-Quartic}) is invariant under the transformations changing signs of two coordinates, generated by 
\[
 [w:x:y:z] \to [-w:-x:y:z], \quad [w:x:y:z] \to [-w:x:-y:z],
\] 
and under the permutations of variables, generated by $[w:x:y:z] \to [x:w:z:y]$ and $[w:x:y:z] \to [y:z:w:x]$. 
\par A different quartic hypersurface equation is due to Kummer~\cite{MR1579281} and Borchardt~\cite{MR1579732}. We say that a \emph{G\"opel quartic} is the surface in $\bP^3 = \bP(P,Q,R,S)$ given by
\begin{equation}
\label{Kummer-Quartic}
    \Phi^2 - 4 \, \delta^2 SPQR  =0\;,
\end{equation}
with 
\begin{equation}
\Phi= P^2 + Q^2 + R^2 +S^2 - \alpha \, \big( PS + QR \big)  - \beta \,  \big( PQ + RS \big) - \gamma \,  \big(PR+QS\big) \;,
\end{equation}
where $\alpha, \beta, \gamma, \delta \in \C$ such that
\begin{equation}
\label{paramG}
 \delta^2 = \alpha^2 + \beta^2 + \gamma^2 + \alpha \beta \gamma -4 \;.
\end{equation}

We have the following:
\begin{prop}
\label{lem:GH}
The Hudson quartic in Equation~(\ref{Goepel-Quartic}) is isomorphic to the G\"opel quartic in Equation~(\ref{Kummer-Quartic}), for suitable sets of parameters $(A,B,C,D)$ satisfying Equation~(\ref{paramGH}) and $(\alpha, \beta, \gamma,\delta)$ satisfying Equation~(\ref{paramG}). In particular, the quartic hypersurfaces are isomorphic to the singular Kummer variety $\mathcal{K}_\AA$ associated with the principally polarized Abelian surface $(\AA, \mathscr{L})$.
\end{prop}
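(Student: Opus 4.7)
The plan is to construct an explicit linear transformation $\varphi\colon \bP^3 \to \bP^3$ between the Hudson and G\"opel models, motivated by the second principal transformations of degree two, equations~\eqref{Eq:degree2doubling} and~\eqref{Eq:degree2doublingR}. These doubling formulas express the theta constants with characteristics $[0,b]$ in terms of the dual theta constants with characteristics $[a,0]$ via a $4\times 4$ Hadamard matrix $H$ over $\mathbb{F}_2^2$; this matrix is precisely the Fourier transform on a G\"opel subgroup of $\AA[2]$ and should give the linear map interchanging the two coordinate systems on $\mathcal{K}_\AA$.

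First I would fix the Hadamard-type map $\varphi$ and pull back the Hudson quartic \eqref{Goepel-Quartic} along $\varphi$, expanding the result in the source coordinates $(P,Q,R,S)$ using the orthogonality $H H^T = 4\, I$. I expect the symmetric fourth-power sum $w^4+x^4+y^4+z^4$ to reorganize as a linear combination of $P^4+Q^4+R^4+S^4$, the three pair-products $(PS)^2+(QR)^2$, $(PQ)^2+(RS)^2$, $(PR)^2+(QS)^2$, and the monomial $PQRS$; the three quadratic-quadratic terms of the Hudson form should rearrange into multiples of the G\"opel pairings $PS+QR$, $PQ+RS$, $PR+QS$; and the monomial $wxyz$ should become a scalar multiple of $PQRS$. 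Collecting coefficients then yields an explicit dictionary $(\alpha,\beta,\gamma,\delta)$ in terms of $(A,B,C,D)$, which by symmetry should itself be a Hadamard-type involution on the four parameters.

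Next I would verify that the Hudson constraint $D^2 = A^2+B^2+C^2+ABC-4$ pulls back exactly to the G\"opel constraint $\delta^2 = \alpha^2+\beta^2+\gamma^2+\alpha\beta\gamma-4$. This step should be transparent once the parameter dictionary is in hand, because both relations are the vanishing locus of the single irreducible cubic factor in the discriminant of the five-parameter family~\eqref{Eq:QuarticSurfaces12}, and the Hadamard substitution preserves this discriminant locus by construction. For the second assertion, that either quartic realizes the singular Kummer variety $\mathcal{K}_\AA$, I would combine the preceding lemma (which identifies the image of $\varphi_2$ as a $16$-nodal quartic sitting inside~\eqref{Eq:QuarticSurfaces12}) with a dimension count: both constraint equations cut out a three-dimensional subvariety of $\bP^4_{(\xi_0:\cdots:\xi_4)}$, matching the dimension of $\mathbb{A}_2$, so the universal property of the embedding by $|2\Theta|$ forces the generic point of either family to coincide with some $\mathcal{K}_\AA$.

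The main obstacle is the polynomial identity verified in the first step, which matches two homogeneous quartics each depending on four parameters and is combinatorially bulky. A brute-force expansion is possible, but a cleaner path is to route the computation through Proposition~\ref{compactifications}: the coordinate systems $[\theta_1:\cdots:\theta_{10}]$ and $[\Theta_1:\cdots:\Theta_4]$ appear there as Satake coordinates on $\overline{\mathbb{A}_2(4,8)}$ and $\overline{\mathbb{A}_2(2,4)}$ intertwined by the square map and the Veronese map, so the desired identity should reduce to a routine check in these moduli-theoretic coordinates, with the parameter relation automatically inherited from the commutative diagram.
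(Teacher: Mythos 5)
Your overall strategy --- an explicit linear change of coordinates of Hadamard type, motivated by the degree-two theta transformations, followed by coefficient matching --- is exactly the shape of the paper's proof. However, as stated your map has a genuine gap: you propose to use the \emph{fixed} Hadamard matrix $H$ with $HH^{T}=4\,\mathbb{I}$, with no free parameters. That map cannot work. Pulling back the G\"opel quartic along the untwisted Hadamard substitution sends $\Phi$ to a diagonal quadratic form $c_w w^2+c_x x^2+c_y y^2+c_z z^2$ with $c_w=4-2\alpha-2\beta-2\gamma$, $c_x=4+2\alpha-2\beta+2\gamma$, etc., while $PQRS$ pulls back to a product of four unimodular linear forms whose $w^4,x^4,y^4,z^4$ coefficients are all $1$; the resulting quartic therefore has fourth-power coefficients $c_w^2-4\delta^2,\dots,c_z^2-4\delta^2$, which are generically pairwise distinct, so the pullback is \emph{not} of Hudson form. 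More structurally, a single fixed linear map induces a single linear map on coefficient space and cannot intertwine two three-dimensional families of quartics.

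The missing ingredient is a diagonal twist: the paper's transformation~(\ref{eq:LinearTransfo}) is the Hadamard matrix precomposed with $\operatorname{diag}(w_0,x_0,y_0,z_0)$ for an auxiliary point $[w_0:x_0:y_0:z_0]\in\bP^3$ (a node of the Hudson quartic by Lemma~\ref{lem:16nodes}, and equal to $[\Theta_1:\Theta_2:\Theta_3:\Theta_4]$ in the theta picture, cf.\ Equations~(\ref{2isogTHETA}), which involve the products $\Theta_j\,\Theta_j(2z)$ rather than a bare sign matrix). Both parameter sets $(A,B,C,D)$ and $(\alpha,\beta,\gamma,\delta)$ are then expressed as rational functions of this one point via Equations~(\ref{Eq:paramABCD}) and~(\ref{KummerParameter_c}); in particular the dictionary between them is \emph{not} a Hadamard-type involution on the four parameters, contrary to your guess. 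The three projective degrees of freedom in $[w_0:x_0:y_0:z_0]$ are precisely what let you equalize the fourth-power coefficients and sweep out the full three-dimensional moduli on both sides. Your suggested detour through Proposition~\ref{compactifications} is a legitimate way to organize the verification (it is essentially Propositions~\ref{prop:ThetaImageG} and~\ref{prop:ThetaImageGH}), but it does not repair the omission of the twist; once the twist is inserted, the rest of your outline goes through.
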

\begin{proof}
We first introduce complex numbers $[w_0:x_0:y_0:z_0] \in \bP^3$ such that
\begin{gather}
\nonumber
  A   = \frac{w_0^4 - x_0^4-y_0^4+z_0^4}{w_0^2 z_0^2 - x_0^2 y_0^2}, \quad
  B  = \frac{w_0^4 + x_0^4-y_0^4-z_0^4}{w_0^2 x_0^2 - y_0^2 z_0^2}, \quad
  C  = \frac{w_0^4 - x_0^4+y_0^4-z_0^4}{w_0^2 y_0^2 - x_0^2 z_0^2}, \\
\label{Eq:paramABCD}
   D  = \frac{w_0 x_0 y_0 z_0 \prod_{\epsilon, \epsilon' \in \lbrace \pm1 \rbrace} (w_0^2 + \epsilon x_0^2+ \epsilon' y_0^2+ \epsilon \epsilon' z_0^2)}{(w_0^2 z_0^2 - x_0^2 y_0^2)
   (w_0^2 x_0^2 - y_0^2 z_0^2)(w_0^2 y_0^2 - x_0^2 z_0^2)}, 
 \end{gather}
and
\begin{gather}
\nonumber
  \alpha  = 2 \, \frac{w_0^2 \, y_0^2 + x_0^2 \, z_0^2}{w_0^2 \, y_0^2 - x_0^2 \, z_0^2}, \quad
  \beta  = 2 \, \frac{w_0^2 \, x_0^2 + y_0^2 \, z_0^2}{w_0^2 \, x_0^2 - y_0^2 \, z_0^2}, \quad
  \gamma  =  2\, \frac{w_0^2 \, z_0^2 + x_0^2 \, y_0^2}{w_0^2 \, z_0^2 - x_0^2 \, y_0^2},\\
\label{KummerParameter_c}
  \delta^2 = 16 \, \frac{w_0^4 x_0^4 y_0^4 z_0^4 \, \prod_{\epsilon, \epsilon' \in \lbrace \pm1 \rbrace} (w_0^2 + \epsilon x_0^2+ \epsilon' y_0^2+ \epsilon \epsilon' z_0^2)}{\big(w_0^2 \, y_0^2 - x_0^2 \, z_0^2\big)^2 \, \big(w_0^2 \, x_0^2 - y_0^2 \, z_0^2\big)^2 \, \big(w_0^2 \, z_0^2 - x_0^2 \, y_0^2\big)^2}.
  \end{gather}
Then, the linear transformation given by
\begin{equation}
\label{eq:LinearTransfo}
\begin{split}
 P & =  w_0 \, w + x_0 \, x + y_0 \, y+ z_0 \, z \;,\\
 Q & =  w_0 \, w + x_0 \, x - y_0 \, y- z_0 \, z \;,\\
 R & =  w_0 \, w  - x_0 \, x - y_0 \, y+ z_0 \, z \;,\\
 S & =  w_0 \, w - x_0 \, x + y_0 \, y- z_0 \, z \;,
\end{split}
\end{equation} 
transforms Equation~(\ref{Kummer-Quartic}) into Equation~(\ref{Goepel-Quartic}). 
\end{proof}
\begin{lem}
\label{lem:16nodes}
Using the same notation as in Equation~(\ref{Eq:paramABCD}), the sixteen nodes of the Hudson quartic~(\ref{Goepel-Quartic}) are given by
\begin{gather*}
[w_0:x_0:y_0:z_0], [-w_0:-x_0:y_0:z_0], [-w_0:x_0:-y_0:z_0], [-w_0:x_0:y_0:-z_0],\\
[x_0:w_0:z_0:y_0], [-x_0:-w_0:z_0:y_0], [-x_0:w_0:-z_0:y_0], [-x_0:w_0:z_0:-y_0],\\
[y_0:z_0:w_0:x_0], [-y_0:-z_0:w_0:x_0], [-y_0:z_0:-w_0:x_0], [-y_0:z_0:w_0:-x_0],\\
[z_0:y_0:x_0:w_0], [-z_0:-y_0:x_0:w_0], [-z_0:y_0:-x_0:w_0], [-z_0:y_0:x_0:-w_0].
\end{gather*}
In particular, for generic parameters $(A,B,C,D)$, no node is contained in the coordinate planes $w=0$, $x=0$, $y=0$, or $z=0$.
\end{lem}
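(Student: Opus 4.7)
The plan is to exploit a 16-element symmetry group of the Hudson quartic and thereby reduce the statement to verifying that the single base point $[w_0:x_0:y_0:z_0]$ is a singular point of the quartic.

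First I would identify the relevant symmetry group $G\subset\mathrm{PGL}(4,\mathbb{C})$. Direct substitution shows that the defining polynomial $F(w,x,y,z)$ of~(\ref{Goepel-Quartic}) is invariant under the Klein four-group of coordinate permutations $V_4=\{e,(wx)(yz),(wy)(xz),(wz)(xy)\}$---these are exactly the permutations that permute among themselves the three unordered pairings $\{w,z\}\{x,y\}$, $\{w,x\}\{y,z\}$, $\{w,y\}\{x,z\}$ attached to the coefficients $-A,-B,-C$---and under the four projective sign changes $[w:x:y:z]\mapsto[\epsilon_0 w:\epsilon_1 x:\epsilon_2 y:\epsilon_3 z]$ with $\epsilon_0\epsilon_1\epsilon_2\epsilon_3=1$ (the parity condition required to preserve the $2Dwxyz$ term). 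Together these generate a group $G$ of order $16$ acting on the Hudson quartic, and an elementary orbit calculation identifies the $G$-orbit of $[w_0:x_0:y_0:z_0]$ with the sixteen points listed in the statement: the four rows correspond to applying the four elements of $V_4$ to the base point, and within each row the four entries correspond to the four admissible sign changes.

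Next I would verify that $[w_0:x_0:y_0:z_0]$ is a singular point of the Hudson quartic. Substituting $(w,x,y,z)=(w_0,x_0,y_0,z_0)$ together with the expressions~(\ref{Eq:paramABCD}) for $A,B,C,D$ into $F$ and clearing the common denominator $(w_0^2z_0^2-x_0^2y_0^2)(w_0^2x_0^2-y_0^2z_0^2)(w_0^2y_0^2-x_0^2z_0^2)$ reduces $F(w_0,x_0,y_0,z_0)=0$ to a polynomial identity in $w_0,x_0,y_0,z_0$, which ultimately follows from the factorization $w_0x_0y_0z_0\prod_{\epsilon,\epsilon'\in\{\pm1\}}(w_0^2+\epsilon x_0^2+\epsilon' y_0^2+\epsilon\epsilon' z_0^2)$ appearing in the numerator of $D$. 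The analogous substitutions into the four partial derivatives $\partial F/\partial w,\ldots,\partial F/\partial z$ yield four further polynomial identities showing that all gradient components vanish at $(w_0,x_0,y_0,z_0)$. Once the base point is shown to be singular, every element of its $G$-orbit is singular as well, producing sixteen singular points; by Proposition~\ref{lem:GH} the Hudson quartic is the singular Kummer variety $\mathcal{K}_\AA$, which carries exactly sixteen nodes in bijection with $\AA[2]$, so the orbit exhausts the node locus. The genericity claim is then immediate: the sixteen orbit points involve only the coordinate values $\pm w_0,\pm x_0,\pm y_0,\pm z_0$, so the assumption $w_0 x_0 y_0 z_0\neq 0$ forces every node to have all four coordinates nonzero.

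The principal obstacle is the polynomial identity underlying the nodal check: after clearing denominators the numerator of $F(w_0,x_0,y_0,z_0)$ has substantial degree, and the corresponding gradient identities are longer still. No new idea is required beyond careful bookkeeping---grouping terms by the four factors $(w_0^2+\epsilon x_0^2+\epsilon' y_0^2+\epsilon\epsilon' z_0^2)$ of $D$ and exploiting the $V_4$-symmetry of the computation---but organizing the cancellations efficiently, or simply delegating the verification to a computer algebra system, is where the real work of the proof lies.
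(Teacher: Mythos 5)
Your proposal is sound, and in fact the paper states this lemma with no proof at all, so there is nothing to compare it against line by line. Your two structural ingredients are both consistent with what the paper sets up elsewhere: the order-$16$ group generated by the even sign changes and the Klein four-group of double transpositions is exactly the symmetry group the paper records immediately after Equation~(\ref{Goepel-Quartic}), and the fact that the orbit of a single node must exhaust the singular locus follows from the paper's remark that a singular member of the family~(\ref{Eq:QuarticSurfaces12}) automatically has sixteen nodes (together with Proposition~\ref{lem:GH}). Two small points deserve attention. First, you should note explicitly that the sixteen orbit points are pairwise distinct for generic $[w_0:x_0:y_0:z_0]$ (coincidences such as $[w_0:x_0:y_0:z_0]=[x_0:w_0:z_0:y_0]$ occur only on proper subvarieties); without this the orbit need not account for all sixteen nodes. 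Second, the brute-force polynomial identity you defer to a computer algebra system can be avoided entirely using machinery already in the paper: the linear change of coordinates~(\ref{eq:LinearTransfo}) of Proposition~\ref{lem:GH} carries the Hudson quartic to the G\"opel quartic~(\ref{Kummer-Quartic}), whose nodes are listed in Lemma~\ref{lem:16nodesG}, and one checks directly that $[w:x:y:z]=[w_0:x_0:y_0:z_0]$ maps to the node $[p_0^2:q_0^2:r_0^2:s_0^2]$; equivalently, Proposition~\ref{prop:ThetaImageGH} identifies $[w_0:x_0:y_0:z_0]$ with the Theta-null point $[\Theta_1:\Theta_2:\Theta_3:\Theta_4]$, the image of a two-torsion point, which is a node by construction, and translation by half-periods realizes the sixteen points of your orbit as the images of all of $\AA[2]$. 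Either route replaces the large identity check with a short verification and is closer in spirit to how the paper derives its parameters.
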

We also set
\begin{gather}
\label{KummerParameter_thetas}
p_0^2 = w_0^2 + x_0^2+ y_0^2+z_0^2, \quad q_0^2 = w_0^2 + x_0^2- y_0^2-z_0^2,\\
\nonumber
r_0^2 = w_0^2 - x_0^2+ y_0^2-z_0^2, \quad s_0^2 = w_0^2 - x_0^2- y_0^2+z_0^2.
\end{gather}
We have the following:
\begin{lem}
\label{lem:16nodesG}
The nodes of the G\"opel quartic~(\ref{Kummer-Quartic}) are given by
\begin{gather*}
[p_0^2:q_0^2:r_0^2:s_0^2], [q_0^2:p_0^2:s_0^2:r_0^2], [r_0^2:s_0^2:r_0^2:q_0^2], [s_0^2:r_0^2:q_0^2:p_0^2], \\
[w_0x_0+y_0z_0	:w_0x_0-y_0z_0	:0				:0],				[w_0x_0-y_0z_0	:w_0x_0+y_0z_0	:0				:0], \\
[0				:0				:w_0x_0+y_0z_0	:w_0x_0-y_0z_0], 	[0				:0				:w_0x_0-y_0z_0	:w_0x_0+y_0z_0],\\
[w_0z_0+x_0y_0	:0				:w_0z_0-x_0y_0	:0],				[0				:w_0z_0+x_0y_0	:0				:w_0z_0-x_0y_0], \\
[w_0z_0-x_0y_0	:0				:w_0z_0+x_0y_0	:0],				[0				:w_0z_0-x_0y_0	:0				:w_0z_0+x_0y_0], \\
[w_0y_0+x_0z_0	:0				:0				:w_0y_0-x_0z_0],	[0				:w_0y_0+x_0z_0	:w_0y_0-x_0z_0	:0], \\
[0				:w_0y_0-x_0z_0	:w_0y_0+x_0z_0	:0],				[w_0y_0-x_0z_0	:0				:0				:w_0y_0+x_0z_0].
\end{gather*}
\end{lem}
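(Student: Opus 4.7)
The plan is to exploit the explicit linear isomorphism between the Göpel quartic and the Hudson quartic that was established in Proposition~\ref{lem:GH}. Since the change of coordinates~(\ref{eq:LinearTransfo}) is a \emph{linear} isomorphism of $\mathbb{P}^3$, it identifies the singular loci of the two quartic surfaces as sets. Consequently, the sixteen nodes of the Göpel quartic~(\ref{Kummer-Quartic}) are precisely the images under the map $(w,x,y,z) \mapsto (P,Q,R,S)$ of the sixteen nodes of the Hudson quartic, which are already listed in Lemma~\ref{lem:16nodes}. No node-counting argument is needed since the bijection guarantees that we recover all of them.

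To carry this out, I would partition the sixteen Hudson nodes into four blocks of four, according to the underlying permutation of the reference tuple $(w_0,x_0,y_0,z_0)$. For the identity block $[\epsilon_1 w_0 : \epsilon_2 x_0 : \epsilon_3 y_0 : \epsilon_4 z_0]$ (with $\prod \epsilon_i = 1$), substitution into~(\ref{eq:LinearTransfo}) produces diagonal combinations $\epsilon_1 w_0^2 \pm \epsilon_2 x_0^2 \pm \epsilon_3 y_0^2 \pm \epsilon_4 z_0^2$, which by the definitions~(\ref{KummerParameter_thetas}) of $p_0^2, q_0^2, r_0^2, s_0^2$ collapse to the first four entries of the claimed list (possibly after an overall sign). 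The remaining three blocks, whose Hudson representatives are permuted tuples such as $[x_0:w_0:z_0:y_0]$, are the interesting ones: in each such block the off-diagonal cross-terms $w_0 x_0 \pm x_0 w_0 = 2 w_0 x_0$ or $0$ appear, together with $y_0 z_0 \pm z_0 y_0 = 2 y_0 z_0$ or $0$, and the four sign variations within each block produce exactly the pairs of nodes of the form
\[
[\,w_0 x_0 + y_0 z_0 \;:\; w_0 x_0 - y_0 z_0 \;:\; 0 \;:\; 0\,], \quad [\,0\;:\;0\;:\;w_0 x_0 + y_0 z_0 \;:\; w_0 x_0 - y_0 z_0\,],
\]
and their sign-swapped companions. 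The other two permutation blocks yield analogously the nodes involving $w_0 z_0 \pm x_0 y_0$ and $w_0 y_0 \pm x_0 z_0$.

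The work is entirely bookkeeping; there is no real obstacle beyond organizing the sixteen substitutions so that the resulting Göpel coordinates are easy to read off. The only subtlety worth flagging is that four of the Hudson nodes map to Göpel nodes supported on a coordinate 2-plane (i.e., have two vanishing coordinates), which is a feature, not a bug: these correspond to the fact that each cross-term $w_0 x_0 \pm x_0 w_0$ degenerates once the transformation's sign pattern matches the permutation. A brief remark at the end that the listed points are pairwise distinct for generic $(w_0,x_0,y_0,z_0)$, so they exhaust the singular locus of the K3 surface $\mathcal{K}_{\AA}$, completes the argument.
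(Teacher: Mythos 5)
Your approach is correct and is exactly the one the paper implicitly relies on: Lemma~\ref{lem:16nodesG} is stated without proof, immediately after Proposition~\ref{lem:GH} and Lemma~\ref{lem:16nodes}, and the listed points are precisely the images of the sixteen Hudson nodes under the linear isomorphism~(\ref{eq:LinearTransfo}), which carries the singular locus of one quartic bijectively onto that of the other. Your block-by-block bookkeeping checks out; for instance $[x_0:w_0:z_0:y_0]$ maps to $[w_0x_0+y_0z_0:w_0x_0-y_0z_0:0:0]$ exactly as claimed. One remark worth adding: if you actually carry the substitution through for the first block, the node $[w_0:x_0:y_0:z_0]$ lands at $[p_0^2:q_0^2:s_0^2:r_0^2]$ (since $R=w_0^2-x_0^2-y_0^2+z_0^2=s_0^2$ and $S=w_0^2-x_0^2+y_0^2-z_0^2=r_0^2$), so the first four entries of the printed list appear with their last two coordinates transposed, and the printed third entry $[r_0^2:s_0^2:r_0^2:q_0^2]$ contains an evident typo; your computation is the one to trust here.
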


In~\cite{MR2062673}*{Prop.~\!10.3.2} yet another normal form for a nodal quartic surfaces was established. We say that a \emph{Rosenhain quartic} is the surface in $\bP^3 = \bP(Y_0, \dots, Y_3)$ given by
\begin{equation}
\label{Birkenhake-Lange-Quartic}
\begin{split}
a^2  \big(Y_0^2 Y_1^2 + Y_2^2 Y_3^2\big) + b^2 \big(Y_0^2 Y_2^2 + Y_1^2 Y_3^2\big) + c^2  \big(Y_0^2 Y_3^2 + Y_1^2 Y_2^2\big)\\
+ 2ab  \big(Y_0 Y_1 - Y_2 Y_3\big) \big(Y_0 Y_2 + Y_1 Y_3\big)  - 2ac \big(Y_0 Y_1 + Y_2 Y_3\big)  \big(Y_0 Y_3 + Y_1 Y_2\big) \\
+ 2 b c \big(Y_0 Y_2 - Y_1 Y_3\big)  \big(Y_0 Y_3 - Y_1 Y_2\big) + d^2  Y_0Y_1Y_2Y_3 =0 ,
\end{split}
\end{equation}
with $[a:b:c:d] \in \mathbb{P}^3$. Equation~(\ref{Birkenhake-Lange-Quartic}) is invariant under the Cremona transformation 
\begin{equation}
[ Y_0: Y_1: Y_2: Y_3] \  \mapsto \ [Y_1Y_2 Y_3: Y_0Y_2Y_3: Y_0Y_1Y_3: Y_0Y_1Y_2] \;.
\end{equation}
\par The following was proved in~\cite{MR2062673}*{Prop.~10.3.2}:
\begin{prop}
\label{lem:BL}
For generic parameters $[a:b:c:d] \in \mathbb{P}^3$, the Rosenhain quartic in Equation~(\ref{Birkenhake-Lange-Quartic}) is isomorphic to the singular Kummer variety $\mathcal{K}_\AA$ associated with the principally polarized Abelian surface $(\AA, \mathscr{L})$.
\end{prop}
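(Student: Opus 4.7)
The plan is to reduce the statement to Proposition~\ref{lem:GH}, which already identifies both the Hudson and Göpel quartic normal forms with the singular Kummer variety $\mathcal{K}_\AA$. I would proceed in three steps. First, I would verify that for generic $[a:b:c:d] \in \bP^3$ the Rosenhain quartic in Equation~(\ref{Birkenhake-Lange-Quartic}) is an irreducible quartic surface in $\bP^3$ whose singular locus consists of exactly sixteen nodes. Computing the Jacobian ideal, I expect the scheme of singular points to have length sixteen for generic parameters, and the invariance of the equation under the Cremona involution $[Y_0:Y_1:Y_2:Y_3] \mapsto [Y_1Y_2Y_3:Y_0Y_2Y_3:Y_0Y_1Y_3:Y_0Y_1Y_2]$ organizes these nodes into two orbits of eight, matching the decomposition of $\AA[2]$ under translation by the kernel of the polarization on the Theta group.

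Second, I would construct an explicit projective change of coordinates converting Equation~(\ref{Birkenhake-Lange-Quartic}) into the Hudson form~(\ref{Goepel-Quartic}) (or equivalently, via Proposition~\ref{lem:GH}, into the Göpel form~(\ref{Kummer-Quartic})). The natural ansatz is dictated by Proposition~\ref{compactifications}, which identifies $\overline{\mathbb{A}_2(2,4)}$ with $\bP^3$ via the even Theta constants $[\Theta_1:\Theta_2:\Theta_3:\Theta_4]$. I would therefore let $[a:b:c:d]$ play the role of $[\Theta_1:\Theta_2:\Theta_3:\Theta_4]$ and define the Hudson parameters $(A,B,C,D)$ through Equations~(\ref{Eq:paramABCD}) with $(w_0,x_0,y_0,z_0) = (a,b,c,d)$. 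The quadratic relation~(\ref{paramGH}) is then an algebraic identity in the Theta constants, automatically satisfied; a linear transformation of the form~(\ref{eq:LinearTransfo}) converts between the two coordinate systems.

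Third, I would verify that the transformed equation matches Equation~(\ref{Birkenhake-Lange-Quartic}) coefficient-by-coefficient, using the Frobenius identities~(\ref{Eq:FrobeniusIdentities}) and the principal transformations of degree two~(\ref{Eq:degree2doubling})--(\ref{Eq:degree2doublingR}) to rewrite the cross-terms in a form compatible with the Cremona-symmetric presentation. Combining this verification with Proposition~\ref{lem:GH} then identifies the Rosenhain quartic with $\mathcal{K}_\AA$ for generic $[a:b:c:d]$.

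The main obstacle is the second step: identifying the correct projective transformation. The Hudson form makes visible the sign-change and permutation symmetries of the $(\Z/2\Z)^4$-action on $\AA[2]$ through its monomial structure, whereas the Rosenhain form organizes the data through the Cremona involution acting on the monomials $Y_0Y_1Y_2Y_3$. Reconciling these two very different presentations of the automorphism structure of $\mathcal{K}_\AA$ requires the full Thomae-type dictionary of Lemma~\ref{ThomaeLemma}, together with the squaring map $\operatorname{Sq}$ and Veronese map $\operatorname{Ver}$ of Proposition~\ref{compactifications}. It is precisely this dictionary that will make the final coefficient comparison succeed and that justifies the genericity hypothesis, since the change of variables degenerates along the Humbert loci where the Theta constants vanish.
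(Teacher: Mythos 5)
Your overall strategy---reduce to Proposition~\ref{lem:GH} by exhibiting an explicit linear isomorphism between the Rosenhain quartic and the Hudson quartic---is the right one, and it is essentially what the paper does (the paper itself merely cites \cite{MR2062673}*{Prop.~10.3.2} for this proposition, but the lemma immediately following it carries out exactly this reduction). However, your second step contains a concrete error that would make the third step fail.

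The ansatz $(w_0,x_0,y_0,z_0)=(a,b,c,d)$ is wrong. The Rosenhain parameters $[a:b:c:d]$ are \emph{not} the even Theta constants $[\Theta_1:\Theta_2:\Theta_3:\Theta_4]$ (equivalently, the node $[w_0:x_0:y_0:z_0]$ of the Hudson quartic). The correct dictionary, which is what the paper's subsequent lemma establishes, expresses $a$, $b$, $c$ as specific degree-six polynomials in $(w_0,x_0,y_0,z_0)$, each a product of three quadrics, e.g.
\begin{equation*}
a = 4\,(w_0^2 z_0^2 - x_0^2 y_0^2)(w_0 y_0 + x_0 z_0), \qquad
b = (w_0^2+x_0^2-y_0^2-z_0^2)(w_0^2-x_0^2+y_0^2-z_0^2)(w_0 x_0 - y_0 z_0),
\end{equation*}
and similarly for $c$ and $d^2$; compare also Proposition~\ref{prop:ThetaImageR}. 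Moreover the linear substitution is not of the form~(\ref{eq:LinearTransfo}): its last two rows must use the \emph{permuted} coefficient vector $(z_0,y_0,x_0,w_0)$ rather than sign changes of $(w_0,x_0,y_0,z_0)$, as in Equation~(\ref{eq:LinearTransfo2}). The geometric reason your ansatz cannot work is visible in Remark~\ref{lem:bijection_tropes_thetas2}: the coordinate tetrahedron of the Rosenhain quartic is a Rosenhain tetrahedron containing an \emph{odd} trope (its coordinates are $[\theta_1^2(z):\theta_2^2(z):\theta_7^2(z):\theta_{12}^2(z)]$, involving the odd function $\theta_{12}$), whereas the G\"opel/Hudson coordinates are built entirely from a G\"opel tetrahedron of even data. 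No relabeling of the four even Theta constants bridges these two tetrahedra, so with your identification the coefficient comparison in step three would not close. Once the degree-six parameter assignment and the corrected linear map are substituted, the rest of your argument (reduction to Proposition~\ref{lem:GH}, genericity to avoid the degeneration locus) goes through.
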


We also have the following:
\begin{lem}
The Rosenhain quartic in Equation~(\ref{Birkenhake-Lange-Quartic}) is isomorphic to the Hudson quartic in Equation~(\ref{Goepel-Quartic}). 
\end{lem}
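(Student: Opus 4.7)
The plan is to reduce the statement to the two preceding results Proposition~\ref{lem:GH} and Proposition~\ref{lem:BL}: the former identifies the Hudson quartic of Equation~\eqref{Goepel-Quartic} with the singular Kummer variety $\mathcal{K}_\AA$ of a principally polarized Abelian surface $(\AA,\mathscr{L})$, and the latter does the same for the Rosenhain quartic of Equation~\eqref{Birkenhake-Lange-Quartic}. Granted these two identifications, both normal forms are realized as images of the same $\mathcal{K}_\AA$ under the complete linear system $|2\Theta|$, which has $h^0(\mathscr{L}^2) = 4$, so any two such embeddings differ by a projective automorphism of $\mathbb{P}^3$; this already gives the abstract isomorphism. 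The task is therefore to exhibit the corresponding linear change of coordinates explicitly and to match the parameter sets.

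Concretely, I would proceed in three steps. First, I would match moduli: given Rosenhain parameters $[a:b:c:d] \in \mathbb{P}^3$, compute the associated principally polarized Abelian surface $(\AA,\mathscr{L})$ (equivalently, the corresponding point in $\mathbb{A}_2$) and then read off Hudson parameters $(A,B,C,D)$ satisfying the constraint $D^2 = A^2 + B^2 + C^2 + ABC - 4$ of Equation~\eqref{paramGH} that realize the same moduli. The cleanest device is to pass through the even Theta-nulls from Section~\ref{sssec:relats_thetas}: by Thomae's formula (Lemma~\ref{ThomaeLemma}) and the quadratic relations in Equations~\eqref{Eq:degree2doubling} and~\eqref{Eq:degree2doublingR}, both quadruples of parameters can be expressed as rational functions of $\Theta_1,\Theta_2,\Theta_3,\Theta_4$, producing the rational parameter map $(a,b,c,d) \mapsto (A,B,C,D)$.

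Second, I would construct an explicit linear transformation $M \in GL_4(\C)$, modeled on Equation~\eqref{eq:LinearTransfo} which passes from the G\"opel to the Hudson normal form, sending the Rosenhain coordinates $[Y_0:Y_1:Y_2:Y_3]$ to Hudson coordinates $[w:x:y:z]$ in such a way that Equation~\eqref{Birkenhake-Lange-Quartic} becomes Equation~\eqref{Goepel-Quartic} with the parameters produced in the first step. The matrix $M$ is determined, up to the finite group generated by the sign changes and coordinate permutations preserving the Hudson normal form, by requiring it to send the $16$ nodes of the Rosenhain quartic bijectively onto the $16$ nodes of the Hudson quartic listed in Lemma~\ref{lem:16nodes}, in a manner compatible with the $\AA[2]$-torsor structure that the $16_6$-configuration on $\mathcal{K}_\AA$ induces on each node set.

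The main obstacle will be purely combinatorial bookkeeping: the symmetry group of the Rosenhain form contains the Cremona involution $[Y_0:Y_1:Y_2:Y_3] \mapsto [Y_1Y_2Y_3:Y_0Y_2Y_3:Y_0Y_1Y_3:Y_0Y_1Y_2]$, while that of the Hudson form sits inside the semidirect product of sign-changes and index permutations, so matching orbits forces a careful choice of node correspondence, and the resulting expressions for $(A,B,C,D)$ as rational functions of $(a,b,c,d)$ will be lengthy. Once the node correspondence is fixed and $M$ is written down, the final verification that the pullback of Equation~\eqref{Goepel-Quartic} along $M$ coincides, up to an overall scalar, with the polynomial in Equation~\eqref{Birkenhake-Lange-Quartic} is a routine though tedious polynomial identity best checked in a computer algebra system.
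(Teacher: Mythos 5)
Your proposal is correct and, in its substantive part, takes essentially the same route as the paper: the paper's proof simply writes down the Rosenhain parameters $a,b,c$ (and $d^2$) as explicit polynomials in the auxiliary quantities $(w_0,x_0,y_0,z_0)$ from Equation~(\ref{Eq:paramABCD}) and exhibits a linear change of coordinates (Equation~(\ref{eq:LinearTransfo2}), a close cousin of Equation~(\ref{eq:LinearTransfo}) with the last two rows built from $z_0,y_0,x_0,w_0$) that carries Equation~(\ref{Birkenhake-Lange-Quartic}) into Equation~(\ref{Goepel-Quartic}) -- exactly the explicit matrix, node-matching and parameter-matching computation you describe. Your preliminary abstract argument (both quartics are $|2\Theta|$-models of the same $\mathcal{K}_\AA$ by Propositions~\ref{lem:GH} and~\ref{lem:BL}, hence differ by an element of $\mathrm{PGL}_4$ once the moduli are matched) is a sound conceptual framing that the paper omits; it buys an existence statement for free, but the content of the lemma as the paper proves it is precisely the explicit transformation, so you would still need to carry out the computation you defer to a computer algebra system.
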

\begin{proof}
Using the same notation as in the proof of Proposition~\ref{lem:GH}, we set
\begin{equation}
 \begin{split}
 a &= 4 (w_0^2 z_0^2-x_0^2 y_0^2)(w_0 y_0+x_0 z_0),\\
 b &=(w_0^2+x_0^2-y_0^2-z_0^2)(w_0^2-x_0^2+y_0^2-z_0^2)(w_0x_0-y_0z_0) ,\\
 c & =(w_0^2-x_0^2-y_0^2+z_0^2)(w_0^2+x_0^2+y_0^2+z_0^2)(w_0x_0+y_0z_0) ,
 \end{split}
 \end{equation}
and a polynomial expression for $d^2$ of degree twelve that we do not write out explicitly. Then, the linear transformation given by
\begin{equation}
\label{eq:LinearTransfo2}
\begin{split}
 Y_0 & =  w_0 \, w + x_0 \, x + y_0 \, y+ z_0 \, z \;,\\
 Y_1 & =  w_0 \, w  + x_0 \, x - y_0 \, y- z_0 \, z \;,\\
 Y_2 & =  z_0 \, w+ y_0 \, x + x_0 \, y+ w_0 \, z \;,\\
 Y_3 & =  z_0 \, w + y_0 \, x - x_0 \, y- w_0 \, z \;,
\end{split}
\end{equation} 
transforms Equation~(\ref{Birkenhake-Lange-Quartic}) into Equation~(\ref{Goepel-Quartic}). 
\end{proof}
\subsection{An explicit model using Theta functions}
\label{computation1}
We provide an explicit model for the quartic hypersurfaces in terms of Theta functions. 
\par In addition to the identities from Section~\ref{sssec:relats_thetas}, we have the following identities for Theta functions with non-vanishing elliptic argument~\cites{MR0141643, MR0168805}:
\begin{equation}
\label{Eq:degree2doubling_z}
\begin{split}
 \theta_1 \, \theta_1(z) & = \Theta_1(z)^2 + \Theta_2(z)^2 + \Theta_3(z)^2 
 +\Theta_4(z)^2 \,,\\[0.2em]
 \theta_2 \, \theta_2(z) & = \Theta_1(z)^2 + \Theta_2(z)^2 - \Theta_3(z)^2 
 -\Theta_4(z)^2 \,,\\[0.2em]
  \theta_3 \, \theta_3(z) & = \Theta_1(z)^2 - \Theta_2(z)^2 - \Theta_3(z)^2 
 +\Theta_4(z)^2 \,,\\[0.2em]
  \theta_4 \, \theta_4(z) & = \Theta_1(z)^2 - \Theta_2(z)^2 + \Theta_3(z)^2 
 -\Theta_4(z)^2 \,,
\end{split}
\end{equation}
and
\begin{equation}
\label{Eq:Degree2doubling_z}
\begin{split}
 4 \, \Theta_1 \, \Theta_1(2z) & = \theta_1(z)^2 + \theta_2(z)^2 + \theta_3(z)^2 
 +\theta_4(z)^2 \,,\\[0.2em]
 4 \, \Theta_2 \, \Theta_2(2z) & = \theta_1(z)^2 + \theta_2(z)^2 - \theta_3(z)^2 
 -\theta_4(z)^2 \,,\\[0.2em]
 4\, \Theta_3 \, \Theta_3(2z) & = \theta_1(z)^2 - \theta_2(z)^2 - \theta_3(z)^2 
 +\theta_4(z)^2 \,,\\[0.2em]
 4\, \Theta_4 \, \Theta_4(2z) & = \theta_1(z)^2 - \theta_2(z)^2 + \theta_3(z)^2 
 -\theta_4(z)^2 \,.
\end{split}
\end{equation}
The following is a well-known fact:
\begin{rem}
\label{fact:sections1}
For the principally polarized Abelian surface $(\AA,\mathscr{L})$, a basis of sections for $\mathscr{L}^2$, called Theta functions of level two, is given by $\Theta_i(2z)$ or, alternatively, $\theta^2_i(z)$ for $1 \le i \le 4$ using Equations~\eqref{Eq:Degree2doubling_z}.
\end{rem}

We have the following:
\begin{prop}
\label{prop:ThetaImageG}
For the surface in $\bP^3$ given by Equation~(\ref{Kummer-Quartic}), the coordinates are given by
\begin{equation} 
\label{KummerVariables}
 [P:Q:R:S]=\left[\theta_1(z)^2:\theta_2(z)^2:\theta_3(z)^2:\theta_4(z)^2\right] \;,
\end{equation}
and the parameters are
\begin{gather}
\nonumber
 \alpha =  \frac{\theta_1^4-\theta_2^4-\theta_3^4+\theta_4^4}{\theta_1^2\theta_4^2-\theta_2^2\theta_3^2}, \quad
 \beta  =   \frac{\theta_1^4+\theta_2^4-\theta_3^4-\theta_4^4}{\theta_1^2\theta_2^2-\theta_3^2\theta_4^2}, \quad
 \gamma  = \frac{\theta_1^4-\theta_2^4+\theta_3^4-\theta_4^4}{\theta_1^2\theta_3^2-\theta_2^2\theta_4^2},\\
\label{KummerParameter}
 \delta = \frac{ \theta_1 \theta_2 \theta_3 \theta_4 \prod_{\epsilon, \epsilon' \in \lbrace \pm1 \rbrace} (\theta_1^2 + \epsilon \theta_2^2+ \epsilon' \theta_3^2+ \epsilon \epsilon' \theta_3^2)}{\big(\theta_1^2 \, \theta_2^2 - \theta_3^2 \, \theta_4^2\big) \, \big(\theta_1^2 \, \theta_3^2 - \theta_2^2 \, \theta_4^2\big) \, \big(\theta_1^2 \, \theta_4^2 - \theta_2^2 \, \theta_3^2\big)}.
\end{gather}
\end{prop}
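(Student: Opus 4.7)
The plan is to combine the change-of-variables from Proposition~\ref{lem:GH} with the doubling identities from Section~\ref{sssec:relats_thetas}, turning the proposition into a direct calculation that uses only the Frobenius identities \eqref{Eq:FrobeniusIdentities} and the second principal transformations \eqref{Eq:degree2doubling}, \eqref{Eq:degree2doublingR}.

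First, I would pin down the identification of coordinates. By Remark~\ref{fact:sections1}, both $\{\Theta_i(2z)\}_{i=1}^4$ and $\{\theta_i(z)^2\}_{i=1}^4$ are bases of $\Gamma(\AA,\mathscr{L}^2)$, and the two bases are related precisely by the identities \eqref{Eq:Degree2doubling_z}:
\begin{equation*}
4\,\Theta_i\,\Theta_i(2z) \;=\; \pm\theta_1(z)^2 \pm \theta_2(z)^2 \pm \theta_3(z)^2 \pm \theta_4(z)^2,
\end{equation*}
with the same sign pattern that appears in the linear transformation \eqref{eq:LinearTransfo} used in the proof of Proposition~\ref{lem:GH}. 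Comparing coefficients row by row, I identify $[w_0:x_0:y_0:z_0]$ with $[\Theta_1:\Theta_2:\Theta_3:\Theta_4]$ and $[w:x:y:z]$ with $[\Theta_1(2z):\Theta_2(2z):\Theta_3(2z):\Theta_4(2z)]$. Then \eqref{eq:LinearTransfo} forces
\[
[P:Q:R:S] \;=\; [\theta_1(z)^2 : \theta_2(z)^2 : \theta_3(z)^2 : \theta_4(z)^2],
\]
establishing \eqref{KummerVariables}.

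Next, I would compute the parameters by specializing the formulas \eqref{KummerParameter_c} at $(w_0,x_0,y_0,z_0)=(\Theta_1,\Theta_2,\Theta_3,\Theta_4)$ and rewriting the result in terms of $\theta_i$. Using $\theta_5^2 = 2(\Theta_1\Theta_3 + \Theta_2\Theta_4)$ and $\theta_6^2 = 2(\Theta_1\Theta_3 - \Theta_2\Theta_4)$ from \eqref{Eq:degree2doublingR}, I get $\theta_5^2\theta_6^2 = 4(\Theta_1^2\Theta_3^2-\Theta_2^2\Theta_4^2)$ and, after squaring and subtracting, $\theta_5^4+\theta_6^4 = 8(\Theta_1^2\Theta_3^2+\Theta_2^2\Theta_4^2)$. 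The Frobenius identities \eqref{Eq:FrobeniusIdentities} then give
\[
\alpha \;=\; 2\,\frac{\Theta_1^2\Theta_3^2+\Theta_2^2\Theta_4^2}{\Theta_1^2\Theta_3^2-\Theta_2^2\Theta_4^2} \;=\; \frac{\theta_5^4+\theta_6^4}{\theta_5^2\theta_6^2} \;=\; \frac{\theta_1^4-\theta_2^4-\theta_3^4+\theta_4^4}{\theta_1^2\theta_4^2-\theta_2^2\theta_3^2}.
\]
Completely analogous manipulations with the pairs $(\theta_7,\theta_9)$ and $(\theta_8,\theta_{10})$ yield the stated formulas for $\beta$ and $\gamma$.

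Finally, for $\delta$ I would substitute $(w_0,x_0,y_0,z_0)=(\Theta_1,\Theta_2,\Theta_3,\Theta_4)$ into the last line of \eqref{KummerParameter_c} and translate each factor $w_0^2\pm x_0^2\pm y_0^2\pm z_0^2$ via \eqref{Eq:degree2doubling} into the corresponding $\theta_i^2$. The three denominator factors $\Theta_1^2\Theta_3^2-\Theta_2^2\Theta_4^2$, $\Theta_1^2\Theta_2^2-\Theta_3^2\Theta_4^2$, $\Theta_1^2\Theta_4^2-\Theta_2^2\Theta_3^2$ become, after the same squaring trick used for $\alpha$, the three quantities $\tfrac14\theta_5^2\theta_6^2,\tfrac14\theta_8^2\theta_{10}^2,\tfrac14\theta_7^2\theta_9^2$, which match $(\theta_1^2\theta_3^2-\theta_2^2\theta_4^2)$, etc., by the Frobenius identities, while the prefactor $w_0x_0y_0z_0 = \Theta_1\Theta_2\Theta_3\Theta_4$ becomes $\theta_1\theta_2\theta_3\theta_4$ up to a power of two absorbed by the eight signed factors. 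The main obstacle will be the bookkeeping of the $2$-powers in this last step, since the prefactor and the eight sign-choice factors each contribute powers of two that have to cancel against the cube of $4$ in the denominator; once these factors are tracked carefully (and the consistency check against \eqref{paramG} is made using the $\chi_{10}$-type identity that expresses $\delta^2$ polynomially in $\alpha,\beta,\gamma$), the formula \eqref{KummerParameter} drops out.
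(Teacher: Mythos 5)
Your computation of the parameters is exactly the paper's: the paper's proof likewise first records $\alpha,\beta,\gamma$ in the form $2(\Theta_i^2\Theta_j^2+\Theta_k^2\Theta_l^2)/(\Theta_i^2\Theta_j^2-\Theta_k^2\Theta_l^2)$ (its Equation~(\ref{KummerParameter_B})) and then converts to the $\theta_i$ expressions via the products $\theta_5^2\theta_6^2$, $\theta_7^2\theta_9^2$, $\theta_8^2\theta_{10}^2$ from~(\ref{Eq:degree2doublingR}) together with the Frobenius identities~(\ref{Eq:FrobeniusIdentities}); your power-of-two bookkeeping for $\delta$ also comes out correctly ($\prod_{\epsilon,\epsilon'}(\theta_1^2+\epsilon\theta_2^2+\epsilon'\theta_3^2+\epsilon\epsilon'\theta_4^2)=2^8\,\Theta_1^2\Theta_2^2\Theta_3^2\Theta_4^2$ against the factor $16$ and the three $\tfrac14$'s in the denominator). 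So the conversion half of your argument is the paper's argument.

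The one point you should tighten is the first step. As written, you obtain $[P:Q:R:S]=[\theta_i(z)^2]$ by feeding $[w:x:y:z]=[\Theta_i(2z)]$ and $[w_0:\dots:z_0]=[\Theta_1:\dots:\Theta_4]$ into the linear change of variables~(\ref{eq:LinearTransfo}) of Proposition~\ref{lem:GH}. That presupposes that $[\Theta_i(2z)]$ parametrizes the Hudson quartic with parameters~(\ref{Eq:paramABCD}) evaluated at $[\Theta_1:\dots:\Theta_4]$ --- which is Proposition~\ref{prop:ThetaImageGH}, and in the paper that proposition is proved \emph{afterwards}, using the present one. Matching the sign patterns of~(\ref{eq:LinearTransfo}) against~(\ref{Eq:Degree2doubling_z}) only shows that the two parametrizations are consistent with each other; it does not show that either set of theta coordinates actually satisfies its quartic equation. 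That is the content of the paper's opening sentence ``a direct computation shows that $[\theta_1(z)^2:\dots:\theta_4(z)^2]$ satisfy Equation~(\ref{Kummer-Quartic})'', and it is the one substantive verification your proposal omits. You can close the gap without circularity either by performing that direct verification (ultimately a Riemann-theta-relation computation), or by the geometric route already available earlier in the paper: the image of $\varphi_2$ is a $16$-nodal quartic of the symmetric form~(\ref{Eq:QuarticSurfaces12}), hence a Hudson quartic, and its node $\varphi_2(0)=[\Theta_1:\Theta_2:\Theta_3:\Theta_4]$ pins down $(A,B,C,D)$ via Lemma~\ref{lem:16nodes} and~(\ref{Eq:paramABCD}); only then does Proposition~\ref{lem:GH} transport everything to the G\"opel form as you describe.
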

\begin{proof}
A direct computation shows that the non-vanishing elliptic variables $[P:Q:R:S]=[\theta_1(z)^2:\theta_2(z)^2:\theta_3(z)^2:\theta_4(z)^2]$ satisfy Equation~(\ref{Kummer-Quartic}), 
with
\begin{equation}
\begin{split}
\label{KummerParameter_B}
 \alpha =  2\, \frac{\Theta_1^2 \Theta_3^2+\Theta_2^2\Theta_4^2}{\Theta_1^2 \Theta_3^2-\Theta_2^2\Theta_4^2}, \quad
 \beta  =   2\, \frac{\Theta_1^2 \Theta_2^2+\Theta_3^2\Theta_4^2}{\Theta_1^2 \Theta_2^2-\Theta_3^2\Theta_4^2}, \quad
 \gamma  =   2\, \frac{\Theta_1^2 \Theta_4^2+\Theta_2^2\Theta_3^2}{\Theta_1^2 \Theta_4^2-\Theta_2^2\Theta_3^2}.
\end{split}
\end{equation}
The parameters can be re-written 
\begin{equation}
\label{KummerParameter_b}
\begin{split}
 \alpha =  \frac{\theta_1^4-\theta_2^4-\theta_3^4+\theta_4^4}{\theta_1^2\theta_4^2-\theta_2^2\theta_3^2}, \quad
 \beta  =   \frac{\theta_1^4+\theta_2^4-\theta_3^4-\theta_4^4}{\theta_1^2\theta_2^2-\theta_3^2\theta_4^2}, \quad
 \gamma  = \frac{\theta_1^4-\theta_2^4+\theta_3^4-\theta_4^4}{\theta_1^2\theta_3^2-\theta_2^2\theta_4^2}.
\end{split}
\end{equation}
such that over $\mathbb{A}_2(4,8)$ the modulus $\delta$ is given as
\begin{equation*}
  \delta = \frac{ \theta_1 \theta_2 \theta_3 \theta_4 \prod_{\epsilon, \epsilon' \in \lbrace \pm1 \rbrace} (\theta_1^2 + \epsilon \theta_2^2+ \epsilon' \theta_3^2+ \epsilon \epsilon' \theta_3^2)}{\big(\theta_1^2 \, \theta_2^2 - \theta_3^2 \, \theta_4^2\big) \, \big(\theta_1^2 \, \theta_3^2 - \theta_2^2 \, \theta_4^2\big) \, \big(\theta_1^2 \, \theta_4^2 - \theta_2^2 \, \theta_3^2\big)}.
\end{equation*}
\end{proof}
\begin{rem}
The transformation $[w:x:y:z] \to [-w:x:y:z]$ is an isomorphism between a Hudson quartic in Equation~(\ref{Goepel-Quartic}) with moduli $(A, B, C, D)$ and the one with $(A, B, C, -D)$. Moreover, the two quartics coincide exactly along the coordinate planes $w=0$, $x=0$, $y=0$, or $z=0$.
\end{rem}
\par We also provide an explicit model for the Hudson quartic in terms of Theta functions.  In terms of Theta functions, the relation between Equation~(\ref{Goepel-Quartic}) and Equation~(\ref{Kummer-Quartic}) was first determined by Borchardt \cite{MR1579732}.  We have the following:
\begin{prop}
\label{prop:ThetaImageGH}
For the surface in $\bP^3$ given by Equation~(\ref{Goepel-Quartic}), the coordinates are given by
\begin{equation} 
 [w:x:y:z]=[ \Theta_1(2  z): \Theta_2(2  z): \Theta_3(2  z) : \Theta_4(2  z) ] \;,
\end{equation}
and the parameters are
\begin{gather}
  \nonumber
  A   = \frac{\Theta_1^4 - \Theta_2^4-\Theta_3^4+\Theta_4^4}{\Theta_1^2 \Theta_4^2 - \Theta_2^2 \Theta_3^2}, \quad
  B  = \frac{\Theta_1^4 + \Theta_2^4-\Theta_3^4-\Theta_4^4}{\Theta_1^2 \Theta_2^2 - \Theta_3^2 \Theta_4^2}, \quad
  C  = \frac{\Theta_1^4 - \Theta_2^4+\Theta_3^4-\Theta_4^4}{\Theta_1^2 \Theta_3^2 - \Theta_2^2 \Theta_4^2}, \\
  \label{KummerParameter3}
  D  = \frac{\Theta_1 \Theta_2 \Theta_3 \Theta_4 \prod_{\epsilon, \epsilon' \in \lbrace \pm1 \rbrace} (\Theta_1^2 + \epsilon \Theta_2^2+ \epsilon' \Theta_3^2+ \epsilon \epsilon' \Theta_4^2)}
  {(\Theta_1^2 \Theta_2^2 - \Theta_3^2 \Theta_4^2)(\Theta_1^2 \Theta_3^2 - \Theta_2^2 \Theta_4^2)(\Theta_1^2 \Theta_4^2 - \Theta_2^2 \Theta_3^2)}.
\end{gather}
\end{prop}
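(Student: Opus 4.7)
The plan is to derive Proposition~\ref{prop:ThetaImageGH} by combining Proposition~\ref{prop:ThetaImageG} (which parameterizes the G\"opel quartic) with the isomorphism between the G\"opel and Hudson quartics from Proposition~\ref{lem:GH}, using the ``second principal transformations of degree two'' of Equation~\eqref{Eq:Degree2doubling_z} as the bridge. More precisely, I will show that the linear change of coordinates~\eqref{eq:LinearTransfo} that transforms the G\"opel quartic into the Hudson quartic is, when expressed in Theta coordinates, exactly the duplication formula~\eqref{Eq:Degree2doubling_z}, with the free parameters $[w_0:x_0:y_0:z_0]$ specialized to the dual Theta constants $[\Theta_1:\Theta_2:\Theta_3:\Theta_4]$.

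First I would start from the coordinates given by Proposition~\ref{prop:ThetaImageG}, namely $[P:Q:R:S] = [\theta_1(z)^2:\theta_2(z)^2:\theta_3(z)^2:\theta_4(z)^2]$ on the G\"opel quartic. Next I would apply the four identities~\eqref{Eq:Degree2doubling_z} to rewrite the symmetric combinations $\theta_1(z)^2 \pm \theta_2(z)^2 \pm \theta_3(z)^2 \pm \theta_4(z)^2$ as $4\Theta_i\,\Theta_i(2z)$ for $i=1,2,3,4$. Comparing the resulting expressions for $(P,Q,R,S)$ with the linear transformation~\eqref{eq:LinearTransfo}
\begin{equation*}
\begin{aligned}
 P & = w_0 w + x_0 x + y_0 y + z_0 z, & Q & = w_0 w + x_0 x - y_0 y - z_0 z,\\
 R & = w_0 w - x_0 x - y_0 y + z_0 z, & S & = w_0 w - x_0 x + y_0 y - z_0 z,
\end{aligned}
\end{equation*}
one reads off, up to the harmless overall factor of $4$, the identifications
\begin{equation*}
 w_0 = \Theta_1,\ x_0 = \Theta_2,\ y_0 = \Theta_3,\ z_0 = \Theta_4,\qquad w = \Theta_1(2z),\ x = \Theta_2(2z),\ y = \Theta_3(2z),\ z = \Theta_4(2z).
\end{equation*}
Thus the given quadruple of Theta functions of level two satisfies the Hudson equation~\eqref{Goepel-Quartic} with moduli computed from $(w_0, x_0, y_0, z_0)=(\Theta_1,\Theta_2,\Theta_3,\Theta_4)$ via the formulas~\eqref{Eq:paramABCD}. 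Substituting these identifications directly into~\eqref{Eq:paramABCD} yields the desired expressions~\eqref{KummerParameter3} for $A, B, C, D$ in terms of the dual Theta constants $\Theta_i$.

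Finally, I would verify consistency: the parameters $(A,B,C,D)$ so obtained must satisfy the quadric relation $D^2 = A^2 + B^2 + C^2 + ABC - 4$ from~\eqref{paramGH}. This is a purely algebraic check that reduces, after clearing denominators, to a polynomial identity in $\Theta_1,\Theta_2,\Theta_3,\Theta_4$; it is guaranteed by the isomorphism established in Proposition~\ref{lem:GH}, but can also be confirmed directly by invoking the Frobenius identities~\eqref{Eq:FrobeniusIdentities} (translated to $\Theta$'s via~\eqref{Eq:degree2doubling} and~\eqref{Eq:degree2doublingR}), which express the odd squared Theta constants in terms of the four even ones and thereby control the symmetric functions appearing in the numerator of $D$.

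The one genuinely delicate step is the identification of the constant $D$: its defining formula in~\eqref{Eq:paramABCD} is a rational function of degree twelve in $(w_0,x_0,y_0,z_0)$, and it must match the expression in~\eqref{KummerParameter3} on the nose, not just up to sign. Since the transformation $[w:x:y:z]\mapsto[-w:x:y:z]$ relates the two quartics with $D$ and $-D$ (see the remark following Proposition~\ref{prop:ThetaImageG}), one must fix the sign of $D$ by normalizing a section of $\mathscr{L}^2$ consistent with the level $(4,8)$-structure on $\mathbb{A}_2(4,8)$; this is where one needs to work upstairs on the cover $\overline{\mathbb{A}_2(4,8)}$ rather than on $\overline{\mathbb{A}_2(2,4)}$, using Proposition~\ref{compactifications}. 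Once the sign is pinned down, the proposition follows.
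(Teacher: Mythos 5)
Your proposal is correct and follows essentially the same route as the paper: the paper likewise identifies $[w_0:x_0:y_0:z_0]=[\Theta_1:\Theta_2:\Theta_3:\Theta_4]$, observes that the linear transformation~\eqref{eq:LinearTransfo} then becomes exactly the degree-two duplication identities for Theta functions with non-vanishing argument, and reads off $[w:x:y:z]=[\Theta_1(2z):\Theta_2(2z):\Theta_3(2z):\Theta_4(2z)]$ together with the parameters by substituting into~\eqref{Eq:paramABCD}. The only (minor) difference is that the paper pins down $[w_0:x_0:y_0:z_0]$ by comparing the parameter formulas~\eqref{KummerParameter_B} and~\eqref{KummerParameter_c} rather than by term-by-term matching of the bilinear expressions $w_0w+\cdots$ against $\Theta_1\Theta_1(2z)+\cdots$ (which by itself leaves each pair $(w_0,w)$ determined only up to a reciprocal rescaling); your consistency check against~\eqref{paramGH} and the discussion of the sign of $D$ are additional but harmless.
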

\begin{proof}
Comparing Equations~(\ref{KummerParameter_B}) and (\ref{KummerParameter_c}), we find a solution in terms of Theta function given by
\begin{equation}
 [w_0:x_0:y_0:z_0]=[\Theta_1:\Theta_2:\Theta_3:\Theta_4] \;.
 \end{equation}
Equations~(\ref{eq:LinearTransfo}) are equivalent to
\begin{equation}
 \label{2isogTHETA}
\begin{split}
  \theta_1^2(z)  & = \Theta_1 \, \Theta_1(2z) + \Theta_2 \, \Theta_2(2z) + \Theta_3 \, \Theta_3(2z) + \Theta_4 \, \Theta_4(2z) ,\\
 \theta_2^2(z)  & = \Theta_1 \, \Theta_1(2z) + \Theta_2 \, \Theta_2(2z) - \Theta_3 \, \Theta_3(2z) - \Theta_4 \, \Theta_4(2z) ,\\
  \theta_3^2(z)  & =  \Theta_1 \, \Theta_1(2z) - \Theta_2 \, \Theta_2(2z) - \Theta_3 \, \Theta_3(2z) + \Theta_4 \, \Theta_4(2z) ,\\
   \theta_4^2(z)  & =  \Theta_1 \, \Theta_1(2z) - \Theta_2 \, \Theta_2(2z) + \Theta_3 \, \Theta_3(2z) - \Theta_4 \, \Theta_4(2z) .
  \end{split} 
\end{equation}
Comparing Equations~(\ref{eq:LinearTransfo}) with Equations~(\ref{2isogTHETA}), the coordinates can be expressed in terms of Theta functions with non-vanishing elliptic arguments as
$$[w:x:y:z]=[ \Theta_1(2  z): \Theta_2(2  z): \Theta_3(2  z) : \Theta_4(2  z) ] .$$
\end{proof}

\par The map $\pi\colon \bP(w,x,y,z) \longrightarrow \bP(P,Q,R,S)$ with $P=w^2, \dots, S=z^2$ is $8:1$ outside the coordinate planes. The map $\pi$ induces a covering of a reducible octic surface in $\bP^3$ given by
\begin{equation}
\begin{split}
\left(\Phi' - 2 D wxyz\right) \, \left(\Phi' + 2 D wxyz\right) = 0 \;,
 \end{split} 
\end{equation}
with
\begin{gather*}
\Phi' = w^4 + x^4 + y^4 + z^4 - A \big( w^2 y^2 + y^2 z^2 \big) 
- B\big( w^2z^2 + x^2y^2 \big) - C  \big(w^2x^2+y^2z^2\big),
\end{gather*}
onto the G\"opel quartic in Equation~(\ref{Kummer-Quartic}) with $\alpha=A$, $\beta=B$, $C=\gamma$, and $D=\delta$. We can assume that the Hudson quartic and G\"opel quartic are the singular Kummer varieties associated with two principally polarized Abelian varieties, say $\AA$ and $\hat{\AA}$, respectively. We have the following:
\begin{prop}
\label{prop:RatMap}
The map $\pi\colon \bP(w,x,y,z) \longrightarrow \bP(P,Q,R,S)$ with $P=w^2, \dots, S=z^2$ restricted to the Hudson quartic onto the G\"opel quartic with $\alpha=A$, $\beta=B$, $C=\gamma$, and $D=\delta$, is induced by a $(2,2)$-isogeny $\Psi\colon \AA \longrightarrow \hat{\AA}$.
\end{prop}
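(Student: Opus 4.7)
The plan is to verify the commutative square $\pi\circ\varphi_H = \varphi_G\circ\Psi$, where $\varphi_H\colon \AA \to \mathcal{K}_\AA\subset\bP^3$ is the Hudson embedding $z \mapsto [\Theta_1(2z):\Theta_2(2z):\Theta_3(2z):\Theta_4(2z)]$ of Proposition~\ref{prop:ThetaImageGH}, $\varphi_G\colon \hat\AA \to \mathcal{K}_{\hat\AA}\subset\bP^3$ is the G\"opel embedding $\tilde z \mapsto [\theta_1(\tilde z)^2:\cdots:\theta_4(\tilde z)^2]$ of Proposition~\ref{prop:ThetaImageG}, and $\Psi\colon \AA \to \hat\AA$ is the $(2,2)$-isogeny to be identified. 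The condition $\alpha=A$, $\beta=B$, $\gamma=C$, $\delta=D$, combined with the formulas \eqref{KummerParameter3} and \eqref{KummerParameter}, forces the projective identification $[\Theta_1:\Theta_2:\Theta_3:\Theta_4]=[\theta_1:\theta_2:\theta_3:\theta_4]$ of the theta constants of $\AA$ with those of $\hat\AA$. By the convention of Section~\ref{sssec:relats_thetas}, the $\Theta_i$ are precisely the theta constants of the abelian surface isogenous to $\AA$ under the $(2,2)$-isogeny whose kernel is the G\"opel subgroup of $\AA[2]$ associated with the characteristics in \eqref{eqn:dual_Goepel_groups}; this simultaneously identifies $\hat\AA$ and defines $\Psi$.

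The main step is then to establish the theta-function identity
\begin{equation*}
 \bigl[\Theta_1(2z):\Theta_2(2z):\Theta_3(2z):\Theta_4(2z)\bigr] \;=\; \bigl[\theta_1(\Psi(z)):\theta_2(\Psi(z)):\theta_3(\Psi(z)):\theta_4(\Psi(z))\bigr]
\end{equation*}
in $\bP^3$, for every $z \in \AA$. Both quadruples give bases of $H^0(\AA, \mathscr{L}_\AA^{\otimes 2})$, a four-dimensional space: the left-hand side directly as level-$2$ theta functions on $\AA$ (cf.\ Remark~\ref{fact:sections1}), and the right-hand side as $\Psi^*(\theta_i)$ via the isomorphism $\Psi^*\mathscr{L}_{\hat\AA}\cong\mathscr{L}_\AA^{\otimes 2}$ characterizing a $(2,2)$-isogeny of principally polarized Abelian surfaces. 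Both bases transform identically under translation by $\ker\Psi$, namely as eigenbases with matching characters, thanks to the dual pairing of G\"opel groups in \eqref{eqn:dual_Goepel_groups}; so the projective identity reduces to matching scalar multiples across indices, and these scalars are pinned down by the theta-constant identification already imposed. Squaring both sides then yields $\pi\circ\varphi_H = \varphi_G\circ\Psi$, which is the content of the proposition.

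The main obstacle is verifying that the four proportionality constants collapse into a single projective scalar; this is a delicate bookkeeping with characteristics and normalizations, going back essentially to Borchardt \cite{MR1579732}. As a final consistency check, the degrees match: $\Psi$ has degree $4=2^2$, and hence induces a degree-$4$ map on Kummer surfaces, consistent with the fact that the ambient squaring $\pi\colon \bP^3 \to \bP^3$ is $8\colon 1$ and covers the reducible octic Hud $\cup$ Hud$'$ (where Hud$'$ is the Hudson quartic with $D\mapsto -D$), restricting to a $4\colon 1$ map from the Hudson quartic onto the G\"opel quartic.
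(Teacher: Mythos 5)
Your proposal is correct in outline, but it takes a genuinely different route from the paper's. The paper's own argument is purely geometric and very short: the ambient squaring map is $8{:}1$ and covers the reducible octic $\mathrm{Hud}\cup\mathrm{Hud}'$, so its restriction to the Hudson quartic is $4{:}1$ onto the G\"opel quartic, and it collapses the sixteen nodes of Lemma~\ref{lem:16nodes} onto four of the nodes in Lemma~\ref{lem:16nodesG} --- exactly the degree and node behaviour of the Kummer-level shadow of a $(2,2)$-isogeny. You instead argue at the level of theta functions: using Propositions~\ref{prop:ThetaImageGH} and~\ref{prop:ThetaImageG} you realize both quartics as theta images, identify $\hat{\AA}$ from the forced equality of theta constants, and verify the commuting square $\pi\circ\varphi_H=\varphi_G\circ\Psi$ by an eigenbasis argument for the translation action of $\ker\Psi$ on $H^0(\AA,\mathscr{L}^2)$, pinning the four scalars by evaluation at $z=0$. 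This buys more than the paper's proof: it exhibits $\Psi$ explicitly (the second principal transformation $\tau\mapsto 2\tau$, with kernel the G\"opel group attached to the characteristics in \eqref{eqn:dual_Goepel_groups}) and establishes the full factorization, whereas the degree-and-node count is, strictly speaking, only a consistency check. The cost is the bookkeeping you flag: the unsquared identity $[\Theta_i(2z)]=[\theta_i(\Psi(z))]$ hides the modular transformation $\tau'=-(2\tau)^{-1}$ that swaps the two characteristic types in \eqref{eqn:dual_Goepel_groups}, whose automorphy factors are index-independent and therefore cancel projectively; the duplication identities \eqref{Eq:Degree2doubling_z} and \eqref{2isogTHETA} already recorded in the paper supply the required relations. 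One small imprecision worth fixing: the four $\theta_i$ on $\hat{\AA}$ are sections of four distinct two-torsion translates of $\mathscr{L}_{\hat{\AA}}$ (only the trivial characteristic gives a section of $\mathscr{L}_{\hat{\AA}}$ itself), so their pullbacks land in $H^0(\AA,\mathscr{L}^2)$ only after using that translation by two-torsion preserves $\mathscr{L}^2$; this does not affect the conclusion.
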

\begin{proof}
The rational map $\pi$ is 4:1 from the Hudson quartic onto the G\"opel quartic. In fact, the map $\pi\colon \bP(w,x,y,z) \longrightarrow \bP(P,Q,R,S)$ maps the sixteen nodes on the G\"opel-Hudson quartic in Lemma~\ref{lem:16nodes} to four nodes on the G\"opel quartic in Lemma~\ref{lem:16nodesG}.
\end{proof}
\par Similarly, we have for the Rosenhain quartic~(\ref{Birkenhake-Lange-Quartic}):
\begin{prop}
\label{prop:ThetaImageR}
For the surface in $\bP^3$ given by Equation~~(\ref{Birkenhake-Lange-Quartic}), the coordinates are given by
\begin{equation} 
\label{KummerVariables_Rosenhain}
 [Y_0:Y_1:Y_2:Y_3]=\left[\theta_1(z)^2:\theta_2(z)^2:\theta_7(z)^2:\theta_{12}(z)^2\right] \;,
\end{equation}
and the parameters are
\begin{gather*}
a= \left( 2\,\Theta_1\Theta_4-2\,\Theta_2\Theta_3  \right)  \left( 2\,\Theta_1\Theta_4+2\,\Theta_2\Theta_3 \right)  \left( 2\,\Theta_1\Theta_3+2\,\Theta_2\Theta_4 \right), \\
b= \left( \Theta_1^2+\Theta_2^2-\Theta_3^2-\Theta_4^2 \right) \left( \Theta_1^2-\Theta_2^2+\Theta_3^2-\Theta_4^2 \right)  \left( 2\,\Theta_1\Theta_2-2\,\Theta_3\Theta_4 \right), \\
c= \left( \Theta_1^2-\Theta_2^2-\Theta_3^2+\Theta_4^2 \right) \left( \Theta_1^2+\Theta_2^2+\Theta_3^2+\Theta_4^2 \right)  \left( 2\,\Theta_1\Theta_2+2\,\Theta_3\Theta_4\right), \\
d^2 =256\,\Theta_1\Theta_2\Theta_4\Theta_3 \left( \Theta_1^2\Theta_4^2-\Theta_2^2\Theta_3^2 \right)  \left( \Theta_1^4-\Theta_2^4-\Theta_3^4+\Theta_4^4 \right) \\
+ 8 \left( \Theta_1^2+\Theta_4^2 \right) \left( \Theta_2^2+\Theta_3^2 \right) \left( \Theta_1^2+\Theta_2^2+\Theta_3^2+\Theta_4^2 \right)^2\left( \Theta_1^2-\Theta_2^2-\Theta_3^2+\Theta_4^2 \right)^2\\
+8 \left(\Theta_1^2-\Theta_4^2 \right) \left( \Theta_2^2-\Theta_3^2 \right)   \left( \Theta_1^2+\Theta_2^2-\Theta_3^2-\Theta_4^2 \right)^2 \left( \Theta_1^2-\Theta_2^2+\Theta_3^2-\Theta_4^2 \right)^2\\
-32\left( \Theta_1^2\Theta_2^2+\Theta_3^2\Theta_4^2 \right)  \left( \Theta_1^2+\Theta_2^2+\Theta_3^2+\Theta_4^2 \right)  \left( \Theta_1^2-\Theta_2^2+\Theta_3^2-\Theta_4^2 \right) \\
\times  \left( \Theta_1^2-\Theta_2^2-\Theta_3^2+\Theta_4^2 \right)  \left( \Theta_1^2+\Theta_2^2-\Theta_3^2-\Theta_4^2 \right).
\end{gather*}
\end{prop}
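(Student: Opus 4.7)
The plan is to leverage the explicit isomorphism between the Rosenhain quartic and the Hudson quartic that was constructed in the preceding Lemma via the linear transformation \eqref{eq:LinearTransfo2}, and to combine it with the Theta function parametrization of the Hudson quartic given by Proposition \ref{prop:ThetaImageGH}. Since the Hudson quartic has already been shown to be parametrized by $[w:x:y:z]=[\Theta_1(2z):\Theta_2(2z):\Theta_3(2z):\Theta_4(2z)]$, and the identification $[w_0:x_0:y_0:z_0]=[\Theta_1:\Theta_2:\Theta_3:\Theta_4]$ of Theta nulls emerged in the proof of Proposition \ref{prop:ThetaImageGH}, substituting both of these into \eqref{eq:LinearTransfo2} immediately expresses $Y_0,Y_1,Y_2,Y_3$ as explicit bilinear combinations of the Theta nulls and the Theta functions evaluated at $2z$.

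Next, I would identify these bilinear combinations with squares of Theta functions. For $Y_0$ and $Y_1$, the doubling identities \eqref{2isogTHETA} already established in the proof of Proposition \ref{prop:ThetaImageGH} give $Y_0=\theta_1(z)^2$ and $Y_1=\theta_2(z)^2$ directly. For $Y_2$ and $Y_3$, one needs the analogous second principal transformations of degree two for the odd-index Theta functions $\theta_7$ and $\theta_{12}$; these have the same bilinear shape as \eqref{2isogTHETA} but with a permutation of the Theta null labels $\{\Theta_1,\Theta_2,\Theta_3,\Theta_4\}$ dictated by the characteristic arithmetic. I would derive the relevant identities by applying the Riemann Theta relation (or Mumford's addition formulas) to the appropriate pair of characteristics, exactly as in the derivation of \eqref{Eq:degree2doubling_z} and \eqref{Eq:Degree2doubling_z}, thereby matching $Y_2=\theta_7(z)^2$ and $Y_3=\theta_{12}(z)^2$.

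For the parameters, I would substitute the specialization $(w_0,x_0,y_0,z_0)=(\Theta_1,\Theta_2,\Theta_3,\Theta_4)$ into the polynomial expressions for $a,b,c,d^2$ appearing in the proof of the preceding Lemma. The formulas for $a,b,c$ then drop out immediately by direct inspection, since the factors $w_0^2 z_0^2-x_0^2 y_0^2$, $w_0^2+x_0^2-y_0^2-z_0^2$, and so forth transcribe verbatim into the claimed Theta null monomials. The expression for $d^2$ is longer but arises by the same direct substitution followed by simplification using the Frobenius identities \eqref{Eq:FrobeniusIdentities}, which rewrite products such as $\Theta_1^2\Theta_4^2-\Theta_2^2\Theta_3^2$ and mixed quartics in the form prescribed by the proposition.

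The main obstacle is the identification $Y_2=\theta_7(z)^2$, $Y_3=\theta_{12}(z)^2$: the correct choice of odd-index Theta labels $7$ and $12$ is dictated by the particular characteristic pairing implicit in \eqref{eq:LinearTransfo2}, and verifying this requires tracking signs and characteristic arithmetic carefully through the Mumford addition formula rather than citing \eqref{2isogTHETA} directly. Once these two identities are in hand, the rest of the proof is mechanical substitution and polynomial simplification using \eqref{Eq:FrobeniusIdentities} and the relations from Proposition \ref{compactifications}, so I would allocate the bulk of the write-up to the Theta-characteristic bookkeeping and relegate the parameter computation to a direct verification remark.
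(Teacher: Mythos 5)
Your proposal is correct and follows essentially the same route the paper takes (the paper leaves this proof implicit, introducing the proposition with ``Similarly'' after Proposition~\ref{prop:ThetaImageGH}): specialize the linear transformation~\eqref{eq:LinearTransfo2} at $[w_0:x_0:y_0:z_0]=[\Theta_1:\Theta_2:\Theta_3:\Theta_4]$ and $[w:x:y:z]=[\Theta_1(2z):\Theta_2(2z):\Theta_3(2z):\Theta_4(2z)]$, identify $Y_0,Y_1$ via~\eqref{2isogTHETA} and $Y_2,Y_3$ via the analogous degree-two transformations for the characteristics of $\theta_7$ and $\theta_{12}$, and obtain $a,b,c,d^2$ by direct substitution into the formulas of the preceding lemma (up to a harmless common projective factor). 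You also correctly isolate the only genuinely delicate step, namely the characteristic bookkeeping behind $Y_2=\theta_7(z)^2$ and $Y_3=\theta_{12}(z)^2$.
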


\par The Rosenhain roots of the genus 2 curve $\CC$ in Equation~(\ref{Eq:Rosenhain_b}) generate the rational function field $\Q(\lambda_1, \lambda_2, \lambda_3)$ of the moduli space $\mathbb{A}_2(2)$ of principally polarized Abelian surfaces $\AA = \mathrm{Jac}(\CC)$ with level-two structure. The moduli $A, B, C, D$ of the Hudson quartic~\eqref{Goepel-Quartic} are then given by the rational functions 
\begin{gather}
\nonumber
 A =  2 \, \frac{\lambda_1+1}{\lambda_1-1}, \quad
 B =  2 \, \frac{\lambda_1\lambda_2+\lambda_1\lambda_3-2\lambda_2\lambda_3-2\lambda_1+\lambda_2+\lambda_3}{(\lambda_2-\lambda_3)(\lambda_1-1)},  \quad
 C  = 2 \, \frac{\lambda_3+\lambda_2}{\lambda_3-\lambda_2}, \\ 
 \label{KummerParameter4}
 D = 4 \, \frac{\lambda_1-\lambda_2 \lambda_3}{(\lambda_2 - \lambda_3) (\lambda_1-1)} \;.
\end{gather}
Similarly, the Rosenhain roots of the genus 2 curve $\hat{\CC}$ in Equation~(\ref{Eq:Rosenhain2}) generate the rational function field $\Q(\Lambda_1, \Lambda_2, \Lambda_3)$ of the moduli space $\hat{\mathbb{A}}_2(2)$ of $(2,2)$-isogenous principally polarized Abelian surfaces with level-2 structure.
In turn, the moduli $\alpha, \beta, \gamma, \delta$ of the G\"opel quartic~\eqref{Kummer-Quartic} are given by the rational functions 
\begin{gather}
\nonumber
 \alpha =  2 \, \frac{\Lambda_1+1}{\Lambda_1-1},\;
 \beta =  2 \, \frac{\Lambda_1\Lambda_2+\Lambda_1\Lambda_3-2\Lambda_2\Lambda_3-2\Lambda_1+\Lambda_2+\Lambda_3}{(\Lambda_2-\Lambda_3)(\Lambda_1-1)}, \;
 \gamma = 2 \, \frac{\Lambda_3+\Lambda_2}{\Lambda_3-\Lambda_2}, \\
 \label{KummerParameter2}
\delta = \frac{4(\Lambda_1-\Lambda_2\Lambda_3)}{(\Lambda_1-1)(\Lambda_3-\Lambda_2)}.
\end{gather}
Recalling the statement of Proposition~\ref{compactifications}, we conclude the following:
\begin{thm}
\label{thm_Kummer_even}
For the Jacobian $\operatorname{Jac}(\CC)$ of a smooth genus-two curve~$\CC$ in Rosenhain normal form given by Equation~(\ref{Eq:Rosenhain_b}) we have the following:

\begin{enumerate}
\item The Hudson quartic~(\ref{Goepel-Quartic}) is the image of $[\Theta_{1}(2z): \Theta_{2}(2z): \Theta_{3}(2z): \Theta_{4}(2z)]$ in $\bP^3$ with moduli~(\ref{KummerParameter4}) defined over $\mathbb{A}_2(2)$. 

\item The G\"opel quartic~(\ref{Kummer-Quartic}) is the image of $[\theta^2_{1}(z): \theta^2_{2}(z): \theta^2_{3}(z): \theta^2_{4}(z)]$ in $\bP^3$ with moduli~(\ref{KummerParameter2}) defined over $\hat{\mathbb{A}}_2(2)$.

\item The Rosenhain quartic~(\ref{Birkenhake-Lange-Quartic}) is the image of $[\theta^2_1(z):\theta^2_2(z):\theta_7^2(z):\theta_{12}^2(z)]$ in $\bP^3$ with moduli defined over $\mathbb{A}_2(2, 4)$. 
\end{enumerate}
\end{thm}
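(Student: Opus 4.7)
The three statements have a common structure: each is the conjunction of (i) a coordinate formula identifying the Kummer quartic as the image of specific Theta functions with non-vanishing elliptic argument, and (ii) a moduli formula expressing its parameters in terms of the Rosenhain roots (or dually the roots $\Lambda_i$, or the level-$(2,4)$ Theta constants), and finally (iii) the identification of the natural Siegel modular threefold on which this moduli expression is defined. My plan is to handle these three ingredients in order for each case, appealing to results already assembled in the excerpt rather than re-deriving them.

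For part (1), the coordinate identification $[w:x:y:z]=[\Theta_1(2z):\Theta_2(2z):\Theta_3(2z):\Theta_4(2z)]$ together with the moduli $(A,B,C,D)$ expressed in the dual Theta constants $\Theta_i$ is precisely the content of Proposition~\ref{prop:ThetaImageGH}, so the only work remaining is the conversion of (\ref{KummerParameter3}) into the Rosenhain form~(\ref{KummerParameter4}). The plan is to first replace each $\Theta_i^2$ in the expressions for $A,B,C$ by half-sums of $\theta_j^2$ via the inverse of the linear system~(\ref{Eq:degree2doubling}), and each product $\Theta_i\Theta_j$ by a half-sum of $\theta_k^2$ via~(\ref{Eq:degree2doublingR}); then apply Thomae's formula (Lemma~\ref{ThomaeLemma}, especially Equations~(\ref{Thomaeg=2}) together with (\ref{Picard2})) to rewrite $\theta_i^4$ and the relevant quadratic monomials in $\theta_i^2$ as polynomials in $\lambda_1,\lambda_2,\lambda_3$. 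Since every $\Theta$--monomial occurring in~(\ref{KummerParameter3}) has even weight in the $\theta_j$, all ambiguous square roots cancel and the common factor $R$ from Thomae drops out of the ratios. After simplification the rational functions must agree with~(\ref{KummerParameter4}), which by Proposition~\ref{compactifications} are expressions in the coordinates $\lambda_1,\lambda_2,\lambda_3$ of $\mathbb{A}_2(2)$.

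Part (2) is handled by exactly the same strategy but with the roles of $\theta$ and $\Theta$ exchanged. The coordinates $[\theta_1^2(z):\cdots:\theta_4^2(z)]$ and the moduli $(\alpha,\beta,\gamma,\delta)$ in terms of $\theta_i$ are given by Proposition~\ref{prop:ThetaImageG}, and I need to rewrite these in terms of the Rosenhain roots $\Lambda_1,\Lambda_2,\Lambda_3$ of the isogenous curve $\hat{\CC}$. Here I invoke Proposition~\ref{lem:2isog_curve} (or equivalently the Richelot-level computation of Section~\ref{computation}) which links the $\Theta$-constants of the dual principal polarization to the Rosenhain roots of $\hat{\CC}$ via an exact analogue of Thomae applied to $\hat{\CC}$; the manipulation is then formally identical to part (1) and yields~(\ref{KummerParameter2}), so the moduli lie in the rational function field $\Q(\Lambda_1,\Lambda_2,\Lambda_3)$ of $\hat{\mathbb{A}}_2(2)$. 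Part (3) for the Rosenhain quartic uses Proposition~\ref{prop:ThetaImageR} for the coordinate and moduli formulas; the crucial difference is that the parameters $a,b,c$ are \emph{odd} in the $\Theta_i$ (not purely in $\Theta_i^2$), so they cannot be expressed in $\mathbb{A}_2(2)$-coordinates but descend only to $\mathbb{A}_2(2,4)$. This follows from Proposition~\ref{compactifications}, which identifies $\overline{\mathbb{A}_2(2,4)}\cong \bP^3$ via $[\Theta_1:\Theta_2:\Theta_3:\Theta_4]$ and shows that the square map $\operatorname{Sq}$ realizes the further quotient to $\mathbb{A}_2(2)$.

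The main obstacle is the bookkeeping in part (1): rewriting the degree-$12$ numerator of $D$ in~(\ref{KummerParameter3}) as the clean rational function $4(\lambda_1-\lambda_2\lambda_3)/((\lambda_2-\lambda_3)(\lambda_1-1))$ requires using the Frobenius identities~(\ref{Eq:FrobeniusIdentities}) to reduce high powers of $\theta_i$, together with the four-term Thomae relations to expose cancellations; it is routine but unforgiving. The invariance argument from Proposition~\ref{compactifications} makes (iii) essentially free once (ii) is established, since the functions so obtained are by construction $\Gamma_2(2)$-invariant (resp.\ $\Gamma_2(2,4)$-invariant for the Rosenhain quartic), and a density argument on the moduli space allows us to conclude that the identification of the quartic with $\mathcal{K}_{\AA}$ holds everywhere $J_{10}\neq 0$.
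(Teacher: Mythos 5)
Your proposal is correct and follows essentially the same route as the paper, which obtains the theorem by assembling Propositions~\ref{prop:ThetaImageGH}, \ref{prop:ThetaImageG} and \ref{prop:ThetaImageR} together with the rational expressions~(\ref{KummerParameter4}) and~(\ref{KummerParameter2}) and the level-structure dictionary of Proposition~\ref{compactifications}; the Thomae/duplication bookkeeping you outline is precisely the (unwritten) verification behind those displayed formulas. One minor wording point: the parameters $a,b,c$ of the Rosenhain quartic have even total degree in the $\Theta_i$, so the obstruction to descending to $\mathbb{A}_2(2)$ is not literal oddness but the occurrence of monomials such as $\Theta_1\Theta_4$ that are not polynomials in the $\Theta_i^2$ — which is evidently what you mean.
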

\par Tetrahedra in $\mathbb{P}^3$ whose faces are tropes are called \emph{Rosenhain tetrahedra} if all vertices are nodes; they are called \emph{G\"opel tetrahedra} if none of the vertices are nodes.  It turns out that Rosenhain tetrahedra have an odd number of odd tropes, namely one or three, whereas G\"opel tetrahedra have an even number of odd tropes, namely zero or two. In \cite{MR4421430} it was shown that Thomae's formula provides an explicit relation between tropes and the Theta functions used in Theorem~\ref{thm_Kummer_even}. In particular, in \cite{MR4421430} it was shown that tropes can be chosen so that the following holds:

\begin{rem}
\label{lem:bijection_tropes_thetas2}
\begin{enumerate}
\item[] 
\item
The tropes of the G\"opel tetrahedron $\{\mathsf{T}_{256}, \mathsf{T}_{136}, \mathsf{T}_{356},\mathsf{T}_{126}\}$ can be chosen to coincide with the Theta functions $[\theta^2_{1}(z): \theta^2_{2}(z): \theta^2_{3}(z): \theta^2_{4}(z)]$. 

\item The tropes of the Rosenhain tetrahedron $\{\mathsf{T}_{256}, \mathsf{T}_{136}, \mathsf{T}_{246}, \mathsf{T}_{2}\}$, can be chosen to coincide with the Theta functions $[\theta^2_1(z):\theta^2_2(z):\theta_7^2(z):\theta_{12}^2(z)]$. 
\end{enumerate}
\end{rem}
\subsection{Shioda-Inose Surfaces}\label{sec:shioda-inose}
Given an Abelian surface $\AA = \Jac(\X)$, one can construct an associate \emph{Shioda-Inose surface} $\Y := \operatorname{SI}(\Jac(\X))$, which is a K3 surface that 
shares a similar Hodge structure with $\AA$, on its transcendental lattice. 
\par The surface, $\Y$, is obtained (see \cite{clingherdoran1}) as a geometric two-isogeny of $\Kum(\Jac(\X))$. Namely, one has a diagram as below, involving rational double-cover maps:
\begin{equation}
\begin{tikzcd}[column sep=scriptsize]
\Jac(\X)  \arrow[r, dashrightarrow, "\pi"]  &
\Kum(\Jac(\X)) \arrow[rrrr, dashrightarrow, bend left=10, "r"] & & & &
\Y=\operatorname{SI}(\Jac(\X)) \arrow[llll, dashrightarrow, bend left=10, "p"] 
\end{tikzcd}
\end{equation}
The Shioda-Inose terminology for $\Y$ is motivated by the existence of a Shioda-Inose structure on this surface. In turn, such a structure induces an isomorphism of integral Hodge structures between the transcendental lattices of $\Jac(\X)$ and $\Y$ (see \cite{shioda-inose}). More specifically, $\Y$ admits an involution fixing the holomorphic (2,0)-form, with quotient $\Kum(\Jac(\X))$, and the rational degree-two map $p \colon \Y \longrightarrow \Kum(\Jac(\X))$ determined a Hodge isometry between $T(\Y)(2)$ and $T(\Kum(\Jac(\X)))$; see \cite{2019-4}.

\begin{lem}
In the situation above the Shioda-Inose structure induces an isomorphism between the transcendental lattices $T(\Jac(\X))$ and $T(\Y)$, up to scaling.
\end{lem}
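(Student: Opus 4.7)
The plan is to obtain the isomorphism by composing two separate Hodge isometries, one coming from the Kummer construction and one coming from the Shioda-Inose involution, and then to cancel the common scaling factor of $2$. Both of these maps are degree-two rational covers of $\Kum(\Jac(\X))$, so each one contributes a factor of $2$ to the intersection form of its associated transcendental lattice; the proof amounts to recording these two contributions carefully and showing they match.

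First I would invoke the classical Nikulin result for Kummer surfaces: for any Abelian surface $\AA$ (here $\AA=\Jac(\X)$), the quotient map $\pi\colon \AA \dashrightarrow \Kum(\AA)$ is a rational double cover whose induced map on transcendental lattices is an isomorphism of rational Hodge structures, and which multiplies the intersection form by $2$. Concretely this gives a Hodge isometry
\begin{equation}
\pi_*\colon T(\Jac(\X))(2) \xrightarrow{\ \sim\ } T(\Kum(\Jac(\X))).
\end{equation}
This is the standard fact that the transcendental lattice of a Kummer surface is the transcendental lattice of the Abelian surface with intersection form scaled by $2$, and it follows from the fact that $\pi$ factors through the blow-up of the sixteen two-torsion points and the exceptional $(-2)$-curves lie in the Néron-Severi (not transcendental) part.

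Next I would quote the statement already recorded in the excerpt, namely that the rational degree-two map $p\colon \Y \longrightarrow \Kum(\Jac(\X))$ coming from the Shioda-Inose involution on $\Y$ fixing the holomorphic $(2,0)$-form induces a Hodge isometry
\begin{equation}
p_*\colon T(\Y)(2) \xrightarrow{\ \sim\ } T(\Kum(\Jac(\X))).
\end{equation}
Composing $\pi_*$ with the inverse of $p_*$ then yields a Hodge isometry
\begin{equation}
T(\Jac(\X))(2) \xrightarrow{\ \sim\ } T(\Y)(2),
\end{equation}
which is the same underlying $\Z$-module isomorphism as a Hodge isometry $T(\Jac(\X)) \cong T(\Y)$ after rescaling the quadratic form by $\tfrac12$ on both sides. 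This is precisely the claim that the Shioda-Inose structure induces an isomorphism of the transcendental lattices up to scaling.

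The only subtle step is making sure the two $2$-scalings are the same (so that they cancel to give an honest lattice isomorphism up to a single overall scalar, rather than some relative scaling). This is not an obstacle in principle, since both maps are geometric rational double covers of the same intermediate surface $\Kum(\Jac(\X))$, and in each case the factor of $2$ comes from pulling back a class along a degree-two map followed by dividing out the exceptional curves (which lie in $\mathrm{NS}$, not in $T$). In particular, the compatibility with the Hodge filtration on each side is automatic because both $\pi$ and $p$ preserve, up to sign on the quotient, the holomorphic two-form, so $\pi_*$ and $p_*$ preserve the $(2,0)$-pieces. The main conceptual point I would emphasize is that the isomorphism is canonical at the level of rational Hodge structures and only the integral scaling is defined "up to scaling".
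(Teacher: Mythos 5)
Your proposal is correct and follows essentially the same route as the paper's own proof: both arguments pass through the common quotient $\Kum(\Jac(\X))$, use the Kummer double cover to identify $T(\Jac(\X))(2)$ with $T(\Kum(\Jac(\X)))$ and the Shioda–Inose double cover to identify $T(\Y)(2)$ with the same lattice, and then cancel the two factors of $2$. Your write-up is simply a more explicit version of the paper's argument, with the compatibility of the two scalings (the one subtle point) spelled out rather than asserted.
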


\begin{proof}
The transcendental lattice $T(\Jac(\X))$ is the orthogonal complement of the N\'eron-Severi group in the cohomology lattice $H^2(\Jac(\X), \Z)$. The map $\pi\colon \Jac(\X) \longrightarrow \Kum(\Jac(\X))$ contracts the 2-torsion cycles, and the map $p\colon \Y \to \Kum(\Jac(\X))$ doubles the lattice, with the involution on $\Y$ preserving the (2,0)-form. The Hodge structure on $T(\Y)$ aligns with $T(\Jac(\X))$, and the scaling by 2 in $T(\Y)(2)$ matches the degree of the cover, ensuring an isometry of integral lattices.
\end{proof}

The Shioda-Inose surfaces can be constructed explicitly as minimal resolutions of some special projective quartic surfaces. Namely, consider the following family of quartics in $\mathbb{P}^3$:
\begin{equation}
\label{shiodain}
 y^2zw - 4x^3z+3 \alpha xzw^2 + \beta zw^3 + \gamma xz^2w - 
 \frac{1}{2}(\delta z^2w^2 +w^4)=0.   
\end{equation}
Then, under the condition $(\gamma, \delta) \neq (0,0)$, the minimal resolution of the above quartic is a K3 surface $\operatorname{SI}(\Jac(\X))$ of Shioda-Inose type. Moreover, the connection with the associated genus-two curve can be determined via the fact that the coefficients $(\alpha, \beta, \gamma, \delta)$ can be computed in terms of genus-two Siegel modular forms. Namely, one has (see \cite{clingherdoran1}):
\begin{equation}
    [\alpha: \beta: \gamma: \delta ] \ = \ \left [ \psi_4: \ \psi_6: \ 
    2^{12} 3^5 \chi_{10} : \ 2^{12} 3^6 \chi_{12} \right ]
\end{equation}
The right-hand term above involves the Igusa modular forms of Section \ref{ssec:ppas} and the identity should be seen as an equality of points in the weighted-projective space $\mathbb{P}_{(2,3,5,6)}$. From here, one can compute directly the Igusa invariants of the associated genus-two curve as:
\begin{equation}
[J_2: J_4 : J_6: J_{10}] \ =  \ 
\left [ 2^3 3 \delta: \ 2^23^2 \alpha \gamma ^2: 
\ 2^33^2(4 \alpha \delta  + \beta \gamma )\gamma^2: 2^2 \gamma^6 \right ] .
\end{equation} 
\par It is also important to note the connection between the quartic form (\ref{shiodain}) and the Hudson quartic (\ref{Goepel-Quartic}) which describes $\Kum(\Jac(\X))$. The Kummer surface carries a special Jacobian elliptic fibration $\pi \colon \Kum(\Jac(\X)) \to \bP^1$ which, in the generic case, has a Mordell-Weil isomorphic to $\mathbb{Z} / 2 \mathbb{Z}$ and a configuration of singular fibers consisting of a fiber of Kodaira type $I_5^*$, five fibers of type $I_2$ and one fiber of type $I_1$. The generator of the Mordell-Weil group is associated with a section of order two. Deriving from the Hudson quartic description, the fibration $\pi$ can also be described explicitly via an equation of type:
\begin{equation}
\label{siform1}
Y^2 = X \left( X^2 + \mathcal{P}(t) X + \mathcal{Q}(t) \right ) 
\end{equation}
where $\mathcal{P}(t)$ and $\mathcal{Q}(t)$ are polynomials of degrees 4 and 6 with coefficients involving the Hudson quartic parameters $A.B,C,D$ (see \cite{clingherdoran1} for the explicit formulas). The section of order two may be seen in this model as $X=0$. Translations by this order-two section, within the smooth fibers, extend to a global Nikulin involution on $\Kum(\Jac(\X))$. After taking the quotient by this involution and resolving the eight occurring singularities, one obtains a new K3 surface - the Shioda-Inose surface $\operatorname{SI}(\Jac(\X))$. The quotient map $r \colon \Kum(\Jac(\X)) \dashrightarrow \operatorname{SI}(\Jac(\X))$ may be seen in the context of $(\ref{siform1})$ as:
\begin{equation}
    (X,Y) \mapsto (x,y) = \left ( \frac{\mathcal{Q}(t)}{X}, \ - \frac{\mathcal{Q}(t) Y}{X^2}\right ) \ , 
\end{equation}
with the two-isogenous elliptic fibration on $\operatorname{SI}(\Jac(\X))$ being given by:
\begin{equation}
\label{SIexplicit}
y^2 = X \left( x^2 + \tilde{\mathcal{P}}(t) x + \tilde{\mathcal{Q}}(t) \right ) \ ,
\end{equation}
with $\tilde{\mathcal{P}}(t) = -2 \mathcal{P}(t) $ and  $\tilde{\mathcal{Q}}(t) = \mathcal{P}(t)^2 - 4 \mathcal{Q}(t) $. This dual elliptic fibration $(\ref{SIexplicit})$, whose underlying surface is the Shioda-Inose surface  $\operatorname{SI}(\Jac(\X))$, also has Mordell-Weil group isomorphic to  $\mathbb{Z} / 2 \mathbb{Z}$ and carries a singular fiber of Kodaira type $I_{10}^*$.  As it turns out, $(\ref{SIexplicit})$ matches the elliptic fibration obtained by projection to $[x\colon w]$ in the context of the Shioda-Inose quartic $(\ref{shiodain})$. This allows one to obtain explicit formulas relating the parameters $(\alpha, \beta, \gamma, \delta)$ to the Hudson coefficients $A.B,C,D$. We refer  to \cite{clingherdoran1} for the actual formulas.  
\section{\texorpdfstring{$(n, n)$}{(n,n)}-Split Jacobians}
\label{sec:split_jac}
We have seen above that sets of Abelian surfaces with the same endomorphism ring form sub-varieties within $\mathbb{A}_2$.  In fact, we showed that the endomorphism ring of principally polarized Abelian surface tensored with $\Q$ is either a quartic CM field, an indefinite quaternion algebra, a real quadratic field or in the generic case $\Q$. Irreducible components of the corresponding  subsets in $\mathbb{A}_2$ have dimensions $0, 1, 2$ and are known as \emph{CM points}, \emph{Shimura curves}, and \emph{Humbert surfaces}, respectively. The latter can be related to a fascinating aspect of genus 2 Jacobians, namely their potential \emph{decomposability}. 
\par Let $\psi \colon \CC \longrightarrow E_1$ be a maximal degree $n$ covering to an elliptic curve $E_1$, meaning $\deg \psi = n$ and $\psi$ does not factor through an isogeny of $E_1$. Then, there exists another elliptic curve $E_2 := \Jac(\CC) / E_1$, defined as the quotient by the connected component of $\ker(\psi_*)$, such that $\Jac(\CC)$ is isogenous to $E_1 \times E_2$ via an isogeny of degree $n^2$. We then call $\Jac(\CC)$ \textbf{$(n, n)$-decomposable} or \textbf{$(n, n)$-Split}, a property studied in \cite{FK1}. The locus of such curves in $\M_2$ forms a 2-dimensional irreducible subvariety, with explicit computations for $n = 2, 3, 5$ given in \cite{2000-2}, \cite{2001-1}, and \cite{2005-1}, respectively.

Consider an irreducible, smooth, projective curve $\CC$ of genus 2 and a maximal covering $\psi_1 \colon  \CC \longrightarrow E_1$ of degree $n$. The induced map $\psi_1^* \colon E_1 \longrightarrow \Jac(\CC)$ is injective, embedding $E_1$ as a subvariety, and $\psi_{1,*} \colon \Jac(\CC) \longrightarrow E_1$ has kernel $\ker(\psi_{1,*})$, an elliptic curve $E_2$ since $\dim \Jac(\CC) = 2$ and $\dim E_1 = 1$ (see \cite{2000-1}). Fixing a Weierstrass point $P \in \CC$, the embedding
\begin{equation}
\begin{aligned}
i_P \colon \CC & \longrightarrow \Jac(\CC) \\
x & \mapsto \ [(x) - (P)]
\end{aligned}
\end{equation}
maps $\CC$ into $\Jac(\CC)$. Let $g\colon E_2 \longrightarrow \Jac(\CC)$ be the natural inclusion, with dual $g^* \colon \Jac(\CC) \longrightarrow E_2$. Define $\psi_2 = g^* \circ i_P \colon \CC \longrightarrow E_2$, a morphism to $E_2$. This yields exact sequences:
\begin{equation}\label{red-Jacobians}
0 \longrightarrow E_2 \overset{g}{\longrightarrow} \Jac(\CC) \xrightarrow{\psi_{1,*}} E_1 \to 0,
\end{equation}
and its dual
\begin{equation}
0 \longrightarrow E_1 \xrightarrow{\psi_1^*} \Jac(\CC) \overset{g^*}{\longrightarrow} E_2 \longrightarrow 0.
\end{equation}
If $\deg(\psi_1) = 2$ or odd, $\psi_2 \colon \CC \longrightarrow E_2$ is unique up to elliptic curve isomorphism, as shown in \cite{2001-1}. The Hurwitz space $\H_\sigma$ of such covers embeds as a 2-dimensional subvariety $\L_n \subset \M_2$, with equations in terms of $J_2, J_4, J_6, J_{10}$ given in \cite{2000-2} (for $n = 2$), \cite{2001-1} (for $n = 3$), and \cite{2005-1} (for $n = 5$). We say $\CC$ has an $(n, n)$-decomposable Jacobian if $\Jac(\CC)$ admits such a structure, with $E_1$ and $E_2$ as its components.
\subsection{Humbert surfaces}
The Humbert surface $H_{\Delta}$ with invariant $\Delta$ is the space of principally polarized Abelian surfaces admitting a symmetric endomorphism with discriminant $\Delta$. It turns out that $\Delta$ always is a positive integer satisfying $\Delta \equiv 0, 1\mod{4}$ and uniquely determined $H_{\Delta}$. In fact, $H_{\Delta}$ is the image inside $\mathbb{A}_2$ under the projection of the rational divisor associated with the equation
\begin{equation}
\label{eqn:discriminant}
 a \, \tau_{11} + b \, \tau_{12} + c \, \tau_{22} + d\, (\tau_{12}^2 -\tau_{11} \, \tau_{22}) + e = 0 \;,
\end{equation}
with integers $a, b, c, d, e$ satisfying $\Delta=b^2-4\,a\,c-4\,d\,e$ and $\tau = \bigl(\begin{smallmatrix} \tau_{11}& \tau_{12}\\ \tau_{12} & \tau_{22} \end{smallmatrix} \bigr) \in \mathbb{H}_2$.  The following was proven by Birkenhake and Lange in \cite{MR1953527}:
\begin{thm}
\label{prop:isogeny_Delta}
For $n \in \mathbb{N}$ the Humbert surface $H_{n^2}$ is the locus of principally polarized Abelian surfaces $(\AA, \mathscr{L}) \in \mathbb{A}_2$ admitting an isogeny of degree $n^2$, given by
\begin{equation}
\label{eqn:Phi}
 \hat{\Psi}\colon \quad \Big( E_1 \times E_2, \;  \mathcal{O}_{E_1} (n) \boxtimes  \mathcal{O}_{E_2} (n)  \Big)
 \ \longrightarrow \ \Big( \AA, \mathscr{L} \Big) \,,
\end{equation} 
where $\mathcal{O}_{E_l} (n)$ is a line bundle of degree $n$ on an elliptic curve $\mathcal{E}_l$ for $l= 1,2$.
\end{thm}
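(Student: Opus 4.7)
The plan is to establish both directions of the equivalence by leveraging the key observation that the discriminant $n^2$ being a perfect square forces the rational endomorphism algebra of $\AA$ to split rationally, yielding a product decomposition up to isogeny.

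For the direction $(\Leftarrow)$: given the polarized isogeny $\hat{\Psi} \colon (E_1 \times E_2, \mathcal{O}_{E_1}(n) \boxtimes \mathcal{O}_{E_2}(n)) \longrightarrow (\AA, \mathscr{L})$, I would construct a symmetric endomorphism of $(\AA, \mathscr{L})$ whose characteristic polynomial has discriminant exactly $n^2$. Concretely, the polarization compatibility $\hat{\Psi}^* \mathscr{L} \cong \mathcal{O}_{E_1}(n) \boxtimes \mathcal{O}_{E_2}(n)$ translates into the identity $\hat{\Psi}^\vee \circ \phi_{\mathscr{L}} \circ \hat{\Psi} = \phi_{\mathcal{M}}$ between polarization morphisms, and after scaling by $n$ this produces a symmetric endomorphism $\phi \in \End \AA$ fixed by the Rosati involution. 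Writing out its analytic representation with respect to the period matrix $\tau$ and extracting the integer relation it imposes, one recovers exactly Equation~\eqref{eqn:discriminant} with $b^2 - 4ac - 4de = n^2$, placing $(\AA, \mathscr{L})$ on $H_{n^2}$.

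For the direction $(\Rightarrow)$: assume $\tau \in \mathbb{H}_2$ satisfies Equation~\eqref{eqn:discriminant} with $b^2 - 4ac - 4de = n^2$. Since the discriminant is a perfect square, the quadratic form in question factors over $\Q$, producing two $\Z$-linearly independent primitive integral vectors in the lattice $\Lambda = \Z^2 \oplus \tau\,\Z^2$ whose $\C$-spans descend to one-dimensional complex subtori $E_1, E_2 \subset \AA$. The natural addition map $\hat{\Psi} \colon E_1 \times E_2 \longrightarrow \AA$ is then an isogeny, and a direct index computation using the Humbert data shows $\deg \hat{\Psi} = n^2$. The pullback $\hat{\Psi}^* \mathscr{L}$ is computed via the restriction of the Riemann form $\alpha$ defining $\mathscr{L}$ to each $E_i$: the block-diagonal splitting of $\alpha$ with respect to this decomposition yields a product polarization of type $(n, n)$, i.e., $\mathcal{O}_{E_1}(n) \boxtimes \mathcal{O}_{E_2}(n)$, matching the left-hand side of \eqref{eqn:Phi}.

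The main obstacle will be the bookkeeping of polarization types under the isogeny: verifying that the pullback of $\mathscr{L}$ is exactly $\mathcal{O}_{E_1}(n) \boxtimes \mathcal{O}_{E_2}(n)$—with degree $n$ on each factor and no ``mixed'' off-diagonal terms—requires careful analysis of how the chosen decomposition of $\Lambda$ interacts with the alternating form $\alpha$. Identifying the specific integer coefficients $a, b, c, d, e$ in terms of intersection numbers of the elliptic subvarieties $E_1, E_2$ with the Theta divisor is the key step linking the algebro-geometric condition of $(n,n)$-Splitting to the analytic Humbert equation, and is where the hypothesis that $\Delta = n^2$ is a perfect square (as opposed to a generic square-free integer, which would give irreducible real multiplication rather than a split) plays the decisive role.
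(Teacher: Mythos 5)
The paper does not actually prove this theorem: it is quoted from the cited reference \cite{MR1953527}, so there is no internal argument to compare yours against. Judged on its own, your outline follows the standard route and is sound in spirit, but two steps need repair. In the direction $(\Rightarrow)$ you propose to obtain the elliptic subtori from ``two primitive integral vectors in $\Lambda$ whose $\C$-spans descend to one-dimensional complex subtori.'' A single primitive vector does not suffice: the complex line it spans meets $\Lambda$ in a rank-two lattice only under an additional integrality condition, and supplying that condition is precisely the content of the singular relation \eqref{eqn:discriminant}. The clean way to organize this is through the object you already use in the other direction: the relation with $\Delta=n^2$ produces a Rosati-symmetric endomorphism $f$ whose characteristic polynomial has square discriminant, hence rational eigenvalues, and the associated pair of complementary abelian subvarieties (images of the norm endomorphisms attached to $f$) are the elliptic curves $E_1,E_2$; complementarity gives $E_1+E_2=\AA$ with finite intersection, so the addition map is an isogeny. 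This also dissolves what you call the main obstacle: once $E_2$ is the complementary abelian subvariety of $E_1$ with respect to $\mathscr{L}$, the Riemann form of $\mathscr{L}$ is block-diagonal on $\Lambda_1\oplus\Lambda_2$ by construction, and the degree count $\deg\phi_{\hat{\Psi}^*\mathscr{L}}=(\deg\hat{\Psi})^2\cdot\deg\phi_{\mathscr{L}}=n^4$ together with $E_i\cdot\Theta=n$ forces each block to have type $(n)$, i.e. $\hat{\Psi}^*\mathscr{L}\cong\mathcal{O}_{E_1}(n)\boxtimes\mathcal{O}_{E_2}(n)$.

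In the direction $(\Leftarrow)$, the identity $\hat{\Psi}^\vee\circ\phi_{\mathscr{L}}\circ\hat{\Psi}=\phi_{\hat{\Psi}^*\mathscr{L}}$ you write down holds for any isogeny and by itself produces no endomorphism of $\AA$; ``scaling by $n$'' is not the mechanism. The symmetric endomorphism comes from transporting the idempotent $\epsilon_1\colon E_1\times E_2\to E_1\times\{0\}$ through the isogeny: with $\Psi$ the dual isogeny satisfying $\Psi\circ\hat{\Psi}=[n]$, the endomorphism $f=\hat{\Psi}\circ\epsilon_1\circ\Psi$ is Rosati-symmetric and satisfies $f^2=nf$, so its characteristic polynomial $t^2-nt$ has discriminant exactly $n^2$, placing $(\AA,\mathscr{L})$ on $H_{n^2}$. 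With these two repairs your plan reproduces the argument of the reference.
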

For example, inside of $\mathbb{A}_2$ sit the Humbert surfaces $H_1$ and $H_4$ that are defined as the images under the projection  of the rational divisor associated to $\tau_{12}=0$ and $\tau_{11} - \tau_{22}=0$, respectively. In fact, the singular locus of $\mathbb{A}_2$ has $H_1$ and $H_4$ as its two connected components.  As analytic spaces, $H_1$ and $H_4$ are each isomorphic to the Hilbert modular surface 
\begin{equation}
\label{modular_product2}
 \Big( (\mathrm{SL}_2(\Z) \times \mathrm{SL}_2(\Z) ) \rtimes \Z_2 \Big) \backslash \Big( \mathbb{H} \times \mathbb{H} \Big) \,.
\end{equation}
In terms of Siegel modular forms in Section~\ref{ssec:ppas} we have the following characterization, see \cite{MR1438983}:
\begin{prop}
\label{prop:Q}
The vanishing divisor of the cusp form $\chi_{10}$ in $\mathbb{A}_2$ is the Humbert surface $H_1$, i.e., a period point $\tau$ is equivalent to a point with $\tau_{12}=0$ relative to $\Gamma_2$ if and only if $\chi_{10}(\tau)=0$.  The vanishing divisor of $Q$ in $\mathbb{A}_2$ is the Humbert surface $H_4$, i.e.,  a period point $\tau$ is equivalent to a point with $\tau_{11}=\tau_{22}$ relative to $\Gamma_2$ if and only if $Q=0$. 
\end{prop}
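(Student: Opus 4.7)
The plan is to prove the two assertions separately. The first follows from the identification of $\chi_{10}$ (up to a constant) with the discriminant of the associated genus-two curve, and the second from Igusa's identity~\eqref{chi_35sqr} relating $\chi_{35}^2$ to $\chi_{10}$ and $Q$.

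For the claim about $\chi_{10}$ and $H_1$, I would use the relation $J_{10}(f) = -2^{14}\chi_{10}(\tau)$ from Equation~\eqref{invariants}. Since $J_{10}(f)$ is the discriminant of the sextic $f$ defining the genus-two curve $\CC$, the vanishing of $\chi_{10}(\tau)$ is equivalent to $f$ having a repeated root. By the Torelli-type dichotomy recalled at the end of Section~\ref{ssec:ppas}, a period point $\tau \in \mathbb{H}_2$ either yields the Jacobian of a smooth genus-two curve (when $\chi_{10}(\tau)\neq 0$) or the product of two elliptic curves with the product polarization (when $\chi_{10}(\tau)=0$). The latter case is precisely when $\tau$ is $\Gamma_2$-equivalent to a diagonal matrix $\bigl(\begin{smallmatrix}\tau_1 & 0 \\ 0 & \tau_2\end{smallmatrix}\bigr)$: the forward implication is immediate from the lattice decomposition $\C^2/(\Z^2 \oplus \operatorname{diag}(\tau_1,\tau_2)\Z^2) \cong (\C/(\Z+\tau_1\Z))\times(\C/(\Z+\tau_2\Z))$, while the converse follows because any principally polarized product of elliptic curves admits a symplectic basis making its period matrix diagonal.

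For the claim about $Q$ and $H_4$, I would rewrite Equation~\eqref{chi_35sqr} as $\chi_{35}^2 = 2^{-12}\cdot 3^{-9}\,\chi_{10}\,Q$. Since $\chi_{35}$ is (up to scalar) the unique Siegel cusp form of odd weight $35$ relative to $\Gamma_2$, a classical argument of Igusa forces $\chi_{35}$ to vanish on the locus where a non-scalar element of $\Gamma_2$ stabilizes $\tau$; this singular locus of $\mathbb{A}_2$ has exactly two irreducible components, $H_1$ and $H_4$, corresponding to the extra involutions that act at their generic points---the sign-flip on one elliptic factor for $H_1$ and the coordinate swap $(z_1,z_2)\mapsto(z_2,z_1)$ for $H_4$---with simple vanishing of $\chi_{35}$ along each component. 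Squaring yields that $\chi_{35}^2$ vanishes with multiplicity two on each of $H_1$ and $H_4$. Combined with the first part of the proposition---from which $\chi_{10}$ vanishes to order two on $H_1$ (its theta-product representation has exactly one even theta constant vanishing to first order there) and is nonzero at a generic point of $H_4$---the identity yields $\Div(Q) = 2 H_4$ on $\mathbb{A}_2$. In particular, $Q(\tau) = 0$ if and only if $\tau$ is $\Gamma_2$-equivalent to a point with $\tau_{11}=\tau_{22}$.

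The main obstacle is the multiplicity bookkeeping underlying the second assertion: one must verify that $\chi_{35}$ has simple zeros along each of $H_1$ and $H_4$ and no zeros elsewhere, and that $\chi_{10}$ has a double zero along $H_1$. These facts can be handled via local Fourier-expansion computations transverse to the Humbert surface in question, using the product expressions of $\chi_{10}$ and $\chi_{35}$ in terms of theta constants: for $H_1$ one uses the factorization of the all-odd genus-two even theta constant through a product of odd genus-one theta constants, and for $H_4$ one uses the antisymmetry of $\chi_{35}$ under the swap involution, which acts nontrivially on pairs of theta characteristics. Once these orders of vanishing are pinned down, the divisor-comparison argument above is formal.
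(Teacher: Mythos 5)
The paper itself offers no proof of this proposition; it is quoted from Gritsenko--Nikulin \cite{MR1438983}, so there is no internal argument to compare against. Your outline follows the standard route (and, in essence, the one in the cited reference): identify the vanishing of $\chi_{10}$ with the product-of-elliptic-curves locus via the Torelli dichotomy, then read off $\Div(Q)$ from the identity $\chi_{35}^2 = 2^{-12}3^{-9}\chi_{10}\,Q$ together with the divisors of $\chi_{35}$ and $\chi_{10}$. The first half is essentially fine, with one small caveat: the relation $J_{10}(f)=-2^{14}\chi_{10}(\tau)$ is only meaningful for $\tau$ that actually arise from a smooth sextic, so the equivalence ``$\chi_{10}=0$ iff $f$ has a repeated root'' should be replaced by the contrapositive (Jacobian of a smooth curve $\Rightarrow \chi_{10}\neq 0$) plus a direct verification that $\chi_{10}$ vanishes on the diagonal locus, e.g.\ via the factorization of the all-odd even theta constant into two odd genus-one theta constants --- which you do mention in your final paragraph.

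The genuine gap is in the second half: the statement that $\chi_{35}$ has \emph{no zeros outside} $H_1\cup H_4$. Your anti-invariance argument (the reflections stabilizing $\{\tau_{12}=0\}$ and $\{\tau_{11}=\tau_{22}\}$ act with automorphy factor $-1$ in odd weight) correctly forces $\Div(\chi_{35})\ge H_1+H_4$, and local Fourier expansions transverse to these surfaces can pin down the orders of vanishing along them; but no local computation can rule out additional components of $\Div(\chi_{35})$, and without that the identity only gives $\Div(Q)=2H_4+2D$ for some unknown effective $D$, which is not enough for the ``only if'' direction of the claim about $Q$. Closing this requires a global input: either Igusa's explicit product expression for $\chi_{35}$, the Borcherds-product expansion of \cite{MR1438983} (whose divisor is exactly $H_1+H_4$), or a degree count of $[H_1]+[H_4]$ against $35\lambda$ in $\operatorname{Pic}(\mathbb{A}_2^\star)\otimes\Q$. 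Alternatively, you could bypass $\chi_{35}$ altogether using material already in the paper: the Pringsheim factorization~\eqref{eqn:symmetry} exhibits $\chi_{35}^2/\chi_{10}^4$, hence $Q$ up to a power of $\chi_{10}$, as a product of fifteen squared linear forms in the Rosenhain roots, and Proposition~\ref{prop:pringsheim} identifies their vanishing loci exactly with the fifteen components of $H_4$ in $\mathbb{H}_2/\Gamma_2(2)$; this gives both inclusions for $\Div(Q)$ directly on the locus where $\chi_{10}\neq 0$, with the $\chi_{10}=0$ locus handled by the first part.
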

It is known that one has $\chi_{10}(\tau)=0$ if and only if the principally polarized Abelian surface $\AA$ is a product of two elliptic curves  $\AA  = E_{\tau_{11}} \times E_{\tau_{22}}$ with the transcendental lattice  $\mathrm{T}_\AA = H \oplus H$; see \cite{MR3712162}.  Here, $H$ denotes the lattice $\Z^2$ with quadratic form $q(\vec{v}) = 2 v_1 v_2$. Moreover, for $Q(\tau)=0$ the transcendental lattice of the corresponding Abelian surface $\AA$ is given by  $\mathrm{T}_\AA = H \oplus \langle 2 \rangle \oplus \langle -2 \rangle$. Here, $\langle m \rangle$ denotes the rank-one lattice $\Z v$ with $q(v)=m$. Similarly, for $\tau \in H_{n^2}$, the transcendental lattice of $\AA$ is given by  $\mathrm{T}_\AA = H \oplus \langle n \rangle \oplus \langle -n \rangle$.

Considering $\AA=\operatorname{Jac}(\CC)$ as above, the isogeny in the above theorem is precisely $\hat{\Psi} = \psi_1^* \times \psi_2^*$.  The Humbert hypersurface $H_{\Delta}$ then parameterizes curves $\CC$ whose Jacobians admit an optimal action by the order $\O_\Delta$, a condition tied to embeddings of quadratic fields (see \cite{HM95}). In particular, $H_{n^2}$ correspond to curves with $(n, n)$-Split Jacobians, reflecting isogenies to products of elliptic curves. A point in $H_{m^2} \cap H_{n^2}$ ($m \neq n$) indicates either a simple Abelian surface with quaternionic multiplication by an indefinite quaternion algebra over $\Q$, or a self-product $E^2$ where $E$ is an elliptic curve, a phenomenon prominent on Shimura curves.

\begin{prop}
$\Jac(\CC)$ is a geometrically simple Abelian variety if and only if it is not $(n, n)$-decomposable for some $n > 1$. Equivalently, if $\Jac(\CC)$ is split over $k$, then there exists an integer $n \geq 2$ such that $\Jac(\CC)$ is $(n, n)$-Split.
\end{prop}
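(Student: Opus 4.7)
The plan is to prove the biconditional by handling each direction separately, using the Poincar\'e--Weil decomposition together with the functorial relation between maximal covers $\CC \to E$ and elliptic subvarieties of $\Jac(\CC)$ described just before the statement.

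For the easy direction, I would assume $\Jac(\CC)$ is $(n,n)$-decomposable for some $n > 1$, so by definition there is a maximal cover $\psi_1\colon \CC \longrightarrow E_1$ of degree $n$ onto an elliptic curve. The pullback $\psi_1^*\colon E_1 \longrightarrow \Jac(\CC)$ is injective: any positive-dimensional component of its kernel would let $\psi_1$ factor through a nontrivial isogeny of $E_1$, violating maximality. Thus $E_1$ embeds as a $1$-dimensional abelian subvariety of the $2$-dimensional variety $\Jac(\CC)$, so $\Jac(\CC)$ is not geometrically simple.

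For the converse, suppose $\Jac(\CC)$ is not geometrically simple. By the Poincar\'e--Weil theorem applied over $\bar k$, $\Jac(\CC)$ is isogenous to a product $\AA_1^{n_1} \times \cdots \times \AA_r^{n_r}$ of powers of pairwise non-isogenous simple abelian varieties; since $\dim \Jac(\CC) = 2$ and no factor can have dimension $2$, each simple factor is an elliptic curve, so $\Jac(\CC)$ is isogenous to $E_1 \times E_2$ for elliptic curves $E_1, E_2$ (possibly isogenous). Composing this isogeny with projection to the first factor gives a surjective homomorphism $\pi\colon \Jac(\CC) \twoheadrightarrow E_1$. Precomposing with the Abel--Jacobi embedding $i_P\colon \CC \hookrightarrow \Jac(\CC)$ at a Weierstrass point $P$ yields a morphism $\psi := \pi \circ i_P\colon \CC \longrightarrow E_1$, which cannot be constant: if it were, then $i_P(\CC) \subset \ker \pi$, but $i_P(\CC)$ generates $\Jac(\CC)$ as a group by Abel--Jacobi, forcing $\pi = 0$. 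Hence $\psi$ has positive degree $d$, and since $g(\CC) = 2 > 1 = g(E_1)$, Riemann--Hurwitz forces $d \geq 2$.

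It remains to promote $\psi$ to a \emph{maximal} cover. If $\psi$ factors as $\CC \overset{\psi'}{\longrightarrow} E_1' \overset{\varphi}{\longrightarrow} E_1$ with $\varphi$ a nontrivial isogeny, then $\deg \psi' < \deg \psi$, so after finitely many replacements we arrive at a maximal cover $\tilde\psi_1\colon \CC \longrightarrow \tilde E_1$ of some degree $n \geq 2$. By the construction recalled from \cite{FK1} via the exact sequence \eqref{red-Jacobians}, taking $\tilde E_2 := \Jac(\CC)/\tilde\psi_1^*(\tilde E_1)$ yields an isogeny $\Jac(\CC) \sim \tilde E_1 \times \tilde E_2$ of degree $n^2$, witnessing that $\Jac(\CC)$ is $(n,n)$-split. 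The main obstacle is the maximal-cover step: one must verify carefully that the factoring procedure terminates and that the resulting $\tilde\psi_1$ is genuinely maximal in the sense of \cite{FK1}, so that the sequence \eqref{red-Jacobians} applies and produces the $(n,n)$-decomposition of the prescribed degree rather than merely some isogeny to a product. Once this is in place, combining both directions establishes the equivalence, and the ``split over $k$'' reformulation follows since split Jacobians are a fortiori not geometrically simple.
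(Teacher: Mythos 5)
Your proposal is correct and follows essentially the same route as the paper: the easy direction via the injectivity of $\psi_1^*$ embedding an elliptic curve into $\Jac(\CC)$, and the converse via Poincar\'e--Weil to split $\Jac(\CC) \sim E_1 \times E_2$ over $\bar k$ and then produce a maximal cover $\CC \to E_1$ of degree $n \geq 2$. The only difference is that you spell out the step the paper delegates to \cite{FK1} (composing the isogeny projection with the Abel--Jacobi embedding and reducing to a maximal cover by the strictly decreasing degree argument), and that elaboration is sound.
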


\begin{proof}
Suppose $\Jac(\CC)$ is geometrically simple, i.e., simple over $\bar{k}$. By the Poincaré-Weil theorem, $\Jac(\CC)$ is isogenous to $\AA_1^{n_1} \times \cdots \times \AA_r^{n_r}$, and simplicity over $\bar{k}$ implies $r = 1$, $n_1 = 1$, with $\AA_1 = \Jac(\CC)$. If $\Jac(\CC)$ were $(n, n)$-decomposable, there would exist a maximal degree $n$ covering $\psi \colon \CC \longrightarrow E_1$, inducing an isogeny $\Jac(\CC) \longrightarrow E_1 \times E_2$ of degree $n^2$, where $E_1, E_2$ are elliptic curves. Over $\bar{k}$, this isogeny splits $\Jac(\CC)$ into a product of 1-dimensional varieties, contradicting simplicity unless $n = 1$, which is trivial (as $\deg \psi = 1$ implies $\CC \cong E_1$, contradicting $g = 2$). Thus, $\Jac(\CC)$ is not $(n, n)$-decomposable for any $n > 1$.

Conversely, if $\Jac(\CC)$ is not $(n, n)$-decomposable for any $n > 1$, suppose it is not geometrically simple. Then over $\bar{k}$, $\Jac(\CC) \cong E_1 \times E_2$, with $\dim E_i = 1$. By the theory of maximal coverings (\cite{FK1}), there exists a degree $n > 1$ map $\psi : \CC \to E_1$ (e.g., projection via a correspondence), making $\Jac(\CC)$ isogenous to $E_1 \times E_2$, hence $(n, n)$-Split, a contradiction. Thus, $\Jac(\CC)$ must be simple over $\bar{k}$.

For the equivalent statement, if $\Jac(\CC)$ is split over $k$ (isogenous to $E_1 \times E_2$ over $k$), there exists a maximal covering $\psi \colon \CC \longrightarrow E_1$ of degree $n \geq 2$, as genus 2 curves admit non-trivial maps to elliptic curves, inducing the $(n, n)$-Split structure (see \cite{2000-1}, §3).
\end{proof}

This characterization connects the geometric simplicity of $\Jac(\CC)$ to its indecomposability, a key property for later isogeny studies. In terms of the Kummer plane introduced in Section~\ref{ssec:Kummer_odd} we have the following:
\begin{rem}
\label{rem:config}
In \cite{MR1953527} the geometry of the Kummer plane was determined over several Humbert surfaces $H_\Delta$. In particular, the following statement were proven~\cite{MR1953527}*{Cor.~7.2} and~\cite{MR1953527}*{Cor.~7.3}:
\begin{enumerate}
\item[$\Delta=\phantom{1}4\colon$] if $(\AA, \mathscr{L}) \in H_4$ if and only if (numbering the six lines on its Kummer plane $(\mathbb{P}^2; \mathsf{T}_1, \dots, \mathsf{T}_6)$ suitably) the three points $\mathsf{T}_1 \cap \mathsf{T}_2$, $\mathsf{T}_3 \cap \mathsf{T}_4$, $\mathsf{T}_5 \cap \mathsf{T}_6$ are collinear.
\item[$\Delta=16\colon$] if $(\AA, \mathscr{L}) \in H_{16}$ then its Kummer plane $(\mathbb{P}^2; \mathsf{T}_1, \dots, \mathsf{T}_6)$ admits a cubic passing smoothly through three of the 15 points $\mathsf{T}_m \cap \mathsf{T}_n$ and touching the singular lines $\mathsf{T}_n$ in the remaining intersection points with even multiplicity. Conversely, if $(\mathbb{P}^2; \mathsf{T}_1, \dots, \mathsf{T}_6)$ admits such a curve, then $(\AA, \mathscr{L}) \in H_\Delta$ with $\Delta \in \{ 4, 8, 12, 16, 20\}$.
\end{enumerate} 
For the transcendental lattice it follows $\mathrm{T}_{\operatorname{Kum}(\AA)} \cong \mathrm{T}_{\AA}(2)$ \cite{MR728142}*{Thm.~10}. In particular we have
\begin{equation}
\begin{split}
 \mathrm{T}_{\operatorname{Kum}(\AA)}  & = H(2) \oplus \langle 4 \rangle \oplus \langle -4 \rangle \ \text{for} \ (\AA, \mathscr{L}) \in H_4\,.
\end{split} 
\end{equation}
Generally, for $\AA \in H_{n^2}$ it follows from \cite{MR728142} that $\operatorname{Kum}(\AA)$ has Picard rank 18, and the transcendental lattice is given by
\begin{equation}
  \operatorname{T}_{\operatorname{Kum}(\AA)} \cong \operatorname{T}_\AA(2) = H(2) \oplus \langle 2n \rangle \oplus   \langle - 2n \rangle\,.
\end{equation}
\end{rem}
\subsection{The case of \texorpdfstring{$(2,2)$}{(2,2)}-Split Jacobians}
\label{ssec:22case}
The case of a smooth genus 2 curve $\CC$ with a \((2, 2)\)-Split Jacobian was described by Bolza \cite{MR1505464} explicitly. In particular,  he proved that the Jacobian $\operatorname{Jac}(\CC)$ for a smooth genus-two curve $\CC$ is \((2, 2)\)-Split if and only if $Q(\tau)=0$; see Proposition~\ref{prop:Q}. Next we will provide several explicit models for the algebraic curves involved in this case. 
\subsubsection{Bolza's representation}
Bolza also proved that one can always represent this extra involution as $[X:Y:Z] \mapsto [-X:Y:Z]$ if the genus 2 curve is given in the from
\begin{equation}
\label{eqn:genus2+auto}
  \CC\colon \quad Y^2 = X^6 + s_1 X^4 Z^2 + s_2 X^2 Z^4 + Z^6 \,.
\end{equation} 
One uses Equations~(\ref{invariants}) and Equation~(\ref{chi_35sqr}) to check that $Q= 2^{12} \, 3^9 \, \chi_{35}^2 /\chi_{10}$ always vanishes for such a genus-two curve.  
\par Given the elliptic involution, its composition with the hyperelliptic involution defines a second elliptic involution. The two involutions define two elliptic subfields of degree two for the function field of $\CC$. We introduce the elliptic curves $E_l$ in $\bP^2 = \bP(x_l, y_l, z_l)$ for $l=1, 2$, given by
\begin{equation}
\label{eqn:elliptic_curves}
 E_1\colon \ y_1^2 z_1 = x_1^3 + s_2 x_1^2 z_1 + s_1 x_1 z_1^2 +  z_1^3 \,, \quad
 E_2\colon \ y_2^2 z_2 = x_2^3 + s_1 x_2^2 z_2 + s_2 x_2 z_2^2 +  z_2^3 \,,
\end{equation}
where $E_1$ and $E_2$ have the j-invariants $j_1 = j(E_1)$ and $j_2 = j(E_2)$ with
\begin{equation}
\label{eqn:j_invs}
 j_1 =\frac{2^8 \left(3 s_1 - s_2^2\right)^3}{4(s_1^3+s_2^3)-(s_1s_2)^2-18 s_1s_2+27}  \,, \quad
 j_2 = \frac{2^8 \left(3 s_2 - s_1^2\right)^3}{4(s_1^3+s_2^3)-(s_1s_2)^2-18 s_1s_2+27} \,,
\end{equation}
respectively. Here, we use the standard normalization of the j-invariant  where the square torus with the complex structure $i$ satisfies $j=1728=12^3$.  The degree-two quotient maps $\psi_l\colon  \CC \longrightarrow E_l$ associated with the involutions are given by
\begin{equation}
 \psi_1 \colon \quad   \CC \ \longrightarrow \  E_1\,, \qquad \Big[ X : Y : Z \Big]   \ \mapsto \ \Big[ x_1: y_1 : z_1 \Big]  =  \Big[ XZ^2: Y : X^3 \Big] \,,
\end{equation}
and
\begin{equation}
 \psi_2 \colon \quad   \CC \ \longrightarrow \  E_2 \,, \qquad \Big[ X : Y : Z \Big]   \ \mapsto \ \Big[ x_2: y_2 : z_2 \Big] =  \Big[ X^2Z: Y : Z^3 \Big] \,,
\end{equation}
respectively, for $XZ \not =0$.
\subsubsection{Pringsheim's representation}
One can also start with a smooth  genus-two curve $\CC$ in Rosenhain form given by Equation~(\ref{Eq:Rosenhain}). We use the aforementioned relations between the Igusa invariants and  the Siegel modular forms to expand the generators of the ring of modular forms in terms of the Rosenhain roots. We obtain
\begin{equation}
\label{eqn:symmetry}
\begin{array}{rl}
\multicolumn{2}{l}{- \dfrac{2^{22} \chi_{35}^2(\tau)}{\chi_{10}^4(\tau)}  =  	{\color{black}\big(\lambda_1 - \lambda_2 \lambda_3\big)^2 } 
		{ \big(\lambda_2 - \lambda_1 \lambda_3\big)^2}  
		{ \big(\lambda_3 - \lambda_1 \lambda_2\big)^2} }\\[4pt]
\times & 	{\big(\lambda_1 - \lambda_2 - \lambda_3 + \lambda_2 \lambda_3\big)^2}
		{ \big(-\lambda_1 + \lambda_2 - \lambda_3 + \lambda_1 \lambda_3\big)^2} 
		{ \big(-\lambda_1 - \lambda_2 + \lambda_3 + \lambda_1 \lambda_2\big)^2} \\[4pt]
\times & 	{\big(\lambda_1\lambda_2 + \lambda_1\lambda_3  - \lambda_2 \lambda_3 - \lambda_1\big)^2 }
		{\big(\lambda_1\lambda_2 + \lambda_2\lambda_3 - \lambda_1 \lambda_3 - \lambda_2\big)^2} 
		{\big(\lambda_1\lambda_3 + \lambda_2\lambda_3 - \lambda_1 \lambda_2 - \lambda_3\big)^2} \\[4pt]
\times & \left\lbrace \begin{array}{l} 
		{\big(\lambda_1\lambda_2 - \lambda_1\lambda_3  -\lambda_1 + \lambda_3\big)^2} 
		{\big(\lambda_1\lambda_3 - \lambda_2\lambda_3 - \lambda_1 + \lambda_3\big)^2} 
		{\big(\lambda_1\lambda_2 - \lambda_2\lambda_3 - \lambda_1 + \lambda_2\big)^2}  \\[2pt]
		{\big(\lambda_1\lambda_2 - \lambda_1\lambda_3 + \lambda_1 - \lambda_2\big)^2} 
		{\big(\lambda_1\lambda_3 - \lambda_2\lambda_3 + \lambda_2 - \lambda_3\big)^2} 
		{\big(\lambda_1\lambda_2 - \lambda_2\lambda_3 - \lambda_2 + \lambda_3\big)^2}  \,.
\end{array} \right.
\end{array}
\end{equation}
The vanishing divisor of Equation~(\ref{eqn:symmetry}) defines fifteen components in $\mathbb{H}_2 / \Gamma_2(2)$ of discriminant $\Delta =4$. Notice that each line in Equation~(\ref{eqn:symmetry}) is arranged to be invariant under permutations of the three roots $\lambda_1, \lambda_2, \lambda_3$. Pringsheim proved the following statement\footnote{We corrected two minor typos in the statement of the main theorem.} in \cite{MR1509868}:
\begin{prop}[Pringsheim]
\label{prop:pringsheim}
There are exactly 15 components in $\mathbb{H}_2 / \Gamma_2(2)$ covering $H_4$. Each of the component is equivalent to $\tau_{11} = \tau_{22}$  relative to the modular group $\Gamma_2$. Moreover, there is a transposition of the six roots $(\lambda_1, \lambda_2, \lambda_3, 0, 1,\infty)$ in  $\Gamma_2/\Gamma_2(2)\cong S_6$ that permutes each pair of components. 
\end{prop}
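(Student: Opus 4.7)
The plan rests on three ingredients from the excerpt: Proposition~\ref{prop:Q}, which identifies $H_4$ with the vanishing locus of $Q = 2^{12}\cdot 3^9\,\chi_{35}^2/\chi_{10}$ on $\mathbb{A}_2$; Equation~\eqref{eqn:symmetry}, which displays the pullback of $-2^{22}\chi_{35}^2/\chi_{10}^4$ to $\mathbb{H}_2/\Gamma_2(2)$ as the square of a product of fifteen polynomial factors in the Rosenhain roots; and the Galois covering $\mathbb{H}_2/\Gamma_2(2)\to\mathbb{A}_2$ with deck group $\Gamma_2/\Gamma_2(2) \cong S_6$.

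First I would count. On the open locus $\chi_{10}\neq 0$ the preimage of $H_4$ in $\mathbb{H}_2/\Gamma_2(2)$ is the vanishing locus of $\chi_{35}$, which by Equation~\eqref{eqn:symmetry} decomposes as the union of the zero divisors of the fifteen displayed polynomials. A short check---comparing degrees in each $\lambda_i$ and the manifest $S_3$-symmetries across the three lines of three factors and one line of six factors---shows that no two of these polynomials are proportional, giving exactly fifteen irreducible components. Second, I would show that each component is $\Gamma_2$-equivalent to $\{\tau_{11}=\tau_{22}\}$. By Proposition~\ref{prop:Q} that divisor projects onto $H_4$, so its image in $\mathbb{H}_2/\Gamma_2(2)$ is one of the components; irreducibility of $H_4$ and Galoisness of the cover then force the full preimage to be a single $S_6$-orbit. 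An orbit of size fifteen requires a stabilizer of order $720/15 = 48$, which matches the order of the stabilizer in $S_6$ of an unordered partition of the six Weierstrass branch points $\{\lambda_1,\lambda_2,\lambda_3,0,1,\infty\}$ into three unordered pairs, namely the wreath product $(\mathbb{Z}/2\mathbb{Z})^3 \rtimes S_3$. I would identify the stabilizer of $\{\tau_{11}=\tau_{22}\}$ with this subgroup by appealing to the Bolza form~\eqref{eqn:genus2+auto}: on $\{\tau_{11}=\tau_{22}\}$ the curve carries an extra involution $x\mapsto -x$ that pairs the six Weierstrass points, and the subgroup of $S_6$ preserving this pairing is precisely $(\mathbb{Z}/2\mathbb{Z})^3\rtimes S_3$. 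This simultaneously produces a canonical bijection between the fifteen components and the fifteen pair-partitions of the six Weierstrass points.

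Finally, for the transposition statement, I would translate the combinatorial action of $(i\,j)\in S_6$ on a pair-partition $P$ into an action on components: $(i\,j)$ fixes $P$ whenever $\{i,j\}$ is already a pair of $P$, and otherwise sends $P$ to the partition obtained by swapping the partners of $i$ and $j$ in the two pairs of $P$ containing them. Each pair of components related in this way is therefore permuted by an explicit transposition of the six roots, which is the content of the claim. The main obstacle is the bookkeeping required to match each of the fifteen factors in Equation~\eqref{eqn:symmetry} with its pair-partition: a single explicit M\"obius computation, for instance identifying the factor $\lambda_1 - \lambda_2\lambda_3$ with the partition $\{\{\lambda_1, 1\}, \{\lambda_2, \lambda_3\}, \{0, \infty\}\}$ by constructing the involution $\phi(x) = \lambda_1/x$, pins down one identification; once this is done, the $S_3$-symmetry visible within each line of Equation~\eqref{eqn:symmetry} extends the identification to all fifteen factors. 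The remaining assertions then follow formally from the Galois-covering dichotomy together with orbit-stabilizer.
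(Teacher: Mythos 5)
Your argument is sound, and it is worth noting that the paper itself does not prove this proposition: it records Equation~\eqref{eqn:symmetry}, observes that its vanishing divisor has fifteen components, and cites Pringsheim's 1876 paper for the statement. So there is no in-text proof to compare against, and what you supply is a genuine, self-contained reconstruction. Your route --- pulling $H_4=\{Q=0\}$ back along the Galois cover $\mathbb{H}_2/\Gamma_2(2)\to\mathbb{A}_2$ via Proposition~\ref{prop:Q}, reading off the fifteen pairwise non-proportional irreducible factors from \eqref{eqn:symmetry}, using irreducibility of $H_4$ to force a single transitive $S_6$-orbit, and then matching the order-$48$ stabilizer with the stabilizer $(\mathbb{Z}/2\mathbb{Z})^3\rtimes S_3$ of a pair-partition of the six Weierstrass points via the Bolza form \eqref{eqn:genus2+auto} --- is exactly the natural modern argument, and the equivariant bijection between the fifteen components and the fifteen perfect matchings (pinned down by your M\"obius computation $x\mapsto \lambda_2\lambda_3/x$ for the factor $\lambda_1-\lambda_2\lambda_3$, giving the matching $\{\lambda_1,1\},\{\lambda_2,\lambda_3\},\{0,\infty\}$) is the right organizing device.

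One caveat on the final clause. Under the literal reading ``for every pair of the fifteen components there is a transposition swapping them,'' the statement is false, and your own analysis shows why: a transposition applied to a perfect matching either fixes it or alters exactly two of its three pairs, so two matchings whose union is a single $6$-cycle (hence sharing no common pair, e.g.\ $\{12,34,56\}$ and $\{13,25,46\}$) are not related by any transposition. What is true, and what you prove, is that each transposition acts on the fifteen components fixing three of them and interchanging the remaining twelve in six pairs, so that the pairs of components which \emph{are} exchanged by a transposition are precisely those whose matchings differ by a partner swap. Since the paper's footnote already admits to correcting typos in Pringsheim's statement, you should state explicitly which reading you are proving, rather than asserting that your computation ``is the content of the claim'' --- as written, the strongest reading of the claim is not established, because it cannot be.
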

We will use component  $\lambda_1=\lambda_2\lambda_3$ in Proposition~\ref{prop:pringsheim} to construct a smooth genus-two curve in Rosenhain normal form admitting an elliptic involution.  That is, we consider the smooth genus-two curve $\CC$, given by
\begin{equation}
\label{Eq:Rosenhain_special}
   \CC\colon \quad Y^2 = X Z \, \big(X-Z\big) \, \big( X- \lambda_2 \lambda_3 Z\big) \,  \big( X- \lambda_2  Z\big) \,  \big( X - \lambda_3  Z\big) \,,
\end{equation}
with a discriminant given by
\begin{equation}
  \lambda_2^6  \lambda_3^6 (\lambda_2-1)^4  (\lambda_3-1)^4  (\lambda_2\lambda_3-1)^4   (\lambda_2 - \lambda_3)^2 \,.
\end{equation}
Let us  denote by $\Lambda_l \in \bP^1 \backslash \lbrace 0, 1, \infty \rbrace$ the modular parameter for the elliptic curves $E_l$ in $\bP^2 = \bP(x_l, y_l, z_l)$ for $l=1, 2$ defined by the \emph{Legendre normal form}
\begin{equation}
\label{eqn:EC}
 E_l\colon \quad y_l^2 z_l = x_l \Big( x_l -  z_l\Big) \Big( x_l - \Lambda_l z_l\Big)  \,,
\end{equation}
with the hyperelliptic involution given by $\imath_l: [x_l : y_l : z_l] \mapsto [x_l : -y_l : z_l]$.  One easily checks the following:
\begin{lem}
\label{lem:ECR}
The function field of the smooth genus-two curve $\CC$ contains the subfields given by the function fields of the elliptic curves $\mathcal{E}_l$ for $l=1, 2$ if
\begin{equation}
\label{eqn:EC_12_j_invariants}
 \Lambda_1 \Lambda_2 = \dfrac{(\lambda_2 +\lambda_3)^2 -4 \lambda_2\lambda_3}{(1-\lambda_2)^2 (1-\lambda_3)^2} \,, \qquad
 \Lambda_1 + \Lambda_2 = - \dfrac{2(\lambda_2 +\lambda_3)}{(1-\lambda_2) (1-\lambda_3)} \,.
\end{equation}
\end{lem}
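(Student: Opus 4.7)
The plan is to exploit the extra elliptic involution forced by the specialization $\lambda_1=\lambda_2\lambda_3$, compute the two elliptic quotients in $V_4$-equivariant coordinates on $\CC/V_4=\mathbb{P}^1$, convert each quotient to Legendre form to read off $\Lambda_1,\Lambda_2$, and finally verify the two identities of~(\ref{eqn:EC_12_j_invariants}) by a direct algebraic manipulation. Both $\Lambda_1$ and $\Lambda_2$ will individually involve the square root $\alpha:=\sqrt{\lambda_2\lambda_3}$, but the symmetric functions $\Lambda_1+\Lambda_2$ and $\Lambda_1\Lambda_2$ will be rational in $(\lambda_2,\lambda_3)$.

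Setting $\alpha=\sqrt{\lambda_2\lambda_3}$, the M\"obius map $x\mapsto\alpha^2/x$ permutes the six Weierstrass images $\{0,\infty,1,\alpha^2,\lambda_2,\lambda_3\}$ of $\CC$ as the three transpositions $0\leftrightarrow\infty$, $1\leftrightarrow\alpha^2$, $\lambda_2\leftrightarrow\lambda_3$; it lifts to the involution $\sigma_+\colon(x,y)\mapsto(\alpha^2/x,\alpha^3y/x^3)$ on $\CC$, and together with $\sigma_-:=\imath\circ\sigma_+$ generates a Klein four-group $V_4=\langle\imath,\sigma_+\rangle$ whose two non-hyperelliptic quotients $E_\pm:=\CC/\langle\sigma_\pm\rangle$ are the elliptic subfields of Lemma~\ref{lem:ECR}. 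Introducing the $V_4$-invariant coordinate $u=x+\alpha^2/x$ on $\CC/V_4\cong\mathbb{P}^1_u$, the telescoping identities
\[
(x-1)(x-\alpha^2)=x(u-1-\alpha^2),\qquad(x-\lambda_2)(x-\lambda_3)=x(u-\lambda_2-\lambda_3),
\]
\[
(x\pm\alpha)^2=x(u\pm2\alpha),\qquad x^2\mp\alpha x+\alpha^2=x(u\mp\alpha)
\]
immediately give $y^2=x^3(u-1-\alpha^2)(u-\lambda_2-\lambda_3)$.

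Next, combine these with the $\sigma_\pm$-invariant expressions $W_\pm:=y(x^3\pm\alpha^3)/(2x^3)$ and the factorization $x^3\pm\alpha^3=(x\pm\alpha)(x^2\mp\alpha x+\alpha^2)$ to reduce $W_\pm^2$ to a rational function of $u$ alone,
\[
W_\pm^2=\tfrac14(u-1-\alpha^2)(u-\lambda_2-\lambda_3)(u\pm2\alpha)(u\mp\alpha)^2.
\]
After the rescaling $V_\pm:=2W_\pm/(u\mp\alpha)$ one obtains Weierstrass models
\[
E_\pm\colon\quad V_\pm^2=(u-A)(u-B)(u-C_\pm),\qquad A=1+\lambda_2\lambda_3,\ B=\lambda_2+\lambda_3,\ C_\pm=\mp2\alpha.
\]
I then apply the M\"obius change of variable $u\mapsto(u-B)/(A-B)$ to put each $E_\pm$ in the Legendre form~(\ref{eqn:EC}): the assignment $B\mapsto0$, $A\mapsto1$, $\infty\mapsto\infty$, $C_\pm\mapsto\Lambda_\pm$, together with the identities $A-B=(1-\lambda_2)(1-\lambda_3)$ and $C_\pm-B=-(\sqrt{\lambda_2}\pm\sqrt{\lambda_3})^2$, yields
\[
\Lambda_\pm=-\frac{(\sqrt{\lambda_2}\pm\sqrt{\lambda_3})^2}{(1-\lambda_2)(1-\lambda_3)}.
\]
The two identities of~(\ref{eqn:EC_12_j_invariants}) then follow from a one-line expansion, since $\Lambda_++\Lambda_-$ and $\Lambda_+\Lambda_-$ are symmetric in $\sqrt{\lambda_2}$ and $\sqrt{\lambda_3}$ and hence rational in $(\lambda_2,\lambda_3)$.

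The one genuine subtlety is that each $\Lambda_\pm$ is defined only up to the six-element $S_3$-action on the three finite roots of its defining cubic, giving six candidate Legendre values per curve. The stated identities hold only for the specific ordering $(B,A,C_\pm)\mapsto(0,1,\Lambda_\pm)$ used above, and any other choice introduces $\sqrt{\lambda_2\lambda_3}$ into the symmetric functions. The correct ordering is forced by the rationality requirement, and it can be cross-checked independently against the $j$-invariant formulas~(\ref{eqn:j_invs}) computed from the Bolza normal form~(\ref{eqn:genus2+auto}); this bookkeeping is the only nontrivial point in the argument.
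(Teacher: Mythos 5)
Your proof is correct and matches the route the paper intends: the paper states the lemma with ``one easily checks'' and the surrounding material (the explicit quotient maps of Proposition~\ref{prop:quotient_maps} and the parametrization $\lambda_l=k_l^2$ in Remark~\ref{rem:field_extensions}) encodes exactly the involution $\sigma_\pm$ and the values $\Lambda_\pm=-(\sqrt{\lambda_2}\pm\sqrt{\lambda_3})^2/((1-\lambda_2)(1-\lambda_3))$ that you derive via the invariant coordinate $u=x+\lambda_2\lambda_3/x$. Your closing remark on the anharmonic ambiguity of the Legendre parameter is the right caveat, and the symmetric functions you obtain agree with Equations~(\ref{eqn:EC_12_j_invariants}).
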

\par We then have the following:
\begin{prop}
\label{prop:quotient_maps}
Assume that $\lambda_2, \lambda_3 \in \bP^1 \backslash \{ 0, 1, \infty\}$ satisfy $\lambda_2 \not= \lambda_3^{\pm1}$, and the moduli of the curves $\CC$ and $E_l$ in Equations~(\ref{Eq:Rosenhain_special}) and~(\ref{eqn:EC}) satisfy Equations~(\ref{eqn:EC_12_j_invariants}). Quotient maps $\psi_l\colon  \CC \longrightarrow E_l$ for $l= 1,2$ are given by
\begin{equation}
\label{eqn:pi_l}
 \psi_l \colon \quad   \CC \ \longrightarrow  \  E_l \,, \qquad \Big[ X : Y : Z \Big]   \mapsto \Big[ x_l: y_l : z_l \Big]  \; \; \text{for $l= 1,2$,}
\end{equation}
with
\begin{equation}
\Big[ x_l: y_l : z_l \Big] \ = \ \Big[ r \big(X-\lambda_2 Z \big)\big(X-\lambda_3 Z \big) \, XZ:  \ \big(X - (-1)^l q Z\big) \, Y  :\ r^3 X^2Z^2 \Big] \,,
\end{equation}
for $XZ \not =0$, and $[ x_l: y_l : z_l] = [1:0:0]$ otherwise. Here, $q, r$ are square roots of $q^2=\lambda_2\lambda_3$ and $r^2 = (1-\lambda_2)(1-\lambda_3)$, respectively. The elliptic involutions $\jmath_l$ on $\CC$, given by
\begin{equation}
\label{eqn:involutions}
\jmath_l\colon \quad \Big[ X : Y : Z \Big]  \mapsto \Big[ \lambda_2\lambda_3 Z : \ (-1)^{l+1} \lambda_2\lambda_3 q Y : \ X \Big] \,,
\end{equation}
satisfy $\psi_l \circ \jmath_l = \psi_l$ for $l=1, 2$.
\end{prop}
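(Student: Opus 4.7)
The plan is to verify Proposition~\ref{prop:quotient_maps} by an explicit calculation in three steps: (a) $\jmath_l$ is an involution of $\CC$; (b) the image of $\psi_l$ lies on $E_l$; (c) $\psi_l \circ \jmath_l = \psi_l$. Since $\psi_l$ is non-constant of generic degree at most two, items (a)--(c) force $\psi_l$ to factor through an isomorphism $\CC/\langle \jmath_l \rangle \cong E_l$, identifying $\psi_l$ with the quotient map.

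For (a), applying $\jmath_l$ twice sends $[X:Y:Z] \mapsto [\lambda_2\lambda_3 X : (\lambda_2\lambda_3)^3 Y : \lambda_2\lambda_3 Z]$, which is the identity in the weighted projective coordinates $(1,3,1)$ appropriate to the hyperelliptic model. To see that $\jmath_l$ preserves the defining sextic of Equation~\eqref{Eq:Rosenhain_special}, I would observe that the Möbius involution $(X,Z) \mapsto (\lambda_2\lambda_3 Z, X)$ permutes the six Weierstrass roots $\{0,\infty\}$, $\{1,\lambda_2\lambda_3\}$, $\{\lambda_2,\lambda_3\}$ in pairs (using $q^2 = \lambda_2\lambda_3$), and the scaling of $Y$ by $\pm \lambda_2\lambda_3\, q$ exactly absorbs the Jacobian of this change of variable.

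For (b), the entire calculation rests on two algebraic identities:
\begin{align*}
(X-\lambda_2 Z)(X-\lambda_3 Z) - r^2 X Z &= (X-Z)(X-\lambda_2\lambda_3 Z), \\
(X-\lambda_2 Z)(X-\lambda_3 Z) - \Lambda_l\, r^2 X Z &= \bigl(X-(-1)^l q Z\bigr)^2.
\end{align*}
The first is a direct expansion using $r^2 = (1-\lambda_2)(1-\lambda_3)$; the second is obtained by solving the quadratic from Lemma~\ref{lem:ECR} for $\Lambda_l$ and exploiting the discriminant identity $(\lambda_2+\lambda_3)^2 - 4\lambda_2\lambda_3 = (\lambda_2-\lambda_3)^2$ to write the roots as $\Lambda_l = \bigl(2(-1)^l q - (\lambda_2+\lambda_3)\bigr)/r^2$. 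These identities give $x_l - z_l = rXZ(X-Z)(X-\lambda_2\lambda_3 Z)$ and $x_l - \Lambda_l z_l = rXZ(X-(-1)^l q Z)^2$; multiplying the three factors $x_l$, $x_l-z_l$, $x_l-\Lambda_l z_l$ and invoking the defining equation of $\CC$ to substitute $Y^2 = XZ(X-Z)(X-\lambda_2\lambda_3 Z)(X-\lambda_2 Z)(X-\lambda_3 Z)$ reproduces $y_l^2 z_l$ exactly.

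For (c), I would substitute $\jmath_l([X:Y:Z])$ into the formulas defining $[x_l:y_l:z_l]$ and, using $q^2 = \lambda_2\lambda_3$ once more, verify that each of the three homogeneous coordinates picks up the same factor $(\lambda_2\lambda_3)^2$, so the image in $\bP^2$ is fixed. The main technical obstacle is recognizing the perfect-square factorization in the second identity above: this is precisely the condition that singles out $\Lambda_l$ from the quadratic in Lemma~\ref{lem:ECR} and explains why exactly two sign choices (indexed by $l$) appear in the formula for $y_l$. The hypothesis $\lambda_2 \ne \lambda_3^{\pm 1}$ is exactly what ensures $\Lambda_l \notin \{0, 1, \infty\}$, so each $E_l$ is smooth and the two involutions $\jmath_1$, $\jmath_2$ define distinct degree-two subfields, matching the Bolza picture of Section~\ref{ssec:22case}.
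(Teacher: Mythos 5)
Your proposal is correct and follows essentially the same route as the paper, whose proof simply states that the result ``follows by explicit computation'' after noting the smoothness hypotheses and the sign ambiguities in $q$ and $r$. You have in fact supplied the content the paper leaves implicit --- in particular the two factorization identities $(X-\lambda_2 Z)(X-\lambda_3 Z)-r^2XZ=(X-Z)(X-\lambda_2\lambda_3 Z)$ and $(X-\lambda_2 Z)(X-\lambda_3 Z)-\Lambda_l r^2XZ=(X-(-1)^l qZ)^2$, which are exactly what make the verification $y_l^2z_l=x_l(x_l-z_l)(x_l-\Lambda_l z_l)$ and the invariance $\psi_l\circ\jmath_l=\psi_l$ (all three coordinates scaling by $(\lambda_2\lambda_3)^2$) go through.
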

\begin{proof}
The genus-two curve $\CC$ in Equation~(\ref{Eq:Rosenhain_special}) is assumed to be smooth. Thus, one must have $\lambda_2, \lambda_3 \in \bP^1 \backslash \{ 0, 1, \infty\}$ and $\lambda_2 \not= \lambda_3^{\pm1}$.  The remainder of the proof follows by explicit computation. We note that a choice of square root for $r$ is changed by composing $\psi_l$ with the elliptic involution on $E_l$. The choice of square root for $q$ is changed by interchanging $\psi_1 \leftrightarrow \psi_2$.
\end{proof}
\begin{rem}
\label{rem:field_extensions}
Using a covering space of c$\lambda_1=\lambda_2\lambda_3$ in Proposition~\ref{prop:pringsheim}, given by the set of tuples $(k_1,k_2)$ with $\lambda_l= k_l^2$ for $l=2,3$ and $\lambda_1 = (k_2 k_3)^2$, we obtain for the modular parameters of the elliptic-curve quotients $E_l$  in Lemma~\ref{lem:ECR} the following algebraic solutions of Equation~(\ref{eqn:EC_12_j_invariants}):
\begin{equation}
\label{eqn:moduli}
 \Lambda_1 =  - \dfrac{(k_2 - k_3)^2}{(1- k_2^2) (1-k_3^2)} \,, \qquad
 \Lambda_2 =  - \dfrac{(k_2 + k_3)^2}{(1- k_2^2) (1-k_3^2)}  \,.
\end{equation} 
A change of square roots $(k_2, k_3) \mapsto (\pm k_2, \pm k_3)$ leaves Equations~(\ref{eqn:moduli}) invariant, whereas the change $(k_2, k_3) \mapsto (\pm k_2, \mp k_3)$ interchanges $\Lambda_1$ and $\Lambda_2$.  This means that the elliptic curves $E_l$ with level-two structure can be constructed over $\Q(k_2, k_3)$ as a finite field extension of  $\Q(\Lambda_1, \Lambda_2)$.
\end{rem}
\subsubsection{Legendre's gluing construction}
\label{sssec:Legendre}
The Weierstrass points $P_i$ of the curve $\CC$ in Equation~(\ref{Eq:Rosenhain_special}), given by 
\begin{equation}
\begin{split}
 P_1 :  [\lambda_2 \lambda_3: 0 :1] \,, \quad  P_2 :  & \, [\lambda_2 : 0 :1] \,,\quad P_3 :  [\lambda_3: 0 :1] \,, \\
 P_4 :  [0: 0 :1] \,,  \quad P_5 : & \, [1: 0 :1] \,,  \quad P_6 :  [1: 0 :0] \,,
 \end{split}
 \end{equation}
are the fixed points of the hyperelliptic involution. The two elliptic involutions $\jmath_l$ in Equation~(\ref{eqn:involutions}) for $l=1,2$ (each) pairwise interchange the Weierstrass points, i.e., 
 \begin{equation}
  \jmath_l\colon \quad P_2 \leftrightarrow P_3 \,, \quad   P_1 \leftrightarrow P_5 \,, \quad  P_4 \leftrightarrow P_6 \,.
\end{equation}  
The points $Q_{l,\pm}$, given by
\begin{equation}
\begin{split}
 Q_{1,\pm} \colon&  \quad [ -q: \pm i q^{\frac{3}{2}} (k_2 k_3+1) (k_2+k_3) : 1 ] \,, \\
 Q_{2,\pm} \colon&  \quad [\phantom{-} q:  \pm \phantom{i} q^{\frac{3}{2}} (k_2 k_3-1) (k_2-k_3): 1 ] \,,
 \end{split}
\end{equation}  
are the ramification points of $\psi_l$ in Equation~(\ref{eqn:pi_l}) for $l= 1$ and $l=2$, respectively.  The involution $\jmath_1$ fixes the points $Q_{1,+}$ and $Q_{1,-}$, and interchanges the points $Q_{2,+} \leftrightarrow Q_{2,-}$. An analogous statements holds for the involution $\jmath_2$.  The hyperelliptic involution of the curve $\CC$ interchanges the elements of the two pairs simultaneously, i.e.,  $Q_{l,+} \leftrightarrow Q_{l,-}$ for $l=1, 2$. 
\par The pairs $\{ P_2, P_3 \}$, $\{ P_1, P_5 \}$, $\{Q_{2,+}, Q_{2,-}\}$, and $\{ P_5, P_6 \}$ are mapped by $\psi_1$ to the two-torsion points $p_0: [x_1 : y_1: z_1] = [0:0:1]$,  $p_1: [1:0:1]$, $p_{\Lambda_1}: [\Lambda_1:0:1]$, and the identity $p_\infty: [1:0:0]$ in $E_1$. Similarly, the pairs $\{ P_2, P_3 \}$, $\{ P_1, P_5 \}$, $\{Q_{1,+}, Q_{1,-}\}$, and $\{ P_5, P_6 \}$ are mapped by $\psi_2$ to the two-torsion points $p_0: [x_2 : y_2: z_2] = [0:0:1]$,  $p_1: [1:0:1]$, $p_{\Lambda_2}: [\Lambda_2:0:1]$, and the identity $p_\infty:[1:0:0]$ in $E_2$. 
\par We associate with the branch locus of $\psi_1$ and $\psi_2$ the effective divisor of degree two $B_1 = [ \psi_1( Q_{1,+}) + \psi_1( Q_{1,-}) ]$ on $E_1$ and  $B_2  = [  \psi_2( Q_{2,+}) + \psi_2( Q_{2,-}) ]$  on $E_2$, respectively. One checks that for $\psi_1( Q_{1,\pm} )$ one has $[x_1:z_1]=[\Lambda_2:1]$, and for $\psi_2( Q_{2,\pm} )$ one has $[x_2:z_2]=[\Lambda_1:1]$.  Because of $\operatorname{Pic}^0(E_l) \cong E_l$ the line bundle $\mathcal{O}_{E_l}(B_l)$ associated with the branch locus $B_l$ is equivalent to the line bundle $\mathcal{O}_{E_l}(2 p_\alpha)$ if and only if
\begin{equation}
 \psi_l\big( Q_{l,+} \big) \oplus \psi_l\big( Q_{l,-} \big)  = 2 p_\alpha = p_\alpha \oplus p_\alpha \,,
\end{equation} 
where $\oplus$ refers to the addition with respect to the elliptic curve group law on $E_l$. Using the properties for $\psi_l$ in Equation~(\ref{eqn:pi_l}) one finds that
\begin{equation}
  \psi_l\big( Q_{l,+} \big) \oplus \psi_l\big( Q_{l,-} \big)  = 0 \,,
\end{equation}
and $p_\alpha \in E_l[2]$ is a two-torsion point.  Thus, there are four line bundle $\L_l \to E_l$ such that $\L_l^{\otimes 2} \cong \mathcal{O}_{E_l}(B_l)$, namely $\L_l  \cong   \mathcal{O}_{E_l}(p_\alpha)$. It follows $h^0( \mathcal{E}_l, \L_l)=1$ by the Riemann-Roch theorem. Conversely, the preimage of the vanishing locus of a section of $\L_l \to \mathcal{E}_l$ determines a unique double cover $\psi_l\colon \CC \longrightarrow  E_l$. The composition on $\CC$ of the elliptic involution with the hyperelliptic involution defines a second elliptic involution, and a second elliptic-curve quotient is obtained. Thus, the data $(E_l, \L_l, B_l)$ for either $l=1$ or $l=2$ determines the curve $\CC$ uniquely. As we have seen, this data is equivalent to an (unordered) pair $\{ \Lambda_1, \Lambda_2\}$ of modular parameters $\Lambda_1, \Lambda_2 \in  \bP^1 \backslash \{0, 1, \infty\}$ with $\Lambda_1 \not = \Lambda_2$. Legendre constructed an explicit model for $\CC$ over $\Q(\Lambda_1, \Lambda_2)$; this is often referred to as \emph{Legendre's gluing method}. Following Serre’s explanation \cite{Serre85}*{Sec.~27} of Legendre's gluing method we obtain:
\begin{prop}
\label{prop:serre}
Assume that $\Lambda_1, \Lambda_2 \in \bP^1 \backslash \{ 0, 1, \infty\}$ satisfy $\Lambda_1 \not = \Lambda_2$, and the moduli of $\CC$ and $E_l$ in Equations~(\ref{Eq:Rosenhain_special}) and~(\ref{eqn:EC}) satisfy Equations~(\ref{eqn:EC_12_j_invariants}).  The smooth genus-two curve, given by
\begin{equation}
\label{eqn:LegendreSerreCurve}
  \widetilde{\CC}\colon \quad Y^2 = \Big( X^2 -Z^2 \Big)  \left( X^2 - \frac{\Lambda_1}{\Lambda_2} Z^2 \right)   \left( X^2 - \frac{1-\Lambda_1}{1-\Lambda_2} Z^2 \right)  \,,
\end{equation} 
is isomorphic to $\CC$ over a finite field extension of $\Q(\lambda_2, \lambda_3)$. The curve is also isomorphic to each of the curves obtained by replacing $\{ \Lambda_1, \Lambda_2\}$ in Equation~(\ref{eqn:LegendreSerreCurve}) by one of the following pairs:
\begin{equation}
\label{eqn:5values}
 \left\lbrace \frac{1}{1-\Lambda_1}, \frac{1}{1-\Lambda_2} \right\rbrace  , \;  
 \left\lbrace \frac{\Lambda_1-1}{\Lambda_1}, \frac{\Lambda_2-1}{\Lambda_2} \right\rbrace , \;   
 \left\lbrace \frac{1}{\Lambda_1}, \frac{1}{\Lambda_2} \right\rbrace , \;  
 \left\lbrace \frac{\Lambda_1}{\Lambda_1-1}, \frac{\Lambda_2}{\Lambda_2-1} \right\rbrace , \;  
 \Big\lbrace 1-\Lambda_1, 1-\Lambda_2 \Big\rbrace.
\end{equation}
\end{prop}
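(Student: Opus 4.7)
The plan is to verify that $\widetilde{\CC}$ carries the same pair of elliptic-involution quotients as $\CC$ and then to invoke the gluing-uniqueness discussion of Section~\ref{sssec:Legendre} to conclude the claimed isomorphism. The invariance under the five additional substitutions will follow from explicit projective transformations realizing the diagonal anharmonic-group action on $(\Lambda_1,\Lambda_2)$.

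First I would observe that $\widetilde{\CC}$ is manifestly invariant under $\sigma\colon[X{:}Y{:}Z]\mapsto[-X{:}Y{:}Z]$, hence carries two commuting elliptic involutions $\sigma$ and $\sigma\imath$, with $\imath$ the hyperelliptic involution. Setting $u=X^2/Z^2$ with $v$ a suitable rescaling of $Y$, the quotient $\widetilde{E}_1=\widetilde{\CC}/\langle\sigma\rangle$ is cut out by
$$v^2=(u-1)\left(u-\frac{\Lambda_1}{\Lambda_2}\right)\left(u-\frac{1-\Lambda_1}{1-\Lambda_2}\right),$$
while the quotient $\widetilde{E}_2=\widetilde{\CC}/\langle\sigma\imath\rangle$ is extracted from the quartic $w^2=u(u-1)(u-\Lambda_1/\Lambda_2)(u-(1-\Lambda_1)/(1-\Lambda_2))$ obtained by letting $w=XY/Z^3$. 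A direct cross-ratio computation at the four branch points of each quotient yields Legendre parameters $\Lambda_2/(\Lambda_2-1)$ and $\Lambda_1/(\Lambda_1-1)$ respectively, which lie in the anharmonic $S_3$-orbits of $\Lambda_2$ and $\Lambda_1$, so that $(j(\widetilde{E}_1),j(\widetilde{E}_2))=(j(E_2),j(E_1))$ by Lemma~\ref{lem:ECR} together with Equations~\eqref{eqn:EC_12_j_invariants}.

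Next I would identify the branch divisors $B_l$ of $\widetilde{\CC}\to\widetilde{E}_l$ with those coming from $\CC\to E_l$. The fixed locus of $\sigma$ on $\widetilde{\CC}$ consists of the two points with $X=0$, while the fixed locus of $\sigma\imath$ consists of the two points with $Z=0$; pushing these forward to $\widetilde{E}_l$ yields effective divisors of degree two whose classes are linearly equivalent to twice the identity, matching the structure of $B_l\sim 2p_\alpha$ established in Section~\ref{sssec:Legendre}. Since the triple $(E_l,\L_l,B_l)$ determines the glued genus-two curve uniquely up to isomorphism over any field extension containing the required square roots, one obtains $\widetilde{\CC}\cong\CC$ over a finite algebraic extension of $\Q(\lambda_2,\lambda_3)$ containing the square roots $q,r$ of Proposition~\ref{prop:quotient_maps} together with $\sqrt{\Lambda_1/\Lambda_2}$ and $\sqrt{(1-\Lambda_1)/(1-\Lambda_2)}$.

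For the invariance under the five additional pairs in \eqref{eqn:5values}, I would produce the isomorphisms by direct projective substitution. Each element $\phi$ of the anharmonic group $\langle\lambda\mapsto 1-\lambda,\ \lambda\mapsto 1/\lambda\rangle\cong S_3$, applied diagonally to $(\Lambda_1,\Lambda_2)$, permutes the unordered triple $\{1,\Lambda_1/\Lambda_2,(1-\Lambda_1)/(1-\Lambda_2)\}$ of squared $X/Z$-coordinates of the Weierstrass points by a M\"obius transformation (possibly up to a common nonzero scalar). This M\"obius action on $U=X^2/Z^2$ lifts via the double cover $X\mapsto X^2$ to a projective transformation of the form $[X{:}Z]\mapsto[\alpha X+\beta Z:\gamma X+\delta Z]$ together with a scalar rescaling of $Y$, which carries the sextic on the right-hand side of \eqref{eqn:LegendreSerreCurve} with parameters $(\Lambda_1,\Lambda_2)$ into the sextic with parameters $(\phi(\Lambda_1),\phi(\Lambda_2))$. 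The main obstacle I anticipate is the 2-torsion bookkeeping: matching $j$-invariants is easy, but producing a genuine isomorphism $\widetilde{\CC}\cong\CC$ requires tracking which Weierstrass point of $\CC$ is sent to which fixed point of the elliptic involution of $\widetilde{\CC}$, and verifying that each of the six diagonal substitutions lifts to a projective transformation defined over $\Q(\lambda_2,\lambda_3)$ rather than over a further quadratic extension requires a careful case-by-case computation.
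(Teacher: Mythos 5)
Your strategy is genuinely different from the paper's: the paper proves both halves of the proposition by a single computation, substituting the explicit parametrization $\Lambda_1=-(k_2-k_3)^2/((1-k_2^2)(1-k_3^2))$, $\Lambda_2=-(k_2+k_3)^2/((1-k_2^2)(1-k_3^2))$ from Remark~\ref{rem:field_extensions} and checking that the Igusa invariants of $\widetilde{\CC}$ and $\CC$ (and of the six variants) agree as points of weighted projective space, with the scaling factor $s$ lying in $\Q(\lambda_2,\lambda_3,\sqrt{\lambda_2\lambda_3})$; isomorphy over $\bar k$ is then automatic. Your structural route through the gluing data is legitimate in principle, and your step for the five extra pairs is actually fine and even simpler than you suggest: each diagonal anharmonic substitution merely rescales the unordered triple $\{1,\Lambda_1/\Lambda_2,(1-\Lambda_1)/(1-\Lambda_2)\}$ by a common factor (e.g.\ $\lambda\mapsto 1-\lambda$ fixes the triple, $\lambda\mapsto 1/\lambda$ multiplies it by $\Lambda_2/\Lambda_1$), so a diagonal substitution $[X{:}Z]\mapsto[cX{:}Z]$ with $c^2$ that scalar, plus a rescaling of $Y$, does the job — no general M\"obius lift is needed.

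The gap is in your identification of the branch data, which is the crux of the first half. Knowing that $\widetilde{E}_1,\widetilde{E}_2$ are isomorphic to $E_2,E_1$ and that the branch divisors are linearly equivalent to $2O$ is not enough to conclude $\widetilde{\CC}\cong\CC$: \emph{every} gluing of the same pair of elliptic curves has branch divisor of the form $p+(-p)\sim 2O$, and for a fixed pair $(E_1,E_2)$ there are in general several non-isomorphic genus-two curves obtained by gluing along different anti-isometries of the $2$-torsion (equivalently, along non-diagonal combinations $\{\Lambda_1,\phi(\Lambda_2)\}$ of Legendre parameters). So your steps (2)--(3), as written, would equally "prove" $\widetilde{\CC}$ isomorphic to any of these, which is false. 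What the gluing-uniqueness argument actually requires is the \emph{position} of the branch divisor, not its class: you must write down the isomorphism $\widetilde{E}_1\cong E_1$ explicitly (a M\"obius transformation of the base carrying the Weierstrass set $\{1,\Lambda_1/\Lambda_2,(1-\Lambda_1)/(1-\Lambda_2),\infty\}$ to $\{0,1,\Lambda_1,\infty\}$) and verify that it sends the support $u=0$ of your branch divisor to the point $x_1=\Lambda_2$ identified in Section~\ref{sssec:Legendre}, and similarly for the second quotient. That cross-ratio computation is absent from your proposal; without it the argument does not single out $\CC$ among the curves split by $E_1\times E_2$. Once you add it (and note that the Legendre parameters you extract, $\Lambda_l/(\Lambda_l-1)$, lie in the \emph{diagonal} anharmonic orbit, which your step (4) shows is harmless), the proof closes; alternatively, the paper's invariant-theoretic check avoids all of this bookkeeping at the cost of a symbolic computation.
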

\begin{proof}
Using Remark~\ref{rem:field_extensions} and denoting the Igusa-Clebsch invariants of any pairs genus-two curves in the statement of the proposition by $[ J_2 : J_4 : J_6 : J_{10} ]$ and $[ J'_2 : J'_4 : J'_6 : J'_{10} ]$, respectively, one checks that
\begin{equation}
[ J_2 : J_4 : J_6 : J_{10} ] = [ s^2 J'_2 \ : \ s^4J'_4 \ : \ s^6J'_6 \ : \ s^{10}J'_{10} ] = [ J'_2 : J'_4 : J'_6 : J'_{10} ] 
\end{equation}
with $s \in \Q(\lambda_2, \lambda_3, \sqrt{\lambda_2\lambda_3})$.
\end{proof}
The six pairs, given by $\{\Lambda_1, \Lambda_2\}$ and Equation~(\ref{eqn:5values}), correspond to the orbit of the data $(E_l, \L_l, B_l)$ under $\Gamma/\Gamma(2)$, i.e., the orbit under simultaneous action of the \emph{anharmonic group} on an elliptic curve in Legendre form. The anharmonic group is generated by the two transformations $\Lambda \mapsto 1-\Lambda, \Lambda/(\Lambda-1)$. 
\par Since the Picard group $\operatorname{Pic}^0(\CC) \cong \operatorname{Jac}{(\CC)}$ consists of elements of the from $[P + Q - 2 P_0 ]$ for $P, Q \in \CC$, an isogeny $\Psi$ of Abelian surfaces is defined by setting
\begin{equation}
\label{eqn:Psi}
 \Psi\colon\ \operatorname{Jac}{\CC}  \longrightarrow  E_1 \times E_2 \,, \quad  [P + Q - 2 P_6 ]  \mapsto  \Big( \psi_1(P) \oplus  \psi_1(Q) , \;   \psi_1(P) \oplus  \psi_2(Q) \Big)\,.
\end{equation} 
It follows from the construction of the line bundles $\L_l \to E_l$ in Section~\ref{sssec:Legendre} that the line bundle $\mathscr{L} \to \AA=\operatorname{Jac}{(\CC)}$ associated with the principal polarization of $\AA$ satisfies $\mathscr{L} \cong \Psi^* (\L_1 \boxtimes \L_2)$.  $\Psi$ is the dual isogeny to $\hat{\Psi}$ in Theorem~\ref{prop:isogeny_Delta}. The elliptic involutions $\jmath_l$ on $\CC$ extend to involutions on the Jacobian $\AA=\operatorname{Jac}{(\CC)}$ that coincide on $\AA[2]$. We denote this induced involution on $\AA[2]$ by $\jmath\colon \AA[2] \longrightarrow \AA[2]$. 
\par Given the marking $(P_1, \dots, P_6) = (\lambda_1=\lambda_2\lambda_3, \lambda_2, \lambda_3, 0, 1,\infty)$ of the Weierstrass points for the curve $\CC$ as before, we have the following:
\begin{prop}
\label{prop:special_G_group}
In the situation above, the kernel of $\Psi\colon \AA =\operatorname{Jac}{(\CC)} \longrightarrow  E_1 \times E_2$ is given by the G\"opel group
\begin{equation}
 \ker \Psi \ = \ \Big\lbrace P_0, P_{15}, P_{23}, P_{46} \Big\rbrace \ \cong \ (\Z/2 \Z)^2 \ \subset \ \AA[2] \,.
\end{equation} 
In particular, $\Psi$ is a $(2,2)$-isogeny and $\hat{\AA} =\AA/ \ker \Psi \cong  E_1 \times E_2$ with $\mathrm{T}_{\hat{\AA}} = H \oplus H$.
\end{prop}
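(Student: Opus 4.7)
The plan is to compute $\ker \Psi$ directly on the sixteen two-torsion points $P_{ij}$, and then to identify the resulting quotient with $E_1\times E_2$ using the line bundle analysis of Section~\ref{sssec:Legendre}. Throughout, I use the fact that $P_6=[1:0:0]$ lies in the locus $XZ=0$, so by the convention in Proposition~\ref{prop:quotient_maps} we have $\psi_l(P_6)=0_{E_l}$ for $l=1,2$. Hence for any $1\le i\le j\le 6$,
\begin{equation*}
\Psi(P_{ij}) \;=\; \bigl(\psi_1(P_i)\oplus\psi_1(P_j),\;\psi_2(P_i)\oplus\psi_2(P_j)\bigr),
\end{equation*}
and the problem reduces to understanding the action of each $\psi_l$ on the set of Weierstrass points $\{P_1,\dots,P_6\}$.

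First I would record the key structural observation that the $x$- and $z$-components in Equation~(\ref{eqn:pi_l}) depend only on $X,Z$ and not on the sign of $q$, so $\psi_1$ and $\psi_2$ induce \emph{the same} set-theoretic map on Weierstrass points. Evaluating
\begin{equation*}
[x_l:z_l]\;=\;\bigl[(X-\lambda_2 Z)(X-\lambda_3 Z)\;:\;r^2\,XZ\bigr], \qquad r^2=(1-\lambda_2)(1-\lambda_3),
\end{equation*}
at $P_1,\dots,P_6$ yields, after a brief calculation, the partition
\begin{equation*}
\{P_1,P_5\}\mapsto p_1,\qquad \{P_2,P_3\}\mapsto p_0,\qquad \{P_4,P_6\}\mapsto p_\infty=0_{E_l}.
\end{equation*}
Since each image $\psi_l(P_i)$ is a $2$-torsion point of $E_l$ (branch image of a Weierstrass point), the vanishing condition $\psi_l(P_i)\oplus\psi_l(P_j)=0$ is equivalent to $\psi_l(P_i)=\psi_l(P_j)$. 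Intersecting this constraint over $l=1,2$ gives the same condition twice, so $P_{ij}\in\ker\Psi$ if and only if $\{P_i,P_j\}$ is one of the three pairs above. Combined with $P_0$ this produces
\begin{equation*}
\ker\Psi \;=\;\bigl\{P_0,\;P_{15},\;P_{23},\;P_{46}\bigr\}.
\end{equation*}
Because $\{1,5\}\cup\{2,3\}\cup\{4,6\}=\{1,\dots,6\}$, this subgroup has the form of Equation~(\ref{eqn:G_groups}) and is therefore a G\"opel group; in particular it is a maximal isotropic subspace of $\AA[2]$ of order $4=2^2$, so $\Psi$ is a $(2,2)$-isogeny.

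It remains to identify $\hat{\AA}=\AA/\ker\Psi$ with $E_1\times E_2$ and to read off the transcendental lattice. Surjectivity of $\Psi$ follows because the image is an Abelian subvariety of $E_1\times E_2$ containing both $\psi_1(\CC)\times\{0\}$ and $\{0\}\times\psi_2(\CC)$, hence equals the whole product. By Lemma~\ref{noether} the induced map $\hat\AA\to E_1\times E_2$ is an isomorphism of Abelian surfaces, and the identity $\mathscr{L}\cong \Psi^{\ast}(\mathcal{L}_1\boxtimes\mathcal{L}_2)$ from Section~\ref{sssec:Legendre} ensures the polarizations match, so $\Psi$ is the dual of the $\hat\Psi$ from Theorem~\ref{prop:isogeny_Delta}. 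The transcendental lattice of a product $E_1\times E_2$ of generic (non-isogenous) elliptic curves is $H\oplus H$ (cf.~the discussion following Proposition~\ref{prop:Q} for the analogous statement over $H_1$), yielding $\mathrm{T}_{\hat\AA}=H\oplus H$. The main obstacle is the polarization-compatibility step: verifying that the algebraically-defined $\Psi$ of Equation~(\ref{eqn:Psi}) is genuinely dual to Birkenhake--Lange's $\hat\Psi$, as opposed to a twist thereof, requires the line-bundle identification $\mathscr{L}\cong\Psi^{\ast}(\mathcal{L}_1\boxtimes\mathcal{L}_2)$ rather than a bare degree count, and this is where the explicit construction of $\mathcal{L}_l$ via the branch divisor $B_l$ and a $2$-torsion translate in Section~\ref{sssec:Legendre} is essential.
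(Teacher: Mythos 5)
Your determination of the two-torsion points in the kernel is correct and follows essentially the same route as the paper: the paper phrases the criterion through the involutions $\jmath_l$ (namely $P_{ij}\in\ker\Psi$ iff $\jmath_l(P_i)=P_j$ for $l=1,2$), while you phrase it through the equality $\psi_l(P_i)=\psi_l(P_j)$; since $\jmath_l$ is the deck transformation of the degree-two map $\psi_l$, these are the same condition, and your explicit evaluation of $[x_l:z_l]$ at the six Weierstrass points, giving the partition $\{P_1,P_5\}$, $\{P_2,P_3\}$, $\{P_4,P_6\}$, agrees with the pairing $P_1\leftrightarrow P_5$, $P_2\leftrightarrow P_3$, $P_4\leftrightarrow P_6$ recorded in Section~\ref{sssec:Legendre}. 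The identification of the resulting set as a G\"opel group, the surjectivity argument, and the lattice statement are all fine.

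The one genuine gap is the passage from ``the two-torsion points in $\ker\Psi$ are exactly $P_0,P_{15},P_{23},P_{46}$'' to ``$\ker\Psi=\{P_0,P_{15},P_{23},P_{46}\}$''. You only test elements of the form $P_{ij}$, i.e.\ you compute $\ker\Psi\cap\AA[2]$; nothing in the argument as written excludes kernel elements that are not two-torsion. There are two short repairs. (a) Bound the order: $\Psi$ is dual to the degree-$n^2=4$ isogeny $\hat\Psi$ of Theorem~\ref{prop:isogeny_Delta}, so $\#\ker\Psi=4$ and the four points you exhibited exhaust it. (b) Argue for a general class $[P+Q-2P_6]$, as the paper's proof implicitly does: using $\psi_l\circ\imath=[-1]\circ\psi_l$ and $\jmath_1\circ\imath=\jmath_2$, the condition $\psi_l(P)\oplus\psi_l(Q)=0$ is equivalent to $Q\in\{\imath(P),\,\jmath_{3-l}(P)\}$; imposing this for both $l=1$ and $l=2$ forces either $Q=\imath(P)$, which represents the trivial class, or $\jmath_1(P)=\jmath_2(P)$, which gives $\imath(P)=P$, so that $P$ and $Q$ are Weierstrass points and you are back in the case you treated. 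Either fix is a few lines, but without one of them the claimed equality of $\ker\Psi$ with its two-torsion part is asserted rather than proved.
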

\begin{proof}
The induced actions of the two elliptic involutions $\jmath_l$ on $\operatorname{Jac}{(\CC)}$ fix the divisors $P_{15}, P_{23}, P_{46}$, and $[Q_{l,+} + Q_{l,-} - 2 P_0 ]$. It follows from the explicit formulas for $\psi_l$ in Proposition that $(P,Q) \in  \ker \Psi$ if and only if $P, Q \in \AA[2]$ and $\jmath_l (P)=Q$ for $l=1, 2$.
\end{proof}
\subsection{Loci of \texorpdfstring{$(n,n)$}{(n,n)}-Split Jacobians}
The locus \(\L_n \subset \mathcal{M}_2\) of genus 2 curves over \(\overline{\Q}\) whose Jacobians are \((n, n)\)-Split, i.e., isogenous to a product \(E_1 \times E_2\) via an \((n, n)\)-isogeny with kernel an isotropic subgroup of order \(n^2\), is an irreducible two-dimensional subvariety. We have computed \(\L_n\) for \(n = 2, 3, 5, 7, 11\), providing explicit equations in terms of Igusa invariants \(J_2, J_4, J_6, J_{10}\) (see \cite{2000-2}, \cite{2001-1}, \cite{2005-1} for \(n = 2, 3, 5\)). For \(n = 2\), \(\L_2\) was determined explicitly in Section~\ref{ssec:22case}; for odd \(n\), \(\L_n\) is a Hurwitz space of degree-$n$ coverings, irreducible due to automorphism group transitivity, with dimension 2 from \(\mathcal{M}_2\)’s 3 minus the isotropy codimension.
\par In cryptography, identifying \(\L_n\) is critical. Over finite fields (e.g., \(\mathbb{F}_{p^2}\)), a split \(\Jac(\mathcal{X}) \in \L_n\) reduces the superspecial isogeny problem’s complexity from \(\tilde{O}(p)\) to elliptic curve subproblems (\(\tilde{O}(\sqrt{p})\)), as exploited in \cite{Costello-Santos}. Their algorithm detects \((n, n)\)-splittings using Kumar’s parameterizations \cite{kumar}---matching our \(\L_n\)---speeding up attacks by factors of 16–159 for \(p\) from 50 to 1000 bits. This weakens protocols like the Castryck-Decru-Smith hash \cite{CDS}, where split Jacobians enable faster collision finding, suggesting key selection should avoid \(\L_n\) to bolster security. 
\section{Computing higher \texorpdfstring{$(n, n)$}{(n,n)}-Isogenies}
The Richelot isogenies treated in Section~\ref{ssec:2isog} represent the simplest instance of a broader class of isogenies between Abelian surfaces, specifically (2,2)-isogenies with kernels of order 4. Here, we generalize to \((n, n)\)-isogenies  with $n \in \mathbb{N}_{>2}$, where the kernel has order \(n^2\), focusing on computational methods for Jacobians of genus 2 curves, leveraging their Kummer surfaces. \par To start, we consider an isogeny
\begin{equation}
\Phi\colon \quad \Jac(\CC) \to \Jac(\Y),
\end{equation}
with kernel \(G \leq \Jac(\CC)[n]\) of order \(n^2\), where \(\CC\) and \(\Y\) are smooth genus 2 curves over a field \(k\) with \(\char k \neq 2\), and \(\Jac(\CC)\) and \(\Jac(\Y)\) are their Jacobian varieties, each a principally polarized Abelian surface. Over \(\C\), represent \(\Jac(\CC) = \C^2/\Lambda\) with \(\Lambda = \Z^2 \oplus \tau \Z^2\), where \(\tau \in \mathbb{H}_2\) is the period matrix. The \(n\)-torsion subgroup \(\Jac(\CC)[n] = \{ P \in \Jac(\CC) \mid [n]P = 0 \}\) has order \(n^{2g} = n^4\) for \(g = 2\), and an isotropic subgroup \(G \subset \Jac(\CC)[n]\) under the Weil pairing has order \(n^2\). Let \(\Theta_\CC\) and \(\Theta_\Y\) denote the Theta divisors on \(\Jac(\CC)\) and \(\Jac(\Y)\), respectively, where \(\Theta_\CC\) is the image of \(\CC\) under an embedding such as \(\phi_{P_0} \colon \CC \to \Jac(\CC)\), \(P \mapsto [P - P_0]\), for a base point \(P_0 \in \CC(k)\). An \((n, n)\)-isogeny  requires determining both the codomain \(\Jac(\Y)\) and the map \(\Phi\). 
\par One easily checks that the isogeny \(\Phi\) satisfies \(\Phi(\Theta_\CC) \in |n \Theta_\Y|\), the linear system of divisors linearly equivalent to \(n\) times \(\Theta_\Y\). On the other hand, the singular Kummer surface $\mathcal{K}_{\Jac(\Y)} = \Jac(\Y)/\<- \mathbb{I}\>\) embeds in \(\bP^3\) via level-two Theta functions, and on it \(\Phi(\Theta_\CC)\) descends to a curve of degree \(2n\), genus 0, and arithmetic genus \(\frac{1}{2} (n^2 - 1)\), computable without explicitly determining \(\Phi\), as shown in \cite{D-L}. It is precisely this curve that can provide a deeper inside into the \((n, n)\)-isogeny itself.
\subsection{The \texorpdfstring{$n$}{n}-tuple embedding of Dolgachev-Lehavi}
For \(\CC\), given as the binary sextic
\begin{equation}\label{eq-g-2_proj}
y^2 = f(x, z) = a_6 x^6 + \dots + a_1 x z^5 + a_0 z^6,
\end{equation}
with \(\Delta_f = J_{10} \neq 0\), the divisor at infinity is
\begin{equation}
D_\infty := \big[1 : \sqrt{f(1,0)} : 0\big] + \big[1 : -\sqrt{f(1,0)} : 0\big],
\end{equation}
assuming a normalization where infinity points are \([1 : y : 0]\). The Weierstrass points of $\CC$ are denoted \(P_i\colon [x_i: 0 : z_i]\) for \(i = 1, \dots, 6\), with \(f(x_i, z_i) = 0\), forming the Weierstrass divisor
\begin{equation}
W_\CC := \sum_{i=1}^6 [x_i: 0: z_i] \, .
\end{equation}
A canonical divisor on \(\CC\) is
\begin{equation}
K_\CC = W_\CC - 2 D_\infty,
\end{equation}
as \(\deg W_\CC = 6\), \(\deg D_\infty = 2\), and \(2g - 2 = 2\). A divisor \(D \in \Jac(\CC)\) of the form \(D = [P + Q - D_\infty] \), with \(P\colon [x_P: y_P: 1]\) and \(Q \colon [x_Q: y_Q: 1]\), corresponds to an ideal \((a(x, z), y - b(x, z))\), where \(a(x, z) = (x - x_P z)(x - x_Q z)\) is a monic polynomial of degree \(d \leq 2\) --quadratic if \(P \neq Q\), linear if \(P = Q\)-- and \(b(x, z)\) is a cubic polynomial interpolating \(y_P, y_Q\).

\par We embed \(\CC\) via the \(n\)-tuple map
\begin{equation}
\rho_{2n} \colon \quad \bP^2_{(1,2,1)} \to \bP^{2n}, \quad [x: y: z] \mapsto \big[ z^{2n}: x z^{2n-1}: \dots: x^{2n-1} z: x^{2n}\big],
\end{equation}
with image \(\mathcal{R}_{2n}\), a rational normal curve of degree \(2n\). Any \(2n + 1\) distinct points on \(\mathcal{R}_{2n}\) are linearly independent, as the space of degree \(2n\) homogeneous polynomials has dimension \(2n + 1\). For \(n \geq 3\), the 6 points \(\rho_{2n}(P_i)\) are independent (\(6 < 7\) for \(n = 3\)), and
\begin{equation}
W := \< \rho_{2n}(W_\CC) \> \subset \bP^{2n}
\end{equation}
is 5-dimensional. The secant line \(\L_{P,Q}\) is
\begin{equation}
\L_{P,Q} =
\begin{cases}
\< \rho_{2n}(P), \rho_{2n}(Q) \> & \text{if } P \notin \{ Q, \imath(Q) \}, \\
T_{\rho_{2n}(P)}(\mathcal{R}_{2n}) & \text{otherwise},
\end{cases}
\end{equation}
where \(\imath\) is the hyperelliptic involution on $\CC$. Dolgachev and Lehavi (\cite{D-L}) proved:

\begin{thm}[Dolgachev-Lehavi]
Let \(\CC\) be a genus 2 curve, \(G \subset \Jac(\CC)[n]\) an isotropic subgroup of order \(n^2\), and \(\rho_{2n}\colon \CC \longrightarrow \mathcal{R}_{2n} \subset \bP^{2n}\) the \(n\)-tuple embedding. There exists a hyperplane \(H \subset \bP^{2n}\) such that:
\begin{enumerate}[(i)]
\item \(H\) contains \(W = \< \rho_{2n}(W_\CC) \>\),
\item the intersections \(H \cap \L_e\) for each non-zero \(e \in G\) span a subspace \(N \subset H\) of codimension 3.
\end{enumerate}The projection \(\bP^{2n} \to \bP^3\) from \(N\) maps \(\rho_{2n}(W_\CC)\) to a conic \(\mathcal{C} \subset \bP^3\), and the double cover of \(\mathcal{C}\) ramified at these 6 points is a genus 2 curve \(\Y\) with \(\Jac(\Y) \cong \Jac(\CC)/G\).
\end{thm}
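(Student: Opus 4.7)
The strategy is to build $H$ from the canonical descent of the line bundle $n\Theta_\CC$ under the isogeny $\Phi\colon \Jac(\CC) \longrightarrow \Jac(\Y) := \Jac(\CC)/G$, using Mumford's theta group formalism, and then to deduce (i) and (ii) from the interaction of this descent with the hyperelliptic involution $\imath$ on $\CC$. Because $G \subset \Jac(\CC)[n] = K(n\Theta_\CC)$ is maximal isotropic of order $n^2$ under the Weil pairing, it lifts canonically to a subgroup $\widetilde G$ of the Heisenberg group $\mathcal G(n\Theta_\CC)$; since $\Phi^\ast\Theta_\Y \sim n\Theta_\CC$ and $h^0(\Theta_\Y)=1$, the pullback identifies $H^0(\Jac(\Y),\Theta_\Y)$ with the one-dimensional $\widetilde G$-invariant subspace of $H^0(\Jac(\CC), n\Theta_\CC)$ by irreducibility of the Heisenberg representation. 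Fixing a symmetric $\Theta_\CC$ and a generator $\sigma$ of this line, $\sigma$ is an eigenvector of $-\mathbb{I}$, and its restriction via the Abel--Jacobi embedding $\phi_{P_0}\colon \CC \hookrightarrow \Jac(\CC)$ based at a Weierstrass point $P_0$ yields an $\imath$-invariant divisor $D$ of degree $2n$ on $\CC$.

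For property~(i), I would argue that $D$ contains the full Weierstrass divisor $W_\CC$. The section $\sigma$ vanishes on $\Phi^{-1}(\Theta_\Y)$, a union of $n^2$ translates of $\Theta_\CC$ by elements of $G$; combining the $\imath$-invariance of $D$ with the $\widetilde G$-eigencharacter of $\sigma$ and a dimension count in $H^0(n\Theta_\CC)$ pins down $D$ to contain each of the six Weierstrass points of $\CC$ (equivalently, $\sigma$ vanishes at each of the six odd $2$-torsion points lying on $\Theta_\CC$). Through $\rho_{2n}$, which factors as the hyperelliptic quotient $\CC \to \bP^1$ followed by the degree-$2n$ Veronese of $\bP^1$, hyperplane sections of $\mathcal R_{2n}$ correspond bijectively to degree-$2n$ divisors on $\bP^1$, and $W_\CC \subset D$ translates directly into $W \subset H$.

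For property~(ii), I would analyze the composition
\[
\CC \xrightarrow{\phi_{P_0}} \Jac(\CC) \xrightarrow{\Phi} \Jac(\Y) \longrightarrow \mathcal K_{\Jac(\Y)} \hookrightarrow \bP^3,
\]
where the last embedding is by $|2\Theta_\Y|$. Since the four Kummer coordinates of $\Jac(\Y)$ are $(-\mathbb{I})$-invariant, they pull back along $\Phi\circ\phi_{P_0}$ to four $\imath$-invariant sections on $\CC$, i.e.\ to four elements of the $(2n{+}1)$-dimensional space of sections cutting out $\mathcal R_{2n}$ in $\bP^{2n}$. This factors the composite as $\rho_{2n}$ followed by a linear projection from the common zero locus $N$ of these four forms, a subspace of codimension $4$ in $\bP^{2n}$, equivalently codimension $3$ inside $H$. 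To identify $N$ with $\langle H \cap \L_e : 0 \neq e \in G\rangle$, observe that for each $e = [P_e + Q_e - D_\infty] \in G \setminus \{0\}$ the secant $\L_e = \overline{\rho_{2n}(P_e)\,\rho_{2n}(Q_e)}$ collapses under the composition to a single point, namely the image of the node $\Phi(e) = 0 \in \mathcal K_{\Jac(\Y)}$; this forces $H \cap \L_e$ to be a specific point on $\L_e$, and the $n^2 - 1$ such points fill out exactly the projection center $N$. The concluding statement about $\Y$ then drops out: the projection restricted to $\mathcal R_{2n}$ factors through the hyperelliptic quotient as a degree-$2$ Veronese onto a conic $\mathcal C \subset \bP^3$, the six images of $W_\CC$ become its branch points, and by the Torelli theorem the resulting hyperelliptic double cover has Jacobian $\Jac(\CC)/G$.

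The main obstacle is the joint verification of $W_\CC \subset D$ and of the exact codimension-$3$ property of $N$. Both reduce to a careful tracking of the joint eigenspace structure of $H^0(n\Theta_\CC)$ under $\widetilde G$ and $-\mathbb{I}$, and to the explicit identification of the four-dimensional subspace that descends to $H^0(\Jac(\Y), 2\Theta_\Y)$; the essential input is the compatibility between the symmetric theta structure on $(\Jac(\CC), n\Theta_\CC)$ and the induced level-$2$ structure on $(\Jac(\Y), 2\Theta_\Y)$ that governs the Kummer embedding in $\bP^3$.
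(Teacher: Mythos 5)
Your route is genuinely different from, and substantially more detailed than, the proof given in the paper: the paper's argument is a dimension-count sketch that ultimately defers to the original reference \cite{D-L}, whereas you reconstruct the actual mechanism --- descent of $n\Theta_\CC$ through the Heisenberg group, factorization of the composite $\CC \to \Jac(\Y) \to \mathcal{K}_{\Jac(\Y)} \subset \bP^3$ through $\rho_{2n}$ because the four Kummer coordinates pull back to $\imath$-invariant sections, and the collapsing of the secants $\L_e$ as the source of the center $N$. This is the right skeleton, and it buys a conceptual explanation of where $H$ and $N$ come from; by contrast the paper asserts (with an arithmetic slip, since $n^2-2\neq 2n-4$ in general) that the $n^2-1$ intersection points span a subspace of dimension $n^2-2$, whereas in fact $\dim N = 2n-4$.

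Three points in your plan need attention. First, there is a degree mismatch in your construction of $H$: the section $\sigma\in H^0(n\Theta_\CC)$ restricts to a divisor $D$ of degree $2n$ on $\CC$, whereas hyperplane sections of $\mathcal{R}_{2n}$ pull back to divisors of degree $4n$ on $\CC$; the hyperplane you want is the one attached to $\sigma^2=\Phi^*(s^2)\in H^0(2n\Theta_\CC)$, where $s$ spans $H^0(\Theta_\Y)$ --- equivalently, the preimage under the projection centered at $N$ of the trope plane of $\mathcal{K}_{\Jac(\Y)}$ cut out by $\Theta_\Y$. Making this identification also closes a gap you leave implicit: ``codimension $4$ in $\bP^{2n}$, equivalently codimension $3$ inside $H$'' requires $N\subset H$, which is automatic once $H$ is recognized as the pullback of a plane of $\bP^3$ through the projection from $N$, but is not obvious for the $H$ you define directly from $\sigma$. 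Second, each secant $\L_e$ does collapse, since $\phi_{P_0}(P_e)+\phi_{P_0}(Q_e)=e\in\ker\Phi$ forces $\Phi(\phi_{P_0}(P_e))=-\Phi(\phi_{P_0}(Q_e))$ and hence a common image on the Kummer surface; but that image is the point $\pm\Phi(\phi_{P_0}(P_e))$, a node of the image rational curve, not the node of $\mathcal{K}_{\Jac(\Y)}$ at the origin as you state. This is harmless, since all you use is $\L_e\cap N\neq\emptyset$. Third, the two lemmas you yourself flag --- that the six two-torsion points $\Phi(\phi_{P_0}(P_i))$ lie on the symmetric divisor $\Theta_\Y$ (compatibility of parities of theta characteristics under an isogeny of odd degree), and that the $n^2-1$ points $H\cap\L_e$ span all of $N$ rather than a proper subspace --- are the real content of the theorem and are not supplied; as written, the proposal is a correct and well-targeted plan rather than a complete proof.
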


\begin{proof}
The linear system \(|n \Theta_\CC|\) on \(\Jac(\CC)\) has dimension \(n^2 - 1\) (projective dimension of divisors modulo scalars), and for an isotropic subgroup \(G\) of order \(n^2\), \(\Jac(\Y) = \Jac(\CC)/G\) is principally polarized with Theta divisor \(\Theta_\Y\). Since \(\deg \Theta_\CC = 2\), \(\Phi(\Theta_\CC) \in |n \Theta_\Y|\) has degree \(2n\) and projects to a genus-zero curve on the 
 singular Kummer surface $\mathcal{K}_{\Jac(\Y)} = \Jac(\Y)/\<- \mathbb{I}\>\) with arithmetic genus \(\frac{1}{2} (2n - 1)(2n - 2) = n^2 - 1\) by the adjunction formula. Embed \(\CC\) via \(\rho_{2n}\), where \(W\) has dimension 5 (6 points minus 1). The secant variety over \(G\) (order \(n^2\)) requires a hyperplane \(H\) (dimension \(2n - 1\)) containing \(W\). The \(n^2 - 1\) secants \(\L_e\) intersect \(H\) in points spanning a subspace \(N\) of dimension \(n^2 - 2\), codimension 3 in \(H\) (since \(n^2 - 2 = 2n - 1 - 3\) for \(n \geq 2\)). Projection from \(N\) to \(\bP^3\) maps the 6 points to a conic \(\mathcal{C}\) (5 points determine a conic in \(\bP^3\)), and the double cover ramified at 6 points has genus \(1 + \frac{1}{2}(6 - 2) = 2\) by Riemann-Hurwitz, with \(\Jac(\Y) \cong \Jac(\CC)/G\) (\cite{D-L}, Theorem 1).
\end{proof}
For \(n = 2\), this aligns with Richelot isogenies (\(W \subset \bP^4\), \(N\) a point).  Smith (\cite{Smith}) developed an algorithm using this theorem, effective for \(n = 3\) (\(|G| = 9\)), refined in \cite{c-r}, \cite{MR4139650}, \cite{de-feo-1}, and \cite{MR4638168}. For \(n = 3\), \(W \subset \bP^6\), \(H\) is 5-dimensional, \(N\) is 2-dimensional, and the projection yields a conic in \(\bP^3\). Improvements over \cite{MR4638168} include optimized secant computations and invariant recovery. For larger \(n\), the algorithm scales, solving linear systems for \(H\) and \(N\).
\subsection{The Lubicz-Robert Formula for \texorpdfstring{$(n, n)$}{(n,n)}-Isogenies}
The Lubicz-Robert formula provides an efficient method to compute \((n, n)\)-isogenies directly on the Kummer surface \(\Kum(\Jac{\CC})\), leveraging Theta coordinates to bypass high-dimensional embeddings like \(\rho_{2n}\). The Lubicz-Robert formula achieves this by expressing Theta coordinates in terms of sums over \(G\). This approach is particularly effective for odd \(n\) and builds on the foundational work of Lubicz and Robert in \cite{LR}, offering a higher-dimensional analog to V\'elu’s formulas for elliptic curves. 
\par The Lubicz-Robert formula addresses two tasks: computing the Theta null points of \(\Jac(\Y)\) and evaluating \(\Phi\) at points in \(\Jac(\CC)\). For an isotropic \(G\), one represents \(n = a_1^2 + a_2^2 + a_3^2 + a_4^2\) (by Lagrange’s four-square theorem, with \(r \leq 4\)) and chooses generators \(g_1, g_2 \in G\) such that \(G = \< a_1 g_1, a_2 g_1, a_3 g_2, a_4 g_2 \>\) in a suitable basis. The \textbf{Lubicz-Robert Formula} then comprises:
\begin{enumerate}
\item Theta null points of \(\Jac(\Y)\), given by
\begin{equation*}
\hat{\theta}_i = \sum_{g \in G} \prod_{u=1}^r \theta \left[ \begin{smallmatrix} a_u \\ b_u \end{smallmatrix} \right] (g, \tau)_{\alpha_u i},
\end{equation*}
where \(\theta \left[ \begin{smallmatrix} a_u \\ b_u \end{smallmatrix} \right] (g, \tau)\) are level-two Theta functions at \(g \in G\), and \(\alpha_u i\) adjusts indices to align with the basis (e.g., a permutation or selection).
\item Point evaluations for \(P \in \Jac(\CC)\), given by
\begin{equation*}
\Phi(P)_i = \sum_{g \in G} \prod_{u=1}^r \theta \left[ \begin{smallmatrix} a_u \\ b_u \end{smallmatrix} \right] (P + g, \tau)_{\alpha_u i}.
\end{equation*}
\end{enumerate} 
We have the following:
\begin{thm}[Lubicz-Robert]
For an isotropic subgroup \(G \subset \Jac(\CC)[n]\) of order \(n^2\), the Theta coordinates \(\hat{\theta}_i\) and \(\Phi(P)_i\) computed via the above formulas define the codomain \(\Jac(\Y) = \Jac(\CC)/G\) and the isogeny \(\Phi \colon \Jac(\CC) \longrightarrow \Jac(\Y)\), respectively, with complexity \(O(n^2)\) field operations for a general \(n = a_1^2 + \cdots + a_r^2\).
\end{thm}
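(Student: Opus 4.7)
The plan is to reduce the theorem to the quadratic addition (Riemann) relation for Theta functions combined with Lagrange's four-square identity, which together encode how multiplication by $n$ factorises through the isotropic kernel $G$. I would organise the argument in three stages: (i) set up the algebraic identity that makes a product of $r$ Theta values at $P+g$ equal to a Theta value at $P$ on a related Abelian variety; (ii) verify that averaging this identity over $G$ descends to genuine Theta coordinates on the quotient $\hat{\AA}=\AA/G$; and (iii) read off the complexity.

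For stage (i), I would start from the decomposition $n = a_1^2 + \cdots + a_r^2$ with $r \leq 4$ and package the $a_u$ into an integer matrix $F \in \mathrm{Mat}(r,r;\Z)$ satisfying $F^{t} F = n\, \mathbb{I}_r$. Applied coordinate-wise, $F$ induces an isogeny $F\colon \AA^r \longrightarrow \AA^r$ with the property that $F^* \mathscr{L}^{\boxtimes r} \cong \mathscr{L}^{\boxtimes r}$ with the polarisation scaled by $n$. Pulling back sections and applying the Riemann Theta Relation from Section~\ref{sec:theta_functions}, one obtains a product identity expressing $\prod_u \theta\!\left[\begin{smallmatrix} a_u \\ b_u \end{smallmatrix}\right](z_u,\tau)$, evaluated at a suitable linear combination of the $z_u$, as a Theta value on the isogenous variety. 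The indices $\alpha_u i$ in the theorem record precisely the linear action of $F$ on the characteristics.

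Stage (ii) is where the isotropy of $G$ enters. Summing the identity over $g\in G$, the cross-terms that would obstruct descent are controlled by the Weil pairing: because $G$ is isotropic of order $n^2=|G|$, the commutator of any two translations by elements of $G$ acts trivially on the line bundle $\mathscr{L}^n$. Hence the sum $\hat\theta_i$ is invariant under translation by $G$ and defines a section of the descended bundle on $\hat{\AA}$, which by Lemma~\ref{noether} is the principal polarisation of $\hat{\AA}=\AA/G$. Specialising $P$ to $0$ yields the Theta null points of $\hat{\AA}$, while general $P$ gives the image $\Phi(P)$. Uniqueness of the isogeny with a prescribed kernel (Lemma~\ref{noether} again) identifies $\Phi$ with the map $\AA\longrightarrow\AA/G$. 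For stage (iii), once the $r\leq 4$ precomputed Theta evaluations $\theta\!\left[\begin{smallmatrix} a_u \\ b_u \end{smallmatrix}\right](g,\tau)$ are tabulated for each $g\in G$, each coordinate $\hat\theta_i$ or $\Phi(P)_i$ is a sum of $|G|=n^2$ terms, each requiring $O(1)$ field multiplications, giving the $O(n^2)$ bound.

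The main obstacle will be stage (ii), specifically pinning down the cocycle that measures the failure of $\prod_u\theta\!\left[\begin{smallmatrix} a_u \\ b_u \end{smallmatrix}\right](P+g,\tau)_{\alpha_u i}$ to be strictly $G$-invariant term by term, and showing that this cocycle is a coboundary after summation precisely because of the isotropy condition. Once this phase bookkeeping is handled, the remaining verifications---that the resulting $\hat\theta_i$ satisfy the quasi-periodicity for the quotient lattice $\hat\Lambda \supset \Lambda$ of index $n^2$, and that the morphism defined by the $\Phi(P)_i$ has kernel exactly $G$---are formal consequences of the embedding theorem (Theorem~\ref{thm:embedding}) applied on $\hat{\AA}$ and of the universal property of the quotient.
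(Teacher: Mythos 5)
Your proposal follows essentially the same route as the paper: represent the quotient via the enlarged lattice, sum products of theta values over $G$, invoke isotropy of $G$ under the Weil pairing to descend the polarization to $\Jac(\Y)=\Jac(\CC)/G$, and count $|G|=n^2$ summands each with $r\le 4$ factors to obtain the $O(n^2)$ bound. If anything you are more careful than the paper at the one delicate point: the paper asserts termwise invariance $\theta_{j_u}(P+g+g')=\theta_{j_u}(P+g)$, which is literally false for quasi-periodic theta functions, whereas you correctly isolate the automorphy cocycle and note that invariance holds only after summation once isotropy trivializes it; this, together with the matrix $F$ satisfying ${}^tF F = n\,\mathbb{I}_r$ that realizes the four-square decomposition via the Riemann relations, is exactly the machinery of the cited Lubicz--Robert theorem that the paper's proof leaves implicit.
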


\begin{proof}
Represent \(\Jac(\CC) = \C^2/\Lambda\), and \(\Jac(\Y) = \C^2/\hat{\Lambda}\), where \(\hat{\Lambda} = \Z^2 \oplus n \tau \Z^2 + G\). The Theta functions on \(\Jac(\Y)\) are derived from \(\Jac(\CC)\) by summing over \(G\), adjusting the lattice periodicity. For the null points, \(\hat{\theta}_i\) aggregates contributions from \(G\), with the product \(\prod_{u=1}^r \theta_{j_u}(g)\) reflecting the kernel’s structure via \(n = \sum a_u^2\). With \(|G| = n^2\) and \(r \leq 4\), the sum has \(n^2\) terms, each a product of \(O(1)\) evaluations, totaling \(O(n^2)\) operations. For \(\Phi(P)\), the sum ensures \(\Phi(P + g') = \Phi(P)\) for \(g' \in G\), as \(\theta_{j_u}(P + g + g') = \theta_{j_u}(P + g)\), defining the quotient map. Isotropy preserves the principal polarization, as the Weil pairing vanishes on \(G\), and the resulting \(\hat{\theta}_i\) define \(\Jac(\Y)\)’s Theta structure (\cite{LR}, Theorem 4.1).
\end{proof}

On $\mathcal{K}_{\Jac(\CC)}$, \(\hat{\theta}_i\) determine $\mathcal{K}_{\Jac(\Y)}$’s quartic equation, and \(\Phi(P)_i\) maps Kummer points, preserving the (16,6)-configuration on \(\Jac(\CC)\). For \(\CC : y^2 = f(x)\), choose \(G \subset \Jac(\CC)[n]\), e.g., for \(n = 3\), \(G\) of order 9 from combinations of 3-torsion points of the form \((x_i, 0)\). Compute \(\theta_i(g)\) using the curve’s equation, then \(\hat{\theta}_i\) over \(G\). The map \(\Phi\) respects \(\mathcal{K}_{\Jac(\CC)} \longrightarrow \mathcal{K}_{\Jac(\Y)}\), maintaining geometric properties (\cite{Mum}, Chapter III).
\begin{prop}
The Lubicz-Robert formula on \(\mathcal{K}_{\Jac(\CC)}\) determines \(\Y\)’s equation via Theta nulls \(\hat{\theta}_i\), and \(\Phi\)’s action on \(\mathcal{K}_{\Jac(\CC)}\) preserves the (16,6)-configuration of tropes and nodes.
\end{prop}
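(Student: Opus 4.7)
The plan is to establish the two assertions of the proposition separately. For the first claim---that the Theta nulls $\hat\theta_i$ of the codomain determine the equation of $\Y$---I would invoke Thomae's formula (Lemma~\ref{ThomaeLemma}) applied to $\Jac(\Y)$. The Lubicz-Robert formula outputs, up to a common projective scalar, the full set of level-two Theta nulls of $\Jac(\Y)$; in particular, the ten even nulls $\hat\theta_1,\dots,\hat\theta_{10}$. Plugging these into the Rosenhain expressions~\eqref{Picard} recovers the three moduli $\Lambda_1,\Lambda_2,\Lambda_3$ of $\Y$, which pins down the Weierstrass model $Y^2 = X Z (X-Z)(X-\Lambda_1 Z)(X-\Lambda_2 Z)(X-\Lambda_3 Z)$. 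Equivalently, one can pass through the Igusa invariants via the identities~\eqref{invariants}, recovering the moduli point $[J_2 : J_4 : J_6 : J_{10}](\Y) \in \mathbb{P}_{(2,4,6,10)}$.

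For the second assertion, I first observe that the isogeny $\Phi$ is a homomorphism of group schemes and therefore commutes with the inversion $-\mathbb{I}$; it thus descends to a well-defined morphism $\bar\Phi \colon \mathcal{K}_{\Jac(\CC)} \to \mathcal{K}_{\Jac(\Y)}$. Since $n$ is taken odd, $|G|=n^2$ is coprime to $2$, so $G \cap \Jac(\CC)[2] = 0$, and $\Phi$ restricts to a bijection $\Jac(\CC)[2] \xrightarrow{\sim} \Jac(\Y)[2]$; the sixteen nodes on $\mathcal{K}_{\Jac(\CC)}$ are therefore in bijection with the sixteen nodes on $\mathcal{K}_{\Jac(\Y)}$. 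For the bijection on tropes, I would use that the symmetric Theta divisors are indexed by $\Jac(\CC)[2]$ via translation of the standard Theta divisor, together with the fact that $\hat\Phi\circ\Phi=[n^2]$ acts as the identity on $2$-torsion for $n$ odd, so $\Phi$ preserves the Weil pairing on the $2$-torsion and hence the induced bijection is a symplectomorphism. Because the $(16,6)$ incidence structure is determined intrinsically by the group law and Weil pairing on $\AA[2]$, $\bar\Phi$ carries the configuration on $\mathcal{K}_{\Jac(\CC)}$ onto the configuration on $\mathcal{K}_{\Jac(\Y)}$.

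The main obstacle is a bookkeeping issue about conventions rather than a conceptual one. The Lubicz-Robert nulls $\hat\theta_i$ are only defined up to a common projective scalar and depend on the four-square decomposition $n = a_1^2+\dots+a_r^2$, the choice of generators $g_1, g_2$ of $G$, and the alignment of indices $\alpha_u i$ appearing in the product $\prod_u \theta[\cdot](g,\tau)_{\alpha_u i}$. To plug them into Thomae's formula~\eqref{Picard} cleanly one must verify that the labeling of half-integer characteristics produced by the summation $\sum_{g\in G}$ matches the Thomae normalization of Section~\ref{sssec:relats_thetas}, and likewise that the induced symplectic isomorphism $\Jac(\CC)[2]\to\Jac(\Y)[2]$ respects the $S_6$-labeling of Weierstrass points that organizes both the $(16,6)$-configuration and the trope relations~\eqref{eqn:tropes_QR}. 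Once these labeling conventions are fixed compatibly on both sides, the proposition follows from the two formal arguments above.
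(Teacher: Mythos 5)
Your proposal is correct and follows essentially the same route as the paper, whose own proof is a two-sentence sketch making exactly your two points: the nulls $\hat{\theta}_i$ determine the quartic and hence $\Y$'s Rosenhain form via the Igusa invariants, and the $(16,6)$-configuration is preserved because the formula respects the Weil pairing on the $2$-torsion. Your version is more detailed (going through Thomae's formula rather than the quartic, and spelling out why $G\cap\Jac(\CC)[2]=0$ for odd $n$), and your closing remarks on labeling conventions identify precisely the bookkeeping the paper leaves implicit.
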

\begin{proof}
The \(\hat{\theta}_i\) define \(\mathcal{K}_{\Jac(\Y)}\)’s quartic in \(\bP^3\), from which \(\Y\)’s Rosenhain form is derived via Igusa invariants (\cite{LR}, §5). The (16,6)-configuration maps under \(\Phi\) to \(\Jac(\Y)[2]\), preserving isotropy and structure, as the formula respects the Weil pairing’s symmetry (\cite{Mum}, Chapter III).
\end{proof}
\subsection{Isogenies for the \texorpdfstring{$(n,n)$}{(n,n)}-Split case}
The Lubicz-Robert method also intersects with the geometric classification of genus 2 curves. Recall that the loci \(\L_n\) were described in Section~\ref{sec:split_jac} as irreducible two-dimensional subvarieties of \(\mathcal{M}_2\) where \(\Jac(\mathcal{X})\) is \((n, n)\)-Split. They can be computationally probed using the Lubicz-Robert formula: by computing \(\hat{\theta}_i\) for a given \(\mathcal{X}\) and an isotropic subgroup \(G\), one can determine if \([ \mathcal{X} ] \in \L_n\), aligning with detection algorithms in \cite{Costello-Santos}.  This method contrasts with Dolgachev-Lehavi’s approach, operating in \(\bP^3\) with \(O(n^2)\) complexity, enhancing efficiency for odd \(n\). While \(\L_n\) does not drive computation --the isogeny \(\Phi\) is computed agnostically from \(G\)-- it verifies splitting post-hoc, aligning with detection in \cite{Costello-Santos}. Their use of \(\L_n\) to accelerate attacks (e.g., 25x for 100-bit \(p\)) underscores its cryptographic role, not in computing \(\phi\), but in assessing security by identifying weak split Jacobians.
\par Thus, our explicit computations for \(n = 2, 3, 5, 7, 11\) provide test cases, that can be used to validate the formula’s efficiency and offer a bridge between geometric loci and cryptographic applications. While \(\L_n\) does not directly aid in computing \(\Phi\), it provides a geometric testbed. Given \([\mathcal{X}] \in \L_n\), the Lubicz-Robert formula must yield \(\Jac(\mathcal{Y}) \cong E_1 \times E_2\), verifiable via invariants. But the computation proceeds independently of this property, relying solely on \(G\), benchmarking algorithms like Lubicz-Robert. This dual role enhances both geometric classification and cryptanalysis.
\subsubsection{The $(2,2)$-Split case via Kummer surfaces}
As explained in Section~\ref{sssec:Legendre}, Legendre's gluing method provides a method of constructing a smooth genus-two curve $\CC$ with $[\CC] \in \mathcal{L}_2$ such that $\mathrm{Jac}(\CC)$ is $(2,2)$-isogenous to the product of two non-isogenous elliptic curves $E_1 \times E_2$. In Theorem~\ref{prop:KUMC0} we will construct the quartic Kummer surface $\mathcal{K}_{\operatorname{Jac}{(\CC)}}$ that is the surface analogue of Legendre's gluing method. 
\par On the other hand, the Kummer surface $\operatorname{Kum}(E_1\times E_2)$ is the minimal resolution of quotient surface of the product Abelian surface  $E_1\times E_2$ by the inversion automorphism where the elliptic curves $E_l$ for $l=1,2$ are not mutually isogenous. Oguiso proved that these surfaces admit eleven distinct Jacobian elliptic fibrations  \cite{MR1013073}. Kuwata and Shioda furthered Oguiso's work in \cite{MR2409557} and computed their Weierstrass models, reducible fibers, and Mordell-Weil lattices. 
\par For $\operatorname{Kum}(E_1\times E_2)$ a simple model is obtained as follows: let us define a double quadric surface closely related to  $E_1 \times  E_2$ for the elliptic curves given by Equation~(\ref{eqn:EC}). In general, a \emph{double quadric surface} is obtained as the double cover branched along a locus of bi-degree $(4,4)$ in $\mathbb{P}^1 \times \mathbb{P}^1$; see \cite{MR1922094}. It is well known that the minimal resolution of any double quadric is a K3 surface. In our situation, we identify $\mathbb{P}^1 = \mathbb{P}(x_l, z_l)$ for $l=1, 2$ and define the special double quadric surface $\mathcal{K}_{E_1 \times E_2}$, given by
\begin{equation}
\label{eqn:Kummer44}
\mathcal{K}_{E_1 \times E_2}\colon \quad   y_{1,2}^2 = x_1 z_1 \big(x_1-z_1\big) \big(x_1- \Lambda_1 z_1\big) \,  x_2 z_2 \big(x_2-z_2\big) \big(x_2- \Lambda_2 z_2\big) \,.
\end{equation}
There is a natural projection map $\pi\colon \mathcal{E}_1 \times  \mathcal{E}_2 \dasharrow \mathcal{K}_{E_1 \times E_2}$ given by $y_{1,2} = z_1 z_2 y_1y_2$, invariant under the action of $\imath_1 \times \imath_2$.  It follows that the minimal resolution of $\mathcal{K}_{E_1 \times E_2}$ is the Kummer surface $\operatorname{Kum}(\mathcal{E}_1\times \mathcal{E}_2)$.
\par A $(2,2)$-isogeny $\Psi\colon \operatorname{Jac}{(\CC)} \longrightarrow  \mathcal{E}_1 \times \mathcal{E}_2$ was constructed in Equation~(\ref{eqn:Psi}). In fact, Legendre's gluing method in Section~\ref{sssec:Legendre} determines the smooth genus-two curve $\CC$ in Equation~(\ref{eqn:LegendreSerreCurve}) explicitly, admitting an elliptic involution with elliptic-curve quotients $E_1$ and $E_2$. The quotient maps determine the $(2,2)$-isogeny $\Psi\colon \operatorname{Jac}{(\CC)}  \longrightarrow  E_1 \times E_2$ in Equation~(\ref{eqn:Psi}). General results in \cite{MR2062673}*{Sec.~1} and Theorem~\ref{prop:isogeny_Delta} show that there is a dual $(2,2)$-isogeny $\hat{\Psi}$ in Equation~(\ref{eqn:Phi}) with
\begin{equation}
 \hat{\Psi}\colon \ \mathcal{E}_1 \times \mathcal{E}_2   \ \longrightarrow \ \operatorname{Jac}{(\CC)}  \,.
\end{equation}
Up to isomorphisms, the composition $\Psi \circ \hat{\Psi}$ is given by multiplication by two on each factor of $E_1 \times E_2$, i.e., $[2] \times [2]$. Thus, the $(2,2)$-isogeny and its dual $(2,2)$-isogeny fit in the following diagram:
\begin{equation}
\begin{tikzcd}[column sep=scriptsize]
E_1 \times E_2  \arrow[r, rightarrow, "\hat{\Psi}"]  
\arrow[rr, dashrightarrow, bend right=30, "\lbrack 2\rbrack  \times \lbrack 2 \rbrack "]
&\operatorname{Jac}{(\CC)}  \arrow[r, rightarrow, "\Psi"] &
E_1 \times E_2
\end{tikzcd}
\end{equation}
and induce rational maps on the level of Kummer surfaces
\begin{equation*}
  \operatorname{Kum}\left(E_1\times E_2\right)
  \ \overset{\hat{\Psi}_*}{\longrightarrow} \
  \operatorname{Kum}(\operatorname{Jac}{\CC})
  \ \overset{\Psi_*}{\longrightarrow} \
  \operatorname{Kum}\left(E_1\times E_2 \right) 
\end{equation*}
In Theorem~\ref{thm:isogenies_explicit} we will construct the corresponding rational maps relating the singular Kummer surfaces $\mathcal{K}_{\operatorname{Jac}{(\CC)}}$ and $\mathcal{K}_{E_1 \times E_2}$. This construction can be considered \textbf{an application of the Lubicz-Robert formula to the $(2,2)$-Split case}. However, we provide an entirely geometric construction of the induced isogenies, following the work in \cite{MR4323344}.
\par We start with the two elliptic curves $E_l$ for $l=1, 2$ in Legendre form in Equation~(\ref{eqn:EC}); we notice that they can be represented as a complete quadric intersection in $\mathbb{P}^3$. Let  $I_1$ be the complete intersection of the two quadric surfaces in $\mathbb{P}^3 = \mathbb{P}(\mathbf{X}_{00}, \mathbf{X}_{01}, \mathbf{X}_{10}, \mathbf{X}_{11})$, given by
\begin{equation}
\label{eqn:intersections_n_1} I_1 \colon \quad 
 \left\lbrace \begin{array}{lcl} 
  \mathbf{X}_{01}^2 & = & \mathbf{X}_{10}^2 + \mathbf{X}_{11}^2  \,,\\[4pt]
  \mathbf{X}_{00}^2 & = & \mathbf{X}_{10}^2    +  \big(1 - \Lambda_1\big ) \mathbf{X}_{11}^2  \,, \end{array} \right.
\end{equation}
and $I_2$ be the complete intersection in $\mathbb{P}^3 = \mathbb{P}(\mathbf{Y}_{00}, \mathbf{Y}_{01}, \mathbf{Y}_{10}, \mathbf{Y}_{11})$, given by
\begin{equation}
\label{eqn:intersections_n_2} I_2 \colon \quad 
 \left\lbrace \begin{array}{lcl} 
 \mathbf{Y}_{01}^2 & = & \mathbf{Y}_{10}^2 + \mathbf{Y}_{11}^2  \,,\\[4pt]
 \mathbf{Y}_{00}^2 & = & \mathbf{Y}_{10}^2    +  \big(1 - \Lambda_2\big ) \mathbf{Y}_{11}^2  \,. \end{array} \right.
\end{equation}
We have the following:
\begin{lem}
\label{lem:EC_isomorphic2}
$E_l$ is birational equivalent to $I_l$ for $l= 1,2$ over $\mathbb{Q}(\Lambda_l)$.
\end{lem}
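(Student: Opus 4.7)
The plan is to exhibit an explicit birational map from $E_l$ to $I_l$ over $\Q(\Lambda_l)$ and verify it by direct polynomial substitution. First, observe that $I_l$, being a smooth complete intersection of two quadrics in $\bP^3$, has arithmetic genus one by adjunction, and it contains the $\Q$-rational point $[1:1:1:0]$ (set $\mathbf{X}_{11} = 0$); thus $I_l$ is an elliptic curve over $\Q(\Lambda_l)$, as is $E_l$ in Legendre form. Since both are smooth projective genus-one curves, constructing a birational map is equivalent to constructing an isomorphism.

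The key heuristic is that subtracting the two defining quadrics of $I_l$ yields the further relation $\mathbf{X}_{01}^2 - \mathbf{X}_{00}^2 = \Lambda_l \mathbf{X}_{11}^2$, and that the pencil $\lambda Q_1 + \mu Q_2$ degenerates at the four values $[\lambda:\mu] = [0:1], [1:0], [-1:1], [\Lambda_l - 1 : 1]$, a set M\"obius-equivalent (via $t \mapsto t + 1$) to the branch locus $\{0, 1, \Lambda_l, \infty\}$ of $E_l$. Working this through leads to the proposed map
\begin{equation*}
\Phi\colon\quad [x_l:y_l:z_l] \;\longmapsto\; [\,x_l^2 - 2\Lambda_l x_l z_l + \Lambda_l z_l^2 \,:\, x_l^2 - \Lambda_l z_l^2 \,:\, x_l^2 - 2 x_l z_l + \Lambda_l z_l^2 \,:\, 2 y_l z_l\,].
\end{equation*}
A direct expansion using the Legendre relation $y_l^2 z_l = x_l (x_l - z_l)(x_l - \Lambda_l z_l)$ confirms that both defining quadrics of $I_l$ vanish on the image: the first reduces via a difference-of-squares factorization to $4 x_l(x_l - z_l)(x_l - \Lambda_l z_l) z_l - 4 y_l^2 z_l^2 = 0$, and the second is analogous after an overall factor of $(1 - \Lambda_l)$.

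To construct an inverse, work in affine coordinates $(u, v, w) = (\mathbf{X}_{10}, \mathbf{X}_{01}, \mathbf{X}_{00})/\mathbf{X}_{11}$: the pairwise differences satisfy $y_l(v - w) = \Lambda_l(x_l - 1)$, $y_l(v - u) = x_l - \Lambda_l$, and $y_l(w - u) = (1 - \Lambda_l) x_l$, so multiplying and applying Legendre yields $y_l = \Lambda_l(1 - \Lambda_l)/[(v - w)(v - u)(w - u)]$, from which a rational expression for $x_l$ (e.g., $x_l = \Lambda_l + y_l(v - u)$) follows; all of these formulas are defined over $\Q(\Lambda_l)$. The only real obstacle is the polynomial bookkeeping, which is routine once $\Phi$ has been guessed; composing $\Phi$ with the inverse returns the identity by a short calculation, establishing the birational equivalence.
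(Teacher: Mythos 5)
Your proposal is correct and follows essentially the same route as the paper: the forward map you write down is exactly the paper's map in Equation~(\ref{eqn:varphi}) (up to the sign choice on the last coordinate), verified by the same difference-of-squares reduction to the Legendre relation. Your inverse, expressed through the pairwise differences $y_l(v-w)=\Lambda_l(x_l-1)$, $y_l(v-u)=x_l-\Lambda_l$, $y_l(w-u)=(1-\Lambda_l)x_l$, is an equivalent repackaging of the paper's explicit rational inverse and is likewise defined over $\Q(\Lambda_l)$.
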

\begin{proof}
A rational map $E_1  \hookrightarrow \mathbb{P}^3, [ x_1: y_1: z_1 ]  \mapsto [ \mathbf{X}_{00}, \mathbf{X}_{01}, \mathbf{X}_{10}, \mathbf{X}_{11} ]$ is given by
\begin{equation}
\label{eqn:varphi}
  \Big[ \mathbf{X}_{00} :   \mathbf{X}_{01} :   \mathbf{X}_{10} :   \mathbf{X}_{11} \Big]  =  \Big[ x_1^2 - 2 \Lambda_1 x_1 z_1+\Lambda_1 z_1^2:  x_1^2 - \Lambda_1 z_1^2: x_1^2 - 2x_1z_1+\Lambda_1 z_1^2  : \pm 2 y_1z_1  \Big] \,.
\end{equation}
It has a rational inverse $I_1 \dasharrow E_1,  [ \mathbf{X}_{00}, \mathbf{X}_{01}, \mathbf{X}_{10}, \mathbf{X}_{11} ] \mapsto [ x_1: y_1: z_1 ]$, given by
\begin{equation}
  \Big[ x_1 : y_1 : z_1 \Big]  \ = \ \Big[ \Lambda_1 \big(\mathbf{X}_{1,0} - \mathbf{X}_{0,0}\big) : \ \pm \Lambda_1 \big(\Lambda_1-1\big) \mathbf{X}_{1,1} : \ \big(1-\Lambda_1\big) \mathbf{X}_{0,1} + \Lambda_1 \mathbf{X}_{1,0} - \mathbf{X}_{0,0} \Big] \,.
\end{equation}
Thus, we obtain a birational equivalence between $E_1$ and $I_1$.  An analogous argument holds for $E_2$ and $I_2$.
\end{proof}
\par There is a well defined map $\tilde{\pi}\colon I_1 \times I_2 \dasharrow  \mathbb{P}^3$ with $\mathbb{P}^3 = \mathbb{P}(\mathbf{Z}_{00}, \mathbf{Z}_{01}, \mathbf{Z}_{10}, \mathbf{Z}_{11})$ induced by
\begin{equation} 
\label{eqn:projectionP3}
    \Big[ \mathbf{Z}_{00} :   \mathbf{Z}_{01} :   \mathbf{Z}_{10} :   \mathbf{Z}_{11} \Big] 
    \ = \  \Big[ \mathbf{X}_{00} \mathbf{Y}_{00} : \  \mathbf{X}_{01} \mathbf{Y}_{01} :  \ \mathbf{X}_{10} \mathbf{Y}_{10} : \  \mathbf{X}_{11}\mathbf{Y}_{11}  \Big]  \,.
\end{equation}
In fact, we have the following:
\begin{thm}
\label{prop:KUMC0}
Let $\Lambda_1, \Lambda_2 \in  \mathbb{P}^1 \backslash \{ 0, 1, \infty\}$ and $\Lambda_1 \not = \Lambda_2$. The image $\tilde{\pi}(I_1 \times I_2)$ in $\mathbb{P}^3 = \mathbb{P}(\mathbf{Z}_{00}, \mathbf{Z}_{01}, \mathbf{Z}_{10}, \mathbf{Z}_{11})$ is the quartic projective surface, given by
\begin{equation}
\label{eqn:K3_X}
\begin{split}
 \begin{array}{c} \mathbf{Z}_{00}^4 + \big(1-\Lambda_1\big) \big(1-\Lambda_2\big)  \mathbf{Z}_{01}^4 +  \Lambda_1 \Lambda_2 \mathbf{Z}_{10}^4 
 + \Lambda_1 \Lambda_2  \big(1-\Lambda_1\big) \big(1-\Lambda_2\big)  \mathbf{Z}_{11}^4 \\[4pt]
- \big(2-\Lambda_1-\Lambda_2\big) \Big( \mathbf{Z}_{00}^2 \mathbf{Z}_{01}^2 +  \Lambda_1 \Lambda_2  \mathbf{Z}_{10}^2 \mathbf{Z}_{11}^2 \Big) 
- \big( 2 \Lambda_1 \Lambda_2 - \Lambda_1 - \Lambda_2  \big)  \Big( \mathbf{Z}_{00}^2 \mathbf{Z}_{11}^2 +   \mathbf{Z}_{01}^2 \mathbf{Z}_{10}^2 \Big) \\[4pt]
- \big(\Lambda_1 + \Lambda_2 \big)  \Big( \mathbf{Z}_{00}^2 \mathbf{Z}_{10}^2 +  \big(1-\Lambda_1\big) \big(1-\Lambda_2\big)  \mathbf{Z}_{01}^2 \mathbf{Z}_{11}^2 \Big)  \ = \ 0 \,. \end{array}
\end{split} 
\end{equation}
The quartic hypersurface is birational equivalent over $\mathbb{Q}( \lambda_2 \lambda_3,  \lambda_2 + \lambda_3)$ to the singular Kummer variety $\mathcal{K}_\AA$ associated with the principally polarized Abelian surface $\AA=\mathrm{Jac}(\CC)$ with $\CC$ in Equation~\eqref{eqn:LegendreSerreCurve} and parameters satisfying~(\ref{eqn:EC_12_j_invariants}).
The minimal resolution is isomorphic to a Kummer surface  of Picard rank 18.  
\end{thm}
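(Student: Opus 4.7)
The plan is to verify the three assertions of Theorem~\ref{prop:KUMC0} in sequence. The first assertion---the explicit defining equation of the image---is derived by pure elimination from the defining equations of $I_1$ and $I_2$. The second---birational equivalence with $\mathcal{K}_\AA$---follows by identifying $\tilde{\pi}$ as the Kummer map associated with the dual $(2,2)$-isogeny $\hat{\Psi}\colon E_1\times E_2\to\operatorname{Jac}(\CC)$ constructed in Section~\ref{sssec:Legendre}. The third assertion, Picard rank $18$, is then immediate from Remark~\ref{rem:config} once $(2,2)$-decomposability is invoked.

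For the first step, introduce the shorthand $a=\mathbf{X}_{10}^2$, $b=\mathbf{X}_{11}^2$, $c=\mathbf{Y}_{10}^2$, $d=\mathbf{Y}_{11}^2$. Equations~\eqref{eqn:intersections_n_1}--\eqref{eqn:intersections_n_2} then give $\mathbf{X}_{00}^2=a+(1-\Lambda_1)b$, $\mathbf{X}_{01}^2=a+b$, $\mathbf{Y}_{00}^2=c+(1-\Lambda_2)d$, $\mathbf{Y}_{01}^2=c+d$, whence
\begin{align*}
\mathbf{Z}_{10}^2 &= ac, \qquad \mathbf{Z}_{11}^2 = bd, \\
\mathbf{Z}_{01}^2 - \mathbf{Z}_{10}^2 - \mathbf{Z}_{11}^2 &= ad + bc, \\
\mathbf{Z}_{00}^2 - \mathbf{Z}_{10}^2 - (1-\Lambda_1)(1-\Lambda_2)\mathbf{Z}_{11}^2 &= (1-\Lambda_2)\,ad + (1-\Lambda_1)\,bc.
\end{align*}
Under the genericity assumption $\Lambda_1\neq\Lambda_2$, the last two relations solve uniquely for the pair $(ad,bc)$ as linear combinations of the $\mathbf{Z}_{ij}^2$; imposing the additional identity $(ad)(bc)=abcd=\mathbf{Z}_{10}^2\mathbf{Z}_{11}^2$ then produces a single quartic polynomial equation in $[\mathbf{Z}_{00}:\mathbf{Z}_{01}:\mathbf{Z}_{10}:\mathbf{Z}_{11}]$. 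Expanding and collecting terms by the elementary symmetric functions of $(\Lambda_1,\Lambda_2)$ yields precisely Equation~\eqref{eqn:K3_X}; by inspection every coefficient depends only on $\Lambda_1+\Lambda_2$ and $\Lambda_1\Lambda_2$, hence rationally on $(\lambda_2+\lambda_3,\lambda_2\lambda_3)$ through Equations~\eqref{eqn:EC_12_j_invariants}.

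For the second step, Lemma~\ref{lem:EC_isomorphic2} identifies $I_\ell$ birationally with $E_\ell$ over $\mathbb{Q}(\Lambda_\ell)$. Because $\tilde{\pi}$ is defined by the monomials $\mathbf{Z}_{ij}=\mathbf{X}_{ij}\mathbf{Y}_{ij}$, it is invariant under the inversion involution $-\mathbb{I}$ on $E_1\times E_2$ and is equivariant under translation by the kernel $\ker\hat{\Psi}\cong(\mathbb{Z}/2\mathbb{Z})^2$, whose action on the coordinate quadrics~\eqref{eqn:intersections_n_1}--\eqref{eqn:intersections_n_2} is by sign flips among the $\mathbf{X}_{ij}$ and $\mathbf{Y}_{ij}$. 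Hence $\tilde{\pi}$ descends to a morphism from the quotient $(E_1\times E_2)/(\ker\hat{\Psi}\rtimes\langle-\mathbb{I}\rangle)$, which via $\hat{\Psi}$ is precisely $\operatorname{Jac}(\CC)/\langle-\mathbb{I}\rangle=\mathcal{K}_\AA$, and a dimension and degree count shows that the resulting map to the image is birational. To confirm that the equivalence lives over $\mathbb{Q}(\lambda_2+\lambda_3,\lambda_2\lambda_3)$, I match the quartic~\eqref{eqn:K3_X} with the Hudson normal form~\eqref{Goepel-Quartic} by exhibiting an explicit linear change of coordinates on $\mathbb{P}^3$ whose Hudson parameters $(A,B,C,D)$ agree with~\eqref{KummerParameter4} after the substitution~\eqref{eqn:EC_12_j_invariants}. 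Finally, Proposition~\ref{prop:special_G_group} together with Theorem~\ref{prop:isogeny_Delta} places $\operatorname{Jac}(\CC)$ in the Humbert surface $H_4$, and Remark~\ref{rem:config} then gives $\mathrm{T}_{\operatorname{Kum}(\operatorname{Jac}(\CC))}=H(2)\oplus\langle 4\rangle\oplus\langle-4\rangle$, yielding Picard rank $22-4=18$.

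The principal obstacle is the explicit matching step in the second part. General invariant-theoretic considerations guarantee that the quartic produced by elimination and the Hudson quartic attached to $\mathcal{K}_\AA$ are projectively equivalent, but producing a $\operatorname{PGL}_4$ transformation with coefficients in $\mathbb{Q}(\lambda_2+\lambda_3,\lambda_2\lambda_3)$---rather than over a larger extension such as $\mathbb{Q}(\lambda_2,\lambda_3,\sqrt{\lambda_2\lambda_3})$---requires a symmetric-function analysis tracking the dependence on $(\Lambda_1,\Lambda_2)$ through symmetric combinations only. A useful consistency check is to match the $16$ nodes of~\eqref{eqn:K3_X} with the standard Hudson node configuration of Lemma~\ref{lem:16nodes} and verify that the induced $(\mathbb{Z}/2\mathbb{Z})^4$-symmetry of the quartic coincides with the one predicted by the kernel structure of the $(2,2)$-isogeny $\hat{\Psi}$.
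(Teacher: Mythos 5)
Your overall architecture is sound and two of the three steps essentially reproduce what the paper does, in one place with more detail than the paper itself supplies. Your elimination of $(ad,bc)$ from the linear system and the closing relation $(ad)(bc)=\mathbf{Z}_{10}^2\mathbf{Z}_{11}^2$ is a cleaner and more verifiable derivation of Equation~\eqref{eqn:K3_X} than the paper's one-line ``multiply pairwise and substitute.'' Your Picard-rank argument via Proposition~\ref{prop:special_G_group}, Theorem~\ref{prop:isogeny_Delta} and Remark~\ref{rem:config} is also valid and different from the paper's, which instead rescales \eqref{eqn:K3_X} (over the extension $\mathbb{Q}(\sqrt{K_1K_2},\sqrt{K_1'K_2'})$) into a Hudson quartic \eqref{Goepel-Quartic} with $D=0$ and reads off the rank from there. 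For the birational equivalence, your descent argument for $\tilde\pi$ through $(E_1\times E_2)/(\ker\hat\Psi\rtimes\langle-\mathbb{I}\rangle)\cong\mathcal{K}_\AA$ is a reasonable conceptual substitute for the paper's purely computational route, though it leaves unverified the asserted sign-flip action of $\ker\hat\Psi$ on the coordinates of $I_1\times I_2$ and the $8{:}1$ degree count.

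The genuine weak point is exactly the one you flag, and your proposed resolution points in a direction that fails as stated. You plan to certify the field of definition $\mathbb{Q}(\lambda_2\lambda_3,\lambda_2+\lambda_3)$ by matching \eqref{eqn:K3_X} against the Hudson normal form with parameters \eqref{KummerParameter4}. But after substituting $\lambda_1=\lambda_2\lambda_3$, the parameters $B$ and $C$ in \eqref{KummerParameter4} carry the factor $\lambda_2-\lambda_3$ in their denominators and are therefore \emph{antisymmetric} in $(\lambda_2,\lambda_3)$: they live in the quadratic extension generated by $\sqrt{(\lambda_2+\lambda_3)^2-4\lambda_2\lambda_3}$, not in $\mathbb{Q}(\lambda_2\lambda_3,\lambda_2+\lambda_3)$. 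So a symmetric-function analysis of the Hudson parameters cannot, by itself, bring the equivalence down to the claimed field. The paper avoids this precisely by \emph{not} using the Hudson form for this step: it exhibits the explicit linear substitution \eqref{eqn:subi_identify} carrying \eqref{eqn:K3_X} to the Baker quartic \eqref{eqn:Baker_det} with $\lambda_1=\lambda_2\lambda_3$, whose coefficients $L_1,\dots,L_4$ from \eqref{eqn:Ls} become genuinely symmetric in $(\lambda_2,\lambda_3)$ under that specialization; the chain to the Cassels--Flynn model and Proposition~\ref{prop:Kummers} then finishes the identification with $\mathcal{K}_\AA$ over the symmetric field. To repair your argument, replace the Hudson target by the Baker (or Cassels--Flynn) quartic, or else argue that a projective equivalence over the symmetric field exists even though the standard Hudson parameters do not descend --- but that is an additional claim requiring proof, not a bookkeeping exercise.
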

\begin{proof}
We multiply (pairwise) Equations~(\ref{eqn:intersections_n_1}) and~(\ref{eqn:intersections_n_1}) and use the variables in Equation~(\ref{eqn:projectionP3}). We obtain Equation~(\ref{eqn:K3_X}).  For elliptic moduli $K_l$ and complementary elliptic moduli $K'_l$ with $\Lambda_l = K_l^2 = 1 - (K'_l)^2  \in \mathbb{P}^1 \backslash \lbrace 0, 1, \infty \rbrace$ for $l=1, 2$, Equation~(\ref{eqn:K3_X}) is isomorphic over $\mathbb{Q}(\sqrt{K_1K_2}, \sqrt{K'_1 K'_2})$ to the equation in $\mathbb{P}^3 = \mathbb{P}({w}, {x}, {y}, {z})$ given by
\begin{equation}
\begin{split}
\label{eqn:K3_XX}
0    \ = \ {w}^4 +  {x}^4 +  {y}^4  + {z}^4 
  - \frac{(K_1')^2+(K_2')^2}{K_1' K_2'} \Big( {w}^2 {z}^2 + {x}^2 {y}^2  \Big) \;\\
  - \frac{K_1^2+K_2^2}{K_1 K_2} \Big(  {w}^2 {y}^2 + {x}^2 {z}^2 \Big)
  + \frac{(K_1 K'_2)^2+(K'_1K_2)^2}{K_1 K_2 K_1'K_2'}  \Big( {w}^2 {x}^2 +  {y}^2 {z}^2 \Big) \,.
\end{split} 
\end{equation}
In fact, in Equation~(\ref{eqn:K3_X}) we can rescale
\begin{equation}
  \mathbf{Z}_{00} = x \,, \quad
  \mathbf{Z}_{01} = \frac{ y }{\sqrt{K'_1 K'_2}} \,, \quad
  \mathbf{Z}_{10} = \frac{ z }{\sqrt{K_1K_2} } \,, \quad 
  \mathbf{Z}_{11} = \frac{ w }{\sqrt{K_1K_2} \sqrt{K'_1 K'_2}} \,, 
\end{equation} 
and obtain Equation~(\ref{eqn:K3_XX}). We check that the latter is a Hudson quartic in Equation~(\ref{Goepel-Quartic}) with $D=0$.  Accordingly, its minimal resolution is a Jacobian Kummer surface  of Picard rank $18$.
\par For parameters satisfying Equations~(\ref{eqn:EC_12_j_invariants}), the map, given by
\begin{equation}
\label{eqn:subi_identify}
\begin{array}{rl}
 \multicolumn{2}{c}{[ \mathbf{Z}_{00} :   \mathbf{Z}_{01} :   \mathbf{Z}_{10} :   \mathbf{Z}_{11}] \ \ \mapsto  \ \ [ \mathbf{W}: \mathbf{X}: \mathbf{Y}: \mathbf{Z}]\,, }\\[6pt]
\text{with}  \qquad  
\mathbf{W} 	 = \!& \mathbf{Z}_{0,1} -   \mathbf{Z}_{1,0} -   \mathbf{Z}_{1,1} \,, \\[4pt]
\mathbf{X} 	 = &- (1-\lambda_2)(1-\lambda_3)\mathbf{Z}_{0,0} + (1 + \lambda_2\lambda_3) \mathbf{Z}_{0,1}  - (\lambda_2 + \lambda_3) \mathbf{Z}_{1,0} \,,\\ [4pt]
\mathbf{Y} 	 = & \lambda_2 \lambda_3 \big(  \mathbf{Z}_{0,1} -   \mathbf{Z}_{1,0} +   \mathbf{Z}_{1,1} \big) \,,\\[4pt]
\mathbf{Z} 	 = & -\lambda_2\lambda_3 \big((1-\lambda_2)(1-\lambda_3)\mathbf{Z}_{0,0} +  (1 + \lambda_2\lambda_3) \mathbf{Z}_{0,1}  -  (\lambda_2 + \lambda_3) \mathbf{Z}_{1,0} \big)\,,
\end{array}
\end{equation}
is an isomorphism, defined over $\mathbb{Q}( \lambda_2 \lambda_3,  \lambda_2 + \lambda_3)$, between the quartic surface in Equation~(\ref{eqn:K3_X}) and the Baker quartic introduced in Equation~(\ref{eqn:Baker_det}) with $\lambda_1=\lambda_2\lambda_3$. Substituting Equations~(\ref{eqn:subi_identify}) into Equation~(\ref{eqn:Baker_det}), we obtain Equation~(\ref{eqn:K3_X}) up to a non-vanishing scale factor. Inverting Equations~(\ref{eqn:subi_identify}) yields
\begin{equation}
\begin{split}
\mathbf{Z}_{0,0} & = \lambda_2 \lambda_3 \mathbf{X} + \mathbf{Z} \,,\\
\mathbf{Z}_{0,1} & = \lambda_2 \lambda_3 (\lambda_2 + \lambda_3) \mathbf{W} -  \lambda_2 \lambda_3 \mathbf{X}  +  (\lambda_2 + \lambda_3) \mathbf{Y} + \mathbf{Z} \,,\\
\mathbf{Z}_{1,0} & =  \lambda_2 \lambda_3  (\lambda_2 \lambda_3+1)  \mathbf{W} -  \lambda_2 \lambda_3 \mathbf{X}  +  (\lambda_2 \lambda_3+1) \mathbf{Y} + \mathbf{Z} \,,\\
\mathbf{Z}_{1,1} & =- \lambda_2 \lambda_3  (1 -\lambda_2) (1- \lambda_3)  \mathbf{W} + (1 -\lambda_2) (1- \lambda_3)  \mathbf{Y}  \,.
\end{split}
\end{equation}
Moreover, Equations~(\ref{eqn:transfo_1}) and~(\ref{eqn:transfo_2}) provide a birational equivalence between Equation~(\ref{eqn:K3_X}) and the Cassels-Flynn quartic. It is easy to see that for $\lambda_1=\lambda_2 \lambda_3$ in Equations~(\ref{eqn:Ls}) this equivalence is well defined over $\mathbb{Q}( \lambda_2 \lambda_3,  \lambda_2 + \lambda_3)$ 
\end{proof}
\par On the other hand, the double quadric $\mathcal{K}_{E_1 \times E_2}$ in Equation~(\ref{eqn:Kummer44}) is related to the surface in Equation~(\ref{eqn:K3_X}) as follows:
\begin{lem}
\label{lem:phi}
In the commutative diagram
\begin{equation}
\begin{array}{rcl}
  \mathcal{E}_1 \times \mathcal{E}_2 & \overset{\cong}{\longrightarrow } &  \mathcal{I}_1 \times  \mathcal{I}_2 \\[4pt]
  {\pi} \downarrow  & & \downarrow {\tilde{\pi}} \\
  \mathcal{K}_{E_1 \times E_2} & \overset{\hat{\psi}_\pm}{\longrightarrow} & \mathcal{K}_{\mathrm{Jac}(\CC)} 
\end{array}  
\end{equation}
the maps $\hat{\psi}_\pm\colon \mathcal{K}_{E_1 \times E_2} \dasharrow \mathcal{K}_{\mathrm{Jac}(\CC)}$ are rational maps of degree two given by
\begin{equation}
\label{eqn:subi_X}
\begin{array}{rl}
 \multicolumn{2}{l}{ \hat{\psi}_\pm: \qquad (x_1, z_1, x_2, z_2, y_{1,2})  \ \ \mapsto \ \ [ \mathbf{Z}_{00} :   \mathbf{Z}_{01} :   \mathbf{Z}_{10} :   \mathbf{Z}_{11}]\,, }\\[6pt]
\text{with} \qquad   
  \mathbf{Z}_{00}   = & \Big(x_1^2 - 2 \Lambda_1 x_1 z_1+\Lambda_1 z_1^2\Big)  \Big(x_2^2 - 2 \Lambda_2 x_2 z_2+\Lambda_2 z_2^2\Big) \,,\\[4pt]
  \mathbf{Z}_{01}   = & \Big(x_1^2 - \Lambda_1 z_1^2 \Big) \Big(x_2^2 - \Lambda_2 z_2^2\Big)\,,\\[4pt]
  \mathbf{Z}_{10}   = &\Big(x_1^2 - 2 x_1 z_1+\Lambda_1 z_1^2\Big)  \Big(x_2^2 - 2 x_2 z_2+\Lambda_2 z_2^2\Big) \,,\\[6pt]
  \mathbf{Z}_{11}   = & \pm 4 y_{1,2}\,,
\end{array}
\end{equation}
such that $\hat{\psi}_\pm \circ \pi \circ \imath_l = \hat{\psi}_\mp \circ \pi$ for $l= 1,2$ and $\imath_l$ the hyperelliptic involution on $E_l$.
\end{lem}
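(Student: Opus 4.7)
The plan is to derive the formulas (\ref{eqn:subi_X}) by chasing the diagram explicitly, then deduce the degree and intertwining properties. First, I substitute the birational equivalences $E_l \cong \mathcal{I}_l$ from Lemma \ref{lem:EC_isomorphic2} into the definition of $\tilde{\pi}$ from Equation (\ref{eqn:projectionP3}). For a generic point of $E_1 \times E_2$, the coordinates $\mathbf{X}_{ij}$ and $\mathbf{Y}_{ij}$ are given by Equation (\ref{eqn:varphi}). The three pairwise products $\mathbf{X}_{ij}\mathbf{Y}_{ij}$ with $(i,j) \neq (1,1)$ are polynomial in $(x_l, z_l)$ and yield directly the claimed expressions for $\mathbf{Z}_{00}, \mathbf{Z}_{01}, \mathbf{Z}_{10}$. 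For the remaining entry, $\mathbf{X}_{11}\mathbf{Y}_{11} = (\pm 2 y_1 z_1)(\pm 2 y_2 z_2) = \pm 4\, y_1 y_2 z_1 z_2 = \pm 4\, y_{1,2}$, invoking the defining relation $y_{1,2} = z_1 z_2 y_1 y_2$ of the projection $\pi$ from Equation (\ref{eqn:Kummer44}). The two independent sign choices in $\mathbf{X}_{11}$ and $\mathbf{Y}_{11}$ consolidate into a single $\pm$ because simultaneous sign flips leave the product invariant, producing exactly the two distinct rational maps $\hat{\psi}_\pm$.

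Next, I verify that the composition descends through $\pi$. The computed expressions depend on the elliptic coordinates only through $(x_l, z_l)$ and the combination $y_{1,2}$, each of which is invariant under the Kummer involution $-\mathbb{I} = (\imath_1, \imath_2)$; this establishes commutativity of the diagram. That the image lies on the quartic (\ref{eqn:K3_X}) is guaranteed by Theorem \ref{prop:KUMC0}, since that quartic is precisely $\tilde{\pi}(\mathcal{I}_1 \times \mathcal{I}_2)$.

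For the degree-two assertion I would argue as follows. The dual isogeny $\hat{\Psi}: E_1 \times E_2 \to \operatorname{Jac}(\CC)$ from Proposition \ref{prop:special_G_group} has degree $4$, and it intertwines with $-\mathbb{I}$ on both sides, so it descends to a map $\mathcal{K}_{E_1 \times E_2} \to \mathcal{K}_{\operatorname{Jac}(\CC)}$ of the same degree $4$. This descent factors through the pair $\{\hat{\psi}_+, \hat{\psi}_-\}$, whose members differ only by the involution $\sigma : \mathbf{Z}_{11} \mapsto -\mathbf{Z}_{11}$ on $\mathcal{K}_{\operatorname{Jac}(\CC)}$ --- an automorphism of the quartic (\ref{eqn:K3_X}), since that equation contains only even powers of $\mathbf{Z}_{11}$. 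Each map $\hat{\psi}_\pm$ therefore carries degree $4/2 = 2$.

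Finally, the intertwining relation follows immediately from the explicit formulas. The hyperelliptic involution $\imath_l$ on $E_l$ sends $y_l \mapsto -y_l$ while fixing $x_l, z_l$, hence $y_{1,2} \mapsto -y_{1,2}$. Inserting this into (\ref{eqn:subi_X}) preserves $\mathbf{Z}_{00}, \mathbf{Z}_{01}, \mathbf{Z}_{10}$ and flips the sign of $\mathbf{Z}_{11}$, converting $\hat{\psi}_+$ into $\hat{\psi}_-$ and vice versa. The main difficulty is the bookkeeping of the square-root ambiguities: one must verify that the two independent sign choices in $\mathbf{X}_{11}$ and $\mathbf{Y}_{11}$ genuinely collapse to the single $\pm$ in $\mathbf{Z}_{11}$, which is precisely what halves the degree on each branch relative to $\hat{\Psi}$.
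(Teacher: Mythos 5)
Your derivation of the formulas \eqref{eqn:subi_X} — substituting the parametrizations of Lemma~\ref{lem:EC_isomorphic2} into $\tilde{\pi}$, using $y_{1,2}=z_1z_2y_1y_2$ to produce $\mathbf{Z}_{11}=\pm 4y_{1,2}$, and observing that the four sign choices for $(\mathbf{X}_{11},\mathbf{Y}_{11})$ collapse to the single relative sign — is exactly the paper's argument, as is your verification of the intertwining relation $\hat{\psi}_\pm\circ\pi\circ\imath_l=\hat{\psi}_\mp\circ\pi$ from the fact that $\imath_l$ flips $y_{1,2}$ and hence only $\mathbf{Z}_{11}$. That part is fine.

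The gap is in your degree argument, which is also the one place where you depart from the paper (the paper deduces the degree from the double-quadric presentation \eqref{eqn:Kummer44}, i.e.\ from the fact that $\mathcal{K}_{E_1\times E_2}\to\mathbb{P}^1\times\mathbb{P}^1$ is $2:1$ and the first three coordinates of $\hat{\psi}_\pm$ are pulled back from the base while the last is the square-root coordinate). Your chain ``$\hat{\Psi}$ has degree $4$, its descent to the Kummer quotients has degree $4$, this descent factors through the pair $\{\hat{\psi}_+,\hat{\psi}_-\}$, therefore each has degree $4/2=2$'' is a non sequitur at the last step. The two maps $\hat{\psi}_+$ and $\hat{\psi}_-$ differ by post-composition with the automorphism $\mathbf{Z}_{11}\mapsto-\mathbf{Z}_{11}$ of the quartic \eqref{eqn:K3_X}; an automorphism has degree $1$, so the two maps have \emph{equal} degree, and ``being one of a pair'' cannot halve a fiber count. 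If the descent of $\hat{\Psi}$ really were one of the $\hat{\psi}_\pm$ up to that automorphism, your own first two steps would force $\deg\hat{\psi}_\pm=4$, directly contradicting the conclusion you want. The same confusion reappears in your closing remark that the collapse of the four sign choices to two maps ``is precisely what halves the degree'': the sign bookkeeping counts how many distinct maps the construction produces, not how many points lie in a fiber of each. To repair this you would need either an honest fiber computation for the explicit formulas (e.g.\ via the projection $[\mathbf{Z}]\mapsto[\mathbf{Z}_{00}:\mathbf{Z}_{01}:\mathbf{Z}_{10}]$ and the quadratic-in-$\mathbf{Z}_{11}^2$ structure of \eqref{eqn:K3_X}), or the paper's route through the double-quadric structure of $\mathcal{K}_{E_1\times E_2}$; the isogeny-descent argument as you have written it does not establish the claimed degree.
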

\begin{proof}
Equation~(\ref{eqn:subi_X}) follows immediately from Lemma~\ref{lem:EC_isomorphic2} (with a choice $\pm 1$ for the relative sign of $y_1$ and $y_2$) and the fact that $\pi\colon E_1 \times E_2 \longrightarrow \mathcal{K}_{E_1 \times E_2}$ is given by $y_{1,2} = z_1 z_2 y_1y_2$. The fact that the map is a rational map of degree two follows directly from the fact that Equation~(\ref{eqn:Kummer44}) is a double quadric surface.
\end{proof}
For the elliptic curve $E_l$ with $\Lambda_l  \in \mathbb{P}^1 \backslash \lbrace 0, 1, \infty \rbrace$ for $l=1, 2$, multiplication by two $[2]\colon E_1 \longrightarrow E_1$ (with respect to the elliptic-curve group law) is given by 
\begin{equation}
\label{eqn:multiplication2}
\begin{split}
 x_l \ \mapsto \ &2 \big(x_l^2-\Lambda_l z_l\big)^2 y_l z_l \,,\\
 y_l \ \mapsto \ & \big(x_l^2 - 2 \Lambda_l x_l z_l + \Lambda_l z_l^2\big) \big(x_l^2 -2 x_l z_l + \Lambda_l z_l^2\big) \big(x_l^2-\Lambda_l z_l^2\big) \,,\\
 z_l \ \mapsto \ & 8 y_l^3 z_l^3 \,.
\end{split}
\end{equation}
The map can be expressed as rational map originating from the complete intersection of quadrics $I_1$ in Equation~(\ref{eqn:intersections_n_1}) (resp.~$I_2$ in Equation~(\ref{eqn:intersections_n_2})), i.e., 
\begin{equation} 
 [\mathbf{X}_{00}, \mathbf{X}_{01}, \mathbf{X}_{10}, \mathbf{X}_{11} ] \ \mapsto \  [x_1: y_1: z_1] 
 = [ \mathbf{X}_{0,1}^2  \mathbf{X}_{1,1}:   \mathbf{X}_{0,0}\mathbf{X}_{0,1}\mathbf{X}_{1,0}: \mathbf{X}_{1,1}^3 ] \,.
\end{equation}
The induced map for their Cartesian products factors into rational maps between $\mathcal{K}_{\mathrm{Jac}(\CC)}$ to $\mathcal{K}_{E_1 \times E_2}$. We have the following:
\begin{prop}
\label{lem:psi}
Rational maps $\psi_\pm\colon \mathcal{K}_{\mathrm{Jac}(\CC)} \dasharrow \mathcal{K}_{E_1 \times E_2}$ of degree two are
\begin{equation}
\label{eqn:subi_Y}
\begin{array}{rlcrl}
 \multicolumn{5}{l}{ \psi_\pm: \qquad [ \mathbf{Z}_{00} :   \mathbf{Z}_{01} :   \mathbf{Z}_{10} :   \mathbf{Z}_{11}] \ \ \mapsto \ \ (x_1, z_1, x_2, z_2, y_{1,2})   \,,}\\[6pt]
\text{with} \qquad   
 x_1 & = Q \,, &  & z_1 & =\, (\Lambda_1-\Lambda_2) \mathbf{Z}_{1,1}^2 \,, \\[4pt]
 x_2 & = (\Lambda_1-\Lambda_2) \mathbf{Z}_{0,1}^2 \,, &&  z_2 & = Q \,, \\[4pt]
 y_{1,2}& \multicolumn{4}{l}{= \pm (\Lambda_1-\Lambda_2)^2 Q^2 \mathbf{Z}_{0,0}  \mathbf{Z}_{0,1}  \mathbf{Z}_{1,0}  \mathbf{Z}_{1,1} \,,} 
\end{array}
\end{equation}
where we have set
\begin{equation}
\label{eqn:PQ}
Q  =  \mathbf{Z}_{0,0}^2 - (1-\Lambda_1)  \mathbf{Z}_{0,1}^2 - \Lambda_1  \mathbf{Z}_{1,0}^2 +  \Lambda_1 (1-\Lambda_2)  \mathbf{Z}_{1,1}^2 \,,
\end{equation}
and they satisfy that the composition $\psi_\pm \circ \hat{\psi}_\pm\colon \mathcal{K}_{E_1 \times E_2} \dasharrow \mathcal{K}_{E_1 \times E_2}$ is induced by the diagonal action of multiplication by two on $E_1 \times E_2$ in Equation~(\ref{eqn:multiplication2}).
\end{prop}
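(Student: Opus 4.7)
The plan is to derive $\psi_\pm$ by dualizing the construction of $\hat{\psi}_\pm$ via the factorization $\Psi\circ\hat{\Psi}=[2]\times[2]$, and to verify both assertions by direct polynomial computation in the coordinates established in Theorem~\ref{prop:KUMC0} and Lemma~\ref{lem:phi}. First, I would rewrite the multiplication-by-two map of Equation~(\ref{eqn:multiplication2}) in the complete intersection coordinates $[\mathbf{X}_{ij}]$ of $I_l$ from Lemma~\ref{lem:EC_isomorphic2}, where it becomes a quadratic polynomial map on $I_l\subset\bP^3$. Taking Cartesian products and passing through the Segre-type monomials $\mathbf{Z}_{ij}=\mathbf{X}_{ij}\mathbf{Y}_{ij}$ of Equation~(\ref{eqn:projectionP3}), the diagonal $[2]\times[2]$ descends to a self-map of the quartic~(\ref{eqn:K3_X}). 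Factoring this self-map through the explicit formulas for $\hat{\psi}_\pm$ in Lemma~\ref{lem:phi} forces the candidate formulas~(\ref{eqn:subi_Y}) for $\psi_\pm$; in particular, the polynomial $Q$ of Equation~(\ref{eqn:PQ}) arises as the common quadratic factor isolated from the composition.

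Next, I would verify the two content assertions. For the first, that~(\ref{eqn:subi_Y}) actually lands on the double quadric~(\ref{eqn:Kummer44}), one must check the polynomial identity obtained by substituting~(\ref{eqn:subi_Y}) into~(\ref{eqn:Kummer44}); after clearing denominators, this becomes a homogeneous identity of degree twelve in $\mathbf{Z}_{ij}$ to be verified modulo the ideal generated by the quartic~(\ref{eqn:K3_X}). For the composition assertion, substituting the explicit form~(\ref{eqn:subi_X}) of $\hat{\psi}_\pm$ into~(\ref{eqn:subi_Y}) should yield, coordinate by coordinate, the multiplication-by-two polynomials of~(\ref{eqn:multiplication2}) up to a common projective factor, which after passing to the Kummer quotient gives the diagonal $[2]\times[2]$. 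The degree-two assertion for $\psi_\pm$ then follows by a degree count, since $\hat{\psi}_\pm$ has degree two by Lemma~\ref{lem:phi} and the composition covers the chosen sheet of~(\ref{eqn:Kummer44}); the sign index in~(\ref{eqn:subi_Y}) is consistent with the relation $\hat{\psi}_\pm\circ\pi\circ\imath_l=\hat{\psi}_\mp\circ\pi$ from the same lemma.

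The main obstacle will be the polynomial identity placing the image of $\psi_\pm$ on the double quadric. Although it is conceptually forced by the commutative diagram $\Psi\circ\hat{\Psi}=[2]\times[2]$ and the identification of $\mathcal{K}_{\mathrm{Jac}(\CC)}$ with the quartic in Theorem~\ref{prop:KUMC0}, the explicit verification reduces to a nontrivial Gr\"obner basis computation against the quartic ideal. A more conceptual alternative is to argue that $\psi_\pm$ exists uniquely as the Kummer-level realization of $\Psi$ once a branch of the double cover~(\ref{eqn:Kummer44}) is fixed, with the formulas pinned down by matching the action on level-two Theta sections via Remark~\ref{fact:sections1} on both surfaces; this bypasses the brute polynomial reduction but still depends on the explicit model of Theorem~\ref{prop:KUMC0}.
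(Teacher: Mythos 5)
Your proposal follows essentially the same route as the paper: express multiplication by two on each $I_l$ in the $\mathbf{X}_{ij}$ (resp. $\mathbf{Y}_{ij}$) coordinates, descend through the product coordinates $\mathbf{Z}_{ij}=\mathbf{X}_{ij}\mathbf{Y}_{ij}$ to a correspondence between the two Kummer models, isolate $Q$ (and its companion $R$) from the products $x_1z_2$, $x_2z_1$, and solve for a representative of the rescaling class to obtain~(\ref{eqn:subi_Y}), finishing with the composition check against~(\ref{eqn:multiplication2}). The only cosmetic difference is that the paper's derivation builds the formulas directly from the correspondence (so landing on the double quadric is automatic rather than requiring the separate degree-twelve verification you anticipate), but this is the same computation organized slightly differently.
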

\begin{proof}
The composition of elliptic-curve isogenies is given by
\begin{equation}
  \left( \begin{array}{c} 
  	\lbrack \mathbf{X}_{00} :   \mathbf{X}_{01} :   \mathbf{X}_{10} :   \mathbf{X}_{11} \rbrack  \\ 
	\lbrack \mathbf{Y}_{00} :   \mathbf{Y}_{01} :   \mathbf{Y}_{10} :   \mathbf{Y}_{11} \rbrack  \end{array} \right)  \mapsto 
  \left( \begin{array}{l} 
  \lbrack x_1 : z_1 : y_1\rbrack = \lbrack \mathbf{X}_{0,1}^2 \mathbf{X}_{1,1} :  \mathbf{X}_{1,1}^3:   \mathbf{X}_{0,0} \mathbf{X}_{0,1} \mathbf{X}_{1,0}  \rbrack \\
  \lbrack x_2 : z_2 : y_2\rbrack =  \lbrack \mathbf{Y}_{0,1}^2 \mathbf{Y}_{1,1} :  \mathbf{Y}_{1,1}^3:  \mathbf{Y}_{0,0} \mathbf{Y}_{0,1} \mathbf{Y}_{1,0}  \rbrack 
  \end{array} \right).
\end{equation}
The relation, given by
\begin{equation} 
    \Big[ \mathbf{Z}_{00} :   \mathbf{Z}_{01} :   \mathbf{Z}_{10} :   \mathbf{Z}_{11} \Big] 
    \ = \  \Big[ \mathbf{X}_{00} \mathbf{Y}_{00} : \  \mathbf{X}_{01} \mathbf{Y}_{01} :  \ \mathbf{X}_{10} \mathbf{Y}_{10} : \  \mathbf{X}_{11}\mathbf{Y}_{11}  \Big]  \,,
\end{equation}
induces a correspondence between $\mathcal{K}_{E_1 \times E_2}$ and $\mathcal{K}_{\mathrm{Jac}(\CC)}$, given by
\begin{equation}
\begin{split}
x_1 x_2 = \mathbf{Z}_{0,1}^2 \,, \qquad z_1 z_2 =  \mathbf{Z}_{1,1}^2 \,, \qquad y_{1,2} &= \mathbf{Z}_{0,0}\mathbf{Z}_{0,1}\mathbf{Z}_{1,0}\mathbf{Z}_{1,1} \,,\\
(\Lambda_1-\Lambda_2) x_1 z_2 = Q \,, \qquad (\Lambda_2-\Lambda_1)  x_2 z_1 = R \,, \qquad QR &= -(\Lambda_1-\Lambda_2)^2 \mathbf{Z}_{0,1}^2 \mathbf{Z}_{1,1}^2 \,,
\end{split}
\end{equation}
with $Q=(\Lambda_1-\Lambda_2) \mathbf{X}_{01}^2\mathbf{Y}_{11}^2$ and $R=(\Lambda_2-\Lambda_1)  \mathbf{X}_{11}^2\mathbf{Y}_{01}^2$.  One then obtains
\begin{equation}
\begin{split}
Q & =  \mathbf{Z}_{0,0}^2 - (1-\Lambda_1)  \mathbf{Z}_{0,1}^2 - \Lambda_1  \mathbf{Z}_{1,0}^2 +  \Lambda_1 (1-\Lambda_2)  \mathbf{Z}_{1,1}^2 \,,\\
R & =   \mathbf{Z}_{0,0}^2 - (1-\Lambda_2)  \mathbf{Z}_{0,1}^2 - \Lambda_2  \mathbf{Z}_{1,0}^2 + \Lambda_2 (1-\Lambda_1)  \mathbf{Z}_{1,1}^2 \,.
\end{split}
\end{equation}
When solving for an equivalence class, given by
\begin{equation}
 \Big( x_1, z_1, x_2, z_2, y_{1,2} \Big) \ \sim \  \Big( \mu x_1, \ \mu z_1, \ \nu x_2,\  \nu z_2, \ \mu^2\nu^2 y_{1,2} \Big)
 \; \text{with $\mu, \nu \in \mathbb{C}^\times$} \,,
\end{equation} 
we obtain Equations~(\ref{eqn:subi_Y}). Using the relation $QR = -(\Lambda_1-\Lambda_2)^2 \mathbf{Z}_{0,1}^2 \mathbf{Z}_{1,1}^2$ the solution can also be written as
\begin{equation}
\label{eqn:case2}
\begin{split}
 x_1 = (\Lambda_2-\Lambda_1) \mathbf{Z}_{0,1}^2  \,, &\quad 
 z_1 = \, R \,, \quad
 x_2 =R\,, \quad 
 z_2 =  (\Lambda_2-\Lambda_1) \mathbf{Z}_{1,1}^2  \,, \\
y_{1,2} & = (\Lambda_1-\Lambda_2)^2 R^2 \mathbf{Z}_{0,0}  \mathbf{Z}_{0,1}  \mathbf{Z}_{1,0}  \mathbf{Z}_{1,1} \,.
\end{split} 
\end{equation}
Equation~(\ref{eqn:multiplication2}) provides an explicit formula for the (diagonal) action of multiplication by two on $E_1 \times E_2$. We apply the projection map $\pi\colon E_1 \times  E_2 \longrightarrow \mathcal{K}_{E_1 \times E_2}$ given by $y_{1,2} = z_1 z_2 y_1y_2$ to obtain the induced action on the  the double quadric in Equation~(\ref{eqn:Kummer44}). We check that this agrees with the composition of maps $\psi_\pm \circ \hat{\psi}_\pm$.
\end{proof}
In summary, the construction of isogenies descends directly to the level of Kummer surfaces and yield the following Kummer sandwich theorem, see \cite{MR4323344}*{Thm.~2.38} for details:
\begin{thm}
\label{thm:isogenies_explicit}
The rational maps $\hat{\psi}_\pm, \psi_\pm$ in Equations~(\ref{eqn:subi_X}) and~(\ref{eqn:subi_Y}) fit into the following Kummer sandwich:
\begin{equation*}
  \mathcal{K}_{E_1 \times E_2}
  \ \overset{\hat{\psi}_\pm}{\longrightarrow} \
  \mathcal{K}_{\mathrm{Jac}(\CC)}
  \ \overset{\psi_\pm}{\longrightarrow} \
  \mathcal{K}_{E_1 \times E_2}
\end{equation*}
The maps $\hat{\psi}_\pm, \psi_\pm$ are defined over $\mathbb{Q}(\Lambda_1, \Lambda_2)$ and induced (up to the action of the minus identity involution) by the $(2,2)$-isogeny in Equation~(\ref{eqn:Phi}) and its dual $(2,2)$-isogeny in Equation~(\ref{eqn:Psi}).
\end{thm}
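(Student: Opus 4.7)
The plan is to assemble the sandwich by stitching together the two preceding lemmas rather than computing anew. Lemma~\ref{lem:phi} already constructs $\hat\psi_\pm\colon\mathcal{K}_{E_1\times E_2}\dashrightarrow\mathcal{K}_{\mathrm{Jac}(\CC)}$ by descending the isomorphisms $E_l\cong I_l$ of Lemma~\ref{lem:EC_isomorphic2} and the projection $\tilde\pi\colon I_1\times I_2\dashrightarrow\mathbb{P}^3$ through the quotient by $\imath_1\times\imath_2$, with the target being identified with the singular Kummer variety of $\mathrm{Jac}(\CC)$ via Theorem~\ref{prop:KUMC0}. Proposition~\ref{lem:psi} supplies $\psi_\pm$ in the opposite direction and verifies by direct algebraic computation that $\psi_\pm\circ\hat\psi_\pm$ realizes the map induced on $\mathcal{K}_{E_1\times E_2}$ by the diagonal action of $[2]\times[2]$ written out in Equation~(\ref{eqn:multiplication2}). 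The sandwich shape in the theorem is thus immediate.

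Next I would address the field of definition. Inspection of Equation~(\ref{eqn:subi_X}) shows that all coefficients of $\hat\psi_\pm$ are polynomials in $\Lambda_1,\Lambda_2$, and Equations~(\ref{eqn:subi_Y})--(\ref{eqn:PQ}) exhibit $\psi_\pm$ with coefficients polynomial in $\Lambda_1,\Lambda_2$. The only potential descent issue is that Theorem~\ref{prop:KUMC0} identifies the image of $\tilde\pi$ with a Baker/Cassels-Flynn model of $\mathcal{K}_{\mathrm{Jac}(\CC)}$ only over $\mathbb{Q}(\lambda_2\lambda_3,\lambda_2+\lambda_3)$. But Lemma~\ref{lem:ECR} and Equations~(\ref{eqn:EC_12_j_invariants}) express both $\lambda_2\lambda_3$ and $\lambda_2+\lambda_3$ as rational functions of $(\Lambda_1,\Lambda_2)$, so the composite descends to $\mathbb{Q}(\Lambda_1,\Lambda_2)$.

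Finally, to identify $\hat\psi_\pm$ and $\psi_\pm$ with the Kummer-level shadows of the isogenies $\hat\Psi$ and $\Psi$ of Equations~(\ref{eqn:Phi}) and~(\ref{eqn:Psi}), I would argue as follows. The construction of $\hat\psi_\pm$ in Lemma~\ref{lem:phi} is, by definition, the descent of the isomorphism $E_1\times E_2\cong I_1\times I_2$ followed by $\tilde\pi$; since Proposition~\ref{prop:special_G_group} identifies $\ker\Psi$ with the G\"opel group $\{P_0,P_{15},P_{23},P_{46}\}$ and shows $E_1\times E_2\cong\mathrm{Jac}(\CC)/\ker\Psi$, the quotient map $\hat\Psi$ is, up to $-\mathbb{I}$, the unique map realizing this identification on Kummer surfaces, forcing $\hat\psi_\pm$ to be the map induced by $\hat\Psi$. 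Once $\hat\psi_\pm$ is so identified, $\psi_\pm$ is pinned down: the relation $\Psi\circ\hat\Psi=[2]\times[2]$ together with the computation $\psi_\pm\circ\hat\psi_\pm=[2]\times[2]$ on the Kummer surface from Proposition~\ref{lem:psi} forces $\psi_\pm$ to agree with the descent of $\Psi$ up to the $-\mathbb{I}$-action.

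The main obstacle I expect is not a single hard step but the careful bookkeeping of ambiguities: the sign $\pm$ in $\hat\psi_\pm$ and $\psi_\pm$, the choice of square roots $q$, $r$ and $k_2,k_3$ entering through Proposition~\ref{prop:quotient_maps} and Remark~\ref{rem:field_extensions}, and the translation ambiguity intrinsic to working on Kummer surfaces. Showing that all of these collapse into the single ``up to the minus identity involution'' clause of the theorem requires tracking how the elliptic involutions $\imath_l$ on each $E_l$ exchange the two branches $\pm$, which is exactly the symmetry $\hat\psi_\pm\circ\pi\circ\imath_l=\hat\psi_\mp\circ\pi$ recorded at the end of Lemma~\ref{lem:phi} and its analogue for $\psi_\pm$ derived from the alternative expression~(\ref{eqn:case2}) in the proof of Proposition~\ref{lem:psi}.
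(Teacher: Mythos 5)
Your proposal is correct and follows essentially the same route as the paper, which presents Theorem~\ref{thm:isogenies_explicit} as a summary assembled from Lemma~\ref{lem:EC_isomorphic2}, Theorem~\ref{prop:KUMC0}, Lemma~\ref{lem:phi}, and Proposition~\ref{lem:psi} (deferring the detailed verification to the cited reference): the sandwich shape, the composition $\psi_\pm\circ\hat\psi_\pm=[2]\times[2]$, and the sign bookkeeping via $\hat\psi_\pm\circ\pi\circ\imath_l=\hat\psi_\mp\circ\pi$ are exactly the ingredients used there. One small inaccuracy in your descent argument: inverting Equations~(\ref{eqn:EC_12_j_invariants}) does \emph{not} express $\lambda_2\lambda_3$ and $\lambda_2+\lambda_3$ as rational functions of $(\Lambda_1,\Lambda_2)$ --- solving the resulting quadratic introduces $\sqrt{(1-\Lambda_1)(1-\Lambda_2)}$, so they are only algebraic over $\mathbb{Q}(\Lambda_1,\Lambda_2)$; however, this does not affect the conclusion, since the formulas~(\ref{eqn:subi_X}) and~(\ref{eqn:subi_Y}) are written in the $\mathbf{Z}_{ij}$-coordinates of the quartic~(\ref{eqn:K3_X}), whose coefficients and the maps themselves visibly involve only $\Lambda_1,\Lambda_2$, which is the first (and sufficient) half of your own argument.
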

\section{Cryptography Applications}
The computational methods for \((n, n)\)-isogenies and the geometric classification of the loci \(\L_n\), developed in Sections 2 through 6, enable  applications to isogeny-based cryptography in genus 2. 

Let \(\mathcal{S}_2(p)\) denote the isomorphism classes of superspecial principally polarized Abelian surfaces over \(\mathbb{F}_{p^2}\). The superspecial isogeny problem seeks an isogeny \(\phi \colon \Jac(\mathcal{X}) \longrightarrow \Jac(\mathcal{Y})\) between superspecial genus 2 Jacobians over \(\mathbb{F}_{p^2}\), for some prime $p$. The Costello-Smith algorithm~\cite{Costello-Smith} addresses this by navigating the Richelot isogeny graph \(\Gamma_2(2; p)\), whose vertices are superspecial principally polarized Abelian surfaces in \(\mathcal{S}_2(p)\), to reach products \(E_1 \times E_2 \in \mathcal{E}_2(p)\). The Richelot isogeny graph \(\Gamma_2(2; p)\), while not Ramanujan~\cite{Florit-Smith}, exhibits sufficient expansion properties to support efficient random walks from \(\mathcal{S}_2(p)\) to \(\mathcal{E}_2(p)\), a key feature leveraged in both the original algorithm and the optimized attack. Subsequent elliptic isogeny computations, leveraging the Delfs-Galbraith algorithm~\cite{Delfs-Galbraith} in \(\tilde{O}(\sqrt{p})\) time, yield a complexity of \(\tilde{O}(p)\) bit operations, given \(\# \mathcal{S}_2(p) = O(p^3)\) and \(\# \mathcal{E}_2(p) = O(p^2)\).

An attack in~\cite{Costello-Santos} optimizes this approach by detecting \((n, n)\)-splittings for \(n \leq 11\), using equations of \(\L_n\),  derived in~\cites{2000-2,2001-1,2005-1}. The attack inspects approximately \(n^3\) neighbors per step (e.g., 40 for \(n = 3\)) through invariant computations, bypassing full isogeny evaluations. This reduces the cost from 1176 multiplications in \(\mathbb{F}_p\) per Richelot step to as few as 35, as reported in Table 5 of~\cite{Costello-Santos}, achieving speedups of 16 to 159 times for primes \(p\) from 50 to 1000 bits. The asymptotic complexity remains \(\tilde{O}(p/n^3)\), but the reduced constants significantly enhance cryptanalytic efficiency. 

The attack’s reliance on the precomputed equations for \(\L_n\) limits its scope to small  \(n\).   Recent work  in \cite{2024-03} introduces  machine learning-based methods, to detect whether a curve lies in \(\mathcal{L}_n\) for arbitrary \(n\). This enables the attack to target larger \(n\), potentially increasing its efficacy. Additionally, rational points on \(\L_n\) in   $\mathbb{P}_{(2,4,6,10)} (\mathbb{Q})$ over fields of positive characteristic studied in \cite{2025-07}, provide arithmetic insights for assessing protocol vulnerabilities by informing secure key selection. 

The attack in~\cite{Costello-Santos} impacts protocols reliant on the superspecial isogeny problem, such as the CDS hash function~\cite{CDS}, which maps inputs via Richelot isogenies in \(\Gamma_2(2; p)\). For a Jacobian \(\Jac(\mathcal{X}) \in \L_n\), the attack accelerates traversal to \(\mathcal{E}_2(p)\), facilitating collision finding by identifying distinct paths to the same product, as noted by Florit and Smith~\cite{Florit-Smith}. For a 100-bit prime, a 25-fold speedup reduces security from \(\tilde{O}(2^{100})\) to below \(2^{96}\), posing risks to under parameterized systems. For 512-bit primes, a 100-fold speedup yields \(\tilde{O}(2^{509})\), remaining secure classically but with a reduced margin.

In a hypothetical genus 2 SIDH scheme, where the secret is an \((n^k, n^k)\)-isogeny, the attack finds a path to \(\mathcal{E}_2(p)\) if \(\Jac(\mathcal{Y}) \in \L_n\), but does not recover the kernel \(S\), limiting its threat to key exchange compared to hash functions. The larger \(n^4\)-torsion groups and complex endomorphism structures of genus 2 Jacobians offer potential security advantages over elliptic curve protocols like SIDH. Explicit computation of \(\L_n\)   enables protocol designers to avoid vulnerable Jacobians suggesting that careful key selection can mitigate attack risks.

Achieving a complete break of genus 2 isogeny-based cryptography, reducing it to polynomial time or practical insecurity, requires advances beyond the current attack. A method to recover the secret kernel \(S \subset \Jac(\mathcal{X})[n^k]\) from public data, such as images of torsion points under \(\phi\), is essential but challenged by the \(n^4\)-dimensional torsion group and polarization complexities. A sub-exponential algorithm   or polynomial-time solution for the isogeny problem remains elusive, potentially requiring improved expansion of the non-Ramanujan graph \(\Gamma_2(n; p)\)  or endomorphism ring computations. A quantum algorithm achieving   polynomial time, surpassing the current \(\tilde{O}(\sqrt{p}/n^{3/2})\) via Grover’s search, is undeveloped.  Protocol-specific vulnerabilities, such as degree conversion from \((2^m n, 2^m n)\) to \((n^k, n^k)\) or torsion leakage, could suffice, but no such methods exist. 

Our computational methods, particularly the Lubicz-Robert formula with \(O(n^2)\) complexity, enhance the efficiency of \((n, n)\)-isogeny computations, directly supporting genus 2 cryptographic protocols. By computing isogenies via sums of Theta functions on the Kummer surface  the formula leverages the geometric embedding of Kummer surfaces into \(\mathbb{P}^3\)  achieving \(O(n^2)\) complexity through efficient operations in low-dimensional projective space. This generalizes Richelot’s constructions, achieving practical advantages over traditional approaches.   Future research may optimize \(\L_n\) detection for larger \(n\), possibly through new parameterizations, to refine cryptanalysis. 

\bibliographystyle{amsplain}
\begin{bibdiv}
\begin{biblist}

\bib{MR1554977}{article}{
      author={Baker, H.~F.},
       title={On a system of differential equations leading to periodic
  functions},
        date={1903},
        ISSN={0001-5962},
     journal={Acta Math.},
      volume={27},
      number={1},
       pages={135\ndash 156},
         url={https://doi.org/10.1007/BF02421301},
      review={\MR{1554977}},
}

\bib{MR1922094}{incollection}{
      author={Barth, Wolf},
       title={Even sets of eight rational curves on a {$K3$}-surface},
        date={2002},
   booktitle={Complex geometry ({G}\"{o}ttingen, 2000)},
   publisher={Springer, Berlin},
       pages={1\ndash 25},
      review={\MR{1922094}},
}

\bib{2016-5}{incollection}{
      author={Beshaj, L.},
      author={Hidalgo, R.},
      author={Kruk, S.},
      author={Malmendier, A.},
      author={Quispe, S.},
      author={Shaska, T.},
       title={Rational points in the moduli space of genus two},
        date={[2018] \copyright 2018},
   booktitle={Higher genus curves in mathematical physics and arithmetic
  geometry},
      series={Contemp. Math.},
      volume={703},
   publisher={Amer. Math. Soc., [Providence], RI},
       pages={83\ndash 115},
         url={https://doi.org/10.1090/conm/703/14132},
      review={\MR{3782461}},
}

\bib{MR2062673}{book}{
      author={Birkenhake, Christina},
      author={Lange, Herbert},
       title={Complex abelian varieties},
     edition={Second},
      series={Grundlehren der Mathematischen Wissenschaften [Fundamental
  Principles of Mathematical Sciences]},
   publisher={Springer-Verlag, Berlin},
        date={2004},
      volume={302},
        ISBN={3-540-20488-1},
         url={https://doi.org/10.1007/978-3-662-06307-1},
      review={\MR{2062673}},
}

\bib{MR1953527}{article}{
      author={Birkenhake, Christina},
      author={Wilhelm, Hannes},
       title={Humbert surfaces and the {K}ummer plane},
        date={2003},
        ISSN={0002-9947},
     journal={Trans. Amer. Math. Soc.},
      volume={355},
      number={5},
       pages={1819\ndash 1841},
         url={https://mathscinet.ams.org/mathscinet-getitem?mr=1953527},
      review={\MR{1953527}},
}

\bib{MR1505464}{article}{
      author={Bolza, Oskar},
       title={On binary sextics with linear transformations into themselves},
        date={1887},
        ISSN={0002-9327},
     journal={Amer. J. Math.},
      volume={10},
      number={1},
       pages={47\ndash 70},
         url={https://doi.org/10.2307/2369402},
      review={\MR{1505464}},
}

\bib{MR1579732}{article}{
      author={Borchardt, C.~W.},
       title={\"uber die {D}arstellung der {K}ummerschen {F}l\"ache vierter
  {O}rdnung mit sechzehn {K}notenpunkten durch die {G}\"opelsche biquadratische
  {R}elation zwischen vier {T}hetafunctionen mit zwei {V}ariabeln},
        date={1877},
        ISSN={0075-4102},
     journal={J. Reine Angew. Math.},
      volume={83},
       pages={234\ndash 244},
         url={http://dx.doi.org/10.1515/crll.1877.83.234},
      review={\MR{1579732}},
}

\bib{MR970659}{article}{
      author={Bost, Jean-Benoit},
      author={Mestre, Jean-Francois},
       title={Moyenne arithmetico-g\'{e}ometrique et periodes des courbes de
  genre {$1$} et {$2$}},
        date={1988},
        ISSN={0224-8999,2275-0622},
     journal={Gaz. Math.},
      number={38},
       pages={36\ndash 64},
      review={\MR{970659}},
}

\bib{MR4323344}{article}{
      author={Braeger, Noah},
      author={Clingher, Adrian},
      author={Malmendier, Andreas},
      author={Spatig, Shantel},
       title={Isogenies of certain {K}3 surfaces of rank 18},
        date={2021},
        ISSN={2522-0144},
     journal={Res. Math. Sci.},
      volume={8},
      number={4},
       pages={Paper No. 57, 60},
         url={https://doi-org.libproxy.mit.edu/10.1007/s40687-021-00293-0},
      review={\MR{4323344}},
}

\bib{MR1406090}{book}{
      author={Cassels, J. W.~S.},
      author={Flynn, E.~V.},
       title={Prolegomena to a middlebrow arithmetic of curves of genus {$2$}},
      series={London Mathematical Society Lecture Note Series},
   publisher={Cambridge University Press, Cambridge},
        date={1996},
      volume={230},
        ISBN={0-521-48370-0},
         url={https://doi.org/10.1017/CBO9780511526084},
      review={\MR{1406090}},
}

\bib{CDS}{article}{
      author={Castryck, Wouter},
      author={Decru, Thomas},
      author={Smith, Benjamin},
       title={Hash functions from superspecial genus-2 curves using richelot
  isogenies},
        date={2020},
     journal={Journal of Mathematical Cryptology},
      volume={14},
      number={1},
       pages={268\ndash 292},
         url={https://doi.org/10.1515/jmc-2019-0021},
        note={Preprint available at Cryptology ePrint Archive, Paper 2019/296,
  \url{https://eprint.iacr.org/2019/296}},
}

\bib{clingherdoran1}{article}{
      author={Clingher, Adrian},
      author={Doran, Charles~F.},
       title={Lattice polarized {K}3 surfaces and {S}iegel modular forms},
        date={2012},
        ISSN={0001-8708,1090-2082},
     journal={Adv. Math.},
      volume={231},
      number={1},
       pages={172\ndash 212},
         url={https://doi.org/10.1016/j.aim.2012.05.001},
      review={\MR{2935386}},
}

\bib{MR4421430}{incollection}{
      author={Clingher, Adrian},
      author={Malmendier, Andreas},
       title={Normal forms for {K}ummer surfaces},
        date={2020},
   booktitle={Integrable systems and algebraic geometry. {V}ol. 2},
      series={London Math. Soc. Lecture Note Ser.},
      volume={459},
   publisher={Cambridge Univ. Press, Cambridge},
       pages={119\ndash 174},
      review={\MR{4421430}},
}

\bib{2019-2}{article}{
      author={Clingher, Adrian},
      author={Malmendier, Andreas},
      author={Shaska, Tony},
       title={On isogenies among certain abelian surfaces},
        date={2022May},
     journal={Michigan Mathematical Journal},
      volume={71},
      number={2},
       pages={227\ndash 269},
  url={https://projecteuclid.org/journals/michigan-mathematical-journal/volume-71/issue-2/On-Isogenies-Among-Certain-Abelian-Surfaces/10.1307/mmj/20195790.short},
        note={Received: 29 August 2019; Revised: 22 June 2020; Published: May
  2022},
}

\bib{Costello-Santos}{inproceedings}{
      author={Corte-Real~Santos, Maria},
      author={Costello, Craig},
      author={Frengley, Sam},
       title={An algorithm for efficient detection of (n, n)-splittings and its
  application to the isogeny problem in dimension 2},
        date={2024},
   booktitle={Public-key cryptography -- pkc 2024},
      editor={Tang, Qiang},
      editor={Teague, Vanessa},
   publisher={Springer Nature Switzerland},
     address={Cham},
       pages={157\ndash 189},
}

\bib{c-r}{article}{
      author={Cosset, Romain},
      author={Robert, Damien},
       title={Computing {$(\ell,\ell)$}-isogenies in polynomial time on
  {J}acobians of genus {$2$} curves},
        date={2015},
        ISSN={0025-5718},
     journal={Math. Comp.},
      volume={84},
      number={294},
       pages={1953\ndash 1975},
         url={https://doi.org/10.1090/S0025-5718-2014-02899-8},
      review={\MR{3335899}},
}

\bib{Costello-Smith}{article}{
      author={Costello, Craig},
      author={Smith, Benjamin},
       title={The supersingular isogeny problem in genus 2 and beyond},
        date={2020},
     journal={Post-Quantum Cryptography (PQCrypto 2020)},
      volume={12100},
       pages={151\ndash 168},
        note={MR4139650},
}

\bib{MR4139650}{incollection}{
      author={Costello, Craig},
      author={Smith, Benjamin},
       title={The supersingular isogeny problem in genus 2 and beyond},
        date={[2020] \copyright 2020},
   booktitle={Post-quantum cryptography},
      series={Lecture Notes in Comput. Sci.},
      volume={12100},
   publisher={Springer, Cham},
       pages={151\ndash 168},
         url={https://doi.org/10.1007/978-3-030-44223-1_9},
      review={\MR{4139650}},
}

\bib{de-feo-1}{article}{
      author={De~Feo, Luca},
      author={Hugounenq, Cyril},
      author={Pl\^ut, J\'er\^ome},
      author={Schost, \'Eric},
       title={Explicit isogenies in quadratic time in any characteristic},
        date={2016},
        ISSN={1461-1570},
     journal={LMS J. Comput. Math.},
      volume={19},
      number={suppl. A},
       pages={267\ndash 282},
         url={https://doi.org/10.1112/S146115701600036X},
      review={\MR{3540960}},
}

\bib{MR4638168}{incollection}{
      author={Decru, Thomas},
      author={Kunzweiler, Sabrina},
       title={Efficient computation of {$(3^n,3^n)$}-isogenies},
        date={[2023] \copyright 2023},
   booktitle={Progress in cryptology---{AFRICACRYPT} 2023},
      series={Lecture Notes in Comput. Sci.},
      volume={14064},
   publisher={Springer, Cham},
       pages={53\ndash 78},
         url={https://doi.org/10.1007/978-3-031-37679-5_3},
      review={\MR{4638168}},
}

\bib{Delfs-Galbraith}{article}{
      author={Delfs, Christina},
      author={Galbraith, Steven~D.},
       title={Computing isogenies between supersingular elliptic curves over
  \(\mathbb{F}_p\)},
        date={2016},
     journal={Designs, Codes and Cryptography},
      volume={78},
      number={1},
       pages={329\ndash 346},
}

\bib{D-L}{incollection}{
      author={Dolgachev, I.},
      author={Lehavi, D.},
       title={On isogenous principally polarized abelian surfaces},
        date={2008},
   booktitle={Curves and abelian varieties},
      series={Contemp. Math.},
      volume={465},
   publisher={Amer. Math. Soc., Providence, RI},
       pages={51\ndash 69},
         url={https://doi.org/10.1090/conm/465/09100},
      review={\MR{2457735}},
}

\bib{Florit-Smith}{misc}{
      author={Florit, Enric},
      author={Smith, Benjamin},
       title={Random and zig-zag sampling for genus-2 isogeny graphs},
        date={2022},
         url={https://eprint.iacr.org/2022/1700},
        note={Accessed via ePrint},
}

\bib{MR871067}{book}{
      author={Freitag, E.},
       title={Siegelsche {M}odulfunktionen},
      series={Grundlehren der Mathematischen Wissenschaften [Fundamental
  Principles of Mathematical Sciences]},
   publisher={Springer-Verlag, Berlin},
        date={1983},
      volume={254},
        ISBN={3-540-11661-3},
         url={https://doi.org/10.1007/978-3-642-68649-8},
      review={\MR{871067}},
}

\bib{Frey}{incollection}{
      author={Frey, Gerhard},
       title={Isogenies in theory and praxis},
        date={2014},
   booktitle={Open problems in mathematics and computational science},
   publisher={Springer, Cham},
       pages={37\ndash 68},
      review={\MR{3330877}},
}

\bib{FK1}{inproceedings}{
      author={Frey, Gerhard},
      author={Kani, Ernst},
       title={Correspondences on hyperelliptic curves and applications to the
  discrete logarithm},
        date={2011},
      editor={Bouvry, M., P. ands~Klopotek},
      editor={Leprevost, F.},
      editor={Marciniak, M.},
      editor={Mykowiecka, A.},
      editor={Rybinski, H.},
      series={LNCS},
      volume={7053},
       pages={1\ndash 19},
}

\bib{MR2372155}{article}{
      author={Gaudry, P.},
       title={Fast genus 2 arithmetic based on theta functions},
        date={2007},
        ISSN={1862-2976},
     journal={J. Math. Cryptol.},
      volume={1},
      number={3},
       pages={243\ndash 265},
         url={https://doi.org/10.1515/JMC.2007.012},
      review={\MR{2372155}},
}

\bib{MR1438983}{article}{
      author={Gritsenko, V.~A.},
      author={Nikulin, V.~V.},
       title={Igusa modular forms and ``the simplest'' {L}orentzian
  {K}ac-{M}oody algebras},
        date={1996},
        ISSN={0368-8666},
     journal={Mat. Sb.},
      volume={187},
      number={11},
       pages={27\ndash 66},
         url={https://doi.org/10.1070/SM1996v187n11ABEH000171},
      review={\MR{1438983}},
}

\bib{HM95}{article}{
      author={Hashimoto, Ki-ichiro},
      author={Murabayashi, Naoki},
       title={Shimura curves as intersections of {H}umbert surfaces and
  defining equations of {QM}-curves of genus two},
        date={1995},
        ISSN={0040-8735},
     journal={Tohoku Math. J. (2)},
      volume={47},
      number={2},
       pages={271\ndash 296},
         url={https://doi.org/10.2748/tmj/1178225596},
      review={\MR{1329525}},
}

\bib{Humbert1901}{article}{
      author={Humbert, Georges},
       title={Sur les fonctionnes ab\'eliennes singuli\`eres. {I}, {II},
  {III}},
        date={1901},
     journal={J. Math. Pures Appl. serie 5},
       pages={V, 233\ndash 350 (1899); VI, 279\ndash 386 (1900); VII, 97\ndash
  123 (1901)},
}

\bib{Ig}{article}{
      author={Igusa, Jun-ichi},
       title={Arithmetic variety of moduli for genus two},
        date={1960},
        ISSN={0003-486X},
     journal={Ann. of Math. (2)},
      volume={72},
       pages={612\ndash 649},
         url={https://doi.org/10.2307/1970233},
      review={\MR{0114819}},
}

\bib{MR0141643}{article}{
      author={Igusa, Jun-ichi},
       title={On {S}iegel modular forms of genus two},
        date={1962},
        ISSN={0002-9327},
     journal={Amer. J. Math.},
      volume={84},
       pages={175\ndash 200},
         url={https://doi.org/10.2307/2372812},
      review={\MR{0141643}},
}

\bib{MR0168805}{article}{
      author={Igusa, Jun-Ichi},
       title={On {S}iegel modular forms genus two. {II}},
        date={1964},
        ISSN={0002-9327},
     journal={Amer. J. Math.},
      volume={86},
       pages={392\ndash 412},
      review={\MR{0168805}},
}

\bib{MR0229643}{article}{
      author={Igusa, Jun-ichi},
       title={Modular forms and projective invariants},
        date={1967},
        ISSN={0002-9327},
     journal={Amer. J. Math.},
      volume={89},
       pages={817\ndash 855},
         url={https://doi.org/10.2307/2373243},
      review={\MR{0229643}},
}

\bib{MR527830}{article}{
      author={Igusa, Jun-ichi},
       title={On the ring of modular forms of degree two over {${\bf Z}$}},
        date={1979},
        ISSN={0002-9327},
     journal={Amer. J. Math.},
      volume={101},
      number={1},
       pages={149\ndash 183},
         url={https://doi.org/10.2307/2373943},
      review={\MR{527830}},
}

\bib{MR3238326}{article}{
      author={Koike, Kenji},
       title={On {J}acobian {K}ummer surfaces},
        date={2014},
        ISSN={0025-5645},
     journal={J. Math. Soc. Japan},
      volume={66},
      number={3},
       pages={997\ndash 1016},
         url={https://doi.org/10.2969/jmsj/06630997},
      review={\MR{3238326}},
}

\bib{kumar}{article}{
      author={Kumar, Abhinav},
       title={Hilbert modular surfaces for square discriminants and elliptic
  subfields of genus 2 function fields},
        date={2015},
        ISSN={2522-0144,2197-9847},
     journal={Res. Math. Sci.},
      volume={2},
       pages={Art. 24, 46},
         url={https://doi.org/10.1186/s40687-015-0042-9},
      review={\MR{3427148}},
}

\bib{MR1579281}{article}{
      author={Kummer, E.~E.},
       title={\"{U}ber die {F}l\"achen vierten {G}rades, auf welchen {S}chaaren
  von {K}egelschnitten liegen},
        date={1865},
        ISSN={0075-4102},
     journal={J. Reine Angew. Math.},
      volume={64},
       pages={66\ndash 76},
         url={http://dx.doi.org/10.1515/crll.1865.64.66},
      review={\MR{1579281}},
}

\bib{MR2409557}{incollection}{
      author={Kuwata, Masato},
      author={Shioda, Tetsuji},
       title={Elliptic parameters and defining equations for elliptic
  fibrations on a {K}ummer surface},
        date={2008},
   booktitle={Algebraic geometry in {E}ast {A}sia---{H}anoi 2005},
      series={Adv. Stud. Pure Math.},
      volume={50},
   publisher={Math. Soc. Japan, Tokyo},
       pages={177\ndash 215},
         url={https://doi.org/10.2969/aspm/05010177},
      review={\MR{2409557}},
}

\bib{lombardo}{article}{
      author={Lombardo, Davide},
       title={Computing the geometric endomorphism ring of a genus 2 jacobian},
        date={2016},
      eprint={1610.09674},
         url={https://arxiv.org/abs/1610.09674},
}

\bib{LR}{article}{
      author={Lubicz, David},
      author={Robert, Damien},
       title={Arithmetic on abelian and {K}ummer varieties},
        date={2016},
        ISSN={1071-5797},
     journal={Finite Fields Appl.},
      volume={39},
       pages={130\ndash 158},
         url={https://doi.org/10.1016/j.ffa.2016.01.009},
      review={\MR{3475546}},
}

\bib{2005-1}{article}{
      author={Magaard, Kay},
      author={Shaska, Tanush},
      author={V\"olklein, Helmut},
       title={Genus 2 curves that admit a degree 5 map to an elliptic curve},
        date={2009},
        ISSN={0933-7741,1435-5337},
     journal={Forum Math.},
      volume={21},
      number={3},
       pages={547\ndash 566},
         url={https://doi.org/10.1515/FORUM.2009.027},
      review={\MR{2526800}},
}

\bib{MR3712162}{article}{
      author={Malmendier, A.},
      author={Shaska, T.},
       title={The {S}atake sextic in {F}-theory},
        date={2017},
        ISSN={0393-0440},
     journal={J. Geom. Phys.},
      volume={120},
       pages={290\ndash 305},
         url={https://doi.org/10.1016/j.geomphys.2017.06.010},
      review={\MR{3712162}},
}

\bib{2019-4}{article}{
      author={Malmendier, A.},
      author={Shaska, T.},
       title={From hyperelliptic to superelliptic curves},
        date={2019},
        ISSN={1930-1235},
     journal={Albanian J. Math.},
      volume={13},
      number={1},
       pages={107\ndash 200},
      review={\MR{3978315}},
}

\bib{MR3731039}{article}{
      author={Malmendier, Andreas},
      author={Shaska, Tony},
       title={A universal genus-two curve from {S}iegel modular forms},
        date={2017},
     journal={SIGMA Symmetry Integrability Geom. Methods Appl.},
      volume={13},
       pages={Paper No. 089, 17},
         url={https://doi.org/10.3842/SIGMA.2017.089},
      review={\MR{3731039}},
}

\bib{2025-07}{article}{
      author={Mello, J.},
      author={Salami, S.},
      author={Shaska, E.},
      author={Shaska, T.},
       title={Rational points and zeta functions of {H}umbert surfaces with
  square discriminant},
        date={2025},
      eprint={2504.19268},
         url={https://arxiv.org/abs/2504.19268},
}

\bib{MR728142}{article}{
      author={Morrison, D.~R.},
       title={On {$K3$} surfaces with large {P}icard number},
        date={1984},
        ISSN={0020-9910},
     journal={Invent. Math.},
      volume={75},
      number={1},
       pages={105\ndash 121},
         url={https://mathscinet.ams.org/mathscinet-getitem?mr=728142},
      review={\MR{728142}},
}

\bib{Mum}{book}{
      author={Mumford, David},
       title={Abelian varieties},
      series={Tata Institute of Fundamental Research Studies in Mathematics},
   publisher={Published for the Tata Institute of Fundamental Research, Bombay;
  by Hindustan Book Agency, New Delhi},
        date={2008},
      volume={5},
        ISBN={978-81-85931-86-9; 81-85931-86-0},
        note={With appendices by C. P. Ramanujam and Yuri Manin, Corrected
  reprint of the second (1974) edition},
      review={\MR{2514037}},
}

\bib{MR2514037}{book}{
      author={Mumford, David},
       title={Abelian varieties},
      series={Tata Institute of Fundamental Research Studies in Mathematics},
   publisher={Tata Institute of Fundamental Research, Bombay; by Hindustan Book
  Agency, New Delhi},
        date={2008},
      volume={5},
        ISBN={978-81-85931-86-9; 81-85931-86-0},
        note={With appendices by C. P. Ramanujam and Yuri Manin, Corrected
  reprint of the second (1974) edition},
      review={\MR{2514037}},
}

\bib{MR1013073}{article}{
      author={Oguiso, Keiji},
       title={On {J}acobian fibrations on the {K}ummer surfaces of the product
  of nonisogenous elliptic curves},
        date={1989},
        ISSN={0025-5645},
     journal={J. Math. Soc. Japan},
      volume={41},
      number={4},
       pages={651\ndash 680},
         url={https://doi.org/10.2969/jmsj/04140651},
      review={\MR{1013073}},
}

\bib{Oort}{incollection}{
      author={Oort, Frans},
       title={Endomorphism algebras of abelian varieties},
        date={1988},
   booktitle={Algebraic geometry and commutative algebra, {V}ol.\ {II}},
   publisher={Kinokuniya, Tokyo},
       pages={469\ndash 502},
      review={\MR{977774}},
}

\bib{MR2367218}{article}{
      author={Previato, E.},
      author={Shaska, T.},
      author={Wijesiri, G.~S.},
       title={Thetanulls of cyclic curves of small genus},
        date={2007},
        ISSN={1930-1235},
     journal={Albanian J. Math.},
      volume={1},
      number={4},
       pages={253\ndash 270},
      review={\MR{2367218}},
}

\bib{MR1509868}{article}{
      author={Pringsheim, Alfred},
       title={Zur {T}ransformation zweiten {G}rades der hyperelliptischen
  {F}unctionen erster {O}rdnung},
        date={1876},
        ISSN={0025-5831},
     journal={Math. Ann.},
      volume={9},
      number={4},
       pages={445\ndash 475},
         url={https://doi.org/10.1007/BF01442473},
      review={\MR{1509868}},
}

\bib{MR1578135}{article}{
      author={Richelot, Fried.~Jul.},
       title={De transformatione integralium {A}belianorum primi ordinis
  commentatio. {C}aput secundum. {D}e computatione integralium {A}belianorum
  primi ordinis},
        date={1837},
        ISSN={0075-4102,1435-5345},
     journal={J. Reine Angew. Math.},
      volume={16},
       pages={285\ndash 341},
         url={https://doi.org/10.1515/crll.1837.16.285},
      review={\MR{1578135}},
}

\bib{Serre85}{article}{
      author={Serre, Jean-Pierre},
       title={Rational points on curves over finite fields},
        date={1985},
     journal={notes by F. Gouvea of lectures at Harvard University},
}

\bib{2024-03}{article}{
      author={Shaska, Elira},
      author={Shaska, Tanush},
       title={Machine learning for moduli space of genus two curves and an
  application to isogeny-based cryptography},
        date={2025},
        ISSN={0925-9899,1572-9192},
     journal={J. Algebraic Combin.},
      volume={61},
      number={2},
       pages={23},
         url={https://doi.org/10.1007/s10801-025-01393-8},
      review={\MR{4870337}},
}

\bib{2000-2}{incollection}{
      author={Shaska, Tanush},
      author={V\"olklein, Helmut},
       title={Elliptic subfields and automorphisms of genus 2 function fields},
        date={2004},
   booktitle={Algebra, arithmetic and geometry with applications ({W}est
  {L}afayette, {IN}, 2000)},
   publisher={Springer, Berlin},
       pages={703\ndash 723},
      review={\MR{2037120}},
}

\bib{2000-1}{article}{
      author={Shaska, Tony},
       title={Curves of genus 2 with \(\langle n, n\rangle\) decomposable
  jacobians},
        date={2001},
     journal={Journal of Symbolic Computation},
      volume={31},
      number={5},
       pages={603\ndash 617},
      review={\MR{1828706}},
}

\bib{2001-1}{article}{
      author={Shaska, Tony},
       title={Genus 2 fields with degree 3 elliptic subfields},
        date={2004},
     journal={Forum Mathematicum},
      volume={16},
      number={2},
       pages={263\ndash 280},
      review={\MR{2039100}},
}

\bib{Shimura}{book}{
      author={Shimura, Goro},
       title={Introduction to the arithmetic theory of automorphic functions},
      series={Publications of the Mathematical Society of Japan},
   publisher={Princeton University Press},
     address={Princeton, NJ},
        date={1971},
      volume={11},
        ISBN={978-0-691-08092-5},
        note={Reprinted by Princeton University Press, 1994},
}

\bib{MR2296439}{incollection}{
      author={Shioda, Tetsuji},
       title={Classical {K}ummer surfaces and {M}ordell-{W}eil lattices},
        date={2007},
   booktitle={Algebraic geometry},
      series={Contemp. Math.},
      volume={422},
   publisher={Amer. Math. Soc., Providence, RI},
       pages={213\ndash 221},
         url={https://doi.org/10.1090/conm/422/08062},
      review={\MR{2296439}},
}

\bib{shioda-inose}{article}{
      author={Shioda, Tetsuji},
      author={Inose, Katsumi},
       title={On singular k3 surfaces},
        date={1977},
     journal={Complex Analysis and Algebraic Geometry},
       pages={119\ndash 136},
         url={https://doi.org/10.1017/CBO9780511662638.008},
        note={A collection of papers dedicated to K. Kodaira},
}

\bib{Smith}{incollection}{
      author={Smith, Benjamin},
       title={Computing low-degree isogenies in genus 2 with the
  {D}olgachev-{L}ehavi method},
        date={2012},
   booktitle={Arithmetic, geometry, cryptography and coding theory},
      series={Contemp. Math.},
      volume={574},
   publisher={Amer. Math. Soc., Providence, RI},
       pages={159\ndash 170},
         url={https://doi.org/10.1090/conm/574/11418},
      review={\MR{2961408}},
}

\end{biblist}
\end{bibdiv}
 
\end{document}